\theoremstyle{definition}
\newtheorem{theorem}{Theorem}
\let\emph\textbf
\newaliascnt{remark}{theorem}
\newaliascnt{definition}{theorem}
\newaliascnt{proposition}{theorem}
\newaliascnt{lemma}{theorem}
\newaliascnt{corollary}{theorem}
\newaliascnt{example}{theorem}
\newaliascnt{convention}{theorem}
\newaliascnt{question}{theorem}
\newaliascnt{stageplan}{theorem}
\newaliascnt{todo}{theorem}
\newtheorem{remark}[remark]{Remark}
\newtheorem{definition}[definition]{Definition}
\newtheorem{proposition}[proposition]{Proposition}
\newtheorem{lemma}[lemma]{Lemma}
\newtheorem{example}[example]{Example}
\newtheorem{question}[question]{Question}
\newtheorem{stageplan}[stageplan]{Stage Plan}
\numberwithin{equation}{section}
\numberwithin{figure}{section}
\numberwithin{table}{section}
\let\c@table\c@figure\makeatother
\numberwithin{theorem}{section}
\numberwithin{remark}{section}
\numberwithin{definition}{section}
\numberwithin{proposition}{section}
\numberwithin{lemma}{section}
\numberwithin{corollary}{section}
\numberwithin{example}{section}
\numberwithin{convention}{section}
\numberwithin{question}{section}
\numberwithin{stageplan}{section}
\numberwithin{todo}{section}
\newcommand{\tinymath}[1]{\scalebox{0.7}{#1}}
\newcommand{\Id}{\operatorname{Id}}
\newcommand{\id}{\mathrm{id}}
\newcommand{\Hom}{\operatorname{Hom}}
\newcommand{\D}{\operatorname{D}}
\newcommand{\pmat}[1]{\begin{pmatrix} #1 \end{pmatrix}}
\newcommand{\vspan}{\operatorname{span}}
\newcommand{\isoto}{\xrightarrow{\sim}}
\def\Im{\operatorname{Im}}
\newcommand{\running}{~|~}
\newcommand{\Stab}{\operatorname{Stab}}
\newcommand{\Proj}{\operatorname{Proj}}
\newcommand{\vdim}{\operatorname{dim}}
\newcommand{\Rep}{\operatorname{Rep}}
\newcommand{\qQ}{\overline{Q}}
\newcommand{\GL}{\operatorname{GL}}
\newcommand{\SL}{\operatorname{SL}}
\newcommand{\Vect}{\mathsf{Vec}}
\newcommand{\open}{\circ}
\newcommand{\Sym}{\operatorname{Sym}}
\newcommand{\closure}[1]{\overline{#1}}
\newcommand{\trace}{\operatorname{tr}}
\newcommand{\BD}{\operatorname{BD}}
\newcommand{\Qbar}{\overline{Q}}
\newcommand{\surjects}{\twoheadrightarrow}
\newcommand{\CG}{\mathrm{CG}}
\newcommand{\Coh}{\operatorname{Coh}}
\title{Resolution of singularities via Tannaka duality}
\author{Jasper van de Kreeke}
\date{}
\begin{document}

\maketitle

\begin{abstract}
Resolving finite quotient singularities is a classical problem in algebraic geometry. Traditional methods of Geometric Invariant Theory (GIT) translate the singularity into a quiver representation space and take the GIT quotient with respect to a generic stability parameter. While this approach easily produces smooth resolutions, it fails to produce any stacky resolutions, as quiver representation spaces lack finite stabilizers.

This paper provides an alternative framework which produces both smooth and stacky resolutions. Our framework is based on a trick of Abdelgadir and Segal, which deploys Tannaka duality to describe the points of the classifying stack of a finite group in terms of algebraic data. Abdelgadir and Segal successfully pursue this strategy and obtain smooth and stacky resolutions in the Kleinian $ D_4 $ case. We generalize this strategy to all Kleinian singularities and obtain a series of varieties which we refer to as “Clebsch-Gordan varieties”. We provide tools to work with these Clebsch-Gordan varieties, analyze their stable loci with respect to different stability parameters, and study the Kleinian $ A_n $ and $ D_n $ cases in detail.
\end{abstract}

\tableofcontents

\section{Introduction}
Resolving singularities is one of the core tasks in algebraic geometry, and multiple strategies exist to accompalish it. Finite quotient singularities $ X = V \sslash Γ $ are a particular class of singularities and a popular strategy to resolve them \cite{cb-kleinian, king-quivers, bellamy-schedler} consists of turning $ X $ into a quiver variety:
\begin{center}
\begin{tikzpicture}
\path (0, 0) node[align=center] (A) {$ U_0, …, U_n ∈ \Rep(Γ) $ \\ \emph{Simple representations}};
\path (5, 0) node[align=center] (B) {$ φ_i: U_i ¤ V → \bigoplus_k U_k^{⊕ c_{ik}} $ \\ \emph{Linear maps}};
\path (10, 0) node[align=center] (C) {$ X ≅ \Rep(Π_Q, α) \sslash \GL_α $ \\ \emph{Quiver variety}};
\path ($ (A.east)!0.5!(B.west) $) node {\Large $ \rightsquigarrow $};
\path ($ (B.east)!0.5!(C.west) $) node {\Large $ \rightsquigarrow $};
\end{tikzpicture}
\end{center}
Adding a stability parameter $ θ ∈ ℤ^α $ produces a map $ \Rep(Π_Q, α) \sslash_θ \GL_α \surjects \Rep(Π_Q, α) \sslash \GL_α $ which typically is a resolution of singularities. However, all these classical quotients are preprojective varieties and one never obtains the stacky resolution $ [V / Γ] $. In the present paper, we present an alternative strategy to reformulate the finite quotient as a GIT quotient:
\begin{center}
\begin{tikzpicture}
\path (0, 0) node[align=center] (A) {$ U_0, …, U_n ∈ \Rep(Γ) $ \\ \emph{Simple representations}};
\path (5, 0) node[align=center] (B) {$ φ_{ij}: U_i ¤ U_j → \bigoplus_k U_k^{⊕ c_{ijk}} $ \\ \emph{Bilinear maps}};
\path (10, 0) node[align=center] (C) {$ X ≅ (\CG_Γ × V) \sslash \GL $ \\ \emph{Clebsch-Gordan variety}};
\path ($ (A.east)!0.5!(B.west) $) node {\Large $ \rightsquigarrow $};
\path ($ (B.east)!0.5!(C.west) $) node {\Large $ \rightsquigarrow $};
\end{tikzpicture}
\end{center}
The strategy is to define the points of the variety $ \CG_Γ $ to consist of bilinear maps $ φ_{ij}: U_i ¤ U_j → \bigoplus_k U_k^{⊕ c_{ijk}} $. Then $ \CG_Γ $ comes naturally with an action of the general linear group $ \GL = \GL(U_1) × … × \GL(U_n) $, but in contrast to the quiver representation space the action is quadratic on the domain of each $ φ_{ij} $. In case of the Kleinian singularity $ ℂ^2 \sslash Γ $, we show that the variety $ \CG_Γ × ℂ^2 $ indeed produces both the smooth resolution $ \widetilde{ℂ^2 \sslash Γ} $ and the stacky resolution $ [ℂ^2 / Γ] $ as GIT quotients. This paper is a cumulation of a long chain of developments:
\begin{center}
\renewcommand{\arraystretch}{1.2}
\begin{tabular}{ccccc}
\emph{Year} & \emph{Key contributions} & \emph{Approach} & \emph{Type of resolution} & \emph{Type of group} \\\hline
1993 & Folklore \cite{Fulton-toric} & Toric & Smooth & $ A_n $ \\
1994 & King \cite{king-quivers} & Quiver & Smooth & $ A_n $, $ D_n $, $ E_{6, 7, 8} $ \\
2001 & Bridgeland-King-Reid \cite{Bridgeland-King-Reid} & $ Γ $-clusters & Smooth & Finite $ Γ ⊂ \SL_3 (ℂ) $ \\
2004 & Van den Bergh \cite{vandenBergh-flops} & Smash & NCCR & Any finite group \\
2024 & Abdelgadir-Segal \cite{Abdelgadir-Segal} & Tannakian & Smooth and stacky & $ A_n $, $ D_4 $ \\
\emph{2025} & \emph{This paper} & \emph{Tannakian} & \emph{Smooth and stacky} & \emph{Any finite group}
\end{tabular}
\end{center}
In the remainder of this introduction, we elaborate on some of these developments and explain the road to the construction presented in this paper.

\subsection*{Smooth resolutions via quiver varieties}
Kleinian singularities are the affine quotients $ ℂ^2 \sslash Γ $ where $ Γ $ is a finite subgroup of $ \SL_2 (ℂ) $. They enjoy an ADE classification, so that we can speak of the Kleinian $ A_n $, $ D_n $ and $ E_{6/7/8} $ singularities. Their coordinate rings $ ℂ[X, Y]^Γ $ are generated by three variables with a single relation. In fact, the Kleinian singularities are precisely the simple isolated hypersurface singularities in three-dimensional complex space. Kleinian singularities can be resolved and it is well-known that they have a minimal resolution. If $ Γ $ is the $ A_n $, $ D_n $ or $ E_{n=6, 7, 8} $ group, it takes precisely $ n $-many individual blow-ups to reach the minimal resolution.

Every Kleinian singularity can be written as a quiver variety $ \Rep(Π_Q, α) \sslash \GL_α $, where $ \Qbar $ is the Kleinian double quiver and $ α $ is the minimal imaginary dimension vector. The classical road to this was established by Crawley-Boevey and Holland \cite{Crawley-Boevey-Holland-deformed-preprojective} and proceeds via the skew-group ring $ ℂ[X, Y] \rtimes Γ $ and a calculation that the skew-group ring is Morita equivalent to the preprojective algebra $ Π_Q $ of the Kleinian quiver $ Q $. Describing the singularity as a quiver variety opens up the possibility to introduce stability parameters $ θ ∈ ℤ^{Q_0} $ in the aim of providing a resolution of singularities. The theory in this direction was heavily influenced by King \cite{king-quivers} and in the Kleinian case, when $ θ ∈ ℤ^{Q_0} $ is a generic stability parameter which pairs to zero with $ α $, the variety $ \Rep(Π_Q, α) \sslash_θ \GL_α $ is indeed the minimal resolution of $ \Rep(Π_Q, α) \sslash \GL_α $.

\subsection*{The quest for stacky resolutions via GIT}
Noncommutative geometry suggests that we also study resolutions of categories on a categorical level \cite{Kuznetsov}. In case of finite quotient singularities $ V \sslash Γ $, the general theme is that one should study the stack $ [V / Γ] $. This stack is also known as the orbifold resolution or stacky resolution of $ ℂ^2 \sslash Γ $. The relevance of this stack is that the derived category $ \D\Coh[V / Γ] $ is supposed to be a categorical resolution of $ V \sslash Γ $. This appears to be known in many cases, at least in case of the Kleinian singularities $ ℂ^2 \sslash Γ $ as $ [ℂ^2 / Γ] $ is a DM stack resolution of $ ℂ^2 \sslash Γ $ and hence $ \D\Coh[ℂ^2 / Γ] ≅ \D\Coh\widetilde{ℂ^2 \sslash Γ} $, see e.g.~the discussion of \cite[Conjecture 1.2]{vandenBergh-NCresolutions}. Categorical and stacky technicalities are however beyond the scope of this paper.

The natural question arises how $ [ℂ^2 / Γ] $ can be obtained as a GIT quotient of the form $ [X^θ / \GL] $. It is impossible to use the quiver approach $ X = \Rep(Π_Q, α) $ for this purpose. Indeed, $ θ $-semistable representations are always stabilized by the diagonal $ ℂ^* ⊂ \GL $, and even after the $ ℂ^* $ factor has been removed from the gauge group the stabilizer never becomes finite. Therefore a representation space of a quiver will never yield the stacky resolution as GIT quotient. We are therefore led to search for alternative varieties $ X $ with $ \GL $-action.

\subsection*{The Tannakian strategy of Abdelgadir and Segal}
Abdelgadir and Segal \cite{Abdelgadir-Segal} paved the way for constructing a variety $ X $ with $ \GL $-action for a given finite group $ Γ $. Their work consists of two theoretical parts and a precise execution in the Kleinian $ A_n $ and $ D_4 $ cases. We shall now list the two theoretical parts and comment on them in more detail.
\begin{enumerate}
\item An approach for turning a finite group $ Γ $ into an affine variety $ Z $ with a $ \GL $-action and an open subset $ Z^{\open} ⊂ Z $ such that $ \GL $ acts on $ Z^{\open} $ with stabilizer $ Γ $.
\item The ansatz $ X = Z × ℂ^2 $. They equip the product with a $ \GL $-action and propose to find stability parameters $ θ_1 $ and $ θ_2 $ such that $ X^{θ_1} = Z^{\open} × ℂ^2 $ and $ X^{θ_2} ≅ \Rep(Π_Q, α)^{θ_2} $. In consequence we have
\begin{equation*}
[X^{θ_1} / \GL] ≅ [ℂ^2 / Γ] \quad \text{and} \quad [X^{θ_2} / \GL] ≅ \widetilde{ℂ^2 \sslash Γ}.
\end{equation*}
\end{enumerate}
Let us explain the first part in more detail. The starting point is the observation that Lurie's Tannaka duality for geometric stacks \cite{Lurie-tannaka} guarantees an equivalence between $ [*/Γ] $ and the stack of tensor-product preserving functors from $ \Coh[*/Γ] = \Rep(Γ) $ to $ \Vect $. Abdelgadir and Segal focus on those functors which on objects are merely the forgetful functor. They further propose to analyze these functors by studying the category $ \Rep(Γ) $ in terms of the simple representations $ U_0, …, U_n $ and the tensor and wedge relations among them. Through such presentation, the set of functors can be expressed as an algebraic variety $ Z $. For instance, if there is a relation of the form $ U_i ∧ U_j ≅ U_k $ in $ \Rep(Γ) $, then the datum of a point $ z ∈ Z $ should include a linear map $ U_i ∧ U_j → U_k $. There is an obvious action by the group $ \GL = \prod_{k = 1, …, n} \GL(U_k) $ on $ Z $ by gauging all linear maps on their domain and codomain. The problem is that the variety $ Z $ obtained this way has far too many orbits. Unfortunately, in the approach of Abdelgadir and Segal there is no immediate way to reduce the size of $ Z $ in general and their work is therefore limited to the more tractable $ A_n $ and $ D_4 $ case.

\subsection*{The solution for the $ A_n $ and $ D_4 $ type}
Abdelgadir and Segal successfully carry out the construction of a suitable variety $ Z $ and stability parameters $ θ_1 $ and $ θ_2 $ in the case of the Kleinian $ A_n $ and $ D_4 $ groups. Their implementation is however limited to these cases, as their size reduction procedure for $ Z $ does not generalize to other finite groups. To illustrate this limitation, we shall treat here the $ A_n $ case.

For the $ A_n $ case, we have the simple representations $ U_0, U_1, …, U_n $ where the generator $ σ ∈ Γ = C_{n+1} $ acts by $ e^{2πij/(n+1)} $ on $ U_j $. Among others, the simple representations satisfy the relations $ U_1 ¤ U_i ≅ U_{i+1} $ for $ 1 ≤ i ≤ n $. Therefore the datum of a point in $ B ∈ Z $ shall be given by $ n $ scalar numbers $ B_i $, standing for linear maps $ B_i: U_1 ¤ U_i → U_{i+1} $. This choice $ Z = ℂ^n $ and $ Z^{\open} = (ℂ^*)^n $ successfully implements the first part of the proposal of Abdelgadir and Segal in the $ A_n $ case.

To visualize the limitation of this approach, note that many relations among the simple representations have been discarded in the construction of $ Z $. Indeed, we have the general relation $ U_i ¤ U_j ≅ U_{i+j} $. If we were to include these relations into the definition of $ Z $, we would end up with a $ (n+1)^2 $-dimensional variety $ Z $ which is by far too large to consist of one single orbit. In conclusion, there is good reason in discarding these additional relations, but the choice of relations to be discarded is rather arbitrary.

In the $ D_4 $ case, the limitations become even more visible. There is one higher-dimensional irreducible representation and therefore many non-scalar relations arise. We are thus facing the impending question which part of those non-scalar relations should be discarded and which should be preserved. Abdelgadir and Segal succeed in making a valid selection, but the selection is necessarily fragmented and discards many scalar and non-scalar relations. It is even necessary to enforce additional coherence conditions among the non-discarded relations. Nevertheless, the work of Abdelgadir and Segal clearly paves the way for a general construction which we shall now present.

\subsection*{The general solution for all finite groups}
The aim of this paper is to break through the limitation to the $ D_4 $ case and construct a variety $ Z $ for any finite group $ Γ $. Our key insight is that it is not necessary to reduce the amount of relations included in the construction of $ Z $. Rather, we keep all tensor relations $ U_i ¤ U_j ≅ \bigoplus_{k = 0, …, n} U_k^{⊕ c_{ijk}} $, where $ c_{ijk} $ are the Clebsch-Gordan coefficients. Instead of discarding relations, the trick is to define the variety as the closure of one single orbit. This surprisingly simple definition produces a variety with $ \GL $-action for every finite group $ Γ $.

To mark this level of generality, we will call this variety the \emph{Clebsch-Gordan variety} $ \CG_Γ $ of $ Γ $, based on a terminology poll held at a conference in Paderborn. The remainder of the paper is devoted to constructing $ \CG_Γ $ explicitly in case $ Γ $ is the Kleinian group of $ A_n $ or $ D_n $ type and checking in detail that $ \CG_Γ × ℂ^2 $ produces both the stacky and smooth resolution of $ ℂ^2 \sslash Γ $ as GIT quotients. As a bonus, we prove the surprising fact that even in the $ D_4 $ case our variety $ \CG_Γ $ is similar but not naturally isomorphic to the variety constructed by Abdelgadir and Segal.

The paper is structured as follows. In \autoref{sec:prelim}, we recall several preliminaries. In \autoref{sec:tarig}, we recall the approach of Abdelgadir and Segal and their solution in the $ D_4 $ case. In \autoref{sec:strategy}, we define the Clebsch-Gordan variety, develop its general mechanism to produce resolutions for Kleinian singularities and state the main results. In \autoref{sec:caseA}, we explicitly work out the solution for the Kleinian $ A_n $ case and provide additional material on the classification of semiinvariant functions on $ \CG_Γ × ℂ^2 $. In \autoref{sec:caseD}, we explicitly work out the solution for the Kleinian $ D_n $ case. We include calculations of the symmetry and coherence relations and the comparison with the variety of Abdelgadir and Segal.

\subsection*{Connection to the physics literature}
Abdelgadir and Segal write that “It is perhaps surprising, given the extensive literature on the McKay
correspondence, that such a construction has not appeared before.” It was pointed out to the author by Mina Aganagic that theoretical physicists have worked with resolutions of singularities as well. At least two bits of our construction have appeared in work of Lawrence-Nekrasov-Vafa \cite{Lawrence-Nekrasov-Vafa} before. The first point concerns the use of bilinear maps $ U_i ¤ U_j → \bigoplus_k U_k^{⊕ c_{ij}} $. In physics the datum of such a map is known as field content of bifundamental matter type. The second point concerns our comparison map $ R: \CG_Γ × ℂ^2 → \Rep(Π_Q, α) $, which is mentioned by Lawrence-Nekrasov-Vafa as well. We hope the present paper enriches the mathematical toolbox and provides insipiration to physicists.

\subsection*{Acknowledgements}
The ideas for the paper were born during a visit at the University of Glasgow, where Tarig Abdelgadir was presenting his work. The author would like to thank Gwyn Bellamy for the invitation and the University of Glasgow for hospitality. The author was partially supported by NWO Rubicon grant 019.232EN.029.

\section{Preliminaries}
\label{sec:prelim}
In this section, we recall several preliminaries and fix pieces of notation. We treat GIT quotients, quivers, Kleinian singularities and their resolutions. For further reading, we recommend \cite{bellamy-schedler}. All algebraic varieties in this paper are defined over the field of complex numbers.

\paragraph*{GIT quotients} If $ X $ is an affine variety and $ G $ is a algebraic reductive group acting on $ X $, then the affine quotient $ X \sslash G $ is the affine variety whose coordinate ring is $ ℂ[X]^G $. A stability parameter for $ G $ is a group character $ θ: G → ℂ^* $. If $ k ∈ ℕ $, then a $ θ^k $-semiinvariant on $ X $ is a function $ f ∈ ℂ[X] $ such that $ f(gx) = θ^n (g) f(x) $. A point $ x ∈ X $ is $ θ $-semistable if there exists a $ θ^k $-semiinvariant $ f $ for some $ k ∈ ℕ $ such that $ f(x) ≠ 0 $. We write $ X^θ $ for the subset of $ θ $-semistable points of $ X $. The space of $ θ^k $-semiinvariants is denoted $ ℂ[X]_{θ^k} $. The term GIT quotient may refer to either of two concepts. The GIT quotient $ X \sslash_θ G $ is the quasiprojective variety with graded coordinate ring $ \bigoplus_{k ∈ ℕ} ℂ[X]_{θ^n} $. The GIT quotient $ [X^θ / G] $ is the stacky quotient of the $ θ $-semistable locus by $ G $.

The Hilbert-Mumford criterion for deciding whether a point $ x ∈ X $ is $ θ $-semistable works as follows. A one-parameter subgroup of $ G $ is an algebraic group homomorphisms $ g: ℂ^* → G $, which we commonly denote by $ g(t) $. The composition $ θ ∘ g $ with a stability parameter $ θ $ is thus of the form $ (θ ∘ g)(t) = t^α $ for some $ α ∈ ℤ $. The one-parameter subgroup pairs positively with $ θ $ if $ α > 0 $. The Hilbert-Mumford criterion states that an element $ x ∈ X $ is not $ θ $-semistable if and only if there exists a one-parameter subgroup $ g(t) $ which pairs positively with $ θ $ and for which $ \lim_{t → 0} g(t)^{-1} x $ exists.

\paragraph*{Quivers} A quiver $ Q $ is a finite directed graph. The vertex and arrow sets are denoted $ Q_0 $ and $ Q_1 $, respectively. We denote the head and tail of an arrow $ a ∈ Q_1 $ by $ h(a) $ and $ t(a) $. The path algebra $ ℂQ $ is the associative unital algebra with basis given by the set of paths in $ Q $ and product given by concatenation. The double quiver $ \qQ $ of $ Q $ is the quiver obtained from $ Q $ by adjoining an arrow $ a^*: h(a) → t(a) $ for every arrow $ a: t(a) → h(a) $. The preprojective algebra $ Π_Q $ of $ Q $ is defined as
\begin{equation*}
Π_Q = \frac{ℂ\qQ}{\big(\sum_{t(a) = v} a a^* - a^* a\big)_{v ∈ Q_0}}.
\end{equation*}
A dimension vector is an element $ α ∈ ℤ^{Q_0} $. The representation space $ \Rep(Π_Q, α) $ is the affine variety given by all representations of $ Π_Q $ of dimension $ α $. The gauge group $ \GL_α $ is the product of general linear groups given by
\begin{equation*}
\GL_α = \prod_{v ∈ Q_0} \GL_{α_v} (ℂ).
\end{equation*}
The group $ \GL_α $ acts on $ \Rep(Π_Q, α) $ by
\begin{equation*}
(g_v).ρ = (g_{h(a)} (ρ_a)_{a ∈ Q_1} g_{t(a)}^{-1})_{a ∈ \qQ_1}.
\end{equation*}
The quiver variety of $ Q $ is the affine quotient $ \Rep(Π_Q, α) \sslash \GL_α $. When $ θ ∈ ℤ^{Q_0} $, we define a group character $ θ: \GL_α → ℂ^* $ by $ θ(g) = \prod_{v ∈ Q_0} \det(g_v) $, using the same letter $ θ $ by abuse of notation. A representation $ ρ ∈ \Rep(Π_Q, α) $ is $ θ $-semistable if we have $ θ · α = 0 $ and for every nontrivial strict subrepresentation $ η ⊂ ρ $ we have $ θ · \dim η ≥ 0 $.

\paragraph*{Kleinian singularities} The finite subgroups $ Γ ⊂ \SL_2 (ℂ) $ are known as the Kleinian groups. They are classified by $ A_n $, $ D_n $ and $ E_{n=6, 7, 8} $ types. The affine quotients $ ℂ^2 \sslash Γ $ are known as Kleinian singularities. It turns out that every Kleinian singularity can be written as a quiver variety. The classical road to this result is to regard the skew-group ring $ ℂ[X, Y] \rtimes Γ $ whose center is equal to $ ℂ[X, Y]^Γ $. By the Artin-Wedderburn theorem, $ ℂΓ $ is isomorphic to a product of matrix rings. Similar to the fact that any matrix ring is Morita-equivalent to its coefficient ring, an idempotent reduction yields a Morita-equivalent subring $ e(ℂ[X, Y] \rtimes Γ)e $ of the skew-group ring. It is a classical result of Crawley-Boevey and Holland \cite{Crawley-Boevey-Holland-deformed-preprojective} that this subring is isomorphic to the preprojective algebra $ Π_Q $ of a quiver $ Q $. There is a classical theory of roots for quivers \cite{kac} and one denotes by $ α $ the minimal imaginary root of $ Q $. The set $ \Rep(Π_Q, α) $ of all $ α $-dimensional representations of $ Π_Q $, or in other words all representations of the double quiver $ \qQ $ which satisfy the preprojective relations, form an affine variety over the complex numbers. The product $ \GL_α $ of general linear groups acts on $ \Rep(Π_Q, α) $ and it turns out that the affine quotient $ \Rep(Π_Q, α) \sslash \GL_α $ is isomorphic to the Kleinian singularity $ ℂ^2 \sslash Γ $. This way, we can write every Kleinian singularity as a quiver variety. The data attached to the Kleinian $ A_n $ and $ D_n $ singularities is depicted in \autoref{fig:prelim-kleinian}.

\begin{figure}
\centering
\begin{subfigure}{0.99\linewidth}
\centering
\begin{tikzpicture}[xscale=0.9, yscale=0.8]
\begin{scope}[shift={(-10, 0)}]
\path (0, 0) node {$ Γ = \left⟨\pmat{e^{2πi/(n+1)} & 0 \\ 0 & e^{-2πi/(n+1)}}\right⟩ $};
\path (5, 0) node {$ xy - z^{n+1} = 0 $};
\end{scope}
\path (90:2) node (A) {1};
\path (150:2) node (B) {1};
\path (210:2) node (C) {1};
\path (270:2) node (D) {$ \dots $};
\path (330:2) node (E) {1};
\path (30:2) node (F) {1};
\path[draw, ->] ($ (A.west) + (up:0.1) $) to node[midway, above] {$ A_1 $} ($ (B.east) + (up:0.1) $);
\path[draw, <-] ($ (A.west) + (down:0.1) $) to node[midway, below] {$ A_1^* $} ($ (B.east) + (down:0.1) $);
\path[draw, ->] ($ (B.south) + (left:0.1) $) to node[midway, left] {$ A_2 $} ($ (C.north) + (left:0.1) $);
\path[draw, <-] ($ (B.south) + (right:0.1) $) to node[midway, right] {$ A_2^* $} ($ (C.north) + (right:0.1) $);
\path[draw, <-] ($ (C.east) + (up:0.1) $) to node[midway, below, shift={(0, -0.1)}] {$ A_3 $} ($ (D.west) + (up:0.1) $);
\path[draw, ->] ($ (C.east) + (down:0.1) $) to node[midway, above, shift={(0, 0.1)}] {$ A_3^* $} ($ (D.west) + (down:0.1) $);
\path[draw, <-] ($ (D.east) + (up:0.1) $) to node[pos=0.4, above, shift={(-0.2, 0)}] {$ A_{n-1}^* $} ($ (E.west) + (up:0.1) $);
\path[draw, ->] ($ (D.east) + (down:0.1) $) to node[midway, below, shift={(0.1, 0)}] {$ A_{n-1} $} ($ (E.west) + (down:0.1) $);
\path[draw, ->] ($ (E.north) + (right:0.1) $) to node[midway, right] {$ A_n $} ($ (F.south) + (right:0.1) $);
\path[draw, <-] ($ (E.north) + (left:0.1) $) to node[midway, left] {$ A_n^* $}($ (F.south) + (left:0.1) $);
\path[draw, ->] ($ (F.west) + (up:0.1) $) to node[midway, above, shift={(0.1, 0)}] {$ A_{n+1} $} ($ (A.east) + (up:0.1) $);
\path[draw, <-] ($ (F.west) + (down:0.1) $) to node[midway, below] {$ A_{n+1}^* $} ($ (A.east) + (down:0.1) $);
\end{tikzpicture}
\caption{The $ A_n $ singularity}
\label{fig:prelim-A}
\end{subfigure}

\vspace{0.5cm}

\begin{subfigure}{0.99\linewidth}
\centering
\begin{tikzpicture}[xscale=0.8, yscale=0.5]
\begin{scope}[shift={(3, 3)}]
\path (0, 0) node {$ Γ = \left⟨\pmat{e^{πi/(n-2)} & 0 \\ 0 & e^{-πi/(n-2)}}, \pmat{0 & i \\ i & 0}\right⟩ $};
\path (8, 0) node {$ x^2 + y^2 z + z^{n-1} = 0 $};
\end{scope}\path (-2, 2) node (E1) {1};
\path (-2, -2) node (E2) {1};
\path (0, 0) node (O1) {2};
\path (4, 0) node (I2) {2};
\path (8, 0) node (dots) {$ \dots $};
\path (12, 0) node (On-3) {2};
\path (14, 2) node (E3) {1};
\path (14, -2) node (E4) {1};
\path[draw, ->] (E1.south east) to node[midway, above] {$ A_1 $} (O1.north west);
\path[draw, ->] ($ (O1.north west) + (down:0.2) $) to node[midway, below] {$ A_1^* $} ($ (E1.south east) + (down:0.2) $);
\path[draw, ->] (E2.north east) to node[midway, above] {$ A_2 $} (O1.south west);
\path[draw, ->] ($ (O1.south west) + (down:0.2) $) to node[midway, below] {$ A_2^* $} ($ (E2.north east) + (down:0.2) $);
\path[draw, ->] ($ (O1.east) + (up:0.1) $) to node[midway, above] {$ A_3 $} ($ (I2.west) + (up:0.1) $);
\path[draw, ->] ($ (I2.west) + (down:0.1) $) to node[midway, below] {$ A_3^* $} ($ (O1.east) + (down:0.1) $);
\path[draw, ->] ($ (I2.east) + (up:0.1) $) to node[midway, above] {$ A_4 $} ($ (dots.west) + (up:0.1) $);
\path[draw, ->] ($ (dots.west) + (down:0.1) $) to node[midway, below] {$ A_4^* $} ($ (I2.east) + (down:0.1) $);
\path[draw, ->] ($ (dots.east) + (up:0.1) $) to node[midway, above] {$ A_{n-2} $} ($ (On-3.west) + (up:0.1) $);
\path[draw, ->] ($ (On-3.west) + (down:0.1) $) to node[midway, below] {$ A_{n-2}^* $} ($ (dots.east) + (down:0.1) $);
\path[draw, ->] ($ (On-3.north east) $) to node[midway, above] {$ A_{n-1} $} (E3.south west);
\path[draw, ->] ($ (E3.south west) + (down:0.2) $) to node[midway, below] {$ A_{n-1}^* $} ($ (On-3.north east) + (down:0.2) $);
\path[draw, ->] (On-3.south east) to node[midway, above] {$ A_n $} (E4.north west);
\path[draw, ->] ($ (E4.north west) + (down:0.2) $) to node[midway, below] {$ A_n^* $} ($ (On-3.south east) + (down:0.2) $);
\end{tikzpicture}
\caption{The $ D_n $ singularity}
\label{fig:prelim-D}
\end{subfigure}
%
\caption{This figure depicts the $ A_n $ and $ D_n $ singularities. We list the Kleinian group $ Γ $, the equivalent description as hypersurface in $ ℂ^3 $ and the Kleinian quiver setting $ (Q, α) $. Note that the Kleinian quiver with index $ n $ has $ (n+1) $-many vertices.}
\label{fig:prelim-kleinian}
\end{figure}

\paragraph*{Resolutions} Let $ (Q, α) $ be a Kleinian quiver setting and let $ θ ∈ ℤ^{Q_0} $ be a stability parameter. Then there is a natural map $ \Rep(Π_Q, α) \sslash_θ \GL_α → \Rep(Π_Q, α) \sslash \GL_α $. If $ θ ∈ ℤ^{Q_0} $ is generic and $ θ · α = 0 $, then this map is the minimal resolution of the Kleinian singularity. For this classical fact and material on other quiver varieties and their resolutions we refer to \cite{bellamy-schedler}.

The points of the variety $ \Rep(Π_Q, α) \sslash_θ \GL_α $ are in one-to-one correspondence with orbits of semistable points (or polystable points, in case of general quiver varieties). This way, the variety is equal to the stack $ [\Rep(Π_Q, α)^θ / G] $. The diagonal subgroup $ ℂ^* ⊂ \GL_α $ stabilizes the entire representation space. The reader can easily convince themself that the stabilizer of $ θ $-semistable representations is precisely $ ℂ^* $.

The special fiber $ π^{-1} (0) $ of the resolution $ π: \Rep(Π_Q, α) \sslash_θ \GL_α \surjects \Rep(Π_Q, α) \sslash \GL_α $ consists of $ n $-many transversally intersecting projective lines. In fact, the graph whose vertices are the projective lines and whose edges are the intersections is precisely the Dynkin diagram associated with the Kleinian ADE type. The special fiber $ π^{-1} (0) $ consists of the orbits of $ θ $-semistable representations which lie in the nullcone. By definition, the nullcone is the set of representations $ ρ $ for which there exists a 1-parameter subgroup $ g(t) ⊂ \GL_α $ such that $ g(t) ρ → 0 $ as $ t → 0 $. For specific values of $ θ ∈ ℤ^{Q_0} $, one can provide explicit representatives for the points in the special fiber. The specific case where $ θ_v < 0 $ for the special vertex and $ θ_v > 0 $ for all other vertices has been investigated by Crawley-Boevey \cite{cb-kleinian}. The statement is that the $ n $-many 1-parameter families of representations are distinguished by their socle. For every vertex $ v ∈ Q_0 $, apart from the special vertex, there is a 1-parameter family of representations in $ π^{-1} (0) $ whose socle equals the simple representation $ S_v $ at vertex $ v $. Explicit representatives of these 1-parameter families have been calculated for instance in \cite{vdKreeke-Namikawa}.

\section{The Tannakian approach}
\label{sec:tarig}
In this section, we recapitulate the Tannakian approach of Abdelgadir and Segal. In \autoref{sec:tarig-tannaka}, we recall the approach to produce a variety $ Z $ together with a $ \GL $-action via Tannaka duality. In \autoref{sec:tarig-construction}, we recall the implementation of this construction in the $ A_n $ and $ D_4 $ case. In \autoref{sec:tarig-implementing}, we recall the ansatz $ X = Z × ℂ^2 $ to produce both stacky and smooth resolutions. We finish by explaining why the implementation chosen by Abdelgadir and Segal does not easily extend beyond the $ A_n $ and $ D_4 $ cases.
\begin{center}
\begin{tikzpicture}
\path (0, 0) node[align=center, above] (A) {$ \mathcal{X} = [*/Γ] $ \\ \emph{Classifying stack}};
\path (5, 0) node[align=center, above] (B) {$ \mathcal{X} = \{F: \Rep(Γ) \overset{¤}{→} \Vect\} $ \\ \emph{Tannakian description}};
\path (10.5, 0) node[align=center, above] (C) {$ Z = \{(β, A, B) \running \text{Relations}\} $ \\ \emph{Algebraic variety}};
\path[draw, decoration={snake, amplitude=0.1em}, decorate, -{To[scale=1.5]}] (A) to ($ (B.west) + (left:0.3) $);
\path[draw, decoration={snake, amplitude=0.1em}, decorate, -{To[scale=1.5]}] ($ (B.south east) + (0.2, 0.5) $) to ($ (C.south west) + (-0.3, 0.5) $);
\end{tikzpicture}
\end{center}

\subsection{Tannakian approach of Abdelgadir and Segal}
\label{sec:tarig-tannaka}
In this section, we describe the approach of Abdelgadir and Segal to produce a variety $ Z $ with a $ \GL $-action from a finite group $ Γ $. The starting point is the work of Lurie \cite{Lurie-tannaka} which establishes that a geometric stack $ \mathcal{X} $ is equivalent to the stack of tensor-preserving functors from the category of coherent sheaves $ \Coh(\mathcal{X}) $ to the category of vector spaces:
\begin{equation*}
\mathcal{X} ≅ \{F: \Coh(\mathcal{X}) \xrightarrow{¤} \Vect\}.
\end{equation*}
For $ \mathcal{X} = [*/Γ] $ the statement specializes to $ [*/Γ] ≅ \{F: \Rep(Γ) \xrightarrow{¤} \Vect\} $. Abdelgadir and Segal propose to convert the data encoded by the functor $ F $ into algebraic data. To simplify the problem, they regard only those functors $ F $ which on objects are the forgetful map. To further understand the functors, they propose to write the category $ \Rep(Γ) $ in terms of generators and tensor relations. Implementing this concept precisely is not entirely straight-forward due to the coherence issues between the desirable isomorphisms. Abdelgadir and Segal succeed in implementing this approach in the case where $ Γ $ is the Kleinian $ A_n $ or $ D_4 $ group. In what follows, we summarize their implementation.

\subsection{Construction of the variety $ Z $}
\label{sec:tarig-construction}
In this section, we recall the construction of the variety $ Z $ of Abdelgadir and Segal in the $ A_n $ and $ D_4 $ cases. This section is for illustrative purposes only, since the materials will not be used in the remainder of the paper and we will provide our alternative construction in the generality of arbitrary finite groups $ Γ $ in \autoref{sec:strategy}.

We start with the $ A_n $ case. The Kleinian group $ Γ = C_{n+1} $ has the simple representations $ U_0, U_1, …, U_n $. The generator $ σ ∈ Γ = C_{n+1} $ acts by $ e^{2πij/(n+1)} $ on $ U_j $. Among others, the simple representations satisfy the relations $ U_1 ¤ U_i ≅ U_{i+1} $ for $ 1 ≤ i ≤ n $. Therefore the datum of a point in $ B ∈ Z $ shall be given by $ n $ scalar numbers $ B_i $, standing for linear maps $ B_i: U_1 ¤ U_i → U_{i+1} $. The gauge action of an element $ g = (g_1, …, g_n) ∈ \GL = (ℂ^*)^n $ is given by
\begin{equation*}
(gB)_i = g_{i+1} B_i g_1^{-1} g_i^{-1}, \quad 1 ≤ i ≤ n.
\end{equation*}
Note that we use the notation $ U_{n+1} = U_0 $ and $ g_0 = g_{n+1} = 1 $. The open locus $ Z^{\open} ⊂ Z $ is given by those points $ B $ such that $ B_i ≠ 0 $ for all $ 1 ≤ i ≤ n $. It is an instructive exercise to verify that $ Z^{\open} $ consists of one single $ \GL $-orbit. Therefore this choice of variety $ Z $ successfully implements the first part of the proposal of Abdelgadir and Segal in the $ A_n $ case.

We now explain the $ D_4 $ case. The Kleinian group is the quaternion group $ Γ = Q_8 = \{±1, ±i, ±j, ±k\} $. Abdelgadir and Segal label the irreducible representations by $ ℂ, L_1, L_2, L_3, V $ which are of dimensions $ 1, 1, 1, 1, 2 $. The gauge group is $ G = (ℂ^*)^3 × \GL_2 (ℂ) $. Abdelgadir and Segal choose the following three relations among the simple representations:
\begin{align*}
(V ∧ V) ¤ (V ∧ V) &≅ L_1 ¤ L_2 ¤ L_3 \\
L_i ¤ L_i &≅ V ∧ V, \quad i = 1, 2, 3, \\
\Sym^2 V &≅ L_1 ⊕ L_2 ⊕ L_3.
\end{align*}
All other relations are discarded. The construction therefore revolves around tuples $ (β, α_1, α_2, α_3, B) $ where
\begin{align*}
β &∈ \Hom((V ∧ V) ¤ (V ∧ V), L_1 ¤ L_2 ¤ L_3), \\
α_i &∈ \Hom(L_i^2, V ∧ V), \\
B &∈ \Hom(\Sym^2 V, L_1 ⊕ L_2 ⊕ L_3).
\end{align*}
The letter $ A $ is a compact way of denoting the matrix
\begin{equation*}
A = \pmat{α_1 & 0 & 0 \\ 0 & α_2 & 0 \\ 0 & 0 & α_3}.
\end{equation*}
There are several interrelations among the three relations, so that it is necessary to enforce coherence conditions among $ β $, $ A $ and $ B $. In total, the variety $ Z $ is defined as all tuples $ (β, A, B) $ which satisfy the following three conditions:
\begin{itemize}
\item[(E1)] $ B^* A B = α_1 α_2 α_3 β^2 J $,
\item[(E2)] $ B J^{-1} B^* A = α_1 α_2 α_3 β^2 \Id_V $,
\item[(E3)] $ ∧^2 B = β AB J^{-1} $.
\end{itemize}
Here $ J $ denotes the canonical isomorphism $ J: \Sym^2 V \isoto \Sym^2 V^* ¤ (V ∧ V)^{¤2} $. The left-hand side on the third row must in fact be interpreted through yet another canonical isomorphism. We elaborate on these conditions and on a hands-on version with fixed bases in \autoref{sec:caseD-comparison}.

The action of $ G $ on $ Z $ is given naturally by left-multiplication on the codomain of $ β $, $ A $ and $ B $ and by inverse-left-multiplication on their codomains. The open subset $ Z^{\open} ⊂ Z $ is defined to consist of those points for which $ \det B ≠ 0 $. Abdelgadir and Segal prove that $ G $ indeed acts transitively on $ Z^{\open} $ with stabilizer isomorphic to $ Γ $.

\subsection{Implementation of the ansatz $ X = Z × ℂ^2 $}
\label{sec:tarig-implementing}
In this section, we show how Abdelgadir and Segal utilize the affine variety $ Z $ to produce both the smooth and the stacky resolution of the Kleinian singularity in the $ D_4 $ case. The ansatz is to put $ X = Z × ℂ^2 $. Recall that the gauge group $ G = \GL(L_1) × \GL(L_2) × \GL(L_3) × \GL_2 (V) $ acts naturally on $ Z $. Moreover, the space $ ℂ^2 $ is naturally a representation of the Kleinian group $ Γ ⊂ \SL_2 (ℂ) $ and as such it is isomorphic to $ V $. Through this identification, the product $ X = Z × ℂ^2 $ obtains a natural $ G $-action.

The choice of stability parameters is $ θ_1 = (-1, …, -1) ∈ ℤ^n $ and $ θ_2 = (+1, …, +1) ∈ ℤ^n $. Abdelgadir and Segal prove that $ (Z × ℂ^2)^{θ_1} = Z^{\open} × ℂ^2 $. The easy direction of this equality is the inclusion $ Z^{\open} × ℂ^2 ⊂  (Z × ℂ^2)^{θ_1} $. To verify the inclusion, it suffices to note that the function $ f((β, A, B), x) = α_1^2 α_2^2 α_3^2 β^2 \det(B) $ is a $ θ_1 $-semiinvariant on $ Z × ℂ^2 $ that does not vanish on $ Z^{\open} $. In consequence, we have $ [X^{θ_1} / G] ≅ [ℂ^2 / Γ] $.

Abdelgadir and Segal prove moreover that $ [(Z × ℂ^2)^{θ_2} / G] ≅ \widetilde{ℂ^2 \sslash Γ} $. Their stategy is to construct a comparison map $ R: Z × ℂ^2 → \Rep(Π_Q, α) $, where $ (Q, α) $ is the Kleinian $ D_4 $ quiver. The map $ R $ is constructed explicitly by combining the datum of the maps $ β $, $ A $, $ B $ with the vector $ x ∈ ℂ^2 $. The representation $ R((β, A, B), x) $ is depicted in \autoref{fig:abdelgadir-quiverrep}.

\begin{figure}
\centering
\begin{tikzpicture}[scale=1.5]
\path (0, 0) node (C) {$ ℂ $};
\path (2, -2) node (U2) {$ L_1 $};
\path (4, 0) node (U3) {$ L_2 $};
\path (2, 2) node (U4) {$ L_3 $};
\path (2, 0) node (U5) {$ V $};
\path[draw, ->] ($ (C.east) + (up:0.1) $) to node[midway, above] {$ x $} ($ (U5.west) + (up:0.1) $);
\path[draw, <-] ($ (C.east) + (down:0.1) $) to node[midway, below] {$ (α_1 α_2 α_3 β^2)(x, -) $} ($ (U5.west) + (down:0.1) $);
\path[draw, ->] ($ (U2.north) + (left:0.1) $) to node[midway, left] {$ α_1 (B_1^x)^{∨} $} ($ (U5.south) + (left:0.1) $);
\path[draw, <-] ($ (U2.north) + (right:0.1) $) to node[midway, right] {$ B_1^x $} ($ (U5.south) + (right:0.1) $);
\path[draw, ->] ($ (U3.west) + (down:0.1) $) to node[midway, below] {$ α_2 (B_2^x)^{∨} $} ($ (U5.east) + (down:0.1) $);
\path[draw, <-] ($ (U3.west) + (up:0.1) $) to node[midway, above] {$ B_2^x $} ($ (U5.east) + (up:0.1) $);
\path[draw, ->] ($ (U4.south) + (right:0.1) $) to node[midway, right] {$ α_3 (B_3^x)^{∨} $} ($ (U5.north) + (right:0.1) $);
\path[draw, <-] ($ (U4.south) + (left:0.1) $) to node[midway, left] {$ B_3^x $} ($ (U5.north) + (left:0.1) $);
\end{tikzpicture}
\caption{This figure depicts the quiver representation $ R((β, A, B), x) $. The meaning of all symbols is elaborated in \autoref{sec:caseD-comparison}.}
\label{fig:abdelgadir-quiverrep}
\end{figure}
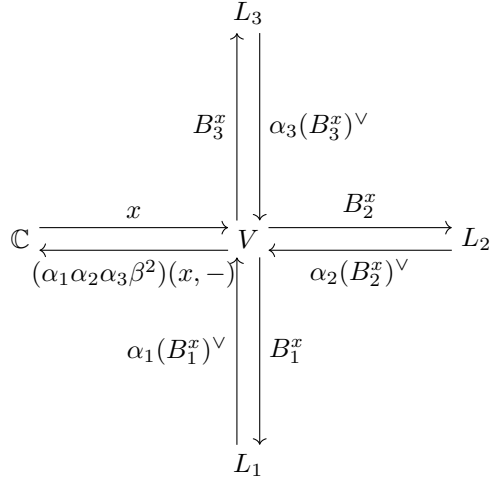

Generalizing this implementation to all ADE cases is a nontrivial task. The $ A_n $ case is much simpler than the $ D_4 $ case due to the scalar nature of the relations and the abelianness of the gauge group, and can be dealt with in a similar manner. The ad-hoc nature in the $ D_4 $ case leaves it unclear how to generalize the implementation of Abdelgadir and Segal. The most pressing challenge for the general $ D_n $ and $ E_{6, 7, 8} $ cases is the choice of tensor, symmetric and wedge relations among the simple representations $ U_0, …, U_n ∈ \Rep(Γ) $.

Abdelgadir and Segal express their wish that the mathematical community replace their “hands-on proof with a more abstract and elegant one, which might then work in greater generality.” In the present article, we respond to this appeal by continuing the Tannakian approach of Abdelgadir and Segal in a different direction that generalizes to all Kleinian singularities.

\section{Resolution of finite quotient singularities}
\label{sec:strategy}
In this section, we define Clebsch-Gordan varieties and show how to use them to resolve Kleinian singularities.
\begin{center}
\begin{tikzpicture}
\path (0, 0) node[align=center] (A) {$ X = ℂ^2 \sslash Γ $ \\ \emph{Kleinian}};
\path (4, 0) node[align=center] (B) {$ \GL \curvearrowright \CG_Γ $, $ θ_1 $, $ θ_2 $ \\ \emph{Clebsch-Gordan}};
\path (10, 1) node[align=center] (C) {$ [(\CG_Γ × ℂ^2)^{θ_1} / \GL] ≅ [ℂ^2 / Γ] $ \\ \emph{Stacky resolution}};
\path (10, -1) node[align=center] (D) {$ [(\CG_Γ × ℂ^2)^{θ_2} / \GL] ≅ \widetilde{ℂ^2 \sslash Γ} $ \\ \emph{Smooth resolution}};
\path[draw, decoration={snake, amplitude=0.1em}, decorate, -{To[scale=2]}] (A) to ($ (B.west) + (left:0.2) $);
\path[draw, decoration={snake, amplitude=0.1em}, decorate, -{To[scale=2]}] (B) to (C);
\path[draw, decoration={snake, amplitude=0.1em}, decorate, -{To[scale=2]}] (B) to (D);
\end{tikzpicture}
\end{center}
In \autoref{sec:strategy-cgdata} and \ref{sec:strategy-variety} we define the Clebsch-Gordan variety $ \CG_Γ $ together with the action of $ \GL $ for an arbitrary finite group $ Γ $. In \autoref{sec:strategy-finquot} we explain how to use $ \CG_Γ $ as candidate for resolutions of finite quotient singularities. In \autoref{sec:strategy-kleinian} we specialize to the case of Kleinian groups $ Γ ⊂ \SL_2 (ℂ) $. In \autoref{th:strategy-mainresult}, we formulate the main result. In \autoref{sec:strategy-proof}, we prove the main result following the six stages formulated in \autoref{stageplan}. Most parts of the stages are proven simultaneously for all ADE types and the others are proven for the $ A_n $ and $ D_n $ types in \autoref{sec:caseA} and \ref{sec:caseD}.

\subsection{Clebsch-Gordan data}
\label{sec:strategy-cgdata}
In this section, we define the notion of a Clebsch-Gordan datum. Simply speaking, a Clebsch-Gordan datum consists of bilinear maps $ U_i ¤ U_j → \oplus_k U_k^{⊕ c_{ijk}} $. The starting point is the Tannakian approach recalled in \autoref{sec:tarig}. While Abdelgadir and Segal convert a Tannakian functor $ F: \Rep(Γ) → \Vect $ into a limited amount of algebraic data by truncating the information contained in the functor, we instead convert the functor into the full amount of algebraic data and preserve the information contained in the functor.

\begin{center}
\begin{tikzpicture}
\path (0, 0) node[align=center] (A) {$ F: \Rep(Γ) \xrightarrow{¤} \Vect $ \\ \emph{Tensor-preserving functor}};
\path (-4, -2) node[align=center] (B) {$ (β, A, B) $ \\ \emph{Truncated algebraic data}};
\path (4, -2) node[align=center] (C) {$ φ = (φ_{i,j}: U_i ¤ U_j → \bigoplus_k U_k^{⊕ c_{ij}}) $ \\ \emph{Full algebraic data}};
\path[draw, ->] (A.south) to node[midway, left, shift={(0, 0.1)}] {Abdelgadir-Segal} (B.north);
\path[draw, ->] (A.south) to node[midway, right, shift={(0, 0.1)}] {This paper} (C.north);
\end{tikzpicture}
\end{center}

We start with a finite group $ Γ $. Let $ U_0, …, U_n $ be its simple representations, with $ U_0 $ denoting the trivial representation. We regard only functors $ F $ which send $ U_i $ to $ U_i $ as a vector space. It is our aim to convert the abstract property that $ F $ preserves tensor products into an algebraic property. We start by the observation that inside the category $ \Rep(Γ) $ we have the isomorphisms
\begin{equation*}
U_i ¤ U_j ≅ \bigoplus_{k = 0, …, n} U_k^{⊕ c_{ijk}}.
\end{equation*}
Here $ c_{ijk} $ are the Clebsch-Gordan coefficients. We convert the property that $ F $ preserves tensor products into the condition that there be an isomorphism $ φ_{i,j}: F(U_i) ¤ F(U_j) \isoto F(U_i ¤ U_j) $. Note that the domain and codomain of the maps $ φ_{i,j} $ are simply complex vector spaces of known dimension. We have thus turned the functor data of $ F $ into purely algebraic data. Since $ F(U_i) $ is simply the representation $ U_i $ regarded as a vector space, we shall drop the letter $ F $. We fix this terminology and notation as follows:

\begin{definition}
Let $ Γ $ be a finite group. Let $ U_0, …, U_n $ be its simple representations, with $ U_0 $ denoting the trivial representation. Let $ c_{ijk} ∈ ℕ $ be the Clebsch-Gordan coefficients. A \emph{Clebsch-Gordan datum} $ φ = (φ_{i,j})_{0 ≤ i, j ≤ n} $ is the datum of bilinear maps
\begin{equation*}
φ_{i,j}: U_i ¤ U_j → \bigoplus_k U_k^{⊕ c_{ijk}}.
\end{equation*}
The \emph{gauge group} associated with $ Γ $ is defined as follows:
\begin{equation*}
\GL = \GL(U_1) × … × \GL(U_n).
\end{equation*}
\end{definition}

\begin{remark}
The factor $ \GL(U_0) = ℂ^* $ does not appear in our definition of the gauge group. The group $ \GL $ acts on a Clebsch-Gordan datum $ φ $ by “change of basis”. More precisely, a group element acts by left composition on the codomain and inverse composition on the domain:
\begin{equation*}
(gφ)_{ij} = \big(\bigoplus_{k = 0, …, n} g_k^{⊕ c_{ijk}}\big) ∘ φ_{i,j} ∘ (g_i^{-1} ¤ g_j^{-1}).
\end{equation*}
The maps $ φ_{i,j} $ in a Clebsch-Gordan datum need not be isomorphisms and need not satisfy any coherence relations. Any Clebsch-Gordan datum coming from a functor $ F $ will however satisfy certain coherence relations. We shall exploit this property later on.
\end{remark}

\subsection{The Clebsch-Gordan variety}
\label{sec:strategy-variety}
In this section, we define the Clebsch-Gordan variety $ \CG_Γ $ for general finite groups $ Γ $. While Abdelgadir and Segal define $ Z $ as the collection of all truncated data $ (β, A, B) $ that satisfy coherence relations, we instead define $ \CG_Γ $ as closure of one single $ \GL $-orbit:
\begin{center}
\begin{tikzpicture}
\path (0, 0) node[align=center] (A) {$ \{F: \Rep(Γ) \xrightarrow{¤} \Vect\} $ \\ \emph{Set of functors}};
\path (-4, -2) node[align=center] (B) {$ Z ≔ \{(β, A, B) \running \text{(E1), (E2), (E3)}\} $ \\ \emph{Set of truncated algebraic data}};
\path (4, -2) node[align=center] (C) {$ \CG_Γ ≔ \overline{\GL φ^{(0)}} $ \\ \emph{Orbit of one full datum}};
\path[draw, ->] (A.south) to node[midway, left, shift={(up:0.1)}] {Abdelgadir-Segal} (B.north);
\path[draw, ->] (A.south) to node[midway, right, shift={(up:0.1)}] {This paper} (C.north);
\end{tikzpicture}
\end{center}

\begin{definition}
Let $ Γ $ be a finite group. Let $ U_0, …, U_n $ be its simple representations, with $ U_0 $ denoting the trivial representation. A Clebsch-Gordan datum $ φ = (φ_{i,j}) $ of $ Γ $ is \emph{regular} if each component $ φ_{i,j} $ is an isomorphism of $ Γ $-representations and each component $ φ_{0, i} $ and $ φ_{i, 0} $ is the identity. The \emph{Clebsch-Gordan variety} $ \CG_Γ $ of $ Γ $ is defined as $ \CG_Γ = \overline{\GL φ^{(0)}} $ where $ φ^{(0)} $ is any choice of regular Clebsch-Gordan datum. Its open dense subset $ \CG_Γ^{\open} ⊂ \CG_Γ $ is defined as the orbit $ \GL φ^{(0)} $.
\end{definition}

\begin{remark}
For every finite group $ Γ $ there exists a regular Clebsch-Gordan datum $ φ^{(0)} $ and therefore a notion of Clebsch-Gordan variety $ \CG_Γ $. It seems likely but not a priori clear that the Clebsch-Gordan variety is also independent of the choice of $ φ^{(0)} $. In the remainder of the paper, we work out the Clebsch-Gordan variety in the specific case where $ Γ $ is the $ A_n $ or $ D_n $ Kleinian group. In these cases, we choose the components $ φ^{(0)}_{i, j} $ to be specific isomorphisms that are easy to handle.
\end{remark}

The maps $ φ_{i,j} $ contained in a Clebsch-Gordan datum $ φ $ are not by definition required to satisfy coherence relations. However, we shall explain now that they do in fact satisfy coherence relations. The precise shape of these relations depends on the choice of $ φ^{(0)} $.

\begin{lemma}
\label{th:construction-CGvariety-coherence}
Let $ U_i, U_j, U_k $ be a choice of simple $ Γ $-representations. Then there exists a unique linear map $ γ_{i, j, k} $ which renders the following diagram commutative for all $ φ ∈ \CG_Γ $:
\begin{center}
\begin{tikzcd}
U_i ¤ U_j ¤ U_k \arrow[dd, "φ_{i,j} ¤ \id"] \arrow[rrr, "\id ¤ φ_{j,k}"] &&& U_i ¤ \bigoplus_l U_l^{⊕ c_{jkl}} \arrow[d, "\bigoplus_l φ_{i, l}^{⊕ c_{jkl}}"] \\
&&& \bigoplus_l \bigoplus_m U_m^{⊕ c_{jkl} c_{ilm}} \\
\bigoplus_{l'} U_{l'}^{⊕ c_{ijl'}} ¤ U_k \arrow[rr, "\bigoplus_{l'} φ_{l', k}^{⊕ c_{ijl'}}"] && \bigoplus_{l'} \bigoplus_{m'} U_{m'}^{⊕ c_{ijl'} c_{l'km'}} \arrow[ur, dashed, "∃! ~ γ_{i, j, k}", "\sim"'] &
\end{tikzcd}
\end{center}
\end{lemma}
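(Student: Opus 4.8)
The plan is to pin down $\gamma_{i,j,k}$ on a single regular datum and then spread commutativity over all of $\CG_\Gamma$ by a density argument. Abbreviate $T := U_i \otimes U_j \otimes U_k$ and write the two composite arrows of the diagram as
\begin{align*}
P_\varphi &:= \Big(\bigoplus_l \varphi_{i,l}^{\oplus c_{jkl}}\Big)\circ(\id\otimes\varphi_{j,k})\colon T\to W_1, & W_1 &:= \bigoplus_{l,m} U_m^{\oplus c_{jkl}c_{ilm}},\\
Q_\varphi &:= \Big(\bigoplus_{l'} \varphi_{l',k}^{\oplus c_{ijl'}}\Big)\circ(\varphi_{i,j}\otimes\id)\colon T\to W_2, & W_2 &:= \bigoplus_{l',m'} U_{m'}^{\oplus c_{ijl'}c_{l'km'}},
\end{align*}
where we suppress the canonical distributivity isomorphisms $U\otimes\bigoplus_t V_t\cong\bigoplus_t(U\otimes V_t)$. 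Commutativity of the diagram for a given $\varphi$ is precisely the identity $\gamma_{i,j,k}\circ Q_\varphi = P_\varphi$. Collecting summands by isomorphism type, both $W_1$ and $W_2$ are isomorphic to $T$ as $\Gamma$-representations, the multiplicity of $U_m$ being $N_m := \sum_l c_{jkl}c_{ilm} = \sum_{l'} c_{ijl'}c_{l'km}$ in either presentation.

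First I would construct the map and settle uniqueness using the regular datum $\varphi^{(0)}$ with $\CG_\Gamma = \overline{\GL\varphi^{(0)}}$. Since each $\varphi^{(0)}_{i,j}$ is an isomorphism of $\Gamma$-representations, both $P_{\varphi^{(0)}}$ and $Q_{\varphi^{(0)}}$ are isomorphisms of $\Gamma$-representations, so the only candidate is $\gamma_{i,j,k} := P_{\varphi^{(0)}}\circ Q_{\varphi^{(0)}}^{-1}\colon W_2\isoto W_1$, which is automatically an isomorphism. This disposes of uniqueness and of the ``$\sim$'' decoration, provided the diagram commutes for every $\varphi\in\CG_\Gamma$.

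The heart of the matter is the behaviour of $P_\varphi$ and $Q_\varphi$ under the gauge action. Writing $\id_{U_k} = g_k\circ g_k^{-1}$ (resp.\ $\id_{U_i} = g_i\circ g_i^{-1}$) on the tensor leg left untouched by the relevant component of $\varphi$, and cancelling, a direct computation from the gauge-action formula $(g\varphi)_{a,b} = \big(\bigoplus_e g_e^{\oplus c_{abe}}\big)\circ\varphi_{a,b}\circ(g_a^{-1}\otimes g_b^{-1})$ yields
\begin{align*}
P_{g\varphi} &= \rho^{W_1}(g)\circ P_\varphi\circ(g_i^{-1}\otimes g_j^{-1}\otimes g_k^{-1}), & \rho^{W_1}(g) &:= \bigoplus_{l,m} g_m^{\oplus c_{jkl}c_{ilm}},\\
Q_{g\varphi} &= \rho^{W_2}(g)\circ Q_\varphi\circ(g_i^{-1}\otimes g_j^{-1}\otimes g_k^{-1}), & \rho^{W_2}(g) &:= \bigoplus_{l',m'} g_{m'}^{\oplus c_{ijl'}c_{l'km'}},
\end{align*}
where $\rho^{W_1}$ and $\rho^{W_2}$ are the ``change of basis'' actions induced on $W_1$ and $W_2$ and $g_0 := \id$. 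The step I expect to carry the real content is that $\gamma_{i,j,k}$ intertwines these two actions: $\gamma_{i,j,k}\circ\rho^{W_2}(g) = \rho^{W_1}(g)\circ\gamma_{i,j,k}$. This is where Schur's lemma enters. On the $U_m$-isotypic component (which on both sides is $\mathbb{C}^{N_m}\otimes U_m$), the $\Gamma$-equivariant map $\gamma_{i,j,k}$ has the form $\Xi_m\otimes\id_{U_m}$ for some $\Xi_m\in\mathrm{Mat}_{N_m}(\mathbb{C})$, while $\rho^{W_1}(g)$ and $\rho^{W_2}(g)$ restrict there to $\id_{\mathbb{C}^{N_m}}\otimes g_m$; acting on complementary tensor legs, the two commute.

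Granting the intertwining, the diagram commutes on the full orbit $\GL\varphi^{(0)}$, since for every $g\in\GL$
\begin{equation*}
\gamma_{i,j,k}\circ Q_{g\varphi^{(0)}} = \rho^{W_1}(g)\circ\gamma_{i,j,k}\circ Q_{\varphi^{(0)}}\circ(g_i^{-1}\otimes g_j^{-1}\otimes g_k^{-1}) = \rho^{W_1}(g)\circ P_{\varphi^{(0)}}\circ(g_i^{-1}\otimes g_j^{-1}\otimes g_k^{-1}) = P_{g\varphi^{(0)}}.
\end{equation*}
Finally, $P_\varphi$ and $Q_\varphi$ depend polynomially (indeed quadratically) on the entries of $\varphi$, so the locus $\{\varphi : \gamma_{i,j,k}\circ Q_\varphi = P_\varphi\}$ is Zariski closed; containing the orbit, it contains its closure $\CG_\Gamma$, which finishes the proof. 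The only genuine obstacle is the bookkeeping behind the two gauge-transformation identities — tracking which $g_\bullet$ sits on which tensor or direct-sum leg through the implicit distributivity isomorphisms — after which the Schur intertwining and the density argument are formal.
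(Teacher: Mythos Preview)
Your proof is correct and follows essentially the same approach as the paper: define $\gamma_{i,j,k}$ via the regular datum $\varphi^{(0)}$, use Schur's lemma to see that $\gamma_{i,j,k}$ has block form $\Xi_m\otimes\id_{U_m}$ on each isotypic component and hence commutes with the gauge action, then extend from the orbit to its closure by continuity/Zariski-closedness. The paper phrases the orbit step as ``$\gamma_{i,j,k}^{g\varphi^{(0)}}$ is the conjugate of $\gamma_{i,j,k}^{\varphi^{(0)}}$ by $g_m$ on each block, hence unchanged,'' which is exactly your intertwining identity $\gamma_{i,j,k}\circ\rho^{W_2}(g)=\rho^{W_1}(g)\circ\gamma_{i,j,k}$ read the other way.
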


\begin{proof}
We start by picking any element $ φ ∈ \GL φ^{(0)} $ and regarding the tensor product of vector spaces $ U_i ¤ U_j ¤ U_k $. There are two different ways of applying the bilinear maps contained in $ φ $ here: either first on the $ U_i ¤ U_j $ component or first on the $ U_j ¤ U_k $ component. After applying on either side, there is only one possible way to apply the bilinear maps to the remaining terms. These two ways are captured in the diagram. The left/bottom path of the diagram is the trilinear map arising from first applying $ φ $ on the left tensor component. The top/right path of this diagram is the trilinear map arising from first applying $ φ $ on the right tensor component. The core observation is that both paths are isomorphisms of vector spaces since this is true for $ φ = φ^{(0)} $ and therefore for any $ φ ∈ \GL φ^{(0)} $. Therefore there exists a unique isomorphism $ γ_{i, j, k}^{φ} $ which renders the diagram commutative.

Let us now explain that the isomorphism $ γ_{i, j, k}^{φ} $ does not depend on $ φ $. Indeed, by assumption the bilinear maps $ φ^{(0)}_{i, j} $ are isomorphisms of $ Γ $-representations. Therefore $ γ_{i, j, k}^{φ^{(0)}} $ is an isomorphism of $ Γ $-representations as well. In particular, it is a direct sum of components which only run between identical simple representations:
\begin{equation*}
γ_{i, j, k}^{φ^{(0)}} = \bigoplus_m γ_m, \quad γ_m = \pmat{γ_{m, 11} \Id_{U_m} & … & … \\ … & … & … \\ … & … & γ_{m, d_m d_m} \Id_{U_m}}.
\end{equation*}
Here $ d_m $ denotes the total number of appearances of $ U_m $ in the domain (equivalently codomain) of $ γ_{i, j, k}^{φ^{(0)}} $, and the entries $ γ_{m, st} $ are scalars. For an element $ φ = gφ^{(0)} $ with $ g ∈ \GL $, the map $ γ_{i, j, k}^{φ} $ is simply the conjugate of $ γ_{i, j, k}^{φ^{(0)}} $ in every individual matrix entry. Since the matrix entries $ γ_{m, st} \Id_{U_m} $ are just scalar multiples of the identity, we conclude that $ γ_{i, j, k}^{φ} = γ_{i, j, k}^{φ^{(0)}} $. In conclusion, the map $ γ_{i, j, k}^{φ} $ is independent of $ φ $ as long as $ φ ∈ \GL φ^{(0)} $, and we shall simply denote it by $ γ_{i, j, k} $. We obtain a single coherence relation which holds equally for all $ φ ∈ \GL φ^{(0)} $ and by passing to limits, we conclude that the coherence relation in fact holds for every $ φ ∈ \CG_Γ $. This finishes the proof.
\end{proof}

\begin{remark}
We shall memorize the coherence relation in the following shortcut notation: For $ u ∈ U_i $ and $ v ∈ U_j $ we have
\begin{equation*}
φ(- ¤ φ(u ¤ v)) = γ_{•, i, j} (φ(φ(- ¤ u) ¤ v)).
\end{equation*}
\end{remark}

\begin{remark}
Similar to the coherence relations, for every pair of simples $ U_i, U_j $ and $ φ ∈ \CG_Γ $ we have symmetry relations
\begin{equation*}
φ_{i, j} = γ_{i, j} ∘ φ_{j, i} ∘ σ.
\end{equation*}
Here $ σ: U_i ¤ U_j → U_j ¤ U_i $ denotes the flip, and the linear map $ γ_{i, j} $ is independent of $ φ $.
\end{remark}

\subsection{Resolution of finite quotient singularities}
\label{sec:strategy-finquot}
In this section, we explain how to use Clebsch-Gordan varieties for resolutions of finite quotient singularities. Let $ V $ be a representation of $ Γ $ and regard the quotient singularity $ V \sslash Γ $. Decomposing $ V $ into simple representations as $ V = U_0^{d_0} ⊕ … ⊕ U_n^{d_n} $, we obtain a natural action of the gauge group $ \GL $ on $ V $. The gauge group also acts on the Clebsch-Gordan variety $ \CG_Γ $ and we obtain an action on the product variety:
\begin{equation*}
\GL \curvearrowright X = \CG_Γ × V.
\end{equation*}
Together with different choices of stability parameters $ θ $ for $ \GL $, this GIT package serves as candidate to provide both smooth and stacky resolutions of $ V \sslash Γ $. In particular, if $ ℂ^2 \sslash Γ $ is a Kleinian singularity, we regard $ V = ℂ^2 $ as natural $ Γ $-representation and define $ X = \CG_Γ × ℂ^2 $.

\subsection{Resolution of Kleinian singularities}
\label{sec:strategy-kleinian}
In this section, we study the case of Kleinian singularities $ ℂ^2 \sslash Γ $. The main result reads as follows.

\begin{theorem}
\label{th:strategy-mainresult}
Let $ Γ $ be any Kleinian group, and let $ θ_1 $ and $ θ_2 $ be a certain choice of stability parameters. Then we have
\begin{align*}
[(\CG_Γ × ℂ^2)^{θ_1} / \GL] ≅ [ℂ^2 / Γ] \quad \text{and} \quad [(\CG_Γ × ℂ^2)^{θ_2} / \GL] &≅ \widetilde{ℂ^2 \sslash Γ}.
\end{align*}
\end{theorem}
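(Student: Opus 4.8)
The plan is to split the statement into its two halves — the stacky resolution for the parameter $θ_1$ and the smooth resolution for the parameter $θ_2$ — and to handle each by identifying the semistable locus explicitly. The first half rests on showing $(\CG_Γ × ℂ^2)^{θ_1} = \CG_Γ^{\open} × ℂ^2$ together with the fact that $\GL$ acts on $\CG_Γ^{\open}$ with stabilizer $Γ$. The second half rests on constructing a comparison map $R: \CG_Γ × ℂ^2 → \Rep(Π_Q, α)$ to the Kleinian quiver representation space, showing that $R$ is $\GL$-equivariant (with respect to the natural projection $\GL × ℂ^* → \GL_α$ whose kernel is the diagonal $ℂ^*$), and identifying the $θ_2$-semistable loci on both sides under $R$. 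These reductions turn the theorem into a finite list of verifications which, following the structure announced in the introduction, are carried out type-by-type for $A_n$ and $D_n$ in the later sections; I will state them here as the six stages referenced in \autoref{stageplan} and indicate how each is discharged.

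For the stacky half, the first stage is to pin down $θ_1$: one takes the character of $\GL$ given by a negative power of the determinant on each factor (normalized so that a natural semiinvariant exists), and exhibits an explicit $θ_1^k$-semiinvariant $f$ on $\CG_Γ × ℂ^2$ which is nonzero exactly on $\CG_Γ^{\open} × ℂ^2$; concretely $f$ will be a product of the relevant "leading" minors of the components $φ_{i,j}$, generalizing the function $α_1^2α_2^2α_3^2β^2\det(B)$ of Abdelgadir–Segal. This gives the easy inclusion $\CG_Γ^{\open} × ℂ^2 ⊆ (\CG_Γ × ℂ^2)^{θ_1}$. For the reverse inclusion one invokes the Hilbert–Mumford criterion: a point of $\CG_Γ × ℂ^2$ lying in the boundary $(\CG_Γ ∖ \CG_Γ^{\open}) × ℂ^2$ must be destabilized, i.e. some one-parameter subgroup pairing positively with $θ_1$ must bring it to a limit. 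Here the coherence and symmetry relations of \autoref{th:construction-CGvariety-coherence} are the key tool: they constrain the boundary strata enough that every boundary point admits such a destabilizing cocharacter. Since $\GL$ acts freely transitively on $\CG_Γ^{\open}$ modulo the stabilizer $Γ$, we get $[(\CG_Γ × ℂ^2)^{θ_1}/\GL] ≅ [(\GLφ^{(0)} × ℂ^2)/\GL] ≅ [ℂ^2/Γ]$.

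For the smooth half, the stages are: (i) construct $R$ on the open part $\CG_Γ^{\open} × ℂ^2$ — there the $φ_{i,j}$ are isomorphisms, so each arrow of the doubled Kleinian quiver can be built by contracting $x ∈ ℂ^2 ≅ V$ against the appropriate component of $φ$ and its inverse, exactly as in \autoref{fig:abdelgadir-quiverrep}; (ii) check that the image satisfies the preprojective relations, which follows formally from the coherence relations; (iii) extend $R$ over the boundary $\CG_Γ$, which is where one must argue that the naively rational formulas in fact extend regularly to the closure (clearing denominators using the semiinvariants from the first half); (iv) choose $θ_2$ to be the quiver stability parameter which is negative at the special vertex and positive elsewhere, pulled back along $\GL × ℂ^* → \GL_α$, and show $R$ sends $(\CG_Γ × ℂ^2)^{θ_2}$ into $\Rep(Π_Q,α)^{θ_2}$; (v) show this induced map $[(\CG_Γ × ℂ^2)^{θ_2}/\GL] → [\Rep(Π_Q,α)^{θ_2}/\GL_α] ≅ \widetilde{ℂ^2\sslash Γ}$ is an isomorphism, by checking it is bijective on points (the nullcone fibres, described via socle types following Crawley-Boevey, match up) and étale; and (vi) assemble the pieces. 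The main obstacle is stage (iii)–(v) in the $D_n$ case: one must verify by hand that the symmetry and coherence relations force $R$ to be a regular isomorphism onto the semistable quiver locus, and this requires the explicit bases and the classification of semiinvariants developed in \autoref{sec:caseD}. That computation — rather than any conceptual difficulty — is the hard part, which is precisely why the proof is completed there and in \autoref{sec:caseA} type-by-type.
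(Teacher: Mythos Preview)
Your high-level split into the $θ_1$ and $θ_2$ halves matches the paper, but two of your proposed stages contain genuine misconceptions that would derail the argument.

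First, your construction of $R$ is wrong. You describe building $R$ on $\CG_Γ^{\open} × ℂ^2$ by ``contracting $x$ against the appropriate component of $φ$ and its inverse'' and then extending over the boundary by clearing denominators. The paper's $R$ uses no inverses at all: it is simply $R(φ,x) = φ(-\otimes x)$, i.e.\ on each arrow $U_i \to U_j$ of the doubled quiver one plugs $x$ into the second slot of the relevant component $φ_{i,\bullet}$. This is manifestly polynomial in the entries of $φ$ and $x$, hence regular on all of $\CG_Γ × ℂ^2$ from the outset. The preprojective relations hold because they hold for $φ^{(0)}$ (a direct check) and are preserved under gauging and closure. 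Your stage (iii) is therefore a phantom difficulty; the Abdelgadir--Segal formulas involving $α_i(B_i^x)^\vee$ are an artifact of their truncated variety $Z$, not of $\CG_Γ$.

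Second, you omit the paper's Stage 3 entirely: the isomorphism $ℂ[\Rep(Π_Q,α)]^{\GL} \xrightarrow{R^*} ℂ[\CG_Γ × ℂ^2]^{\GL} \xrightarrow{π} ℂ[X,Y]^{\Stab_{\GL}(φ^{(0)})}$, where $π$ is restriction to $\{φ^{(0)}\} × ℂ^2$. This is what drives both halves. For $θ_1$, the paper does \emph{not} use Hilbert--Mumford on the boundary as you propose; instead it shows (\autoref{th:strategy-proof-theta1SI}) that every $k|Γ|θ_1$-semiinvariant on $\CG_Γ × ℂ^2$ is $f_0^k$ times a $\GL$-invariant, by comparing restrictions under $π$ and invoking Stage~3. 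Hence $θ_1$-semistability forces $f_0(φ)\neq 0$, which (checked type-by-type) forces $φ \in \GLφ^{(0)}$. For $θ_2$, Stage~3 is what makes surjectivity of $R$ work away from the nullcone: given a $θ_2$-semistable $ρ$ with nonzero invariants, one lifts its image in $\Rep(Π_Q,α)\sslash\GL$ through the isomorphism of affine quotients induced by $R^*$. Without Stage~3 your surjectivity argument has no engine outside the special fibre.

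Finally, a minor point: the paper does use Hilbert--Mumford, but in the opposite direction from what you suggest --- it is deployed for $θ_2$, not $θ_1$, to show that if $R(φ,x)$ is \emph{not} $θ_2$-semistable then neither is $(φ,x)$, via explicit destabilizing one-parameter subgroups built from the vanishing pattern of $φ$.
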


Even though our variety $ \CG_Γ $ is different from the variety $ Z $ of Abdelgadir and Segal, we can adapt the six-stage framework of proof. For instance, we succeed in defining a comparison map $ R: \CG_Γ × ℂ^2 → \Rep(Π_Q, α) $ for all Kleinian singularities. Another important tool in our construction is the projection map $ π: ℂ[\CG_Γ × ℂ^2] → ℂ[X, Y] $ along the inclusion $ \{φ^{(0)}\} × ℂ^2 ⊂ \CG_Γ × ℂ^2 $.

\begin{stageplan}
\label{stageplan}
The proof will proceed in the following six stages:
\begin{enumerate}
\item Construct the varieties $ \CG_Γ^{\open} $, $ \CG_Γ $ and the parameters $ θ_1 $, $ θ_2 $.
\item Construct the $ \GL $-equivariant map $ R: \CG_Γ × ℂ^2 → \Rep(Π_Q, α) $.
\item Verify $ ℂ[\Rep(Π_Q, α)]^{\GL} \overunderset{R^*}{\sim}{→} ℂ[\CG_Γ × ℂ^2]^{\GL} \overunderset{π}{\sim}{→} ℂ[X, Y]^{\Stab_{\GL} (φ^{(0)})} $.
\item Verify $ (\Stab_{\GL} (φ^{(0)}) \curvearrowright ℂ^2) ≅ (Γ \curvearrowright ℂ^2) $.
\item Verify $ \CG_Γ^{θ_1} = \CG_Γ^{\open} $ and $ (\CG_Γ × ℂ^2)^{θ_1} = \CG_Γ^{\open} × ℂ^2 $.
\item Verify $ R: (\CG_Γ × ℂ^2)^{θ_2} \isoto \Rep(Π_Q, α)^{θ_2} $.
\end{enumerate}
\end{stageplan}
The first, second, and part of the third, fourth, fifth and sixth stage can be proved for all ADE types simultaneously. The remaining parts are detailed separately for the $ A_n $ and $ D_n $ case in \autoref{sec:caseA} and \ref{sec:caseD}. Even though these remaining parts are only checked in the $ A_n $ and $ D_n $ cases, we claim the main result for all Kleinian singularities.

\subsection{Proof in the Kleinian case}
\label{sec:strategy-proof}
In this section, we prove the six stages of \autoref{stageplan}. Several technical parts are not proven here, but are deferred to appendices where they are checked in the $ A_n $ and $ D_n $ case. The reader finds an enumeration of these technical statements at the beginning of \autoref{sec:caseA}. Rather than providing intuitive insight, the aim of this section is to provide general proofs that work for all Kleinian groups. We highly recommend that the interested reader consult \autoref{sec:caseA} and \ref{sec:caseD} where all materials are illustrated in a more explicit fashion.

\paragraph*{Stage 1: Construction of the Clebsch-Gordan variety}
The Clebsch-Gordan variety $ \CG_Γ $ is defined for any finite group and therefore also for the Kleinian group $ Γ $. We choose the stability parameters $ θ_1 $ and $ θ_2 $ as follows:
\begin{equation*}
θ_1 = (-\dim U_i)_{i = 0, …, n}, \quad θ_2 = (+1, …, +1).
\end{equation*}

\paragraph*{Stage 2: Construction of the map $ R $}
It is our task to construct the $ \GL $-equivariant map $ R: \CG_Γ × ℂ^2 → \Rep(Π_Q, α) $. Here $ (Q, α) $ is the Kleinian quiver setting and $ \Rep(Π_Q, α) $ denotes the affine variety whose points are the representations of $ (Q, α) $ which satisfy the preprojective relations. Close inspection of the construction of the map $ R $ defined by Abdelgadir and Segal provides us with the cue that $ R(φ, x) $ should be defined by inserting the element $ x ∈ ℂ^2 $ into one slot of the Clebsch-Gordan datum $ φ $. Recall that the number of arrows from $ i $ to $ j $ in the Kleinian double quiver is equal to the dimension $ \dim_{Γ} (U_i ¤ ℂ^2, U_j) $. Therefore for every $ i = 0, …, n $, combining the Clebsch-Gordan datum $ φ $ and the element $ x $ we obtain a map $ φ_{i,•} (- ¤ x): U_i → \bigoplus_k U_k^{|a: i → j|} $. This procedure works for all Kleinian groups. It is implemented in detail in \autoref{sec:caseA-2} and \ref{sec:caseD-2}. We shall collect the definition as follows:

\begin{definition}
The map $ R: \CG_Γ × ℂ^2 → \Rep(Π_Q, α) $ is defined as
\begin{equation*}
R(φ, x) = φ(- ¤ x) ∈ \Rep(Π_Q, α).
\end{equation*}
\end{definition}

\begin{proposition}
The map $ R $ is well-defined and $ \GL $-equivariant.
\end{proposition}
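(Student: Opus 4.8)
The plan is to verify the two claims separately: first that $R(\varphi, x)$ genuinely lands in $\Rep(\Pi_Q, \alpha)$, i.e.\ that the tuple of linear maps $\varphi(- \htensor x)$ assembles into a representation of the double quiver $\Qbar$ of the correct dimension vector and satisfies the preprojective relations; and second that the assignment $(\varphi, x) \mapsto R(\varphi, x)$ is $\GL$-equivariant.

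For well-definedness, I would proceed as follows. First, recall from the preliminaries that the number of arrows $i \to j$ in the Kleinian double quiver $\Qbar$ equals $\dim \Hom_\Gamma(U_i \htensor \mathbb{C}^2, U_j)$, and since $\mathbb{C}^2 \cong \mathbb{C}^2{}^*$ as a $\Gamma$-representation (as $\Gamma \subset \SL_2(\mathbb{C})$), this count is symmetric in $i$ and $j$, matching the double-quiver structure. Given $\varphi \in \CG_\Gamma$ and $x \in \mathbb{C}^2$, the map $\varphi_{i,\bullet}(- \htensor x)\colon U_i \to \bigoplus_k U_k^{\oplus c_{ik}}$ decomposes according to the multiplicities $c_{ik}$ into exactly the block structure prescribed by the arrows of $\Qbar$, so the assignment does produce a representation of dimension $\alpha = (\dim U_i)_i$. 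The nontrivial point is the preprojective relation $\sum_{t(a)=v} aa^* - a^*a = 0$ at each vertex $v$. My expectation is that this relation is precisely a consequence of the coherence relation of \autoref{th:construction-CGvariety-coherence} (together with the symmetry relation from the subsequent remark): inserting $x$ into two of the three slots of $U_i \htensor \mathbb{C}^2 \htensor \mathbb{C}^2$ and using that $x \htensor x$ lies in $\Sym^2 \mathbb{C}^2$ while the preprojective relation at $v$ is governed by the invariant $\wedge^2 \mathbb{C}^2 \cong \mathbb{C}$, the commutativity of the coherence diagram degenerates to exactly the cancellation $\sum aa^* - a^*a = 0$ once one tracks the scalars $\gamma_{\bullet, i, j}$ through the identification. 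Since this holds for $\varphi = \varphi^{(0)}$ and the coherence and symmetry relations hold on all of $\CG_\Gamma$ by passing to limits, it holds for every $\varphi \in \CG_\Gamma$. A careful bookkeeping of this identification is deferred to \autoref{sec:caseA-2} and \ref{sec:caseD-2}, where the block decompositions are written out explicitly.

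Equivariance is the more routine part. An element $g = (g_i) \in \GL$ acts on $\varphi$ by $(g\varphi)_{ij} = \big(\bigoplus_k g_k^{\oplus c_{ijk}}\big) \circ \varphi_{i,j} \circ (g_i^{-1} \htensor g_j^{-1})$, while on $\mathbb{C}^2 = V \cong \bigoplus_i U_i^{d_i}$ it acts through the corresponding block; on the quiver side $\GL_\alpha$ acts by $(g_{h(a)} \rho_a g_{t(a)}^{-1})_a$. Plugging $g\varphi$ and $gx$ into the formula for $R$ and using that the $\htensor x$-slot is acted on by the same $g$ that rescales the relevant $U$-factor, the inner $g_i^{-1}$ on the domain and the outer $g_k$ on the codomain reproduce exactly the $\GL_\alpha$-action on $\Rep(\Pi_Q, \alpha)$; the factors acting on the $x$-slot cancel against the action on $x$. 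I would write this as a short direct computation, one line per arrow-block.

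The main obstacle will be the preprojective relation: identifying the correct normalization so that the coherence scalars $\gamma_{\bullet,i,j}$ conspire to give the \emph{signed} sum $\sum aa^* - a^*a$ rather than an unsigned one. This is where the choice of the reference datum $\varphi^{(0)}$ and the explicit identifications $\wedge^2 \mathbb{C}^2 \cong \mathbb{C}$ enter, and it is genuinely case-dependent in its explicit form, which is why the verification is completed in the $A_n$ and $D_n$ appendices; here I would only reduce the statement to that explicit check.
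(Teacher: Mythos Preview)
Your proposal is correct and, for the equivariance part, essentially identical to the paper's one-line computation $(R(g\varphi,gx))(u)=(g\varphi)(u\htensor gx)=g(\varphi(g^{-1}u\htensor x))=(gR(\varphi,x))(u)$.

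For well-definedness there is a genuine difference in emphasis. You try to explain \emph{why} the preprojective relations should hold, by interpreting them as the $\wedge^2\mathbb{C}^2$-component of the coherence diagram with $x$ inserted in two slots, and you extend from $\varphi^{(0)}$ to all of $\CG_\Gamma$ by invoking that the coherence scalars $\gamma_{\bullet,i,j}$ are constant on $\CG_\Gamma$. The paper takes a more pedestrian route: it simply verifies by hand that $R(\varphi^{(0)},x)$ satisfies the preprojective relations (this is the content of Lemmas~\ref{th:caseA-2-preproj} and~\ref{th:caseD-2-preproj}), then uses the just-proven $\GL$-equivariance to extend to the orbit $\GL\varphi^{(0)}$, and finally a limit argument to extend to the closure $\CG_\Gamma$. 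Your coherence-based extension and the paper's equivariance-plus-limit extension are both valid; the paper's is arguably tighter since equivariance is available in the same proposition anyway, while yours gives more structural insight but, as you yourself flag, still requires the case-by-case bookkeeping of signs to turn the coherence identity into the signed sum $\sum aa^*-a^*a$. In the end both approaches defer the actual computation to the appendices, so the practical content is the same.
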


\begin{proof}
Well-definedness comes down to checking that the representation $ R(φ, x) $ satisfies the preprojective relations. We verify this in \autoref{th:caseA-2-preproj} and \ref{th:caseD-2-preproj}. It is however easy to see that the map $ R $ is $ \GL $-equivariant: Let $ U_i $ be an irreducible representation and $ u ∈ U_i $. Then we have
\begin{equation*}
(R(gφ, gx))(u) = (gφ)(u ¤ gx) = g(φ(g^{-1} u ¤ x)) = g(R(φ, x)(g^{-1} u)) = (gR(φ, x))(u).
\end{equation*}
This finishes the proof.
\end{proof}

\paragraph*{Stage 3: Verification that $ R^* $ is an isomorphism of invariants}
It is our task to examine the pullback map $ R^*: ℂ[\Rep(Π_Q, α)] → ℂ[\CG_Γ × ℂ^2] $. We denote by $ \Stab_{\GL} (φ^{(0)}) ⊂ \GL $ the stabilizer group of $ φ^{(0)} $ under the $ \GL $-action. Recall that the group $ \GL $ acts on $ ℂ^2 $. Therefore also the stabilizer group acts on $ ℂ^2 $ and we have a notion of invariant ring $ ℂ[X, Y]^{\Stab_{\GL} (φ^{(0)})} $. A substantial trick in our investigations is the projection map $ π: ℂ[\CG_Γ × ℂ^2] → ℂ[X, Y] $ along the inclusion $ \{φ^{(0)}\} × ℂ^2 ⊂ \CG_Γ × ℂ^2 $. The notation $ π(f) $ applies regardless of whether $ f $ is an invariant function, a semiinvariant function, or otherwise. We are now ready to prove stage 3.

\begin{proposition}
\label{th:strategy-Rstar}
The map $ R^* $ provides an isomorphism between the $ ℂ[\Rep(Π_Q, α)]^{\GL} $ and $ ℂ[\CG_Γ × ℂ^2]^{\GL} $. The map $ π $ provides an isomorphism between $ ℂ[\CG_Γ × ℂ^2]^{\GL} $ and $ ℂ[X, Y]^{\Stab_{\GL} (φ^{(0)})} $. We thus have the commutative diagram
\begin{center}
\begin{tikzcd}
ℂ[\Rep(Π_Q, α)]^{\GL} \arrow[rd, "π ∘ R^*"', "\sim"] \arrow[rr, "R^*", "\sim"'] && ℂ[\CG_Γ × ℂ^2]^{\GL} \arrow[ld, "π", "\sim"'] \\
& ℂ[X, Y]^{\Stab_{\GL} (φ^{(0)})}. &
\end{tikzcd}
\end{center}
\end{proposition}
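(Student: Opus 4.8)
The plan is to bootstrap everything from two facts: that the orbit $\GL φ^{(0)}$ is dense in $\CG_Γ$, and that the composite $π \circ R^*$ can be computed explicitly and turns out to be the canonical inclusion $ℂ[X,Y]^Γ \hookrightarrow ℂ[X,Y]^{\Stab_{\GL}(φ^{(0)})}$.

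First I would record the easy structural facts. Since $\CG_Γ = \overline{\GL φ^{(0)}}$, the orbit $\GL φ^{(0)} × ℂ^2$ is dense in $\CG_Γ × ℂ^2$, so any $F ∈ ℂ[\CG_Γ × ℂ^2]^{\GL}$ is determined by its restriction there; and $F(g φ^{(0)}, gx) = F(φ^{(0)}, x) = π(F)(x)$, so $F$ is determined by $π(F)$. Hence $π$ is injective on $\GL$-invariants, and its image lies in $ℂ[X,Y]^{\Stab_{\GL}(φ^{(0)})}$ because $\{φ^{(0)}\} × ℂ^2$ is stable under the stabilizer. Also, $R^*$ sends $\GL$-invariants to $\GL$-invariants by the $\GL$-equivariance of $R$ already established. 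Finally, on the quiver side the diagonal $ℂ^* ⊂ \GL_α$ acts trivially and, since the special vertex carries the one-dimensional $U_0$, the subgroup $\GL ⊂ \GL_α$ (identity at the special vertex) together with the diagonal $ℂ^*$ generates $\GL_α$; therefore $ℂ[\Rep(Π_Q,α)]^{\GL} = ℂ[\Rep(Π_Q,α)]^{\GL_α} = ℂ[\Rep(Π_Q,α) \sslash \GL_α] = ℂ[ℂ^2 \sslash Γ] = ℂ[X,Y]^Γ$.

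Next I would compute $π \circ R^*$. By definition $(π R^* f)(x) = f(R(φ^{(0)}, x)) = f(φ^{(0)}(- ¤ x))$, which is manifestly a regular function on $ℂ^2$, and it is $\Stab_{\GL}(φ^{(0)})$-invariant since $x ↦ R(φ^{(0)}, x)$ is $\Stab_{\GL}(φ^{(0)})$-equivariant while $f$ is $\GL$-invariant. The key claim is that the morphism $ρ_0 \colon ℂ^2 → \Rep(Π_Q,α)$, $x ↦ φ^{(0)}(- ¤ x)$, composed with the affine quotient $\Rep(Π_Q,α) → \Rep(Π_Q,α) \sslash \GL_α = ℂ^2 \sslash Γ$, is the canonical quotient morphism $ℂ^2 → ℂ^2 \sslash Γ$ (up to an automorphism of $ℂ^2$ normalising $Γ$, which is harmless). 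Conceptually this is the McKay dictionary: via the Crawley–Boevey–Holland Morita equivalence $e(ℂ[X,Y] \rtimes Γ)e ≅ Π_Q$, the representation $ρ_0(x)$ corresponds to the skew-group-ring module supported on the $Γ$-orbit of $x$, whose image in $\operatorname{Spec} ℂ[X,Y]^Γ$ is the class $[x]$; a bare-hands verification of this, together with the explicit form of $R$, is carried out type-by-type in \autoref{sec:caseA-2} and \ref{sec:caseD-2}. Granting it, $π \circ R^*$ becomes, under the identification $ℂ[\Rep(Π_Q,α)]^{\GL} = ℂ[X,Y]^Γ$ above, precisely the inclusion $ℂ[X,Y]^Γ \hookrightarrow ℂ[X,Y]^{\Stab_{\GL}(φ^{(0)})}$. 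Invoking Stage 4, which identifies the $\Stab_{\GL}(φ^{(0)})$-action on $ℂ^2$ with the $Γ$-action, we get $ℂ[X,Y]^{\Stab_{\GL}(φ^{(0)})} = ℂ[X,Y]^Γ$, so this inclusion is an equality and $π \circ R^*$ is an isomorphism.

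Finally I would assemble the statement. Since $π \circ R^*$ is injective, so is $R^*$ on invariants. Given $F ∈ ℂ[\CG_Γ × ℂ^2]^{\GL}$, its restriction $π(F)$ lies in $ℂ[X,Y]^{\Stab_{\GL}(φ^{(0)})}$, so we may pick $f = (π R^*)^{-1}(π F) ∈ ℂ[\Rep(Π_Q,α)]^{\GL}$; then $R^* f - F$ is a $\GL$-invariant killed by $π$, hence zero, so $R^*$ is surjective on invariants and therefore an isomorphism. Then $π = (π \circ R^*) \circ (R^*)^{-1}$ on $ℂ[\CG_Γ × ℂ^2]^{\GL}$ is a composite of isomorphisms, hence an isomorphism, and the triangle commutes by the very definition of the three arrows. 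I expect the only real difficulty to be the geometric claim in the third paragraph — that $R(φ^{(0)}, -)$ realises the quotient map $ℂ^2 → ℂ^2 \sslash Γ$ — since it is the one place where the specific construction of $R$ and of the isomorphism $\Rep(Π_Q,α) \sslash \GL_α ≅ ℂ^2 \sslash Γ$ enters; everything else is formal, modulo the routine bookkeeping of the missing $\GL(U_0) = ℂ^*$ factor.
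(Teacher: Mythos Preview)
Your proposal is correct and follows essentially the same architecture as the paper's proof: establish that $R^*$ and $\pi$ land in the right invariant rings, prove $\pi$ is injective on invariants via density of the orbit, verify that the composite $\pi\circ R^*$ is an isomorphism, and then conclude that both factors are isomorphisms. The only differences are cosmetic: you add a conceptual gloss via the McKay/Morita dictionary and the $\GL$ versus $\GL_\alpha$ bookkeeping, whereas the paper simply matches explicit generators; and the bare-hands verification you cite lives in \autoref{sec:caseA-3} and \ref{sec:caseD-3} (Stage~3), not in \autoref{sec:caseA-2} and \ref{sec:caseD-2}.
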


\begin{proof}
We divide the proof into several steps. The first step is to prove that $ R^* $ sends $ \GL $-invariants to $ \GL $-invariants. The second step is to prove that $ π $ sends $ \GL $-invariants to $ \Stab_{\GL} (φ^{(0)}) $-invariants. The third part is to prove that $ π $ is injective on $ \GL $-invariants. The fourth step is to prove that $ R^* ∘ π: ℂ[\Rep(Π_Q, α)]^{\GL} → ℂ[X, Y]^{\Stab_{\GL} (φ^{(0)})} $ is an isomorphism. Finally, we draw the conclusion that both factors in this composition are isomorphisms.

For the first step, we observe that $ R^* $ sends $ \GL $-invariant functions to $ \GL $-invariant functions since $ R $ itself is $ \GL $-equivariant. For the second step, we check that if $ f ∈ ℂ[\CG_Γ × ℂ^2] $ is $ \GL $-invariant, then $ π(f) $ is $ \Stab_{\GL} (φ^{(0)}) $-invariant. Indeed, pick $ x ∈ ℂ^2 $ and $ σ ∈ \Stab_{\GL} (φ^{(0)}) $. Then we have
\begin{equation*}
π(f)(σx) = f(φ^{(0)}, σx) = f(σ φ^{(0)}, σx) = f(φ^{(0)}, x) = π(f)(x).
\end{equation*}
For the third step, we observe that a $ \GL $-invariant function is already determined by its value on $ \{φ^{(0)}\} × ℂ^2 $. Indeed, let $ f, f' $ be two $ \GL $-invariant functions on $ \CG_Γ × ℂ^2 $ with $ π(f) = π(f') $, then
\begin{equation*}
f(gφ^{(0)}, x) = π(f)(g^{-1}x) = π(f')(g^{-1}x) = f'(gφ^{(0)}, x).
\end{equation*}
For the fourth step, we invoke the checks done in the $ A_n $ and $ D_n $ cases in \autoref{th:caseA-Rstarinv} and \ref{th:caseD-Rstarinv}. Finally, we conclude that $ π: ℂ[\CG_Γ × ℂ^2]^{\GL} → ℂ[X, Y]^{\Stab_{\GL} (φ^{(0)})} $ is both injective and surjective, thus the same holds for $ R^* $. This finishes the proof.
\end{proof}

\paragraph*{Stage 4: Identification of the stabilizer of $ φ^{(0)} $}
It is our task to prove that the stabilizer of $ φ^{(0)} $ under the $ \GL $-action is isomorphic to $ Γ $ and that $ ℂ^2 $ with the action of the stabilizer is isomorphic to $ ℂ^2 $ with the natural action of $ Γ ⊂ \SL_2 (ℂ) $. Tannaka theory predicts that $ \Stab_{\GL} (φ^{(0)}) ≅ Γ $ under the assumption that we have correctly turned Tannakian functors into algebraic data. We explain this stage in more detail in \autoref{sec:caseA-4} and \ref{sec:caseD-4}.

\paragraph*{Stage 5: Identification of the $ θ_1 $-semistable locus}
It is our task to identify the $ θ_1 $-semistable locus on $ \CG_Γ × ℂ^2 $. In fact, the reason behind our specific choice of $ θ_1 $ is that there is a very useful $ |Γ|θ_1 $-semiinvariant on $ \CG_Γ $, given essentially by multiplying up the determinants of all $ φ $ entries:

\begin{lemma}
\label{th:strategy-proof-theta1SICG}
The following function $ f_0: \CG_Γ → ℂ $ is a $ |Γ| θ_1 $-semiinvariant. We have $ f_0 (φ^{(0)}) ≠ 0 $. Moreover for $ k ≥ 1 $, any $ k|Γ| θ_1 $-semiinvariant is a scalar multiple of $ f_0^k $.
\begin{equation*}
f_0 (φ) = \prod_{i, j = 0}^n \det(φ_{i, j})^{\vdim U_i \vdim U_j}.
\end{equation*}
\end{lemma}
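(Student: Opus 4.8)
The plan is to reduce the semiinvariance of $f_0$ to a single numerical identity on the Clebsch-Gordan coefficients, to prove that identity via the regular representation, and finally to obtain the uniqueness statement from the fact that $\GL$ acts transitively on $\CG_Γ^{\open}$. For the reduction I would fix $g = (g_1, …, g_n) ∈ \GL$, set $g_0 = \id$, and apply multiplicativity of the determinant to the gauge action $(gφ)_{i,j} = \big(\bigoplus_k g_k^{⊕ c_{ijk}}\big) ∘ φ_{i,j} ∘ (g_i^{-1} ¤ g_j^{-1})$. Since $\det\big(\bigoplus_k g_k^{⊕ c_{ijk}}\big) = \prod_k \det(g_k)^{c_{ijk}}$ and $\det(g_i^{-1} ¤ g_j^{-1}) = \det(g_i)^{-\dim U_j}\det(g_j)^{-\dim U_i}$, one gets $\det((gφ)_{i,j}) = \big(\prod_k \det(g_k)^{c_{ijk}}\big)\det(g_i)^{-\dim U_j}\det(g_j)^{-\dim U_i}\det(φ_{i,j})$, an identity of polynomials in the matrix entries (so no invertibility of $φ_{i,j}$ is needed). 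Raising to the power $\dim U_i \dim U_j$ and taking the product over all $i, j ∈ \{0, …, n\}$, the $\det(φ_{i,j})$-factors reassemble into $f_0(φ)$, while the exponent of $\det(g_m)$ in the remaining scalar collects to $\sum_{i,j} c_{ijm}\dim U_i \dim U_j - 2\dim U_m \sum_j (\dim U_j)^2$. Using $\sum_j (\dim U_j)^2 = |Γ|$, this exponent equals $-|Γ|\dim U_m$, which is precisely the exponent of $\det(g_m)$ in $θ_1(g)^{|Γ|}$, if and only if $\sum_{i,j} c_{ijm}\dim U_i \dim U_j = |Γ|\dim U_m$ for every $m$.

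For the second step I would prove this identity by recognizing the inner sum $\sum_i c_{ijm}\dim U_i$ as the multiplicity of $U_m$ inside $\big(\bigoplus_i U_i^{⊕ \dim U_i}\big) ¤ U_j ≅ ℂ[Γ] ¤ U_j$. Since tensoring the regular representation with any representation $W$ yields $ℂ[Γ]^{⊕ \dim W}$, this multiplicity is $\dim U_j \cdot \dim U_m$; summing over $j$ and invoking $\sum_j (\dim U_j)^2 = |Γ|$ once more gives $\sum_{i,j} c_{ijm}\dim U_i \dim U_j = \dim U_m \sum_j (\dim U_j)^2 = |Γ|\dim U_m$, as required. This establishes $f_0(gφ) = θ_1(g)^{|Γ|} f_0(φ)$ on the ambient affine space of tuples of linear maps, hence in particular on $\CG_Γ$, so $f_0$ is a $|Γ|θ_1$-semiinvariant. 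The nonvanishing $f_0(φ^{(0)}) ≠ 0$ is immediate: a regular Clebsch-Gordan datum has each component $φ^{(0)}_{i,j}$ an isomorphism, so every factor $\det(φ^{(0)}_{i,j})$ is nonzero.

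For the uniqueness step I would first record that $f_0$ is nowhere zero on the orbit $\CG_Γ^{\open} = \GL φ^{(0)}$, since $f_0(gφ^{(0)}) = θ_1(g)^{|Γ|} f_0(φ^{(0)})$ and $θ_1(g) ≠ 0$ for $g ∈ \GL$. Now let $f$ be a $k|Γ|θ_1$-semiinvariant on $\CG_Γ$ with $k ≥ 1$. As $f_0^k$ is also a $k|Γ|θ_1$-semiinvariant, the ratio $f/f_0^k$ is a well-defined regular function on $\CG_Γ^{\open}$ which is $\GL$-invariant; because $\GL$ acts transitively there, it is a constant $c = f(φ^{(0)})/f_0(φ^{(0)})^k$. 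Hence $f - c f_0^k$ is a regular function on $\CG_Γ$ vanishing on the dense subset $\CG_Γ^{\open}$; since $\CG_Γ$ is the closure of a single orbit and thus a reduced irreducible variety, $f - c f_0^k = 0$, i.e.\ $f = c f_0^k$. I expect the only genuine content to be the index bookkeeping in the first step together with the representation-theoretic identity $\sum_{i,j} c_{ijm}\dim U_i \dim U_j = |Γ|\dim U_m$; the uniqueness becomes essentially formal once one observes that $f_0$ trivializes the character $|Γ|θ_1$ over the open orbit.
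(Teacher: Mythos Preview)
Your proof is correct and follows essentially the same approach as the paper's. Both arguments reduce the weight computation to the identity $\mathbb{C}\Gamma \otimes U_j \cong \mathbb{C}\Gamma^{\oplus \dim U_j}$ (you via the multiplicity identity $\sum_i c_{ijm}\dim U_i = \dim U_j\dim U_m$, the paper by packaging $f_0(\varphi)$ as the determinant of a single map $\mathbb{C}\Gamma \otimes \mathbb{C}\Gamma \to \mathbb{C}\Gamma^{\oplus|\Gamma|}$), and both obtain uniqueness from transitivity on the open orbit together with density; the only difference is that the paper does the bookkeeping in one stroke whereas you expand $\det((g\varphi)_{i,j})$ component-by-component.
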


\begin{proof}
We shall prove all three parts of the claim after each other. We start with the first part. Its proof comes down to counting the number of appearances of $ \det(g_i) $ for $ i = 0, …, n $ in the expanded version of the term $ f_0 (gφ) $. We shall present a simple trick to count this number. We start by regarding the regular representation $ ℂΓ $. Its character has the property that $ χ_{ℂΓ} (1) = |Γ| $ and $ χ_{ℂΓ} (g) = 0 $ for $ g ≠ 1 $, and it decomposes as $ ℂΓ ≅ \bigoplus_i U_i^{\vdim U_i} $. The tensor product's character $ χ_{ℂΓ ¤ ℂΓ} $ has the property that $ χ_{ℂΓ ¤ ℂΓ} (1) = |Γ|^2 $ and $ χ_{ℂΓ ¤ ℂΓ} (g) = 0 $ for $ g ≠ 1 $, thus decomposes as $ ℂΓ ¤ ℂΓ ≅ (ℂΓ)^{⊕ |Γ|} $. The value $ f_0 (φ) $ is simply the determinant of the map $ ℂΓ ¤ ℂΓ → (ℂΓ)^{⊕ |Γ|} $ given by applying summing up all components of $ φ $ with multiplicities. Recall the rule $ \det(A ¤ B) = \det(A)^b \det(B)^a $ for square matrices $ A, B $ of dimension $ a×a $ and $ b×b $ respectively. We thus determine the total weight of $ \det(g_i) $ in expanding $ f_0 (gφ) $ to be
\begin{equation*}
|Γ| \vdim U_i - 2 \sum_{j ≠ i} \vdim U_i \vdim U_j^2 - 2 \vdim U_i^3 = - |Γ| \vdim U_i = |Γ| (θ_1)_i.
\end{equation*}
This shows that $ f_0 $ is a $ |Γ| θ_1 $-semiinvariant as desired.

The second part of the claim easy. All components of $ φ^{(0)} $ are linear isomorphisms and therefore $ f_0 (φ^{(0)}) ≠ 0 $ and thus $ f_0 (gφ^{(0)}) = θ_1 (g)^{|Γ|} f_0 (φ^{(0)}) ≠ 0 $ for any $ g ∈ \GL $.

For the third part of the claim, let $ f $ be another nonzero $ k|Γ|θ_1 $-semiinvariant. Then it necessarily takes nonzero value on $ φ^{(0)} $ and we conclude that $ f = λ f_0 $ holds on $ \GL φ^{(0)} $, where $ λ ≔ f(φ^{(0)}) / f_0 (φ^{(0)}) $. By passing to limits we conclude that $ f = λ f_0 $ on all of $ \CG_Γ $. This shows that $ f_0 $ is the only $ k|Γ|θ_1 $-semiinvariant on $ \CG_Γ $ up to scalar multiplication, finishing the proof.
\end{proof}

\begin{lemma}
\label{th:strategy-proof-theta1SI}
Any $ k|Γ|θ_1 $-semiinvariant on $ \CG_Γ × ℂ^2 $ with $ k ≥ 1 $ is the product of $ f_0^k $ and an invariant function.
\end{lemma}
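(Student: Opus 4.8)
The plan is to compare an arbitrary $ k|Γ|θ_1 $-semiinvariant $ f $ on $ \CG_Γ × ℂ^2 $ with $ f_0^k $ — here and below $ f_0 $ is tacitly pulled back along the projection $ \CG_Γ × ℂ^2 → \CG_Γ $ — by means of the restriction map $ π : ℂ[\CG_Γ × ℂ^2] → ℂ[X, Y] $ from Stage 3. Two elementary facts drive the argument. First, $ π(f_0^k) $ is the nonzero \emph{constant} $ f_0(φ^{(0)})^k $, since the pullback of $ f_0^k $ is constant along $ \{φ^{(0)}\} × ℂ^2 $ and $ f_0(φ^{(0)}) ≠ 0 $ by \autoref{th:strategy-proof-theta1SICG}. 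Second, a $ k|Γ|θ_1 $-semiinvariant is already determined by its image under $ π $: from $ f(gφ^{(0)}, x) = θ_1(g)^{k|Γ|} f(φ^{(0)}, g^{-1}x) = θ_1(g)^{k|Γ|} π(f)(g^{-1}x) $ it is determined on $ \CG_Γ^{\open} × ℂ^2 = \GL φ^{(0)} × ℂ^2 $, hence on all of $ \CG_Γ × ℂ^2 $ by density, exactly as in the third step of the proof of \autoref{th:strategy-Rstar}.

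First I would check that $ π(f) ∈ ℂ[X, Y]^{\Stab_{\GL}(φ^{(0)})} $. For $ σ ∈ \Stab_{\GL}(φ^{(0)}) $ we have $ σφ^{(0)} = φ^{(0)} $, so $ π(f)(σx) = f(σφ^{(0)}, σx) = θ_1(σ)^{k|Γ|} f(φ^{(0)}, x) = θ_1(σ)^{k|Γ|} π(f)(x) $. By Stage 4 the stabilizer $ \Stab_{\GL}(φ^{(0)}) $ is isomorphic to $ Γ $, hence finite of order $ |Γ| $; therefore $ θ_1(σ) $ is a root of unity of order dividing $ \operatorname{ord}(σ) $, which divides $ |Γ| $, so $ θ_1(σ)^{k|Γ|} = 1 $. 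Thus $ π(f)(σx) = π(f)(x) $, i.e. $ π(f) $ is $ \Stab_{\GL}(φ^{(0)}) $-invariant.

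Next, by the surjectivity half of \autoref{th:strategy-Rstar}, the element $ f_0(φ^{(0)})^{-k} π(f) ∈ ℂ[X, Y]^{\Stab_{\GL}(φ^{(0)})} $ equals $ π(h) $ for some $ h ∈ ℂ[\CG_Γ × ℂ^2]^{\GL} $. The function $ f_0^k h $ is a $ k|Γ|θ_1 $-semiinvariant, being the product of the semiinvariant $ f_0^k $ (pulled back, \autoref{th:strategy-proof-theta1SICG}) and the invariant $ h $; and since $ π $ is a ring homomorphism, $ π(f_0^k h) = π(f_0^k) π(h) = f_0(φ^{(0)})^k \cdot f_0(φ^{(0)})^{-k} π(f) = π(f) $. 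By the second fact above, $ f = f_0^k h $, which is the assertion of the lemma.

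The only substantive point — and the reason the exponent $ |Γ| $, rather than $ 1 $, appears in the statement — is the vanishing $ θ_1^{k|Γ|} ≡ 1 $ on $ \Stab_{\GL}(φ^{(0)}) $, which rests on that group being finite of exponent dividing $ |Γ| $; this is precisely the input borrowed from Stage 4. Everything else is a direct transcription of the arguments already used to prove \autoref{th:strategy-Rstar}, so I do not anticipate a genuine obstacle.
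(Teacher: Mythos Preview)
Your proof is correct and follows essentially the same route as the paper's: restrict to $\{φ^{(0)}\} × ℂ^2$, use $θ_1(σ)^{k|Γ|} = 1$ on the stabilizer to see $π(f)$ is invariant, lift to a $\GL$-invariant via \autoref{th:strategy-Rstar}, and conclude by the injectivity of $π$ on semiinvariants. The only cosmetic difference is that the paper produces the invariant as a pullback $R^* g$ from $ℂ[\Rep(Π_Q, α)]^{\GL}$ rather than directly as an element of $ℂ[\CG_Γ × ℂ^2]^{\GL}$, which is equivalent since $R^*$ is an isomorphism on invariants.
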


\begin{proof}
We start by writing $ f_0: \CG_Γ × ℂ^2 → ℂ $ for the $ |Γ|θ_1 $-semiinvariant obtained from $ f_0: \CG_Γ → ℂ $ by simply extending it independent of the $ ℂ^2 $ factor. Now let $ f: \CG_Γ × ℂ^2 → ℂ $ be any $ k|Γ|θ_1 $-semiinvariant with $ k ≥ 1 $. Then for $ σ ∈ \Stab_{\GL} (φ^{(0)}) $ and $ x ∈ ℂ^2 $ we deduce
\begin{equation*}
π(f) (σx) = f(φ^{(0)}, σx) = θ_1 (σ)^{|Γ|} f(φ^{(0)}, x) = π(f) (x).
\end{equation*}
We have used that $ θ_1 $ is a group character on $ Γ $ and hence $ θ_1 (σ)^{|Γ|} = 1 $. We conclude that $ π(f) $ is $ \Stab_{\GL} (φ^{(0)}) $-invariant. By \autoref{th:strategy-Rstar}, there exists a $ \GL $-invariant function $ g: \Rep(Π_Q, α) → ℂ $ such that $ π(R^* g) = π(f) $. Define $ λ = 1 / f_0 (φ^{(0)}, x) $, noting that this number is independent on $ x $. We claim $ f = λ^k f_0^k R^* g $. Indeed, we have $ f(φ^{(0)}, x) = π(f) (x) = (R^* g)(φ^{(0)}, x) $, thus $ f = λ^k f_0^k R^* g $ holds on $ φ^{(0)} × ℂ^2 $. Since both sides are $ k|Γ|θ_1 $-semiinvariants, they agree on all of $ \GL φ^{(0)} × ℂ^2 $ and therefore on all of $ \CG_Γ × ℂ^2 $. We conclude that any $ k|Γ|θ_1 $-semiinvariant on $ \CG_Γ × ℂ^2 $ can be written as the product of $ f_0^k $ and a $ \GL $-invariant function.
\end{proof}

\begin{proposition}
We have $ \CG_Γ^{θ_1} = \GL φ^{(0)} $ and $ (\CG_Γ × ℂ^2)^{θ_1} = \GL φ^{(0)} × ℂ^2 $.
\end{proposition}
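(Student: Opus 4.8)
The plan is to sandwich the $\theta_1$-semistable locus between the open orbit $\GL\varphi^{(0)}$ and the non-vanishing locus of $f_0$, and then to argue that these two sets coincide. First I would dispatch the inclusions ``$\supseteq$''. By \autoref{th:strategy-proof-theta1SICG}, $f_0$ is a $|\Gamma|\theta_1$-semiinvariant on $\CG_\Gamma$ with $f_0(\varphi^{(0)}) \neq 0$; since the $\theta_1$-semistable locus is $\GL$-stable, every point of $\GL\varphi^{(0)}$ is $\theta_1$-semistable. Extending $f_0$ to $\CG_\Gamma \times \mathbb{C}^2$ independently of the second factor yields a $|\Gamma|\theta_1$-semiinvariant that is nonzero on all of $\GL\varphi^{(0)} \times \mathbb{C}^2$, so every such point is $\theta_1$-semistable as well.

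Next I would reduce the inclusions ``$\subseteq$'' to a single statement about $f_0$. Let $\varphi \in \CG_\Gamma^{\theta_1}$, so there is a $\theta_1^k$-semiinvariant $f$ with $k \geq 1$ and $f(\varphi) \neq 0$. Then $f^{|\Gamma|}$ is a $k|\Gamma|\theta_1$-semiinvariant on $\CG_\Gamma$, hence by \autoref{th:strategy-proof-theta1SICG} equal to $\lambda f_0^k$ for some $\lambda \in \mathbb{C}$; evaluating at $\varphi$ forces $f_0(\varphi) \neq 0$. Similarly, for $(\varphi, x) \in (\CG_\Gamma \times \mathbb{C}^2)^{\theta_1}$, a $\theta_1^k$-semiinvariant $f$ nonzero at $(\varphi, x)$ with $k \geq 1$ gives a $k|\Gamma|\theta_1$-semiinvariant $f^{|\Gamma|}$, which by \autoref{th:strategy-proof-theta1SI} equals $f_0^k \cdot h$ for some $\GL$-invariant $h$; evaluation at $(\varphi, x)$ again forces $f_0(\varphi) \neq 0$. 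Thus in both cases it suffices to prove
\[
\{\varphi \in \CG_\Gamma : f_0(\varphi) \neq 0\} = \GL\varphi^{(0)},
\]
after which the chains $\CG_\Gamma^{\theta_1} \subseteq \{f_0 \neq 0\} = \GL\varphi^{(0)} \subseteq \CG_\Gamma^{\theta_1}$ and $(\CG_\Gamma \times \mathbb{C}^2)^{\theta_1} \subseteq \{f_0 \neq 0\} \times \mathbb{C}^2 = \GL\varphi^{(0)} \times \mathbb{C}^2 \subseteq (\CG_\Gamma \times \mathbb{C}^2)^{\theta_1}$ close the argument.

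It remains to establish the displayed identity. The inclusion ``$\supseteq$'' is the second part of \autoref{th:strategy-proof-theta1SICG}. For ``$\subseteq$'', observe that $f_0(\varphi) \neq 0$ forces every component $\varphi_{i,j}$ to be a linear isomorphism. On all of $\CG_\Gamma$ one has $\varphi_{0,i} = \varphi_{i,0} = \id$ (a closed condition that holds on $\GL\varphi^{(0)}$), together with the coherence relations of \autoref{th:construction-CGvariety-coherence} and the symmetry relations; so $\varphi$ equips the functor sending $U_i$ to $U_i$ as a vector space with the structure of a tensor-preserving functor $\Rep(\Gamma) \to \Vect$. By Lurie's geometric Tannaka duality \cite{Lurie-tannaka}, the stack of such functors is $[*/\Gamma]$, which has a single point; hence $\varphi$ is isomorphic to $\varphi^{(0)}$ through a natural transformation, and such a natural transformation is exactly an element of $\GL$. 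Therefore $\varphi \in \GL\varphi^{(0)}$.

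The main obstacle is the rigidity implication in the last step: an invertible component $\varphi_{i,j}$ need not a priori be $\Gamma$-equivariant, and one must check that the coherence and symmetry constraints valid on $\CG_\Gamma$ force it to be — equivalently, that every point of $\CG_\Gamma$ on which $f_0$ is nonzero is already regular. I expect this to be cleanest to carry out within the explicit models of $\CG_\Gamma$ built for the $A_n$ and $D_n$ Kleinian groups in \autoref{sec:caseA} and \autoref{sec:caseD}, where $f_0$ is visibly a monomial (up to units) in the coordinate functions with strictly positive exponents, so that $\{f_0 \neq 0\}$ is manifestly the open orbit; this is one of the parts of Stage 5 deferred to those sections.
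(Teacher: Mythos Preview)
Your proposal is correct and follows essentially the same route as the paper: use $f_0$ for the easy inclusions, reduce the reverse inclusions via \autoref{th:strategy-proof-theta1SICG} and \autoref{th:strategy-proof-theta1SI} to the implication $f_0(\varphi)\neq 0 \Rightarrow \varphi\in\GL\varphi^{(0)}$, and defer that implication to the explicit $A_n$ and $D_n$ checks (\autoref{th:caseA-cgtheta1}, \autoref{th:caseD-cgtheta1}). Your passage through $f^{|\Gamma|}$ is in fact slightly more careful than the paper's phrasing, and you are right to flag the Tannakian shortcut as incomplete: invertibility of the $\varphi_{i,j}$ plus the coherence relations of \autoref{th:construction-CGvariety-coherence} do not obviously assemble into a genuine tensor functor, which is exactly why the paper, like you, falls back on the case-by-case verification.
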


\begin{proof}
For the inclusions $ \GL φ^{(0)} ⊂ \CG_Γ^{θ_1} $ and $ \GL φ^{(0)} × ℂ^2 ⊂ (\CG_Γ × ℂ^2)^{θ_1} $, it suffices to recall that $ f_0 (φ^{(0)}) ≠ 0 $. We treat both reverse inclusions simultaneously. Let $ φ ∈ \CG_Γ^{θ_1} $ or $ (φ, x) ∈ (\CG_Γ × ℂ^2)^{θ_1} $, then there is a $ k|Γ|θ_1 $-semiinvariant for some $ k ≥ 1 $ which does not vanish on $ φ $ or $ (φ, x) $. By \autoref{th:strategy-proof-theta1SICG} or \ref{th:strategy-proof-theta1SI} it follows that $ f_0 (φ) ≠ 0 $. It is checked in \autoref{th:caseA-cgtheta1} and \ref{th:caseD-cgtheta1} that this implies $ φ ∈ \GL φ^{(0)} $. This finishes the proof.
\end{proof}

\paragraph*{Stage 6: Identification of the $ θ_2 $-semistable locus}
It is our task to identify the $ θ_2 $-semistable locus of $ \CG_Γ × ℂ^2 $. More precisely, we need to prove that $ R $ is an isomorphism when restricted to the sets of $ θ_2 $-semistable points. We note that the proof does not depend on the precise value of $ θ_2 $, only on the positivity of all entries.

\begin{lemma}
If $ (φ, x) ∈ \CG_Γ × ℂ^2 $ is $ θ_2 $-semistable, then $ R(φ, x) ∈ \Rep(Π_Q, α) $ is $ θ_2 $-semistable.
\end{lemma}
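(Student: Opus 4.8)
The plan is to read off semistability on the quiver side through King's subrepresentation criterion and then to turn a destabilising subrepresentation of $ R(φ, x) $ into an explicit destabilising one-parameter subgroup of $ \GL $ for $ (φ, x) $. Here $ θ_2 $-semistability of a point of $ \Rep(Π_Q, α) $ refers to the unique parameter $ θ^{\mathrm{q}} = (-\sum_{i ≥ 1} \vdim U_i, 1, …, 1) $ which pairs to zero with $ α $ and restricts to $ θ_2 $ on $ \GL ⊂ \GL_α $. The first observation is a reduction: since $ \vdim U_0 = 1 $, every subrepresentation $ η $ of $ R(φ, x) $ has vertex-$ 0 $ dimension $ 0 $ or $ 1 $; if it is $ 0 $ then $ θ^{\mathrm{q}} · \vdim η ≥ 0 $ holds automatically, and if it is $ 1 $ then $ θ^{\mathrm{q}} · \vdim η ≥ 0 $ forces $ η = R(φ, x) $. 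Hence $ R(φ, x) $ is $ θ_2 $-semistable if and only if the subrepresentation $ \langle U_0 \rangle $ generated by the vertex-$ 0 $ space is all of $ R(φ, x) $, and it suffices to prove the contrapositive statement: if $ \langle U_0 \rangle \subsetneq R(φ, x) $, then $ (φ, x) $ is not $ θ_2 $-semistable.

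So assume $ W ≔ \langle U_0 \rangle \subsetneq R(φ, x) $, write $ W = \bigoplus_v W_v $ with $ W_0 = U_0 $, fix complements $ U_v = W_v ⊕ C_v $, and define a one-parameter subgroup $ λ $ of $ \GL $ by $ λ(t)_v = \id_{W_v} ⊕ t^{-1} \id_{C_v} $ for $ v = 1, …, n $. Because $ W_v \subsetneq U_v $ for some $ v ≥ 1 $, the composition $ θ_2 ∘ λ $ is $ t \mapsto t^m $ with $ m = -\sum_{v ≥ 1}(\vdim U_v - \vdim W_v) < 0 $, so $ λ^{-1} $ pairs positively with $ θ_2 $; by the Hilbert–Mumford criterion it therefore suffices to show that $ \lim_{t → 0} λ(t)(φ, x) $ exists in $ \CG_Γ × ℂ^2 $. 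The $ ℂ^2 $-factor is immediate: for each $ v $ such that $ U_v $ occurs in $ ℂ^2 $, the arrow out of vertex $ 0 $ in $ R(φ, x) = φ(- ¤ x) $ is the map $ U_0 → U_v $, $ 1 \mapsto $ ($ U_v $-component of $ x $) — using $ φ_{0, v} = \id $, which holds on $ \GL φ^{(0)} $ and hence by passing to limits on all of $ \CG_Γ $ — so $ x ∈ W $ and $ λ(t) x = x $ is constant. It remains to see that $ \lim_{t → 0} (λ(t) φ)_{i, j} $ exists for all $ i, j $, which since $ \CG_Γ $ is closed in its ambient affine space implies $ \lim_{t → 0} λ(t) φ ∈ \CG_Γ $.

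The crucial input is that $ W $ is closed under $ φ $: $ φ_{i, j}(W_i ¤ W_j) \subseteq \bigoplus_k W_k^{⊕ c_{ijk}} $ for all $ i, j $. This is where the coherence relations of \autoref{th:construction-CGvariety-coherence} enter. Every element of $ W_j $ is built from $ 1 ∈ U_0 $ by repeatedly applying the arrow maps $ φ(- ¤ x) $; applying $ φ(a ¤ -) $ to such an element and reassociating via $ φ(- ¤ φ(u ¤ v)) = γ_{•, i, j}(φ(φ(- ¤ u) ¤ v)) $ moves all the inner tensor products so that they act against the $ x $'s, and the outcome again lies in $ W $ because $ W $ is closed under $ φ(- ¤ x) $ and the maps $ γ $ are block-scalar on the isotypic decomposition, hence preserve the subspaces $ \bigoplus_k W_k^{⊕ c_{• • k}} $. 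I expect this reassociation argument — run by induction on the number of $ φ(- ¤ x) $-steps producing the second argument, and using the Kleinian-specific fact that the Clebsch–Gordan coefficients against $ ℂ^2 $ are at most $ 1 $, so that the individual arrow maps recover the full $ φ(- ¤ x) $ — to be the main obstacle, essentially a matter of keeping the indices straight. Granting $ φ $-closedness of $ W $, the convergence of $ (λ(t) φ)_{i, j} $ is a direct computation: decomposing $ U_i ¤ U_j = (W_i ¤ W_j) ⊕ (W_i ¤ C_j) ⊕ (C_i ¤ W_j) ⊕ (C_i ¤ C_j) $, the domain factor of $ λ $ acts by $ t^0, t^1, t^1, t^2 $ on these four summands while on the codomain $ λ $ acts by $ t^0 $ on the $ W_k $-parts and $ t^{-1} $ on the $ C_k $-parts; since $ φ_{i, j}(W_i ¤ W_j) $ has no $ C_k $-component, every surviving power of $ t $ is $ ≥ 0 $ and the limit exists. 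This yields the destabilising one-parameter subgroup and finishes the proof.
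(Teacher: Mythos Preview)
Your argument is correct and considerably cleaner than the paper's. The paper gives no uniform proof of this lemma: it defers entirely to case-by-case verifications in \autoref{sec:caseA-6} and \autoref{sec:caseD-6}, where for $ A_n $ the destabilising one-parameter subgroup is built by writing down explicit integer exponents $ α_i $ through a multi-page case analysis on which entries $ φ_{i,1} $, $ φ_{i,n} $ vanish and whether $ x_1, x_n $ are zero, and for $ D_4 $ every non-$ θ_2 $-semistable representation up to gauge is listed in a two-page figure and handled individually. Your key observation --- that the coherence relations of \autoref{th:construction-CGvariety-coherence} force the subrepresentation $ W = \langle U_0 \rangle $ to be closed under the entire Clebsch--Gordan datum $ φ $, not merely under the arrow maps $ φ(- ¤ x) $ --- replaces all of this with a single one-parameter subgroup $ λ $ that works uniformly for every Kleinian group. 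This is in fact the same ``recognized vectors are closed under $ φ $'' mechanism the paper deploys for injectivity in \autoref{th:construction-theta2}, repurposed here; the authors evidently did not notice that it applies equally well to the semistability direction. One small remark: your appeal to the multiplicity-one condition against $ ℂ^2 $ is slightly stronger than needed, since the block-scalar shape of $ γ $ (each block a scalar times $ \Id_{U_m} $) already preserves $ \bigoplus_k W_k^{⊕ c_{ijk}} $ regardless of multiplicities, and the pieces indexed by the intermediate label $ l $ remain separate in the target direct sum --- but for Kleinian groups the point is moot.
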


\begin{proof}
For the $ A_n $ and $ D_4 $ cases, this is checked in \autoref{th:caseA-theta2-semistable} and \ref{th:caseD-theta2-semistable}.
\end{proof}

We shall now prove that $ R $ actually reaches all $ θ_2 $-semistable representations. Recall that by definition the preimage of a $ θ_2 $-semistable is automatically $ θ_2 $-semistable. It is therefore entirely natural to restrict the domain of $ R $ to $ θ_2 $-semistables as well.

\begin{lemma}
The map $ R: (\CG_Γ × ℂ^2)^{θ_2} → \Rep(Π_Q, α)^{θ_2} $ is surjective.
\end{lemma}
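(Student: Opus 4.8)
The plan is to deduce surjectivity from Stage~3 by passing to GIT quotients. By \autoref{th:strategy-Rstar} the pullback $R^*$ identifies the invariant rings, so $R$ descends to an isomorphism of affine quotients; write $X_0 := (\CG_Γ × ℂ^2) \sslash \GL ≅ \Rep(Π_Q, α) \sslash \GL ≅ ℂ^2 \sslash Γ$. Since $R$ is $\GL$-equivariant, $R^*$ also carries $θ_2^k$-semiinvariants on $\Rep(Π_Q, α)$ to $θ_2^k$-semiinvariants on $\CG_Γ × ℂ^2$ for every $k$, hence induces a graded ring homomorphism and a rational map between the GIT quotients $M_1 := (\CG_Γ × ℂ^2) \sslash_{θ_2} \GL$ and $M_2 := \Rep(Π_Q, α) \sslash_{θ_2} \GL$. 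By the previous lemma, $R$ sends $θ_2$-semistable points to $θ_2$-semistable points, so this rational map is an everywhere-defined morphism $\bar R: M_1 → M_2$ commuting with the canonical projections $p_i: M_i → X_0$.

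The next step is to prove that $\bar R$ is surjective. Both $p_i$ are projective, hence proper, and $M_2 → X_0$ is separated, so $\bar R$ is proper and $\bar R(M_1)$ is closed in $M_2$. Now $\CG_Γ × ℂ^2$ is irreducible ($\CG_Γ$ being the closure of a single orbit), and its $θ_2$-semistable locus is nonempty, so the projection $p_1$ is surjective; as $p_2 ∘ \bar R = p_1$, the closed set $\bar R(M_1)$ maps onto $X_0$. On the other hand, $M_2 = \widetilde{ℂ^2 \sslash Γ}$ is the minimal resolution (\autoref{sec:prelim}), so $p_2$ is an isomorphism over the dense smooth locus $X_0 \setminus \{0\}$; hence $\bar R(M_1)$, already mapping onto $X_0$, must contain all of $p_2^{-1}(X_0 \setminus \{0\})$. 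This is a dense open subset of the irreducible variety $M_2$, and a closed set containing it is all of $M_2$, so $\bar R$ is surjective.

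Finally I would lift surjectivity on GIT quotients to surjectivity on semistable loci. Because $θ_2$ is a generic stability parameter (\autoref{sec:prelim}), every $θ_2$-semistable representation is $θ_2$-stable, $\GL$ acts freely on $\Rep(Π_Q, α)^{θ_2}$, and the points of $M_2$ are exactly the $\GL$-orbits of $θ_2$-stable representations. Given $ρ ∈ \Rep(Π_Q, α)^{θ_2}$, surjectivity of $\bar R$ gives a point of $M_1$ over the class $[ρ] ∈ M_2$; choose a $θ_2$-semistable representative $(φ, x) ∈ (\CG_Γ × ℂ^2)^{θ_2}$ of it. Then $R(φ, x)$ is $θ_2$-semistable by the previous lemma and has class $[ρ]$, so $R(φ, x) = gρ$ for some $g ∈ \GL$; since the semistable locus is $\GL$-invariant, $(g^{-1}φ, g^{-1}x) ∈ (\CG_Γ × ℂ^2)^{θ_2}$, and $R(g^{-1}φ, g^{-1}x) = g^{-1}R(φ, x) = ρ$ by equivariance. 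Hence $ρ$ lies in the image, proving the lemma.

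The step I expect to be the main obstacle is the nonemptiness of the $θ_2$-semistable locus on $\CG_Γ × ℂ^2$, which is what makes $p_1$ and therefore $\bar R$ surjective onto $X_0$; concretely this comes down to checking that $R(φ^{(0)}, x)$ is $θ_2$-stable for generic $x$ — equivalently that it is generated by its special-vertex summand — and this is where one must use the explicit form of $\CG_Γ$ and the regular Clebsch-Gordan datum $φ^{(0)}$, carried out for the $A_n$ and $D_n$ types in \autoref{sec:caseA} and \ref{sec:caseD}.
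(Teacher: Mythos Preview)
Your argument is correct and takes a genuinely different route from the paper. The paper splits $\Rep(\Pi_Q,\alpha)^{\theta_2}$ into the nullcone and its complement: over the complement it argues essentially as you do via the isomorphism of affine quotients, but over the nullcone it \emph{explicitly constructs} preimages for each of the $n$ one-parameter families (\autoref{th:caseA-theta2surj} and \autoref{th:caseD-theta2surj}), the $D_n$ case even requiring computer assistance. You bypass this entirely by observing that $\bar R(M_1)$ is closed (properness) and already contains the dense open $p_2^{-1}(X_0\setminus\{0\})$, hence equals $M_2$. This is more conceptual and works uniformly in the ADE type, at the modest cost of invoking that GIT quotients are projective over their affine quotients and that $M_2$ is irreducible with $p_2$ an isomorphism off the singular point.

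One logical wrinkle: the implication ``the $\theta_2$-semistable locus is nonempty, so $p_1$ is surjective'' is not valid as written---a nonempty $M_1$ could still map to a proper closed subvariety of $X_0$. What you actually need, and what your last paragraph correctly supplies, is that $p_1$ is \emph{dominant}: for generic $x$ the point $(\varphi^{(0)},x)$ is $\theta_2$-semistable (because $R(\varphi^{(0)},x)$ is visibly generated from the special vertex when $x$ is generic, hence $\theta_2$-stable, and one pulls back a nonvanishing semiinvariant), and its image in $X_0$ is the generic point. Combined with properness this gives surjectivity of $p_1$. With that adjustment the proof is complete, and the type-specific input you need (generic $\theta_2$-stability of $R(\varphi^{(0)},x)$) is far lighter than the paper's explicit nullcone analysis.
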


\begin{proof}
The strategy is to regard two types of representations. We start by regarding the natural projection map $ \Rep(Π_Q, α) \sslash_{θ_2} \GL → \Rep(Π_Q, α) \sslash \GL $. The quotient $ \Rep(Π_Q, α) \sslash \GL $ is simply the Kleinian singularity and it consists of the singular zero point and the nonsingular points. The points in $ \Rep(Π_Q, α) \sslash_{θ_2} \GL $ which project to the zero point are known as the nullcone. The nullcone is known to consist \cite{cb-kleinian} of $ n $-many one-parameter families. We prove in \autoref{th:caseA-theta2surj} and \ref{th:caseD-theta2surj} that the representations in these $ n $-many one-parameter families lie in the image of $ R $. The remainder of the present proof is dedicated to those $ θ_2 $-semistable representations which do not lie in the nullcone.

Let $ ρ ∈ \Rep(Π_Q, α)^{θ_2} $ be a representation which does not does not lie in the nullcone. In other words, one of the three $ \GL $-invariants is nonzero on $ ρ $. Then since $ R $ induces an isomorphism $ (\CG_Γ × ℂ^2) \sslash \GL → \Rep(Π_Q, α) \sslash \GL $, we can pick an element $ p ∈ (\CG_Γ × ℂ^2) \sslash \GL $ with the same invariants. Since the map $ (\CG_Γ × ℂ^2)^{θ_2} → (\CG_Γ × ℂ^2) \sslash \GL $ is surjective, we can lift $ p $ to a $ θ_2 $-semistable element $ (φ, x) ∈ (\CG_Γ × ℂ^2)^{θ_2} $ with the same invariants. Then $ R(φ, x) ∈ \Rep(Π_Q, α)^{θ_2} $ has the same invariants as $ ρ $, and therefore both project to the same element of $ \Rep(Π_Q, α) \sslash \GL $. Since $ ρ $ does not lie in the nullcone, the projected element is nonzero, and since $ \Rep(Π_Q, α)^{θ_2} \sslash \GL → \Rep(Π_Q, α) \sslash \GL $ is bijective away from the zero point, we conclude that $ R(φ, x) $ and $ ρ $ are the same point in $ \Rep(Π_Q, α)^{θ_2} \sslash \GL $ and therefore only differ by gauge. This shows that $ ρ $ lies in the image of $ R $ and finishes the proof.
\end{proof}

\begin{lemma}
\label{th:construction-theta2}
The map $ R: (\CG_Γ × ℂ^2)^{θ_2} → \Rep(Π_Q, α)^{θ_2} $ is injective.
\end{lemma}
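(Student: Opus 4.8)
The plan is to follow the blueprint of the surjectivity proof and split $\Rep(Π_Q, α)^{θ_2}$ according to whether a representation lies over the smooth part of the Kleinian singularity $ℂ^2 \sslash Γ$ or in the nullcone. The first thing I would record is that $θ_2 = (+1, …, +1)$ is generic: for every Kleinian quiver the only dimension subvector $0 < β \le α$ with $β$ pairing to zero against the induced character is $β = α$ itself. Hence every $θ_2$-semistable representation of $Π_Q$ is in fact $θ_2$-stable, so its stabiliser in the full gauge group $\GL_α$ is the central $ℂ^*$. Since our $\GL = \GL(U_1) × … × \GL(U_n)$ sits inside $\GL_α$ with the special-vertex factor omitted, one has $\GL_α = \GL · ℂ^*$ and $\GL ∩ ℂ^* = \{1\}$; therefore $\GL$ acts freely on $\Rep(Π_Q, α)^{θ_2}$. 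Because the preceding lemma shows $R$ maps $(\CG_Γ × ℂ^2)^{θ_2}$ into $\Rep(Π_Q, α)^{θ_2}$ and $R$ is $\GL$-equivariant, the inclusion $\Stab_{\GL}(φ, x) ⊂ \Stab_{\GL}(R(φ, x))$ shows $\GL$ acts freely on $(\CG_Γ × ℂ^2)^{θ_2}$ too.

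With freeness available, injectivity reduces to one claim: if $R(φ, x) = R(φ', x')$ with $(φ, x), (φ', x') ∈ (\CG_Γ × ℂ^2)^{θ_2}$, then the two points lie in a single $\GL$-orbit. Granting this, $(φ', x') = g(φ, x)$ for some $g ∈ \GL$, equivariance gives $g R(φ, x) = R(φ', x') = R(φ, x)$, and freeness forces $g = 1$, hence $(φ, x) = (φ', x')$.

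To establish the orbit claim I would distinguish the two cases of the surjectivity proof, writing $ρ = R(φ, x) = R(φ', x')$. If $ρ$ does not lie in the nullcone, then by \autoref{th:strategy-Rstar} the points $(φ, x)$ and $(φ', x')$ have the same, nonzero, image under the induced isomorphism $(\CG_Γ × ℂ^2) \sslash \GL \isoto ℂ^2 \sslash Γ$. Over the smooth locus the resolution is an isomorphism, so $ρ$ represents the unique closed $\GL_α$-orbit above that point; since $ℂ^*$ is exactly $\Stab_{\GL_α}(ρ)$ and $\GL_α = \GL · ℂ^*$, the $\GL$-orbit of $ρ$ coincides with its (closed) $\GL_α$-orbit. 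Now any $\GL$-orbit of $\CG_Γ × ℂ^2$ mapping into $\GL ρ$ is free of dimension $\dim \GL$, and the unique closed orbit in its closure maps onto $\GL ρ$, hence is itself free of dimension $\dim \GL$ and must therefore coincide with it; thus $\GL(φ, x)$ and $\GL(φ', x')$ are closed. Two closed orbits with the same image in the affine quotient coincide, so $(φ, x)$ and $(φ', x')$ lie in one $\GL$-orbit. If instead $ρ$ lies in the nullcone, I would invoke the explicit description of the exceptional fibre — the same $n$ one-parameter families of representations used in the surjectivity proof — and verify, separately in the $A_n$ and $D_n$ cases, that $R$ is injective along each family; this is deferred to \autoref{sec:caseA} and \ref{sec:caseD}.

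I expect the nullcone case to be the genuine obstacle. Away from the nullcone the argument is essentially formal once the affine comparison of Stage 3 and the genericity of $θ_2$ are in hand. On the exceptional locus, however, the orbit structure of $\CG_Γ × ℂ^2$ is genuinely degenerate and there is no uniform argument: injectivity along the one-parameter families has to be read off from the explicit coordinates of the $A_n$ and $D_n$ constructions, which is exactly where the reduction to the appendices is needed.
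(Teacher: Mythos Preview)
Your strategy is sound in outline, but it takes a very different route from the paper and is considerably less economical. The paper does \emph{not} split into nullcone versus non-nullcone, and does not need any appendix checks for injectivity at all. Instead it gives a uniform, direct reconstruction of $(\varphi,x)$ from $R(\varphi,x)$, valid for every Kleinian group simultaneously.

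The key idea you are missing is this. First, $x$ is read off immediately from $R(\varphi,x)$, since $\varphi(1\otimes x)=x$ for $1\in U_0$. For $\varphi$, the paper calls a vector $u\in U_i$ \emph{recognized} if the map $\varphi(-\otimes u)$ can be recovered from $R(\varphi,x)$. Trivially $1\in U_0$ is recognized, and the summands of $x$ are recognized by construction of $R$. The coherence relation of \autoref{th:construction-CGvariety-coherence}, in the shortcut form $\varphi(-\otimes\varphi(u\otimes v))=\gamma_{\bullet,i,j}\bigl(\varphi(\varphi(-\otimes u)\otimes v)\bigr)$, shows that if $u$ and $v$ are recognized then every component of $\varphi(u\otimes v)$ is recognized. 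Now the single fact one extracts from $\theta_2$-semistability (with all entries positive) is that $R(\varphi,x)$ is generated as a $\Pi_Q$-module by the special vertex $U_0$; hence every $U_i$ is spanned by iterated transports of $1\in U_0$ along paths in $\overline Q$, and by induction every vector in every $U_i$ is recognized. This reconstructs $\varphi$ completely.

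Your approach works but costs more: the closed-orbit argument over the smooth locus requires knowing that the fibre of $\Rep(\Pi_Q,\alpha)\to\mathbb C^2\sslash\Gamma$ over a smooth point is a single closed orbit, and the nullcone case would demand new explicit checks in the appendices that the paper never needs. The paper's argument trades all of that for one use of the coherence relations plus the elementary observation that $\theta_2$-semistable representations are generated at the special vertex.
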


\begin{proof}
Let $ (φ, x) ∈ (\CG_Γ × ℂ^2)^{θ_2} $ and regard the simple representations $ U_0, U_1, …, U_n $ of $ Γ $. It is our goal to show that $ (φ, x) $ can be reconstructed from $ R(φ, x) $. We start with the observation that $ x $ is easily reconstructed from $ R(φ, x) $, since $ φ(1 ¤ x) = x $ for the unit element $ 1 ∈ U_0 $ in the trivial representation. Next, we shall explain how to reconstruct $ φ $.

Let us call a vector $ u ∈ U_i $ “recognized” if $ φ(- ¤ u) $ can be reconstructed from $ R(φ, x) $. By definition, the unit $ 1 ∈ U_0 $ is recognized, as the map $ φ(- ¤ 1) $ is the identity. Another key observation is that every individual summand of $ x $ in terms of the decomposition into simples $ ℂ^2 = ⊕ U_i^{k_i} $ is recognized. In the $ A_n $ case, the vector $ x ∈ U_1 ⊕ U_{n-1} $ has two components, and both are recognized since we can read off $ φ(- ¤ x_1) $ and $ φ(- ¤ x_n) $ directly from the representation $ R(φ, x) $. In the $ D_n $ and $ E_n $ cases, the vector $ x $ lies in one simple representation and by definition $ φ(- ¤ x) $ can be reconstructed from $ R(φ, x) $, therefore $ x $ is recognized as well.

We claim that if $ u ∈ U_i $ and $ v ∈ U_j $ are recognized, then also all individual components of $ φ(u ¤ v) $ are recognized. Indeed, we have the equality $ φ(- ¤ φ(u ¤ v)) = γ_{•, i, j} (φ(φ(- ¤ u) ¤ v)) $. Since $ u $ and $ v $ are recognized, the right-hand side can be reconstructed from $ R(φ, x) $. Both left-hand and right-hand side are, behind the scenes, maps from $ U = U_0 ⊕ U_1 ⊕ … ⊕ U_n $ to a direct sum of some $ U_k $'s. If $ φ(u ¤ v) $ consists of one single $ U_k $ component, then we are done. Otherwise, we shall argue that we can still read off $ φ(- ¤ w) $ for every component $ w $ of $ φ(u ¤ v) $. In fact, this is trivial since by definition the equality holds in a higher-dimensional space which keeps direct sum inputs separated from each other. This way, we conclude that $ φ(- ¤ w) $ is recognized for every component $ w $ of $ φ(u ¤ v) $.

Finally, let us prove that for $ i = 0, …, n $ every vector $ u ∈ U_i $ is recognized. We start by recalling that the representation $ R(φ, x) $ is generated by $ 1 ∈ U_0 $. Thus the space $ U_i $ is spanned by the transports of the vector $ 1 ∈ U_0 $ along some set $ P $ of paths in $ \Qbar $. By the earlier parts of this proof, we conclude inductively that any vector appearing in such a transport is recognized. Finally we have obtained a generating set for the vector space $ U_i $ that consists of recognized vectors. We conclude that any vector $ u ∈ U_i $ is recognized as well. This finishes the proof.
\end{proof}

\appendix

\section{The $ A_n $ case}
\label{sec:caseA}
In this section, we treat the $ A_n $ case in detail. We construct the variety $ \CG_Γ $, the stability parameters $ θ_1 $, $ θ_2 $ and the map $ R: \CG_Γ × ℂ^2 → \Rep(Π_Q, α) $, where $ (Q, α) $ is the Kleinian $ A_n $ quiver. We prove the remaining technical parts of \autoref{stageplan}. This concerns precisely the following statements:
\begin{itemize}
\item The representation $ R(φ, x) $ satisfies the preprojective relations.
\item The map $ π ∘ R^*: ℂ[\Rep(Π_Q, α)]^{\GL} → ℂ[X, Y]^{\Stab_{\GL} (φ^{(0)})} $ is an isomorphism.
\item We have $ (\Stab_{\GL} (φ^{(0)}) \curvearrowright ℂ^2) ≅ (Γ \curvearrowright ℂ^2) $.
\item If $ f_0 (φ) ≠ 0 $, then $ φ ∈ \GL φ^{(0)} $.
\item The map $ R $ sends $ θ_2 $-semistables to $ θ_2 $-semistables.
\item The $ θ_2 $-semistables in the nullcone lies in the image of $ R $.
\end{itemize}

\subsection{Stage 1: Construction of the Clebsch-Gordan variety}
\label{sec:caseA-1}
The Kleinian group of $ A_n $ is $ Γ = C_{n+1} $, the cyclic group of order $ n+1 $. The group lies embedded into $ \SL_2 (ℂ) $ as generated by a specific matrix $ σ ∈ \SL_2 (ℂ) $:
\begin{equation*}
Γ = C_{n+1} ≅ ⟨\pmat{e^{2πi/(n+1)} & 0 \\ 0 & e^{-2πi/(n+1)}}⟩ ⊂ \SL_2 (ℂ).
\end{equation*}
We enumerate the irreducible representations of $ Γ $ by $ U_0, U_1, …, U_n $, where $ U_0 $ is the trivial representation and the generator $ σ $ acts on $ U_k $ as $ e^{2πik/(n+1)} $:
\begin{center}
\begin{tabular}{lc}
Representation & Action of $ σ $ \\\hline
$ U_k $ & $ e^{2πik/(n+1)} $
\end{tabular}
\end{center}
The $ Γ $-representation $ ℂ^2 $ itself is isomorphic to $ U_1 ⊕ U_n $. The gauge group is the product of $ n $ general multiplicative groups:
\begin{equation*}
\GL = \underset{U_1}{ℂ^*} × … × \underset{U_n}{ℂ^*}, \\
\end{equation*}
A Clebsch-Gordan datum $ φ $ for this group consists of scalars $ φ_{i, j} ∈ ℂ $ for $ 0 ≤ i, j ≤ n $, representing maps $ U_i ¤ U_j → U_{i+j} $:
\begin{equation*}
φ = (φ_{i, j})_{0 ≤ i, j ≤ n}, \quad φ_{i, j}: U_i ¤ U_j → U_{i+j}.
\end{equation*}
Any sums of indices are calculated modulo $ n+1 $. The gauge group $ \GL $ acts on $ \CG_Γ $ by $ (gφ)_{ij} = g_{i+j} φ_{i, j} g_i^{-1} g_j^{-1} $. The specific datum $ φ^{(0)} $ is given by the following choice:
\begin{equation*}
φ^{(0)}_{i, j} = 1, \quad \text{for all } i, j.
\end{equation*}
We define $ \CG_Γ ≔ \closure{\GL φ^{(0)}} $. It is an affine variety of geometric dimension $ n+1 $. Any $ φ ∈ \CG_Γ $ satisfies the coherence and symmetry relations
\begin{equation*}
\begin{aligned}
φ_{i+j, k} φ_{i, j} &= φ_{i, j+k} φ_{j, k}, \quad && 0 ≤ i, j, k ≤ n, \\
φ_{i, j} &= φ_{j, i}, \quad && 0 ≤ i, j ≤ n.
\end{aligned}
\end{equation*}
Indeed, the element $ φ = φ^{(0)} $ satisfies these relations and in consequence any $ φ ∈ \CG_Γ $ satisfies these relations as well. We make the following choice of stability parameters:
\begin{align*}
& θ_1 = (-1, …, -1) ∈ ℤ^n, \\
& θ_2 = (+1, …, +1) ∈ ℤ^n.
\end{align*}

\begin{example}
Let us examine the Clebsch-Gordan variety for the case of $ n = 1, 2 $. Then a point in $ \GL φ^{(0)} $ is simply determined by the scalar $ φ_{1, 1} $. The orbit of the Clebsch-Gordan datum $ φ^{(0)} $ associated with the value $ φ_{1, 1} ∈ ℂ $ is isomorphic to $ ℂ^* ⊂ ℂ $ and the orbit closure $ \CG_Γ $ is simply $ ℂ $. In case $ n = 2 $, a Clebsch-Gordan datum is given by scalars $ φ_{1, 1}, φ_{1, 2}, φ_{2, 2} ∈ ℂ $ satisfying the condition $ φ_{2, 2} φ_{1, 1} = φ_{1, 2} $, therefore $ φ_{1, 1} $ and $ φ_{2, 2} $ suffice to determine a datum. In this complex plane, the orbit of $ φ^{(0)} $ is $ ℂ^* × ℂ^* ⊂ ℂ × ℂ $ and the orbit closure is the entire complex plane. The situation is more complicated for $ n ≥ 3 $.
\end{example}

\subsection{Stage 2: Construction of the map $ R $}
\label{sec:caseA-2}
We have depicted the Kleinian double quiver $ (\bar{Q}, α) $ with arrows $ A_i, A_i^* $ in \autoref{fig:prelim-A}. For $ (φ, x) ∈ \CG_Γ × ℂ^2 $ we define the representation $ R(φ, x) ∈ \Rep(Π_Q, α) $ following the recipe $ R(φ, x) = φ(- ¤ x) $. More precisely, write $ x = (x_1, x_n) $ and then define the representation by sending $ A_i ↦ φ_{i-1, 1} x_1 $ and $ A_i^* ↦ φ_{i, n} x_n $:
\begin{center}
\begin{tikzpicture}[xscale=1.3, yscale=1]
\path (90:2) node (A) {1};
\path (150:2) node (B) {1};
\path (210:2) node (C) {1};
\path (270:2) node (D) {$ \dots $};
\path (330:2) node (E) {1};
\path (30:2) node (F) {1};
\path[draw, ->] ($ (A.west) + (up:0.1) $) to node[pos=0.6, above, shift={(0, 0.1)}] {$ φ_{0, 1} x_1 $} ($ (B.east) + (up:0.1) $);
\path[draw, <-] ($ (A.west) + (down:0.1) $) to node[pos=0.4, below, shift={(0, -0.1)}] {$ φ_{1, n} x_n $} ($ (B.east) + (down:0.1) $);
\path[draw, ->] ($ (B.south) + (left:0.1) $) to node[midway, left] {$ φ_{1, 1} x_1 $} ($ (C.north) + (left:0.1) $);
\path[draw, <-] ($ (B.south) + (right:0.1) $) to node[midway, right] {$ φ_{2, n} x_n $} ($ (C.north) + (right:0.1) $);
\path[draw, <-] ($ (C.east) + (up:0.1) $) to node[pos=0.6, above, shift={(0, 0.1)}]  {$ φ_{3, n} x_n $} ($ (D.west) + (up:0.1) $);
\path[draw, ->] ($ (C.east) + (down:0.1) $) to node[pos=0.4, below, shift={(0, -0.1)}] {$ φ_{2, 1} x_1 $}  ($ (D.west) + (down:0.1) $);
\path[draw, <-] ($ (D.east) + (up:0.1) $) to node[pos=0.3, above, shift={(0, 0.1)}] {$ φ_{n-1, n} x_n $} ($ (E.west) + (up:0.1) $);
\path[draw, ->] ($ (D.east) + (down:0.1) $) to node[pos=0.7, below, shift={(0, -0.2)}] {$ φ_{n-2, 1} x_1 $} ($ (E.west) + (down:0.1) $);
\path[draw, ->] ($ (E.north) + (right:0.1) $) to node[midway, right] {$ φ_{n-1, 1} x_1 $} ($ (F.south) + (right:0.1) $);
\path[draw, <-] ($ (E.north) + (left:0.1) $) to node[midway, left] {$ φ_{n, n} x_n $}($ (F.south) + (left:0.1) $);
\path[draw, ->] ($ (F.west) + (up:0.1) $) to node[pos=0.3, above, shift={(0, 0.1)}] {$ φ_{n, 1} x_1 $} ($ (A.east) + (up:0.1) $);
\path[draw, <-] ($ (F.west) + (down:0.1) $) to node[pos=0.7, below, shift={(0, -0.1)}] {$ φ_{0, n} x_n $} ($ (A.east) + (down:0.1) $);
\end{tikzpicture}
\end{center}
In particular, the specific representation $ R(φ^{(0)}, x) $ takes the following shape:
\begin{center}
\begin{tikzpicture}[xscale=1.3, yscale=1]
\path (90:2) node (A) {1};
\path (150:2) node (B) {1};
\path (210:2) node (C) {1};
\path (270:2) node (D) {$ \dots $};
\path (330:2) node (E) {1};
\path (30:2) node (F) {1};
\path[draw, ->] ($ (A.west) + (up:0.1) $) to node[pos=0.6, above, shift={(0, 0.1)}] {$ x_1 $} ($ (B.east) + (up:0.1) $);
\path[draw, <-] ($ (A.west) + (down:0.1) $) to node[pos=0.4, below, shift={(0, -0.1)}] {$ x_n $} ($ (B.east) + (down:0.1) $);
\path[draw, ->] ($ (B.south) + (left:0.1) $) to node[midway, left] {$ x_1 $} ($ (C.north) + (left:0.1) $);
\path[draw, <-] ($ (B.south) + (right:0.1) $) to node[midway, right] {$ x_n $} ($ (C.north) + (right:0.1) $);
\path[draw, <-] ($ (C.east) + (up:0.1) $) to node[pos=0.6, above, shift={(0, 0)}] {$ x_n $} ($ (D.west) + (up:0.1) $);
\path[draw, ->] ($ (C.east) + (down:0.1) $) to node[pos=0.4, below, shift={(0, 0)}] {$ x_1 $} ($ (D.west) + (down:0.1) $);
\path[draw, <-] ($ (D.east) + (up:0.1) $) to node[pos=0.3, above, shift={(0, 0.1)}] {$ x_n $} ($ (E.west) + (up:0.1) $);
\path[draw, ->] ($ (D.east) + (down:0.1) $) to node[pos=0.7, below, shift={(0, -0.2)}] {$ x_1 $} ($ (E.west) + (down:0.1) $);
\path[draw, ->] ($ (E.north) + (right:0.1) $) to node[midway, right] {$ x_1 $} ($ (F.south) + (right:0.1) $);
\path[draw, <-] ($ (E.north) + (left:0.1) $) to node[midway, left] {$ x_n $}($ (F.south) + (left:0.1) $);
\path[draw, ->] ($ (F.west) + (up:0.1) $) to node[pos=0.3, above, shift={(0, 0.1)}] {$ x_1 $} ($ (A.east) + (up:0.1) $);
\path[draw, <-] ($ (F.west) + (down:0.1) $) to node[pos=0.7, below, shift={(0, -0.1)}] {$ x_n $} ($ (A.east) + (down:0.1) $);
\end{tikzpicture}
\end{center}

\begin{lemma}
\label{th:caseA-2-preproj}
For any $ (φ, x) ∈ \CG_Γ × ℂ^2 $, the representation $ R(φ, x) $ satisfies the preprojective conditions.
\end{lemma}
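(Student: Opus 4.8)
The plan is to unwind the preprojective relation at each vertex of the Kleinian $ A_n $ quiver into a scalar identity in the entries $ φ_{i,j} $, and then to derive that identity from the coherence relations $ φ_{i+j, k} φ_{i, j} = φ_{i, j+k} φ_{j, k} $, which hold for every $ φ ∈ \CG_Γ $ as recorded in \autoref{sec:caseA-1}.

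First I would spell out the preprojective relation at a vertex $ i $ of the cyclic quiver of \autoref{fig:prelim-A}. Each vertex carries a one-dimensional space and has, in $ Q $, exactly one incoming arrow $ A_i\colon i-1 → i $ and one outgoing arrow $ A_{i+1}\colon i → i+1 $ (indices modulo $ n+1 $), so the preprojective relation at $ i $ reduces to the scalar equation $ A_i A_i^* = A_{i+1}^* A_{i+1} $ of endomorphisms of $ U_i $. Substituting the definition $ R(φ, x) $, i.e. $ A_i ↦ φ_{i-1, 1} x_1 $ and $ A_i^* ↦ φ_{i, n} x_n $, and using $ i+n \equiv i-1 $ modulo $ n+1 $, this becomes $ φ_{i-1,1} φ_{i,n}\, x_1 x_n = φ_{i,1} φ_{i+1,n}\, x_1 x_n $. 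Since it must hold for all $ x = (x_1, x_n) ∈ ℂ^2 $, it remains to prove the coefficient identity
\begin{equation*}
φ_{i-1, 1}\, φ_{i, n} = φ_{i, 1}\, φ_{i+1, n}, \qquad 0 ≤ i ≤ n,
\end{equation*}
for every $ φ ∈ \CG_Γ $.

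Next I would apply the coherence relation twice. Taking $ (i,j,k) = (i-1,1,n) $ and using $ 1+n \equiv 0 $ gives $ φ_{i,n} φ_{i-1,1} = φ_{i-1,0}\, φ_{1,n} $, while $ (i,j,k) = (i,1,n) $ gives $ φ_{i+1,n} φ_{i,1} = φ_{i,0}\, φ_{1,n} $. Thus both products in the coefficient identity are multiples of $ φ_{1,n} $, and the identity follows once $ φ_{k,0} $ is seen to be independent of $ k $. This is immediate: since $ g_0 = 1 $, the gauge action fixes $ φ_{k,0} $, so the function $ φ ↦ φ_{k,0} $ is constant on the orbit $ \GL φ^{(0)} $ with value $ φ^{(0)}_{k,0} = 1 $, hence constant equal to $ 1 $ on the closure $ \CG_Γ $. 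As an independent sanity check, on the dense orbit one has $ φ_{i,j} = g_{i+j} g_i^{-1} g_j^{-1} $, and a one-line cancellation shows both sides of the coefficient identity equal $ g_1^{-1} g_n^{-1} $; the identity then propagates to all of $ \CG_Γ $ by continuity of polynomial functions.

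I do not expect a genuine obstacle here: the argument is a finite bookkeeping exercise once the cyclic index conventions are pinned down. The only place where care is needed is tracking which arrow of $ \bar{Q} $ is $ A_i $ versus $ A_i^* $ together with the reduction $ n \equiv -1 $ modulo $ n+1 $, so that the two relevant products both collapse onto $ φ_{1,n} $.
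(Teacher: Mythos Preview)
Your argument is correct. The paper takes a slightly more streamlined route: it simply checks that the preprojective relations hold for $ R(φ^{(0)}, x) $ (immediate, since every $ φ^{(0)}_{i,j} = 1 $), invokes $ \GL $-equivariance of $ R $ to deduce them on the whole orbit $ \GL φ^{(0)} × ℂ^2 $, and then passes to the closure $ \CG_Γ × ℂ^2 $ by a limit argument. You instead route the verification through the coherence relations of \autoref{sec:caseA-1}, reducing the preprojective relation at vertex $ i $ to the equality $ φ_{i-1,0} = φ_{i,0} $, which you then settle by noting that $ φ_{k,0} $ is gauge-invariant and hence constant on $ \CG_Γ $. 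Your ``sanity check'' at the end is in fact exactly the paper's argument. Both approaches are equivalent in strength; the paper's is shorter because it bypasses the coherence-relation intermediary (which was itself established by the same orbit-plus-closure principle), while yours makes the underlying algebraic identity more explicit.
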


\begin{proof}
We start with the observation that $ R(φ, x) $ is in every case a quiver representation and $ R $ defines a $ \GL $-equivariant map $ R: \CG_Γ × ℂ^2 → \Rep(\Qbar, α) $ to the representations of the Kleinian double quiver. Next, we observe due to the choice $ φ^{(0)}_{i, j} = 1 $, the specific representation $ R(φ^{(0)}, x) $ satisfies the preprojective relations $ A_i A_i^* - A_{i+1}^* A_{i+1}^* = 0 $. This implies that $ R(gφ^{(0)}, g(g^{-1} x)) = g R(φ^{(0)}, g^{-1} x) $ also satisfies the preprojective relations for any $ g ∈ \GL $ and $ x ∈ ℂ^2 $. By a standard limit argument, the preprojective conditions then also hold for any $ (φ, x) ∈ \CG_Γ × ℂ^2 $. We conclude that $ R $ becomes a $ \GL $-equivariant map $ R: \CG_Γ × ℂ^2 → \Rep(Π_Q, α) $. This finishes the proof.
\end{proof}

\subsection{Stage 3: Verification that $ R^* $ is an isomorphism of invariants}
\label{sec:caseA-3}
It is our task to prove the following lemma.

\begin{lemma}
\label{th:caseA-Rstarinv}
The map $ π ∘ R^*: ℂ[\Rep(Π_Q, α)]^{\GL} → ℂ[\CG_Γ × ℂ^2]^{\GL} → ℂ[X, Y]^{\Stab_{\GL} (φ^{(0)})} $ is an isomorphism.
\end{lemma}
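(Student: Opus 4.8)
The plan is to evaluate the composite $\pi\circ R^*$ directly on a set of algebra generators and to recognise the output as generators of the target ring. First I would make the source ring explicit. Since the diagonal $\mathbb{C}^*$ acts trivially on $\Rep(\Pi_Q,\alpha)$, the ring $\mathbb{C}[\Rep(\Pi_Q,\alpha)]^{\GL}$ agrees with the invariants for the full vertex torus, which by the classical identification recalled in \autoref{sec:prelim} is the coordinate ring of the Kleinian $A_n$ singularity. Concretely one carries out the torus-weight computation on the doubled cyclic quiver $\Qbar$: a monomial in the arrows $A_i,A_i^*$ is invariant exactly when the exponent differences $m(A_i)-m(A_i^*)$ do not depend on $i$, so $\mathbb{C}[\Rep(\Qbar,\alpha)]^{\GL}$ is generated by the long loop $a$ formed by composing all the $A_i$, the long loop $b$ formed by composing all the $A_i^*$, and the short loops $A_iA_i^*$. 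Imposing the preprojective relations identifies all short loops with a single class $c$ (up to sign) and yields the presentation $\mathbb{C}[\Rep(\Pi_Q,\alpha)]^{\GL}\cong\mathbb{C}[a,b,c]/(ab-c^{n+1})$.

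Next I would compute the images of these generators. From the description of $R(\varphi,x)$ in \autoref{sec:caseA-2}, the arrow $A_i$ is sent to $\varphi_{i-1,1}x_1$ and $A_i^*$ to $\varphi_{i,n}x_n$, so $R^*(a)$, $R^*(b)$, $R^*(c)$ are products of $\varphi$-entries times $x_1^{n+1}$, $x_n^{n+1}$, $x_1x_n$ respectively. Applying $\pi$, that is, evaluating at $\varphi^{(0)}$ where every entry equals $1$, removes the $\varphi$-factors, so $(\pi\circ R^*)(a)=x_1^{n+1}$, $(\pi\circ R^*)(b)=x_n^{n+1}$ and $(\pi\circ R^*)(c)=x_1x_n$.

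It remains to identify the target ring. A group element $g\in\GL$ fixes $\varphi^{(0)}$ precisely when $g_{i+j}=g_ig_j$ for all $i,j$, i.e.\ when $k\mapsto g_k$ is a character of $\mathbb{Z}/(n+1)$; hence $\Stab_{\GL}(\varphi^{(0)})\cong\mu_{n+1}$, acting on $\mathbb{C}^2=U_1\oplus U_n$ by $(x_1,x_n)\mapsto(\omega x_1,\omega^{-1}x_n)$, which is the standard action of the Kleinian group $\Gamma\subset\SL_2(\mathbb{C})$. Therefore $\mathbb{C}[X,Y]^{\Stab_{\GL}(\varphi^{(0)})}$ is generated by $x_1^{n+1}$, $x_n^{n+1}$, $x_1x_n$ subject to the single relation $(x_1^{n+1})(x_n^{n+1})=(x_1x_n)^{n+1}$. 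Since $\pi\circ R^*$ is a ring homomorphism (a composite of pullbacks) carrying the generators $a,b,c$ bijectively to these generators and respecting the defining relation $ab=c^{n+1}$, it is an isomorphism.

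The step I expect to require the most care is pinning down the presentation of $\mathbb{C}[\Rep(\Pi_Q,\alpha)]^{\GL}$: one must check, with the correct arrow indexing and signs, that the preprojective relations identify all short loops with one class and produce precisely the relation $ab=c^{n+1}$ and no spurious generators. This can be sidestepped by instead invoking the known isomorphism $\Rep(\Pi_Q,\alpha)\sslash\GL\cong\mathbb{C}^2\sslash\Gamma$ and only tracking which invariant functions correspond to $X^{n+1}$, $Y^{n+1}$ and $XY$, which already suffices for the generator-matching argument above.
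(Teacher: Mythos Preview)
Your proposal is correct and follows essentially the same approach as the paper: pick the three generators $A_{n+1}\cdots A_1$, $A_1^*\cdots A_{n+1}^*$, $A_1A_1^*$ on the source side, compute that $\pi\circ R^*$ sends them to $X^{n+1}$, $Y^{n+1}$, $XY$, and observe that both rings are presented by three generators subject to the single relation $ab=c^{n+1}$. You supply more justification than the paper does for the presentation of $\mathbb{C}[\Rep(\Pi_Q,\alpha)]^{\GL}$ and you fold in the identification of the stabilizer (which the paper separates out as Stage~4), but the core argument is the same generator-and-relation matching.
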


\begin{proof}
The trick is to regard the three generating elements of the domain and codomain. In the codomain we pick the three elements
\begin{equation*}
A' = X^{n+1}, \quad B' = Y^{n+1}, \quad C' = XY.
\end{equation*}
In the domain we pick the three elements
\begin{equation*}
A = A_{n+1} … A_1, \quad B = A_1^* … A_{n+1}^*, \quad C = A_1 A_1^*.
\end{equation*}
Evidently, we have $ π(R^* (A)) = A' $ and $ π(R^* (B)) = B' $ and $ π(R^* (C)) = C' $. In both rings, the only relation satisfied by the three generators is $ A' B' - C'^{n+1} = 0 $. This finishes the proof.
\end{proof}

\subsection{Stage 4: Identification of the stabilizer of $ φ^{(0)} $}
\label{sec:caseA-4}
We shall examine the stabilizer of $ φ^{(0)} $ under the $ \GL $-action. Let $ g = (g_1, …, g_n) ∈ \GL $ and assume $ gφ^{(0)} = φ^{(0)} $. Then $ g_{i+j} = g_i g_j $ for any $ i, j $. We immediately see that $ g_1^{n+1} = 1 $ and $ g_i = g_1^i $. We immediately conclude that the stabilizer group of $ φ^{(0)} $ under the $ \GL $-action is the cyclic group of order $ n+1 $ generated by the element $ σ ∈ \GL $ given by $ σ_j = e^{2πij/(n+1)} $:
\begin{align*}
& \Stab_{\GL} (φ^{(0)}) = ⟨σ⟩ ⊂ \GL , \\
& σ = (e^{2πij/(n+1)})_j.
\end{align*}
The element $ σ $ acts on $ ℂ^2 = U_1 ⊕ U_n $ by $ σ(x_1, x_n) = (e^{2πi/(n+1)}, e^{-2πi/(n+1)}) $. This provides an identification
\begin{align*}
\Stab_{\GL} (φ^{(0)}) &\isoto Γ, \\
σ &↦ \pmat{e^{2πi/(n+1)} & 0 \\ 0 & e^{-2πi/(n+1)}}.
\end{align*}
Under this identification, the action of $ \Stab_{\GL} (φ^{(0)}) $ on $ ℂ^2 $ agrees with the action of $ Γ $ on $ ℂ^2 $. This finishes stage 4.

\subsection{Stage 5: Identification of the $ θ_1 $-semistable locus}
\label{sec:caseA-5}
It is our task to prove \autoref{th:caseA-cgtheta1}. We start by formulating the $ (n+1)θ_1 $-semiinvariant $ f_0: \CG_Γ → ℂ $ explicitly:
\begin{equation*}
f_0 (φ) = \prod_{i, j = 0}^n φ_{i, j}.
\end{equation*}

\begin{lemma}
\label{th:caseA-cgtheta1}
Let $ φ ∈ \CG_Γ $. If $ f_0 (φ) ≠ 0 $, then $ φ ∈ \GL φ^{(0)} $.
\end{lemma}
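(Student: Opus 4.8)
The plan is to read the statement cohomologically. The hypothesis $f_0(\varphi) = \prod_{i,j}\varphi_{i,j} \neq 0$ says precisely that every scalar $\varphi_{i,j}$ is nonzero, so $\varphi$ may be viewed as a function $(\mathbb{Z}/(n+1))^{2} \to \mathbb{C}^{*}$. Under this identification the coherence relation $\varphi_{i+j,k}\varphi_{i,j} = \varphi_{i,j+k}\varphi_{j,k}$, which holds on $\CG_\Gamma$, is exactly the $2$-cocycle condition for the trivial action of $\Gamma = C_{n+1}$ on $\mathbb{C}^{*}$; and the normalizations $\varphi_{i,0} = \varphi_{0,j} = 1$, which hold on the orbit $\GL\varphi^{(0)}$ (there $(g\varphi^{(0)})_{i,0} = g_i\cdot g_i^{-1}\cdot g_0^{-1} = 1$) and hence on its closure $\CG_\Gamma$ since they are closed conditions, say that this cocycle is normalized. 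On the other hand $\varphi$ lies in $\GL\varphi^{(0)}$ exactly when there is a tuple $(g_1,\dots,g_n) \in (\mathbb{C}^{*})^{n}$, together with $g_0 = g_{n+1} = 1$, such that $\varphi_{i,j} = g_{i+j}g_i^{-1}g_j^{-1}$ for all $i,j$, i.e.\ exactly when $\varphi$ is a $2$-coboundary. Thus the lemma is equivalent to the vanishing of $H^{2}(C_{n+1}, \mathbb{C}^{*})$, which holds because $\mathbb{C}^{*}$ is divisible, so that $H^{2}(C_{n+1},\mathbb{C}^{*}) \cong \mathbb{C}^{*}/(\mathbb{C}^{*})^{n+1}$ is trivial.

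To make this self-contained I would build $g$ by hand. Set $g_0 = 1$ and, for a scalar $\lambda$ to be fixed, put $g_i := \lambda^{i}\prod_{k=1}^{i-1}\varphi_{1,k}$ for $1 \le i \le n$, so all $g_i$ are nonzero. Iterating the coherence relation with first index equal to $1$ expresses every $\varphi_{a,b}$ through the values $\varphi_{1,0},\dots,\varphi_{1,n}$, and a short induction then yields $g_{a+b} = \varphi_{a,b}g_a g_b$ whenever $a+b \le n$; in particular $g\varphi^{(0)}$ already agrees with $\varphi$ on all entries $\varphi_{1,l}$ with $1 \le l \le n-1$. To force the remaining agreement $(g\varphi^{(0)})_{1,n} = g_1^{-1}g_n^{-1} = \varphi_{1,n}$ I take $\lambda$ to be any $(n+1)$-th root of $\big(\prod_{k=1}^{n}\varphi_{1,k}\big)^{-1}$, which exists in $\mathbb{C}^{*}$ precisely because the $\varphi_{1,k}$ are nonzero. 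Finally, since both $\varphi$ and $g\varphi^{(0)}$ satisfy the coherence relation and have nonzero entries, each is determined by its values at $(1,0),\dots,(1,n)$; these now coincide, so $g\varphi^{(0)} = \varphi$ and hence $\varphi \in \GL\varphi^{(0)}$.

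The one genuine obstacle is the wrap-around: the $g_i$ coming from the first-index-$1$ recursion automatically satisfy every relation indexed by a pair $(a,b)$ with $a+b \le n$, but imposing the single surviving relation $g_1 g_n = \varphi_{1,n}^{-1}$ pins down $\lambda^{n+1}$, so one must extract an $(n+1)$-th root --- exactly the point where divisibility of $\mathbb{C}^{*}$, equivalently $H^{2}(C_{n+1},\mathbb{C}^{*}) = 0$, is used. Everything else is routine manipulation of the coherence identity, which is available on all of $\CG_\Gamma$ by passing to limits from the orbit.
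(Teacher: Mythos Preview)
Your proof is correct and, once you pass to the ``self-contained'' construction of $g$, it is essentially the same argument as the paper's: gauge so that the first row $\varphi_{1,\bullet}$ is trivialized (the paper phrases this as ``by gauging we can achieve $\varphi_{1,i}=1$'', leaving the $(n+1)$-th root extraction implicit), then use the coherence relation with first index $1$ to propagate to all entries. Your cohomological reformulation via $H^{2}(C_{n+1},\mathbb{C}^{*})=0$ is a pleasant conceptual bonus that the paper does not spell out, but the underlying computation coincides.
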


\begin{proof}
By assumption we have $ φ_{i, j} ≠ 0 $ for all $ 0 ≤ i, j ≤ n $. By gauging $ φ $, we can achieve $ φ_{1, i} = 1 $ for all $ i $. Since $ φ $ lies in $ \CG_Γ $, it satisfies the coherence and symmetry relations, and in particular $ φ_{i+1, j} φ_{1, i} = φ_{1, i+j} φ_{i, j} $ for all $ 0 ≤ i, j ≤ n $. We inductively deduce that $ φ_{i, j} = 1 $ for all $ 0 ≤ i, j ≤ n $ and conclude that $ φ $ lies in the orbit $ \GL φ^{(0)} $. This finishes the proof.
\end{proof}

\subsection{Stage 6: Identification of the $ θ_2 $-semistable locus}
\label{sec:caseA-6}
We are now ready to investigate the $ θ_2 $-semistable locus of $ \CG_Γ × ℂ^2 $. Our strategy is to utilize the $ \GL $-equivariant map $ R: \CG_Γ × ℂ^2 → \Rep(Π_Q, α) $. The standard argument of semiinvariants shows that if $ R(φ, x) $ is $ θ_2 $-semistable, then $ (φ, x) $ is $ θ_2 $-semistable as well. As we shall prove, the converse statement is also true.

\begin{figure}
\centering
\begin{subfigure}{0.99\linewidth}
\centering
\begin{minipage}{0.4\linewidth}
\begin{tikzpicture}
\path[fill] (90:2) circle[radius=0.05];
\foreach \i in {0, 2, 4, 5, 6, 8, 9, 11, 13, 14} {\path[draw, bend right, ->] ($ (22.5*\i + 3:2.1) $) to ($ (22.5*\i + 19.5:2.1) $);};
\foreach \i in {} {\path[draw, bend left, <-] ($ (22.5*\i + 3:2) $) to ($ (22.5*\i + 19.5:2) $);};
\path ($ (22.5*0:2.4) $) node {12};
\path ($ (22.5*1:2.4) $) node {12};
\path ($ (22.5*2:2.4) $) node {14};
\path ($ (22.5*3:2.4) $) node {14};
\path ($ (22.5*5:2.4) $) node {0};
\path ($ (22.5*6:2.4) $) node {0};
\path ($ (22.5*7:2.4) $) node {0};
\path ($ (22.5*8:2.4) $) node {4};
\path ($ (22.5*9:2.4) $) node {4};
\path ($ (22.5*10:2.4) $) node {4};
\path ($ (22.5*11:2.4) $) node {7};
\path ($ (22.5*12:2.4) $) node {7};
\path ($ (22.5*13:2.4) $) node {9};
\path ($ (22.5*14:2.4) $) node {9};
\path ($ (22.5*15:2.4) $) node {9};
\end{tikzpicture}
\end{minipage}
%
\begin{minipage}{0.4\linewidth}
\begin{tikzpicture}
\path[fill] (90:2) circle[radius=0.05];
\foreach \i in {0, 4, 5, 6, 8, 9, 11, 13, 14,  2, 3} {\path[draw, bend right, ->] ($ (22.5*\i + 3:2.1) $) to ($ (22.5*\i + 19.5:2.1) $);};
\foreach \i in {0, 4, 5, 6, 7, 8, 9, 10, 11, 12, 13, 14, 15} {\path[draw, bend left, <-] ($ (22.5*\i + 3:2) $) to ($ (22.5*\i + 19.5:2) $);};
\path ($ (22.5*0:2.4) $) node {12};
\path ($ (22.5*1:2.4) $) node {12};
\path ($ (22.5*2:2.4) $) node {14};
\path ($ (22.5*3:2.4) $) node {14};
\path ($ (22.5*5:2.4) $) node {0};
\path ($ (22.5*6:2.4) $) node {0};
\path ($ (22.5*7:2.4) $) node {0};
\path ($ (22.5*8:2.4) $) node {4};
\path ($ (22.5*9:2.4) $) node {4};
\path ($ (22.5*10:2.4) $) node {4};
\path ($ (22.5*11:2.4) $) node {7};
\path ($ (22.5*12:2.4) $) node {7};
\path ($ (22.5*13:2.4) $) node {9};
\path ($ (22.5*14:2.4) $) node {9};
\path ($ (22.5*15:2.4) $) node {9};
\end{tikzpicture}
\end{minipage}
\caption{This figure depicts two sample representations and their exponents $ α_i $ in the $ x_n = 0 $ case.}
\label{fig:caseA-exponents-rule-1}
\end{subfigure}
%
\begin{subfigure}{0.99\linewidth}
\centering
\begin{minipage}{0.4\linewidth}
\begin{tikzpicture}
\path[fill] (90:2) circle[radius=0.05];
\foreach \i in {0, 4, 5, 6, 8, 9, 11, 13, 14} {\path[draw, bend right, ->] ($ (22.5*\i + 3:2.1) $) to ($ (22.5*\i + 19.5:2.1) $);};
\foreach \i in {2, 3} {\path[draw, bend left, <-] ($ (22.5*\i + 3:2) $) to ($ (22.5*\i + 19.5:2) $);};
\path ($ (22.5*0:2.4) $) node {12};
\path ($ (22.5*1:2.4) $) node {12};
\path ($ (22.5*2:2.4) $) node {0};
\path ($ (22.5*3:2.4) $) node {0};
\path ($ (22.5*5:2.4) $) node {0};
\path ($ (22.5*6:2.4) $) node {0};
\path ($ (22.5*7:2.4) $) node {0};
\path ($ (22.5*8:2.4) $) node {4};
\path ($ (22.5*9:2.4) $) node {4};
\path ($ (22.5*10:2.4) $) node {4};
\path ($ (22.5*11:2.4) $) node {7};
\path ($ (22.5*12:2.4) $) node {7};
\path ($ (22.5*13:2.4) $) node {9};
\path ($ (22.5*14:2.4) $) node {9};
\path ($ (22.5*15:2.4) $) node {9};
\end{tikzpicture}
\end{minipage}
%
\begin{minipage}{0.4\linewidth}
\begin{tikzpicture}
\path[fill] (90:2) circle[radius=0.05];
\foreach \i in {0, 4, 5, 6, 8, 9, 11, 13, 14,  1, 2, 3} {\path[draw, bend right, ->] ($ (22.5*\i + 3:2.1) $) to ($ (22.5*\i + 19.5:2.1) $);};
\foreach \i in {2, 3,   0, 4, 5, 6, 7, 8, 9, 10, 11, 12, 13, 14, 15} {\path[draw, bend left, <-] ($ (22.5*\i + 3:2) $) to ($ (22.5*\i + 19.5:2) $);};
\path ($ (22.5*0:2.4) $) node {12};
\path ($ (22.5*1:2.4) $) node {12};
\path ($ (22.5*2:2.4) $) node {0};
\path ($ (22.5*3:2.4) $) node {0};
\path ($ (22.5*5:2.4) $) node {0};
\path ($ (22.5*6:2.4) $) node {0};
\path ($ (22.5*7:2.4) $) node {0};
\path ($ (22.5*8:2.4) $) node {4};
\path ($ (22.5*9:2.4) $) node {4};
\path ($ (22.5*10:2.4) $) node {4};
\path ($ (22.5*11:2.4) $) node {7};
\path ($ (22.5*12:2.4) $) node {7};
\path ($ (22.5*13:2.4) $) node {9};
\path ($ (22.5*14:2.4) $) node {9};
\path ($ (22.5*15:2.4) $) node {9};
\end{tikzpicture}
\end{minipage}
\caption{This figure depicts two sample representations and their exponents $ α_i $ in the $ x_1, x_n ≠ 0 $ case.}
\label{fig:caseA-exponents-rule-2}
\end{subfigure}
\caption{This figure visually depicts the rule for determining the exponents $ α_i $ used in the proof of \autoref{th:caseA-theta2-semistable}. Part (\subref{fig:caseA-exponents-rule-1}) concerns the case $ x_n = 0 $ and part (\subref{fig:caseA-exponents-rule-2}) concerns the case $ x_1, x_n ≠ 0 $. Each of the four graphics qualitatively depicts a representation of the $ A_n $ quiver with $ n = 15 $. The solid arrows are meant to depict a nonzero arrow value, and all missing arrows are meant to depict the zero arrow value. The special vertex of the quiver is highlighted and positioned in the top of the images. The number at the vertex $ i $ is the exponent $ α_i $ which we associate with that vertex. The representations on the left have more zero arrows than the representations on the right, but both representations in each row have the same exponents $ α_i $.}
\label{fig:caseA-exponents-rule}
\end{figure}
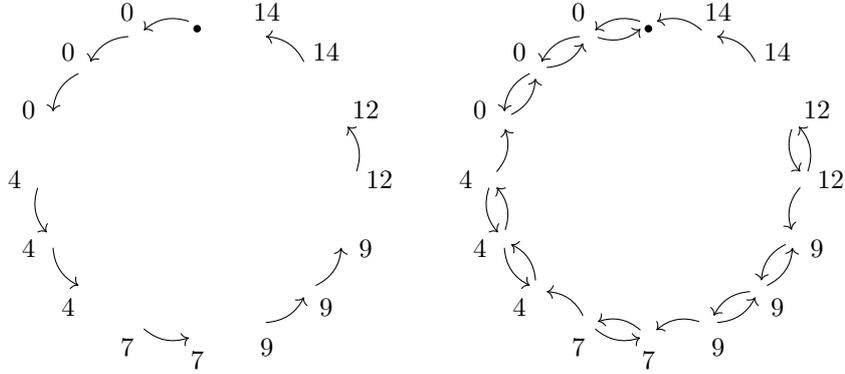
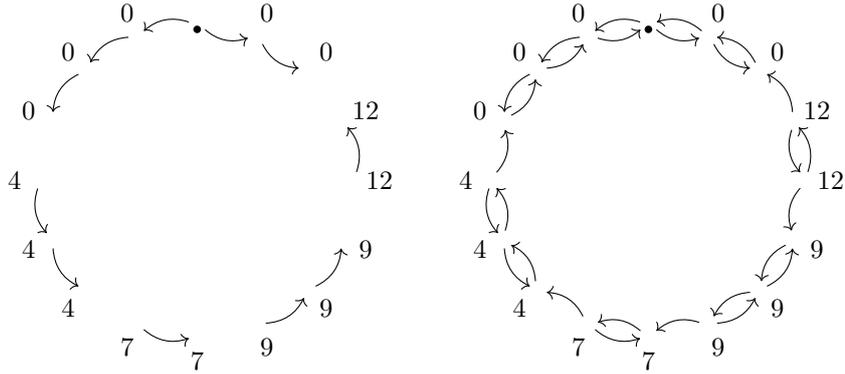

\begin{lemma}
\label{th:caseA-theta2-semistable}
Let $ (φ, x) ∈ \CG_Γ × ℂ^2 $ be $ θ_2 $-semistable. Then $ R(φ, x) ∈ \Rep(Π_Q, α) $ is $ θ_2 $-semistable.
\end{lemma}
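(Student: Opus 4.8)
The plan is to argue by the contrapositive together with the Hilbert--Mumford criterion: assuming $R(φ,x)$ is not $θ_2$-semistable, I exhibit a one-parameter subgroup of $\GL$ that destabilizes $(φ,x)$. The first step is to translate $θ_2$-semistability of quiver representations into combinatorics. On $\Rep(Π_Q,α)$ the relevant stability parameter is the standard lift of $θ_2$ to $ℤ^{Q_0}$, with weight $-n$ at the affine vertex $0$ and weight $+1$ at every other vertex, so that it pairs to zero with $α=(1,\dots,1)$. Since every subdimension vector of $α$ has all entries in $\{0,1\}$, a short King-style computation shows that $ρ\in\Rep(Π_Q,α)$ is $θ_2$-semistable if and only if $ρ$ is generated by the one-dimensional space at vertex $0$, equivalently every vertex is reachable from $0$ along a path of nonzero arrows. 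So suppose $R(φ,x)$ fails to be generated at vertex $0$, and let $S\subseteq ℤ/(n+1)$ be the set of vertices reachable from $0$ and $T$ its complement, so $T\ne\varnothing$. Because the underlying graph is a cycle, $S$ is an arc through $0$: write its residues as $\{-a,-a+1,\dots,b\}$ for integers $a,b\ge0$, with $a+b\le n-1$. The two arcs of nonzero arrows give $b\ge1$ whenever $x_1\ne0$ and $a\ge1$ whenever $x_n\ne0$, since the arrows $0\to1$ and $0\to n$ of $R(φ,x)$ have values $φ_{0,1}x_1=x_1$ and $φ_{0,n}x_n=x_n$, using that $φ_{0,j}$ and $φ_{j,0}$ are identically $1$ on the orbit $\GL φ^{(0)}$, hence on $\CG_Γ$.

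Next I write down the destabilizing one-parameter subgroup. Set $α_i=0$ for $i\in S$ and $α_i=1$ for $i\in T$ (with $α_0:=0$, consistently since $0\in S$), and let $μ(t)=(t^{α_i})_{i=1,\dots,n}\in\GL$. By the Hilbert--Mumford criterion it is enough to check three things: (i) $μ$ pairs positively with $θ_2$, i.e.\ $\sum_{i=1}^n α_i=|T|\ge1$; (ii) $\lim_{t\to0}μ(t)^{-1}x$ exists, which, since $μ(t)^{-1}$ acts on $ℂ^2=U_1\oplus U_n$ by $(t^{-α_1},t^{-α_n})$, amounts to $α_1=0$ when $x_1\ne0$ and $α_n=0$ when $x_n\ne0$ --- both guaranteed by the previous paragraph; and (iii) $\lim_{t\to0}μ(t)^{-1}φ$ exists. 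For (iii), note $(μ(t)^{-1}φ)_{ij}=t^{\,α_i+α_j-α_{i+j}}φ_{ij}$ (indices mod $n+1$), so since $\CG_Γ$ is closed it suffices that $α_i+α_j\ge α_{i+j}$ whenever $φ_{ij}\ne0$; as the exponents lie in $\{0,1\}$, this can only fail if $i,j\in S$, $(i+j)\bmod(n+1)\in T$, and $φ_{ij}\ne0$.

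The heart of the matter, and the step I expect to be the main obstacle, is therefore the claim: for $φ\in\CG_Γ$, if $i,j\in S$ and $φ_{ij}\ne0$, then $(i+j)\bmod(n+1)\in S$. I prove this using only the coherence relations $φ_{p+q,r}φ_{p,q}=φ_{p,q+r}φ_{q,r}$, which hold on all of $\CG_Γ$. Up to the symmetry $U_\ell\leftrightarrow U_{-\ell}$, which reverses the cycle and exchanges the roles of $x_1$ and $x_n$, we may assume $i$ and $j$ both lie in the clockwise half $\{0,1,\dots,b\}$ of $S$; pairs in which $i$ or $j$ is $0$, or in which $i,j$ have opposite signs in $\{-a,\dots,b\}$, give $i+j\in\{-a,\dots,b\}$ as an integer and hence $(i+j)\bmod(n+1)\in S$ automatically. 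So take $1\le i,j\le b$; then $φ_{\ell,1}\ne0$ for $0\le\ell\le b-1$ while $φ_{b,1}=0$. Chaining the coherence relation with $r=1$ through the factors $φ_{i,j},φ_{i,j-1},\dots,φ_{i,1}$ (the right-hand factor $φ_{\ell,1}$ being nonzero at each step because $\ell\le j-1\le b-1$) shows that $φ_{ij}\ne0$ forces $φ_{\ell,1}\ne0$ for the $j$ consecutive residues $\ell=i,i+1,\dots,i+j-1\pmod{n+1}$; but if $i+j>b$ this list contains $\ell=b$ (as $i\le b\le i+j-1$ and $b\le n-1$), contradicting $φ_{b,1}=0$. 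Hence $φ_{ij}\ne0$ implies $i+j\le b$, so $(i+j)\bmod(n+1)=i+j\in S$. This establishes the claim, hence (iii), so $μ$ destabilizes $(φ,x)$ and the lemma follows. What remains is routine: the King computation of the first paragraph, the verification that $S$ is genuinely an arc, and the bookkeeping for the degenerate cases $x_1=0$ or $x_n=0$ (where $S$ shrinks to a single half-arc, or to $\{0\}$, and the same argument, or its mirror, applies); a hands-on form of this exponent assignment is the rule depicted in Figure~\ref{fig:caseA-exponents-rule}.
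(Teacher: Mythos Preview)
Your argument is correct, and in fact streamlines the paper's own proof considerably. Both proofs proceed by contraposition and the Hilbert--Mumford criterion, constructing a destabilizing one-parameter subgroup $g(t)=(t^{α_1},\dots,t^{α_n})$ when $R(φ,x)$ is not generated at the affine vertex. The difference lies in the choice of exponents. The paper defines $α_i$ by a ``last zero'' formula such as $α_i=\max\{j\le i-1:φ_{j,1}=0\}+1$, treats the three cases $x_n=0$, $x_1=0$, $x_1,x_n\ne0$ separately, and then runs a rather lengthy case analysis (several subcases in each regime) to establish $φ_{ij}\ne0\Rightarrow α_{i+j}\le α_i+α_j$. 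Your choice of $\{0,1\}$-valued exponents coming from the reachable set $S$ collapses all of this to the single combinatorial statement that $S$ is closed under ``$φ$-addition'', which you prove cleanly by a short coherence chain; the three $x$-cases are absorbed into the shape of the arc $S=\{-a,\dots,b\}$. What your approach buys is conceptual clarity and a much shorter proof; what the paper's finer exponents buy is perhaps a more explicit description of the limit point, though that is not needed here. One small point worth spelling out: your assertion that $φ_{\ell,1}\ne0$ for all $0\le\ell\le b-1$ is not quite immediate from the definition of $S$ as an arc---it uses that any simple path on the cycle is monotone, so if some clockwise step $\ell\to\ell{+}1$ were blocked with $\ell{+}1\le b$, then $\ell{+}1$ would have to be reached counterclockwise, forcing $S$ to be the whole cycle. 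This is indeed routine, as you say, but it is the one place a reader might pause. Finally, your closing reference to Figure~\ref{fig:caseA-exponents-rule} is slightly misleading: that figure depicts the paper's multi-valued exponents, not your $0/1$ indicator.
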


\begin{proof}
Let us first sketch the structure of the proof. The basic setup is to prove the statement by contraposition. Therefore we shall assume that $ R(φ, x) $ is not $ θ_2 $-semistable, and shall deduce that $ (φ, x) $ is not $ θ_2 $-semistable. We prove that $ (φ, x) $ is not $ θ_2 $-semistable by applying Hilbert-Mumford criterion. This entails that we construct an explicit one-parameter subgroup $ g(t) = (t^{α_1}, …, t^{α_n}) ⊂ \GL $ which pairs positively with $ θ_2 $ and for which $ \lim_{t → 0} g(t)^{-1} (φ, x) $ exists within $ \CG_Γ × ℂ^2 $. Since $ \CG_Γ $ is a closed subset of an affine space, it suffices to simply show that $ g(t)^{-1} (φ, x) $ converges within the affine space. Our specific construction of the exponents $ α_1, …, α_n $ depends on several factors, and we shall proceed by case distinction. We write $ x = (x_1, x_n) ∈ U_1 ⊕ U_n = ℂ^2 $.

\begin{itemize}
\item Consider the case that $ x_n = 0 $. We define integers
\begin{align*}
α_1 &= 0, \\
α_i &= \max \{j \running 1 ≤ j ≤ i-1 \text{ and } φ_{j, 1} = 0\} + 1, \quad i = 2, …, n.
\end{align*}
If the set over which the maximum is taken is empty, then we have $ φ_{j, 1} ≠ 0 $ for all $ 1 ≤ j ≤ i-1 $ and we define $ α_i = 1 $. For convenience purposes, we may write $ α_0 = 0 $. To restate the definition in other words, the value $ α_i $ is simply the number smaller than $ i $ such that $ φ_{α_i - 1, 1} = 0 $ and $ φ_{α_i, 1}, …, φ_{i-1, 1} ≠ 0 $. Let us list a few important properties of the sequence $ (α_i) $:
\begin{align*}
& 1 ≤ α_i ≤ i, \\
& α_i ≤ α_{i+1}, \\
& α_i ≤ i-1 ~\Longrightarrow~ α_i = α_{i-1}.
\end{align*}
We choose the one-parameter subgroup as $ g(t) = (t^{α_1}, …, t^{α_n}) $. Since $ R(φ, x) $ is not $ θ_2 $-semistable, at least one of the values $ φ_{1, 1}, …, φ_{1, n-1} $ vanishes and therefore we have $ 1 ≤ α_n $. Therefore the group $ g(t) $ pairs positively with $ θ_2 $. The limit $ \lim_{t → 0} g(t)^{-1} x $ exists, since we assumed $ x_n = 0 $ and we have $ α_1 = 1 $. We now claim that $ \lim_{t → 0} g(t)^{-1} φ $ exists.

In an inductive fashion ranging over the pairs $ (i, j) $ with $ 1 ≤ i, j ≤ n $ we shall prove that $ φ_{i, j} ≠ 0 $ implies $ α_{i+j} ≤ α_i + α_j $. Recall that in the notation $ α_{i+j} $ the index is taken modulo $ n+1 $, and within this convention $ α_0 $ shall be interpreted as zero.

Regard the case $ i+j ≥ n+1 $. Then $ i+j-(n+1) ≤ i $ and thus $ α_{i+j} ≤ α_i $.

Regard the case $ i+j < n $ and $ α_i = i $ and $ α_j = j $. Then $ α_{i+j} ≤ i+j = α_i + α_j $, which finishes the case.

Regard the case $ i+j < n $ and $ α_{i+j} = i+j $. The coherence relations yield $ φ_{i, j} φ_{1, j-1} = φ_{i, j-1} φ_{1, i+j-1} $. Since $ φ_{i, j} ≠ 0 $ by assumption and $ φ_{1, i+j-1} = 0 $ by assumption that $ α_{i+j} = i+j $, we conclude $ φ_{1, j-1} = 0 $. This proves $ α_j = j $. Similarly we conclude $ α_i = i $. Finally, we have $ α_{i+j} = α_i + α_j $, which finishes the case.

Regard the case $ i+j < n $ and $ α_j < j $ and $ α_{i+j} < i+j $. We conlude $ i ≥ 2 $. By induction we can assume that we have already proven the statement for the pair $ (i-1, j) $. Note that $ φ_{1, j-1} φ_{i, j} = φ_{i, j-1} φ_{1, i+j-1} $. Since $ φ_{1, j-1} ≠ 0 $ and $ φ_{i, j} ≠ 0 $ and $ φ_{1, i+j-1} ≠ 0 $, we deduce $ φ_{i, j-1} ≠ 0 $. By induction we can assume that we have already proven the statement for the pair $ (i, j-1) $. Therefore we have $ α_{i+j-1} ≤ α_i + α_{j-1} $. Since $ α_i ≤ i-1 $ and $ α_{i+j} ≤ i+j-1 $, we deduce that $ α_{i+j-1} = α_{i+j} $ and $ α_{j-1} = α_j $, which finishes the case.

Regard the case $ i+j < n $ and $ α_i < i $ and $ α_{i+j} < i+j $. This case is dealt with in a manner analogous to the case $ α_j ≤ j-1 $.

We have now exhausted all cases and conclude that $ φ_{i, j} ≠ 0 $ implies $ α_{i+j} ≤ α_i + α_j $.

\item Consider the case that $ x_1 = 0 $. This case is dealt with in an analogous fashion.

\item Consider the case that $ x_1 ≠ 0 $ and $ x_n ≠ 0 $. We shall define the numbers $ α_i $ and pick $ g(t) = (t^{α_1}, …, t^{α_n}) $. For $ i = 1, …, n $ define $ α_i $ as
\begin{equation*}
α_i = \begin{cases} 0, & \quad \text{if } φ_{1, 1}, …, φ_{i-1, 1} ≠ 0, \\
0, & \quad \text{if } φ_{i+1, n}, …, φ_{n, n} ≠ 0, \\
\max \{j \running 1 ≤ j ≤ i-1 \text{ and } φ_{j, 1} = 0\} + 1, & \quad \text{else.} \end{cases}
\end{equation*}
We shall say that $ α_i $ is defined by the first, second or third alternative, depending on which of the three rules is used. Let us note that in the third alternative, the set over which the maximum is taken is indeed nonempty since the condition of the first alternative does not hold. We observe that all $ α_i $ are non-negative and at least one of them is nonzero. Indeed, since $ R(φ, x) $ is not $ θ_2 $-semistable and $ x_1, x_n ≠ 0 $, there exists an index $ 1 ≤ j ≤ n $ such that one of $ φ_{1, 1}, …, φ_{j-1, 1} $ is zero or one of $ φ_{n, n}, …, φ_{j+1, n} $ is zero. Finally, note that $ 0 ≤ α_i ≤ i $ for all $ i = 1, …, n $. We shall now prove that for any pair $ (i, j) $ with $ 1 ≤ i, j ≤ n $ we have that $ φ_{i, j} ≠ 0 $ implies $ α_{i+j} ≤ α_i + α_j $.

Regard the case that $ i+j ≤ n $ and $ α_i $ or $ α_j $ falls under the second alternative. Then we want to show $ α_{i+j} = 0 $. Indeed, we have $ i+j+1 ≥ i+1 $ and $ i+j+1 ≥ j+1 $, thus $ φ_{n, n}, …, φ_{i+j+1, n} ≠ 0 $, thus $ α_{i+j} = 0 $. This finishes the case.

Regard the case that $ i+j ≤ n $ and $ α_j < j $, and both $ α_i $ and $ α_j $ individually fall under under the first or third alternatives. We show by induction over $ 1 ≤ t ≤ j - α_j $ that $ φ_{i+j-t, 1} ≠ 0 $ and $ φ_{i, j-t} ≠ 0 $. The trick is to use the coherence relation $ φ_{i+j-t, 1} φ_{i, j-t} = φ_{j-t, 1} φ_{i, j-(t-1)} $. In case $ t = 1 $, we have $ φ_{i, j} ≠ 0 $ and by definition of $ α_j $ and $ α_j < j $ we have $ φ_{j-1, 1} ≠ 0 $, which proves the base case $ t = 1 $. For any other $ t $, by definition of $ α_j $ and $ t ≤ j - α_j $ we have $ φ_{j-t, 1} ≠ 0 $ and by induction hypothesis we have $ φ_{i, j-(t-1)} ≠ 0 $. Applying the coherence relation finishes the induction step.

We conclude $ φ_{i+j-1, 1}, …, φ_{i+j-(j-α_j), 1} = φ_{i+α_j, 1} ≠ 0 $, and also $ φ_{i, α_j} ≠ 0 $. We want to prove $ α_{i+j} ≤ α_i + α_j $. In case $ α_i = i $, we obtain $ φ_{i+j-1, 1}, …, φ_{α_i+α_j, 1} ≠ 0 $ and thus $ α_{i+j} ≤ α_i + α_j $ as desired. In case $ α_i < i $, then since $ α_i $ and $ α_j $ fall under the first or third alternative, we apply what we have just proven to $ φ_{α_j, i} ≠ 0 $. We then obtain $ φ_{i+α_j-1, 1}, …, φ_{i+α_j-(i-α_i), 1} = φ_{α_i+α_j, 1} ≠ 0 $ and conclude $ α_{i+j} ≤ α_i + α_j $ as desired. This finishes the cases.

Regard the case that $ i+j ≤ n $ and $ α_i < i $, and both $ α_i $ and $ α_j $ individually fall under under the first or third alternatives. Under the operation of swapping $ i $ and $ j $, this case is identical to the earlier case.

Regard the case that $ i+j ≤ n $ and $ α_i = i $ and $ α_j = j $. Then $ α_{i+j} ≤ i+j = α_i + α_j $ and we are done.

Regard the case that $ i+j > n+1 $, and that $ α_i $ or $ α_j $ falls under the first or third alternative. Then $ i+j-(n+1) ≤ i $ and $ i+j-(n+1) ≤ j $. If $ α_i $ falls under the first or third alternative, we conclude $ α_{i+j} ≤ α_i ≤ α_i + α_j $. If $ α_j $ falls under the first or third alternative, we conclude $ α_{i+j} ≤ α_j ≤ α_i + α_j $. This proves the case.

Regard the case that $ i+j > n+1 $, and that $ α_i $ and $ α_j $ fall under the second alternative. We have $ φ_{n, n}, …, φ_{i+1, n} ≠ 0 $ and $ φ_{n, n}, …, φ_{j+1, n} ≠ 0 $. We shall prove $ α_{i+j} = 0 $ by showing $ φ_{n, n}, …, φ_{i+j-(n+1)+1, n} ≠ 0 $. We shall assume that $ i ≥ j $.

Let us prove by induction that for all $ i ≤ s ≤ n $ we have $ φ_{s, j} ≠ 0 $ and $ φ_{s+j+1, n} ≠ 0 $. The trick is to apply for any $ i ≤ s ≤ n $ the coherence relation to $ U_n ¤ U_{s+1} ¤ U_j $ and obtain $ φ_{n, s+1} φ_{s, j} = φ_{s+1, j} φ_{s+j+1, n} $. Now in the base case $ s = i $, we have $ φ_{n, i+1} ≠ 0 $ and $ φ_{i, j} ≠ 0 $ by assumption, thus $ φ_{s, j} ≠ 0 $ and $ φ_{s+j+1, n} ≠ 0 $. As induction step, we are allowed to assume $ φ_{s, j} ≠ 0 $ and $ φ_{s+j+1, n} ≠ 0 $. Together with $ φ_{s+1, n} ≠ 0 $ we conclude $ φ_{s+1, j} ≠ 0 $ and $ φ_{s+j+1, n} ≠ 0 $. This finishes the induction.

Finally, we conclude $ φ_{i+j+1, n}, …, φ_{n+j+1, n} ≠ 0 $. Note that the index $ n+j+1 $ is just $ j $. Together with our assumption $ φ_{j+1, n}, …, φ_{n, n} ≠ 0 $ this proves that $ α_{i+j} = 0 $. This finishes the case.
\end{itemize}
Finally, in all cases we have constructed a one-parameter subgroup $ g(t) ⊂ \GL $ which pairs positively with $ θ_2 $ and proven that $ \lim_{t → 0} g(t)^{-1} (φ, x) $ exists. This finishes the proof.
\end{proof}

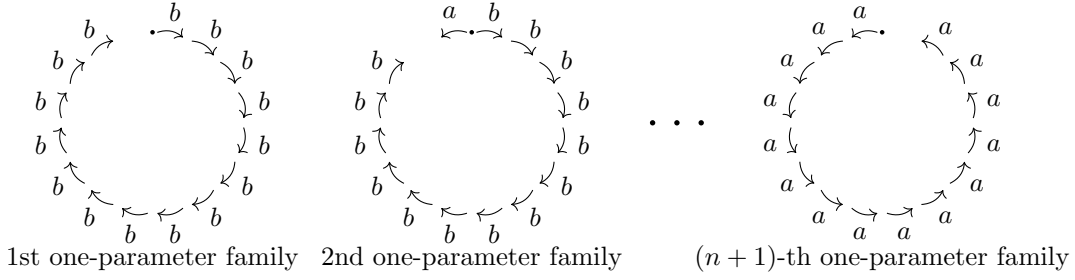
\begin{figure}
\centering
\begin{tikzpicture}[scale=0.6]
\begin{scope}
\path[fill] (90:2) circle[radius=0.05];
\foreach \i in {} {\path[draw, bend right, ->] ($ (22.5*\i + 3:2) $) to ($ (22.5*\i + 19.5:2) $); \path ($ (22.5*\i + 11.25:2.5) $) node {$ a $};};
\foreach \i in {3, 2, 1, 0, 15, 14, 13, 12, 11, 10, 9, 8, 7, 6, 5} {\path[draw, bend right, <-] ($ (22.5*\i + 3:2) $) to ($ (22.5*\i + 19.5:2) $); \path ($ (22.5*\i + 11.25:2.5) $) node {$ b $};};
\path (0, -3) node {1st one-parameter family};
\end{scope}
\begin{scope}[shift={(7, 0)}]
\path[fill] (90:2) circle[radius=0.05];
\foreach \i in {4} {\path[draw, bend right, ->] ($ (22.5*\i + 3:2) $) to ($ (22.5*\i + 19.5:2) $); \path ($ (22.5*\i + 11.25:2.5) $) node {$ a $};};
\foreach \i in {3, 2, 1, 0, 15, 14, 13, 12, 11, 10, 9, 8, 7, 6} {\path[draw, bend right, <-] ($ (22.5*\i + 3:2) $) to ($ (22.5*\i + 19.5:2) $); \path ($ (22.5*\i + 11.25:2.5) $) node {$ b $};};
\path (0, -3) node {2nd one-parameter family};
\end{scope}
\path (11.5, 0) node {\huge …};
\begin{scope}[shift={(16, 0)}]
\path[fill] (90:2) circle[radius=0.05];
\foreach \i in {4, 5, 6, 7, 8, 9, 10, 11, 12, 13, 14, 15, 0, 1, 2} {\path[draw, bend right, ->] ($ (22.5*\i + 3:2) $) to ($ (22.5*\i + 19.5:2) $); \path ($ (22.5*\i + 11.25:2.5) $) node {$ a $};};
\foreach \i in {} {\path[draw, bend right, <-] ($ (22.5*\i + 3:2) $) to ($ (22.5*\i + 19.5:2) $); \path ($ (22.5*\i + 11.25:2.5) $) node {$ b $};};
\path (0, -3) node {$ (n+1) $-th one-parameter family};
\end{scope}
\end{tikzpicture}
\caption{This figure depicts the $ (n+1) $-many one-parameter families of $ θ_2 $-representations that lie in the nullcone. We have expressed the representations $ ρ_{a, b} $ in these families in terms of two parameters $ a, b ∈ ℂ $ for notational convenience, even though one parameter would suffice due to gauging.}
\end{figure}

\begin{lemma}
\label{th:caseA-theta2surj}
Let $ ρ ∈ \Rep(Π_Q, α) $ be a $ θ_2 $-semistable element in the nullcone. Then $ ρ $ lies in the image of $ R $.
\end{lemma}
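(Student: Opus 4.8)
The plan is to invoke the explicit classification of the $θ_2$-semistable representations in the nullcone and to produce, for each of them, a concrete preimage under $R$. Recall from \autoref{sec:prelim} and the work of Crawley-Boevey \cite{cb-kleinian} — with explicit representatives as in \cite{vdKreeke-Namikawa} and in the figure above — that, up to gauge, a $θ_2$-semistable nullcone representation $ρ$ of the Kleinian $A_n$ quiver is one-dimensional at every vertex, is generated from the special vertex, and has the three invariants $A = A_{n+1} \cdots A_1$, $B = A_1^* \cdots A_{n+1}^*$ and $C = A_1 A_1^*$ all equal to zero. A short analysis of reachability on the cycle shows that the nonzero arrows among the $A_i$ then form one arc and the nonzero arrows among the $A_i^*$ form the complementary arc; writing $k$ for the position of the break and $a, b ∈ ℂ$ for the values carried by the two arcs (one of which is removable by gauge), these are precisely the representatives $ρ_{a,b}$ of the corresponding one-parameter family. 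Since $R$ is $\GL$-equivariant and the quiver gauge group $\GL_α$ modulo its trivially-acting diagonal $ℂ^*$ is exactly $\GL$, it suffices to produce a preimage under $R$ of each $ρ_{a,b}$.

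I would build the preimage as follows. Put $x = (a, b) ∈ U_1 ⊕ U_n = ℂ^2$, so that $R(φ, x)$ has arrow values $A_i = φ_{i-1,1}\,a$ and $A_i^* = φ_{i,n}\,b$. It therefore suffices to construct $φ ∈ \CG_Γ$ whose components $φ_{j,1}$ equal $1$ on the clockwise arc and $0$ off it, and whose components $φ_{i,n}$ equal $1$ on the counterclockwise arc and $0$ off it; for such a $φ$ one gets $R(φ, x) = ρ_{a,b}$ by inspection, and $(φ, x)$ is automatically $θ_2$-semistable because its image $ρ_{a,b}$ is. I would obtain such a $φ$ as a limit $φ = \lim_{t → 0} g(t)\,φ^{(0)}$ along the one-parameter subgroup $g(t) = (t^{β_1}, …, t^{β_n}) ⊂ \GL$. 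Since $(g(t)\,φ^{(0)})_{i,j} = t^{\,β_{i+j} - β_i - β_j}$ with $β_0 = 0$ and indices modulo $n+1$, this limit exists in $\CG_Γ = \overline{\GL φ^{(0)}}$ precisely when the exponents are superadditive, $β_{i+j} ≥ β_i + β_j$, and the limit $φ$ then has $φ_{i,j} ∈ \{0, 1\}$ with support the set of index pairs where equality holds. Choosing $(β_i)$ to be the convex, piecewise-linear ("V-shaped") profile that is linear with one slope over the clockwise arc and linear with the complementary slope over the counterclockwise arc makes the support of $φ$ agree with the prescribed pattern.

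The point I expect to be the real obstacle is the bookkeeping needed to check that this convex profile does satisfy $β_{i+j} ≥ β_i + β_j$ for all index pairs once the indices wrap around the cycle, and that equality holds on exactly the two arcs. This is closely parallel to — essentially a dual version of — the exponent estimates carried out in the proof of \autoref{th:caseA-theta2-semistable}, and the same monotonicity identities for $(β_i)$ make it routine. A further minor point is that the boundary representations of each family, on which additional arrows vanish, are reached the same way by enlarging the zero-set of $φ$ (equivalently, by inserting an extra kink into the profile $(β_i)$), so that running over all the families covers every $θ_2$-semistable representation in the nullcone. This finishes the proof.
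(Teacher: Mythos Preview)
Your proposal is correct and follows essentially the same approach as the paper: the paper also sets $x = (a,b)$ and constructs $\varphi = \lim_{t\to 0} g(t)\,\varphi^{(0)}$ along a one-parameter subgroup with a convex piecewise-linear (``V-shaped'') exponent profile, giving the explicit formulas $\alpha_i = -i(n-k+1)$ for $1\le i\le k-1$ and $\alpha_i = -(n-i+1)(k-1)$ for $k\le i\le n$, then checks superadditivity and computes the limiting $\varphi_{i,1}$ and $\varphi_{i,n}$ directly. The only difference is that the paper writes down the slopes and verifies the arc pattern by explicit calculation rather than arguing qualitatively.
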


\begin{proof}
Regard a representation $ ρ_{a, b} $ in the $ k $-th one-parameter family. We define $ g(t) = (t^{α_1}, …, t^{α_n}) $ with
\begin{align*}
α_i &= -i(n-k+1), \quad 1 ≤ i ≤ k-1, \\
α_i &= -(n-i+1)(k-1), \quad k ≤ i ≤ n.
\end{align*}
We observe that $ α_i + α_j ≤ α_{i+j} $ so that $ φ = \lim_{t → 0} g(t) φ^{(0)} $ exists. We calculate
\begin{align*}
φ_{i, 1} &= \lim_{t → 0} t^{-(i+1)(n-k+1)} t^{i(n-k+1)} t^{(n-k+1)} = 1, \quad 1 ≤ i ≤ k-2, \\
φ_{i, 1} &= \lim_{t → 0} t^{-(n-i)(k-1)} t^{(n-i+1)(k-1)} t^{(n-k+1)} = 0, \quad k-1 ≤ i ≤ n, \\
φ_{i, n} &= \lim_{t → 0} t^{-(n-i+2)(k-1)} t^{(n-i+1)(k-1)} t^{(k-1)} = 1, \quad k+1 ≤ i ≤ n, \\
φ_{i, n} &= \lim_{t → 0} t^{-(i-1)(n-k+1)} t^{i(n-k+1)} t^{(k-1)} = 0, \quad 1 ≤ i ≤ k-1, \\
φ_{k, n} &= \lim_{t → 0} t^{-(k-1)(n-k+1)} t^{(n-k+1)(k-1)} t^{(k-1)} = 0, \quad i = k.
\end{align*}
We conclude that $ ρ_{a, b} = R(φ, (a, b)) $. This finishes the proof.
\end{proof}

\subsection{The $ θ_2 $-semiinvariants on $ \CG_Γ × ℂ^2 $}
In this section, we shall determine the $ θ_2 $-semiinvariants on $ \CG_Γ × ℂ^2 $. This investigation is not strictly necessary for achieving the goal of this paper. However, it allows us to visit an alternative way of proving stage 6. We formulate the goal of this investigation in the following question:

\begin{question}
Given $ k ≥ 1 $, is it true that all $ kθ_2 $-semiinvariants on $ \CG_Γ × ℂ^2 $ are pullbacks from $ kθ_2 $-semiinvariants on $ \Rep(Π_Q, α) $ along the map $ R: \CG_Γ × ℂ^2 → \Rep(Π_Q, α) $?
\end{question}

In the present section, we answer this question positively for $ k = 1 $. Slightly more effort would most likely establish the statement for any $ k ≥ 1 $. Once a positive answer for the question is established, then \autoref{th:caseA-theta2-semistable} follows automatically. Indeed, if $ (φ, x) $ is $ θ_2 $-semistable, then there exists a $ kθ_2 $-semiinvariant $ f $ for some $ k ≥ 1 $ such that $ f(φ, x) ≠ 0 $, and by the positive answer it is then of the form $ f = g ∘ R $ for some $ kθ_2 $-semiinvariant on $ \Rep(Π_Q, α) $, so that $ g(R(φ, x)) ≠ 0 $ and we conclude that $ R(φ, x) $ is $ θ_2 $-semistable. In other words, a positive answer to the question proves that pullback along $ R $ provides an isomorphism between the semiinvariant rings $ \bigoplus_{k ≥ 0} ℂ[\Rep(Π_Q, α)]_{kθ_2} \isoto \bigoplus_{k ≥ 0} ℂ[\CG_Γ × ℂ^2]_{kθ_2} $ and correspondingly also an isomorphism between their $ \Proj $ varieties. This shows how a positive answer to the question provides a more insightful proof of stage 6.

We are now ready to start our investigation of the $ θ_2 $-semiinvariants. Our first lemma concerns the approximate shape of the semiinvariants:

\begin{lemma}
\label{th:caseA-theta2si-restr}
Let $ f: \CG_Γ × ℂ^2 → ℂ $ be a $ θ_2 $-semiinvariant. In case $ n $ is odd, the restriction $ π(f) $ lies in the linear span of the monomials
\begin{align*}
X^{l + (n+1)/2 + m(n+1)} Y^l, &\quad l, m ≥ 0, \\
X^l Y^{l + (n+1)/2 + m(n+1)}, &\quad l, m ≥ 0.
\end{align*}
In case $ n $ is even, the restriction $ π(f) $ lies in $ ℂ[X, Y]^{Γ} $, in other words, in the linear span of the monomials
\begin{align*}
X^{l + m(n+1)} Y^l, &\quad l, m ≥ 0, \\
X^l Y^{l + m(n+1)}, &\quad l, m ≥ 0.
\end{align*}
\end{lemma}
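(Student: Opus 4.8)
The plan is to reduce the statement to a character computation for the single element $σ$ generating $\Stab_{\GL}(φ^{(0)})$, reusing the stabilizer trick from the proof of \autoref{th:strategy-proof-theta1SI}. Since $σ φ^{(0)} = φ^{(0)}$, for any $θ_2$-semiinvariant $f$ the restriction $h ≔ π(f)$ satisfies
\[
h(σx) = f(φ^{(0)}, σx) = f(σ·(φ^{(0)}, x)) = θ_2(σ)\, h(x), \qquad x ∈ ℂ^2,
\]
so that $h$ lies in the $θ_2(σ)^{-1}$-eigenspace for the action of $σ$ on $ℂ[X, Y]$. The first step is to identify this eigenvalue. From $σ = (e^{2πij/(n+1)})_{1 ≤ j ≤ n}$ (see \autoref{sec:caseA-4}) and $θ_2 = (1, …, 1)$ one computes
\[
θ_2(σ) = ∏_{j=1}^n e^{2πij/(n+1)} = e^{2πi\, n(n+1)/2 \,/\, (n+1)} = e^{πi n} = (-1)^n,
\]
hence $θ_2(σ)^{-1} = (-1)^n$ as well.

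The second step is to diagonalise $σ$ on $ℂ[X, Y]$ and solve the resulting congruence. Writing $ζ = e^{2πi/(n+1)}$ and recalling $ℂ^2 = U_1 ⊕ U_n$ with $σ$ acting on $U_1$ by $ζ$ and on $U_n$ by $ζ^{-1}$, the induced action on the dual coordinates $X$ (on $U_1$) and $Y$ (on $U_n$) gives $σ·(X^a Y^b) = ζ^{b-a} X^a Y^b$, so every monomial is an eigenvector. Thus $h$ is a linear combination of those $X^a Y^b$ with $ζ^{b-a} = (-1)^n$. For $n$ even this is the condition $b ≡ a \pmod{n+1}$; splitting into the subcases $a ≥ b$ and $b ≥ a$ produces exactly the families $X^{l+m(n+1)}Y^l$ and $X^l Y^{l+m(n+1)}$, i.e. $h ∈ ℂ[X, Y]^Γ$. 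For $n$ odd, $n+1$ is even and $-1 = ζ^{(n+1)/2}$, so the condition reads $b - a ≡ (n+1)/2 \pmod{n+1}$; here $a = b$ is impossible, and splitting into $b > a$ and $a > b$ gives the families $X^l Y^{l+(n+1)/2+m(n+1)}$ and $X^{l+(n+1)/2+m(n+1)}Y^l$. This is precisely the claimed span.

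I expect no genuine obstacle here: the argument is a one-line reduction to the stabilizer followed by an elementary root-of-unity count. The only points requiring care are keeping the sign in $θ_2(σ) = (-1)^n$ straight and checking that the two monomial parametrisations listed in the lemma jointly exhaust the $(-1)^n$-eigenspace, which holds simply because every monomial satisfies $a ≥ b$ or $b ≥ a$. It is worth noting that only the stabilising element $σ$ enters, and none of the genuinely destabilising one-parameter subgroups; this is why one obtains membership in a linear span rather than a full characterisation of the $θ_2$-semiinvariants, the latter being the subject of the remaining results of this section.
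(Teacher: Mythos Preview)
Your proof is correct and follows essentially the same approach as the paper: both reduce via the stabilizer identity $\sigma\varphi^{(0)}=\varphi^{(0)}$ to the eigenvalue condition $h(\sigma x)=\theta_2(\sigma)h(x)$ with $\theta_2(\sigma)=(-1)^n$, and then determine which monomials satisfy it. The paper packages the odd-$n$ step through a Reynolds-style projector $R(h)=\frac{1}{|\Gamma|}\sum_{g\in\Gamma}\theta_2(g)\,g\!\cdot\!h$ and evaluates it on monomials, whereas you observe directly that each $X^aY^b$ is already a $\sigma$-eigenvector with eigenvalue $\zeta^{b-a}$ and solve the resulting congruence; your route is a touch more elementary but the content is identical.
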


\begin{proof}
The trick is to exploit the stabilizer $ Γ = \Stab_{\GL} (φ^{(0)}) $. For every $ g ∈ Γ $, we have
\begin{equation*}
π(f)(gx) = f(φ^{(0)}, gx) = θ_2 (g) f(φ^{(0)}, x) = θ_2 (g) π(f)(x).
\end{equation*}
Recall that $ Γ ⊂ \GL $ is the cyclic group generated by the element $ σ = (e^{2πi/(n+1)}, …, e^{2πin/(n+1)}) ∈ \GL $ and we have $ θ_2 (σ) = (-1)^{n} $. If $ n $ is even, we conclude that $ π(f) ∈ ℂ[X, Y]^Γ $ which finishes the even case.

Let us now assume $ n $ is odd. We shall find all functions $ h ∈ ℂ[X, Y] $ which satisfy the rule $ h(gx) = θ_2 (g) h(x) $. These are precisely the functions which lie in the image of the Reynolds-style operator $ R: ℂ[X, Y] → ℂ[X, Y] $ given by
\begin{equation*}
R(h) = \frac{1}{|Γ|} \sum_{g ∈ Γ} θ_2 (g) g.h.
\end{equation*}
It remains to determine the image of the operator $ R $.  Let us now simply feed the monomials $ X^p Y^q $ into $ R $:
\begin{align*}
R(X^p Y^q) &= \frac{1}{n+1} \sum_{j = 0}^n e^{-2πijp/(n+1) + 2πijq/(n+1) + πijn} X^p Y^q.
\end{align*}
If $ (p-q)/(n+1) + n/2 $ is not an integer, then the sum is zero since the geometric series summation exponent $ (p-q) + n(n+1)/2 $ is an integer. Therefore we only regard the case that $ (p-q)/(n+1) + n/2 $ is an integer. Since $ n $ is odd, the image of $ R $ is spanned by the monomials $ X^p Y^q $ with $ p-q ∈ ℤ $ being $ (n+1)/2 $ plus an integer multiple of $ n+1 $. This finishes the proof.
\end{proof}

We are aiming to enumerate the $ θ_2 $-semiinvariants on $ \CG_Γ × ℂ^2 $. As we have seen in \autoref{th:caseA-theta2si-restr}, the restriction of a $ θ_2 $-semiinvariants has a very specific shape. The question arises whether any function with this shape is also the restriction of a $ θ_2 $-semiinvariant. The following lemma is the technical step in this direction.

\begin{lemma}
\label{th:caseA-SI-pbSI}
The pullback $ θ_2 $-semiinvariants are precisely the functions $ f: \CG_Γ × ℂ^2 → ℂ $ whose restriction $ π(f) $ satisfies
\begin{equation}
\label{eq:caseA-SI-odd-module}
π(f) ∈ ℂ[X, Y]^{Γ} · \vspan\{X^{(n-q)(n-q+1)/2} Y^{q(q+1)/2}\}_{q = 0, …, n}.
\end{equation}
\end{lemma}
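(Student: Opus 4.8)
The plan is to reduce the statement to an explicit description of the $θ_2$-semiinvariant space on the quiver side together with a short computation of its image under $π ∘ R^*$. First I would record that $π$ is injective on the space of $θ_2$-semiinvariants on $\CG_Γ × ℂ^2$: if such an $f$ has $π(f) = 0$, then semiinvariance gives $f(g φ^{(0)}, y) = θ_2(g)\, π(f)(g^{-1} y) = 0$ for all $g ∈ \GL$ and $y ∈ ℂ^2$, so $f$ vanishes on the dense subset $\GL φ^{(0)} × ℂ^2$ and hence identically. Consequently a $θ_2$-semiinvariant $f$ on $\CG_Γ × ℂ^2$ is a pullback along $R$ exactly when $π(f)$ lies in the image of the restriction of $π ∘ R^*$ to the space $ℂ[\Rep(Π_Q, α)]_{θ_2}$ of $θ_2$-semiinvariants on the quiver side, so it suffices to prove that this image equals $M ≔ ℂ[X, Y]^{Γ} · \vspan\{X^{(n-q)(n-q+1)/2} Y^{q(q+1)/2}\}_{q = 0, …, n}$. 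Throughout I would use that $π ∘ R^*$ is a ring homomorphism $ℂ[\Rep(Π_Q, α)] → ℂ[X, Y]$ restricting, by \autoref{th:caseA-Rstarinv}, to the isomorphism $ℂ[\Rep(Π_Q, α)]^{\GL} \isoto ℂ[X, Y]^{Γ}$.

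The second step is to describe $ℂ[\Rep(Π_Q, α)]_{θ_2}$ explicitly. Since $α = (1, …, 1)$, a representation is a tuple of scalars $(a_i, b_i)_{i ∈ ℤ/(n+1)}$ — the values of the arrows $A_i$ and $A_i^*$ — subject only to the preprojective relations $a_i b_i = a_{i+1} b_{i+1}$; I write $t$ for this common value and $N_+ ≔ \prod_i a_i$, $N_- ≔ \prod_i b_i$, which are $\GL$-invariant. A monomial $\prod_i a_i^{α_i} \prod_i b_i^{β_i}$ has $\GL$-weight $θ_2$ exactly when, setting $γ_i ≔ α_i - β_i$, one has $γ_i - γ_{i+1} = 1$ for $i = 1, …, n$, i.e.\ $γ_i = γ_1 - (i-1)$. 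Using $a_i b_i = t$ one rewrites any such monomial as $m · t^e$ with $e = \sum_i \min(α_i, β_i)$ and $m = \prod_i a_i^{\max(γ_i, 0)} \prod_i b_i^{\max(-γ_i, 0)}$ the reduced monomial with the prescribed $(γ_i)$; since $ℂ[\Rep(Π_Q, α)]$ is graded by the character lattice of the torus $(ℂ^*)^{Q_0}$, the space $ℂ[\Rep(Π_Q, α)]_{θ_2}$ is spanned by such monomials. I would then conclude that it is generated, as a module over $ℂ[\Rep(Π_Q, α)]^{\GL}$, by the $n+1$ reduced monomials $m_q = \prod_i a_i^{\max(q - i + 1,\, 0)} \prod_i b_i^{\max(i - 1 - q,\, 0)}$ for $q = 0, …, n$, namely the ones with $γ_1 = q$, using $m_{q+1} = N_+ m_q$ for $q ≥ n$ and $m_{q-1} = N_- m_q$ for $q ≤ 0$ to deal with $γ_1$ outside $\{0, …, n\}$.

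The third step is the computation of $π(R^* m_q)$. Since $R(φ^{(0)}, (X, Y))$ sends every arrow $A_i$ to $X$ and every arrow $A_i^*$ to $Y$, this equals $X^{\sum_i \max(q-i+1, 0)} Y^{\sum_i \max(i-1-q, 0)}$, and the two sums are $\sum_{j=1}^{q} j = q(q+1)/2$ and $\sum_{j=1}^{n-q} j = (n-q)(n-q+1)/2$; hence $π(R^* m_q) = X^{q(q+1)/2} Y^{(n-q)(n-q+1)/2}$. As $q$ runs over $0, …, n$ these are precisely the monomials occurring in $M$ (with $q$ replaced by $n - q$). Since $π ∘ R^*$ is multiplicative and carries $ℂ[\Rep(Π_Q, α)]^{\GL}$ onto $ℂ[X, Y]^{Γ}$, the image of $ℂ[\Rep(Π_Q, α)]_{θ_2}$ under $π ∘ R^*$ is $\sum_{q=0}^{n} ℂ[X, Y]^{Γ} · π(R^* m_q) = M$, which completes the argument.

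I expect the main obstacle to be the module-generation claim in the second step, namely that reducing an arbitrary weight-$θ_2$ monomial modulo the binomial relations $a_i b_i = a_{i+1} b_{i+1}$ always lands in the $ℂ[\Rep(Π_Q, α)]^{\GL}$-span of $\{m_0, …, m_n\}$; the content is that the normal form $m · t^e$ has $m = m_{γ_1}$ and that the reduced monomials with $γ_1 \notin \{0, …, n\}$ are invariant multiples of $m_0$ or $m_n$. Everything else should be routine: the injectivity of $π$ on $θ_2$-semiinvariants, the identification $ℂ[\Rep(Π_Q, α)]^{\GL} ≅ ℂ[X, Y]^{Γ}$ (which is \autoref{th:caseA-Rstarinv} together with $\Stab_{\GL}(φ^{(0)}) = Γ$), and the two arithmetic sums.
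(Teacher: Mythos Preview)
Your proposal is correct and follows essentially the same route as the paper: both arguments hinge on (i) the injectivity of $\pi$ on $\theta_2$-semiinvariants, (ii) the fact that $\mathbb{C}[\Rep(\Pi_Q,\alpha)]_{\theta_2}$ is generated as a module over the invariant ring by $n+1$ explicit monomial semiinvariants, and (iii) the direct computation of their images under $\pi\circ R^*$. The only real difference is that the paper simply cites the module-generation statement as known, whereas you supply a self-contained proof via the normal form $t^e m_q$ and the reductions $m_{q+1}=N_+m_q$, $m_{q-1}=N_-m_q$ outside the range $0\le q\le n$; this makes your argument more complete but not structurally different.
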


\begin{proof}
We start by recalling that we have the standard $ θ_2 $-semiinvariants of $ \Rep(Π_Q, α) $ given for $ q = 0, …, n $ by
\begin{equation*}
f_q = A_{n-q}^1 … A_1^{n-q} · A_{n-q+2}^1 … A_{n+1}^q.
\end{equation*}
It is known that every $ θ_2 $-semiinvariant is a $ ℂ[\Rep(Π_Q, α)]^{\GL} $-linear combination of these standard semiinvariants. Note that the pullback restriction $ π(R^* f_q) $ is precisely the monomial $ X^{(n-q)(n-q+1)/2} Y^{q(q+1)/2} $. We shall now prove both parts of the claim separately.

For the first part, let $ g ∈ ℂ[\Rep(Π_Q, α)]_{θ_2} $ be any $ θ_2 $-semiinvariant. Then we can write $ g = \sum_{q = 0}^n g_q f_q $ with $ g_q ∈ ℂ[\Rep(Π_Q, α)]^{\GL} $. Thus,
\begin{equation*}
π(R^* g) = \sum_{q = 0}^n π(R^* g_q) π(R^* f_q).
\end{equation*}
Recall that $ π(R^* g_q) ∈ ℂ[X, Y]^{Γ} $ and $ π(R^* f_q) = X^{(n-q)(n-q+1)/2} Y^{q(q+1)/2} $. This finishes the first part of the proof.

For the second part, let $ f: \CG_Γ × ℂ^2 → ℂ $ be any function such that
\begin{equation*}
π(f) = \sum_{q = 0}^n g_q X^{(n-q)(n-q+1)/2} Y^{q(q+1)/2}, \text{ with } g_q ∈ ℂ[X, Y]^{\GL}.
\end{equation*}
Since $ R^* $ is an isomorphism on invariants, there exist invariants $ h_q ∈ ℂ[\Rep(Π_Q, α)]^{\GL} $ such that $ R^* h_q = g_q $. Then we immediately see that $ π(f) = π(R^* (\sum_{q = 0}^n h_q f_q)) $. Since $ π $ is injective among $ θ_2 $-semiinvariants, we conclude that $ f $ is a pullback semiinvariant. This finishes the second part of the claim.
\end{proof}

\autoref{th:caseA-SI-pbSI} characterizes the pullback $ θ_2 $-semiinvariants very neatly by describing the shape of their restriction. As we shall see in \autoref{th:caseA-SI-odd-pbrestr}, all $ θ_2 $-semiinvariants are in fact of this shape. Therefore all $ θ_2 $-semiinvariants actually arise as pullbacks.

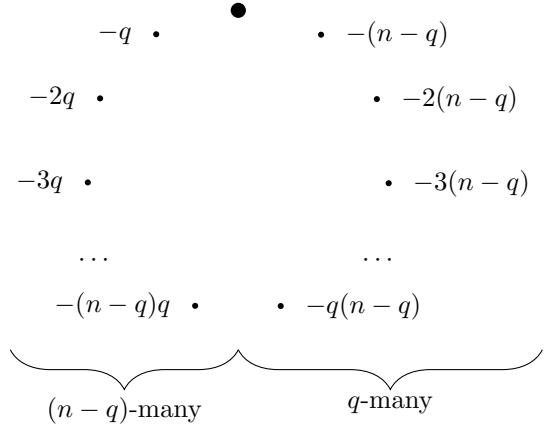
\begin{figure}
\centering
\begin{minipage}{0.49\linewidth}
\begin{tikzpicture}
\path[fill] (90:2) circle[radius=0.1];
\path[fill] ($ (90+1*32.7:2) $) circle[radius=0.04] node[left, shift={(left:0.2)}] {$ -q $};
\path[fill] ($ (90+2*32.7:2) $) circle[radius=0.04] node[left, shift={(left:0.2)}] {$ -2q $};
\path[fill] ($ (90+3*32.7:2) $) circle[radius=0.04] node[left, shift={(left:0.2)}] {$ -3q $};
\path[fill] ($ (90+4*32.7:2) $) node[left] {$ \cdots $};
\path[fill] ($ (90+5*32.7:2) $) circle[radius=0.04] node[left, shift={(left:0.2)}] {$ -(n-q)q $};
\path[fill] ($ (90+6*32.7:2) $) circle[radius=0.04] node[right, shift={(right:0.2)}] {$ -q(n-q) $};
\path[fill] ($ (90+7*32.7:2) $) node[right] {$ \cdots $};
\path[fill] ($ (90+8*32.7:2) $) circle[radius=0.04] node[right, shift={(right:0.2)}] {$ -3(n-q) $};
\path[fill] ($ (90+9*32.7:2) $) circle[radius=0.04] node[right, shift={(right:0.2)}] {$ -2(n-q) $};
\path[fill] ($ (90+10*32.7:2) $) circle[radius=0.04] node[right, shift={(right:0.2)}] {$ -(n-q) $};
\path[draw, decoration={brace, amplitude=1.5em, mirror}, decorate] (-3, -2.5) to node[midway, below, shift={(down:0.5)}] {$ (n-q) $-many} (0, -2.5);
\path[draw, decoration={brace, amplitude=1.5em, mirror}, decorate] (0, -2.5) to node[midway, below, shift={(down:0.5)}] {$ q $-many} (4, -2.5);
\end{tikzpicture}
\end{minipage}
\caption{This figure qualitatively depicts the exponents $ α_j $ used in the proof of \autoref{th:caseA-SI-odd-pbrestr}. The highlighted vertex in the top of the figure is the special vertex of the quiver. There are $ (n-q) $-many vertices whose exponents are increasing negative multiples of $ q $, and $ q $-many vertices whose exponents are increasing negative multiples of $ n-q $. The proof uses exponents which are slightly different from the ones depicted in this figure in order to ease the distinction of exponents that are responsible for divergence.}
\label{fig:caseA-SI-exponents}
\end{figure}

\begin{lemma}
\label{th:caseA-SI-odd-pbrestr}
Regard the $ A_n $ case with odd $ n $. Then every $ θ_2 $-semiinvariant on $ \CG_Γ × ℂ^2 $ is the pullback of a $ θ_2 $-semiinvariant on $ \Rep(Π_Q, α) $.
\end{lemma}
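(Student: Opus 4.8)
By \autoref{th:caseA-SI-pbSI}, a $θ_2$-semiinvariant $f$ on $\CG_Γ × ℂ^2$ is a pullback along $R$ if and only if its restriction $π(f) ∈ ℂ[X, Y]$ lies in the submodule
\begin{equation*}
M ≔ ℂ[X, Y]^{Γ} · \vspan\{X^{(n-q)(n-q+1)/2} Y^{q(q+1)/2}\}_{q = 0, …, n}.
\end{equation*}
Since $ℂ[X, Y]^{Γ}$ is spanned by the monomials $X^a Y^b$ with $a ≡ b \pmod{n+1}$, the module $M$ is spanned by those monomials $X^P Y^Q$ for which there exists $q ∈ \{0, …, n\}$ with $P ≥ (n-q)(n-q+1)/2$ and $Q ≥ q(q+1)/2$ (the congruence between the two complementary shifts being automatic). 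On the other hand, \autoref{th:caseA-theta2si-restr} tells us, in the present odd case, that $π(f)$ is supported on monomials $X^P Y^Q$ with $P - Q ≡ (n+1)/2 \pmod{n+1}$. Hence it suffices to show that every monomial $X^P Y^Q$ occurring in $π(f)$ admits such an index $q$.

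I would obtain this from a family of linear inequalities on $(P, Q)$ produced by one-parameter subgroups of $\GL$. Let $g(t) = (t^{α_1}, …, t^{α_n})$ be such a subgroup, set $α_0 = 0$, and assume $α_{i+j} ≥ α_i + α_j$ for all $i, j$ modulo $n+1$; this is precisely the condition that $\lim_{t → 0} g(t) φ^{(0)}$ exists in $\CG_Γ$, its $(i,j)$-coordinate being $t^{α_{i+j} - α_i - α_j}$. Evaluating the semi-invariance identity $f(g(t) φ^{(0)}, g(t) (X, Y)) = θ_2(g(t)) \, f(φ^{(0)}, (X, Y)) = t^{α_1 + … + α_n} \, π(f)(X, Y)$ as an identity of Laurent polynomials in $t$ and comparing the coefficient of $X^P Y^Q$ on both sides, one finds that whenever $X^P Y^Q$ occurs in $π(f)$ one has $α_1 + … + α_n ≥ P α_1 + Q α_n$: the left-hand side is a pure power $t^{α_1 + … + α_n}$, whereas on the right-hand side every factor $φ_{ij}$ of a monomial of $f$ becomes a non-negative power of $t$ while the variables $X, Y$ contribute the factor $t^{P α_1 + Q α_n}$, so that $α_1 + … + α_n - P α_1 - Q α_n ≥ 0$.

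The plan is then to feed into this estimate the one-parameter subgroups of \autoref{fig:caseA-SI-exponents}: for $q = 1, …, n-1$ take $α_i = -iq$ for $1 ≤ i ≤ n-q$ and $α_i = -(n+1-i)(n-q)$ for $n-q+1 ≤ i ≤ n$, which are admissible in the sense above (as already used in \autoref{th:caseA-theta2surj}) and satisfy $α_1 = -q$, $α_n = -(n-q)$ and $α_1 + … + α_n = -\frac{1}{2} q(n-q)(n+2)$, hence give the inequality $Pq + Q(n-q) ≥ \frac{1}{2} q(n-q)(n+2)$; if needed one may also use the further admissible subgroups $α_i = -i$ and $α_i = i - (n+1)$, which yield $P + Qn ≥ \frac{1}{2} n(n+1)$ and $Pn + Q ≥ \frac{1}{2} n(n+1)$. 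These inequalities are saturated exactly at the module generators $(P, Q) = ((n-q)(n-q+1)/2, \, q(q+1)/2)$. The remaining, purely combinatorial, task is to show that a pair of non-negative integers satisfying all of them together with the congruence $P - Q ≡ (n+1)/2 \pmod{n+1}$ necessarily admits an index $q$ with $P ≥ (n-q)(n-q+1)/2$ and $Q ≥ q(q+1)/2$; contrapositively, given such a $(P, Q)$ without such an index, one must exhibit the subgroup on the list whose inequality it violates, which is read off from the largest triangular numbers not exceeding $P$ and not exceeding $Q$ respectively. This matching of monomials not lying in $M$ to destabilizing subgroups is the main obstacle — a finite polyhedral bookkeeping that the exponents displayed in \autoref{fig:caseA-SI-exponents} are designed to perform. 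Once it is settled, every monomial of $π(f)$ lies in $M$, and $f$ is a pullback by \autoref{th:caseA-SI-pbSI}.
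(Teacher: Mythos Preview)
Your reduction is sound: the derivation of the linear inequality $\sum_i α_i \ge Pα_1 + Qα_n$ from semi-invariance is correct, and the subgroups you borrow from \autoref{th:caseA-theta2surj} do yield exactly the family $Pq + Q(n-q) \ge \tfrac{1}{2}q(n-q)(n+2)$. But you explicitly stop at what you call ``the main obstacle'': showing that every monomial satisfying the congruence and all these inequalities already lies in $M$. As written this is a gap, since nothing in the paper performs that polyhedral bookkeeping for you. It can, however, be closed in two lines. For a bad pair $(P,Q)$ let $r,s$ be determined by $p_{r+1}\le P<p_r$ and $q_s\le Q<q_{s+1}$ (with $p_q=(n-q)(n-q+1)/2$, $q_q=q(q+1)/2$); badness is equivalent to $s\le r$. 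If $s<r$ then $P<p_r$ and $Q<q_{s+1}\le q_r$, so the inequality for that very index $r\in\{1,\dots,n-1\}$ is strictly violated. If $s=r$, write $P=p_{r+1}+a'$, $Q=q_r+b'$ with $0\le a'\le n-r-1$, $0\le b'\le r$; then $P-Q\equiv (n+1)/2\pmod{n+1}$ forces $a'-b'\equiv n-r\pmod{n+1}$, which has no solution in the range $\{-r,\dots,n-r-1\}$. So the congruence alone excludes the diagonal blocks, and your ``further admissible subgroups'' are in fact not needed.

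The paper's own proof takes a genuinely different route. Instead of assembling a fixed family of inequalities and then arguing combinatorially, it perturbs each one-parameter subgroup by a large integer $p$ (using $α_j=-jqp$ on one side and $α_j=-(n+1-j)(n-q)(p+1)$ on the other). For a prescribed bad monomial $(l_0,m_0)$ this makes the corresponding $t$-exponent both negative and, by a divisibility argument in $p$ and $p+1$, distinct from the exponent of every other surviving monomial; the coefficient is then forced to vanish individually. Your approach, once the short combinatorics above is supplied, is cleaner and avoids the $p$-trick; the paper's approach avoids the combinatorics but pays for it with the perturbation and a uniqueness-of-exponent argument.
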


\begin{proof}
Let $ f ∈ \CG_Γ × ℂ^2 → ℂ $ be any $ θ_2 $-semiinvariant. We already know that $ π(f) $ is of the form
\begin{equation*}
π(f) = \sum_{l, m ≥ 0} α_{l, m} X^{l + (n+1)/2 + m(n+1)} Y^l + \sum_{l, m ≥ 0} β_{l, m} X^l Y^{l + (n+1)/2 + m(n+1)}.
\end{equation*}
Here $ α_{l, m} $ and $ β_{l, m} $ are scalar coefficients.

We divide the remainder of the proof into three parts. The first part is to show that we can assume that $ α_{l, m} = β_{l, m} = 0 $ whenever $ m ≥ (n-1)/2 $ or $ l ≥ q(q+1)/2 $ with $ q ≔ (n-1)/2 - m $. The second part is to show that for arbitrary but fixed $ l_0, m_0 ≥ 0 $ assuming $ q ≔ (n-1)/2 - m_0 ≥ 0 $ and $ l_0 < q (q+1)/2 $ we have $ α_{l_0, m_0} = 0 $. The final part draws the right conclusions and finishes the proof.

We now proceed to the first part of the proof. We show that the summands with $ m ≥ (n-1)/2 $ or $ l ≥ q(q+1)/2 $ with $ q ≔ (n-1)/2 - m $ are already pullback semiinvariants by themselves. To see this, let us look at the monomial $ X^{l + (n+1)/2 + m(n+1)} Y^l $. In case $ m ≥ (n-1)/2 $, we can write
\begin{equation*}
X^{l + (n+1)/2 + m(n+1)} Y^l = X^{(n+1)(m-(n-1)/2)} · (XY)^l · X^{n(n+1)/2}.
\end{equation*}
In case $ m ≤ (n-1)/2 $ and $ l ≥ q(q+1)/2 $ with $ q = (n-1)/2 - m $, we can write
\begin{equation*}
X^{l + (n+1)/2 + m(n+1)} Y^l = (X^{n+1})^{s/(n+1)} · (XY)^{l-q(q+1)/2} · X^{(n-q)(n-q+1)/2} Y^{q(q+1)/2}
\end{equation*}
To see that the monomial is indeed a $ ℂ[X, Y]^{\GL} $-linear multiple of $ f_q = X^{(n-q)(n-q+1)/2} Y^{q(q+1)/2} $, note again that
\begin{equation*}
s ≔ [(n-q)(n-q+1)/2 + l - q(q+1)/2] - [l + (n+1)/2 + m(n+1)] ≥ 0,
\end{equation*}
and that $ s $ is necessarily divisible by $ n+1 $, since the difference of exponents between $ X $ and $ Y $ on the left-hand side and on the remainder of the right-hand side is divisible by $ n+1 $. Analogous considerations hold for $ X^l Y^{l + (n+1)/2 + m(n+1)} $.

We have now shown that the terms of $ π(f) $ with $ m ≥ (n-1)/2 $ or $ l ≥ q(q+1)/2 $ with $ q ≔ (n-1)/2 - m $ already lie in the right-hand side of \eqref{eq:caseA-SI-odd-module}. By \autoref{th:caseA-SI-pbSI}, we can lift these terms to a $ θ_2 $-semiinvariant on $ \CG_Γ × ℂ^2 $. Subtracting this $ θ_2 $-semiinvariant from $ f $, the remainder is still a $ θ_2 $-semiinvariant. We can thus assume henceforth that $ α_{l, m} = β_{l, m} = 0 $ whenever $ m ≥ (n-1)/2 $ or $ l ≥ q(q+1)/2 $ with $ q ≔ (n-1)/2 - m $. This finishes the first part of the proof.

We now proceed to the second part of the proof. We show that for arbitrary but fixed $ l_0, m_0 ≥ 0 $ assuming $ q ≔ (n-1)/2 - m_0 ≥ 0 $ and $ l_0 < q (q+1)/2 $ we have $ α_{l_0, m_0} = 0 $. We start by defining the one-parameter subgroup $ g(t) = (t^{α_1}, …, t^{α_n}) ⊂ \GL $ given by the following exponents:
\begin{align*}
α_j &= -jqp, \quad \text{if } j ≤ n-q, \\
α_j &= -(n+1-j)(n-q)(p+1), \quad \text{if } n-q+1 ≤ j ≤ n.
\end{align*}
Here $ p \gg 0 $ is a natural number. The exponents are depicted graphically in \autoref{fig:caseA-SI-exponents}. It is an elementary check that for $ p $ large enough, we have $ α_{i+j} ≥ α_i + α_j $ for any $ i, j $ and therefore the limit $ \lim_{t → 0} g(t) φ^{(0)} $ exists. Put $ x = (1, 1) ∈ ℂ^2 $. For brevity let us convene that we write $ k = l + (n+1)/2 + m (n+1) $ in the below sum which runs over $ l, m ≥ 0 $. Since $ f $ is assumed to be a $ θ_2 $-semiinvariant, we obtain

\begin{align*}
f(g(t) φ^{(0)}, x) &= θ(g(t)) f(φ^{(0)}, g(t)^{-1} x) \\
&= \sum_{l, m ≥ 0} α_{l, m} t^{\sum_{j = 0}^n α_j + qpk + (n-q)(p+1)l} + \sum_{l, m ≥ 0} β_{l, m} t^{\sum_{j = 0}^n α_j + qpl + (n-q)(p+1)k}.
\end{align*}
We now prove two claims about this sum. The first claim entails that the exponent $ \sum_{j = 0}^n α_j + qp k_0 + (n-q)(p+1) l_0 $ is negative, where $ k_0 = l_0 + (n+1)/2 + m_0 (n+1) $. The second claim entails that for any pair $ (l, m) ≠ (l_0, m_0) $ this specific exponent is never attained again, that is,
\begin{align}
\sum_{j = 0}^n α_j + qpk + (n-q)(p+1)l &≠ \sum_{j = 0}^n α_j + qp k_0 + (n-q)(p+1) l_0,
\label{eq:caseA-SI-odd-exponentunique1} \\
\sum_{j = 0}^n α_j + qpl + (n-q)(p+1)k &≠ \sum_{j = 0}^n α_j + qp k_0 + (n-q)(p+1) l_0
\label{eq:caseA-SI-odd-exponentunique2}
\end{align}
For the first claim, we calculate the sum of exponents as
\begin{equation}
\label{eq:caseA-SI-odd-alphasum}
\sum_{j = 0}^n α_j = - \frac{(n-q)(n-q+1)qp}{2} - \frac{q(q+1)(n-q)(p+1)}{2}.
\end{equation}
Aiming to compare the first summand on the right-hand side of \eqref{eq:caseA-SI-odd-alphasum} with $ qp k_0 $, we observe
\begin{align*}
k_0 - (n-q)(n-q+1)/2 &= l_0 + (n+1)/2 + m_0 (n+1) - (n-q)(n-q+1)/2 \\
&< q(q+1)/2 + (n+1)/2 + m_0 (n+1) - q(q+1)/2 + (2q-1)n/2 - n^2 /2 \\
&= m_0 - \frac{n-1}{2} = -q ≤ 0.
\end{align*}
Aiming to compare the second summand on the right-hand side of \eqref{eq:caseA-SI-odd-alphasum} with $ (n-q)(p+1) l_0 $, we observe that by assumption we have
\begin{align*}
(n-q) l_0 - \frac{q(q+1)(n-q)}{2} &< 0.
\end{align*}
Finally, we conclude that the exponent $ \sum_{j = 0}^n α_j + qp k_0 + (n-q)(p+1) l_0 $ is negative, proving the first claim.

For the second claim, we shall check the two inequalities \eqref{eq:caseA-SI-odd-exponentunique1} and \eqref{eq:caseA-SI-odd-exponentunique2} separately. Towards \eqref{eq:caseA-SI-odd-exponentunique1}, assume that there is any pair $ (l, m) $ such that $ qpk + (n-q)(p+1) l = qp k_0 + (n-q)(p+1) l_0 $. Then we conclude that $ p $ divides $ (n-q)(p+1) (l_0 - l) $ and $ p+1 $ divides $ qp (k_0 - k) $. Since $ p $ and $ p+1 $ are coprime, we conclude that $ p $ divides $ (n-q)(l_0 - l) $ and $ p+1 $ divides $ q (k_0 - k) $. Note that the absolute values of the two numbers $ (n-q)(l_0 - l) $ and $ q(k_0 - k) $ are smaller than $ n^3 $. Therefore, if we pick $ p $ to be larger than $ n^3 $, we conclude $ l = l_0 $ and $ k = k_0 $, in particular $ m = m_0 $. This proves the first inequality.

Towards the second inequality \eqref{eq:caseA-SI-odd-exponentunique2}, assume that there is any pair $ (l, m) $ such that $ qpl + (n-q)(p+1)k = qp k_0 + (n-q)(p+1) l_0 $. We similarly conclude $ l = k_0 $ and $ k = l_0 $, which however implies $ k = l_0 ≤ k_0 = l $, a contradiction. This proves the second inequality.

Finally, we have proved both intermediate claims. We conclude that the coefficient $ α_{l_0, m_0} $ vanishes since otherwise $ f(g(t) φ^{(0)}, x) $ diverges. This finishes the second part of the proof.

Ultimately, let us draw the conclusions and finish the proof. In a fashion analogous to the second part of the proof, one shows that if $ q ≔ (n-1)/2 - m_0 ≥ 0 $ and $ l_0 < q (q+1)/2 $ then $ β_{l_0, m_0} = 0 $. Recalling from the first part of the proof that we have already subtracted certain terms from $ π(f) $ and therefore assumed that $ α_{l, m} = β_{l, m} = 0 $ whenever $ m ≥ (n-1)/2 $ or $ l ≥ q(q+1)/2 $ with $ q ≔ (n-1)/2 - m $, we conclude that $ π(f) $ vanishes. This finishes the proof.
\end{proof}

\begin{remark}
\label{th:caseA-SI-even-rem}
We have shown in \autoref{th:caseA-SI-odd-pbrestr} that for odd $ n $ all $ θ_2 $-semiinvariants on $ \CG_Γ × ℂ^2 $ are pullbacks of $ θ_2 $-semiinvariants on $ \Rep(Π_Q, α) $. The same statement holds for even $ n $ with a similar proof.
\end{remark}

\section{The $ D_n $ case}
\label{sec:caseD}
In this section, we treat the $ D_n $ case in detail. We construct the variety $ \CG_Γ $, the stability parameters $ θ_1 $, $ θ_2 $ and the map $ R: \CG_Γ × ℂ^2 → \Rep(Π_Q, α) $, where $ (Q, α) $ is the Kleinian $ D_n $ quiver. We prove the remaining technical parts of \autoref{stageplan}.

\subsection{Stage 1: Construction of the Clebsch-Gordan variety}
\label{sec:caseD-step1}
The Kleinian group is the binary dihedral group of order $ 4(n-2) $:
\begin{align*}
Γ &= \BD_{n-2} = ⟨a, x \running a^{2(n-2)} = 1, x^2 = a^{n-2}, x^{-1} a x = a^{-1}⟩.
\end{align*}
When $ n $ is even, the group $ Γ $ has four 1-dimensional representations $ E_1, …, E_4 $ and two families $ O_1, O_3, …, O_{n-3} $ and $ I_2, I_4, …, I_{n-4} $ of 2-dimensional representations. When $ n $ is odd, the group $ Γ $ has four 1-dimensional representations $ E_1, …, E_4 $ and two families $ O_1, O_3, …, O_{n-4} $ and $ I_2, …, I_{n-3} $ of 2-dimensional representations. The $ Γ $-representation $ ℂ^2 $ itself is isomorphic to $ O_1 $. The representations can be written explicitly in the following matrix form:
\begin{center}
\begin{tabular}{lcc}
Representation & Action of $ a $ & Action of $ x $ \\\hline
$ E_1 $ & 1 & $ 1 $ \\
$ E_2 $ & 1 & $ -1 $ \\
$ E_3 $ & $ -1 $ & $ 1 $ \\
$ E_4 $ & $ -1 $ & $ -1 $ \\
$ O_k $ & $ \pmat{e^{πik/(n-2)} & 0 \\ 0 & e^{-πik/(n-2)}} $ & $ \pmat{0 & -1 \\ 1 & 0} $ \\
$ I_k $ & $ \pmat{e^{πik/(n-2)} & 0 \\ 0 & e^{-πik/(n-2)}} $ & $ \pmat{0 & 1 \\ 1 & 0} $
\end{tabular}
\end{center}

\begin{remark}
For notational reasons, it is occasionally difficult to deal with all $ D_n $ at the same time. We may sometimes make the assumption that $ n $ is an even integer in order to write down the most accurate statements. Moreover, accurate treatment of the Kleinian $ D_4 $ case requires attention to the Clebsch-Gordan coefficients which deviate at first sight from the other $ D_n $ cases. For instance, for $ n ≥ 5 $ the representation $ O_1 ¤ O_1 $ decomposes into $ E_1 ⊕ E_2 ⊕ O_2 $, while for $ n = 4 $ it decomposes into $ E_1 ⊕ E_2 ⊕ E_3 ⊕ E_4 $. Nevertheless, a shared treatment of all $ D_n $ cases is possible as long as attention to this detail is paid and the reader reinterprets the statements in a specific way for the $ n = 4 $ case. In fact, the representation $ O_1 $ in the $ n = 4 $ case has more similarity with the representation $ I_{(n-4)/2} $ in the case of even $ n ≥ 6 $. Therefore, the most accurate statements for the $ n = 4 $ case are obtained when $ E_i $ is interpreted as $ E_i $, but $ O_1 $ is interpreted as $ I_{(n-4)/2} $.
\end{remark}

The gauge group consists of three general linear groups of rank one and $ (n-3) $-many general linear groups of rank two:
\begin{equation*}
\GL = \underset{E_1}{\GL_1 (ℂ)} × \underset{E_2}{\GL_1 (ℂ)} × \underset{E_3}{\GL_1 (ℂ)} × \underset{O_1}{\GL_1 (ℂ)} × … × \underset{O_{n-3}}{\GL_1 (ℂ)}, \\
\end{equation*}
%
%
An Clebsch-Gordan datum $ φ $ for $ Γ $ consists of a long list of bilinear maps. For us, the range of relevant entries of $ φ $ is rather limited. In case $ n $ is even, the most important are the following:
\begin{align*}
φ_{E_i, E_j}: E_i ¤ E_j &→ E_{σ(ij)}, \quad i, j ∈ \{2, 3, 4\}, \\
φ_{E_1, O_1}: E_1 ¤ O_1 &→ O_1, \\
φ_{E_2, O_1}: E_2 ¤ O_1 &→ O_1, \\
φ_{E_3, O_1}: E_3 ¤ O_1 &→ O_{n/2 - 1}, \\
φ_{E_4, O_1}: E_4 ¤ O_1 &→ O_{n/2 - 1}, \\
φ_{I_k, O_1}: I_k ¤ O_1 &→ O_{k-1} ⊕ O_{k+1}, \quad k = 2, 4, …, n-4, \\
φ_{O_k, O_1}: O_i ¤ O_1 &→ I_{k-1} ⊕ I_{k+1}, \quad k = 3, 5, …, n-5, \\
φ_{O_1, O_1}: O_1 ¤ O_1 &→ E_1 ⊕ E_2 ⊕ O_1, \\
φ_{O_{n-3}, O_1}: O_{n-3} ¤ O_1 &→ I_{n-4} ⊕ E_3 ⊕ E_4.
\end{align*}
Here $ σ(ij) $ stands for the missing index. If $ i ≠ j $, then it is defined by $ \{i, j, σ(ij)\} = \{2, 3, 4\} $. If $ i = j $, then $ σ(ij) = 1 $. In case $ n $ is odd, the range of most important entries is analogous. In case $ n ≥ 5 $, the specific Clebsch-Gordan datum $ φ^{(0)} $ consists of the following choices:
\begin{alignat*}{2}
φ^{(0)}_{E_i, E_j} &= 1, && \\
φ^{(0)}_{E_1, O_1} &= \Id, & 
φ^{(0)}_{E_2, O_1} &= \pmat{1 & 0 \\ 0 & -1}, \\
φ^{(0)}_{E_3, O_1} &= \pmat{0 & 1 \\ -1 & 0}, &
φ^{(0)}_{E_4, O_1} &= \pmat{0 & 1 \\ 1 & 0}, \\
φ^{(0)}_{I_k, O_1} &= \pmat{0 & 1 & 0 & 0 \\ 0 & 0 & -1 & 0 \\ 2 & 0 & 0 & 0 \\ 0 & 0 & 0 & 2}, &
φ^{(0)}_{O_k, O_1} &= \pmat{0 & 1 & 0 & 0 \\ 0 & 0 & -1 & 0 \\ 2 & 0 & 0 & 0 \\ 0 & 0 & 0 & 2}, \\
φ^{(0)}_{O_1, O_1} &= \pmat{0 & 1 & -1 & 0 \\ 0 & 1 & 1 & 0 \\ 2 & 0 & 0 & 0 \\ 0 & 0 & 0 & 2}, & \qquad
φ^{(0)}_{O_{n-3}, O_1} &= \pmat{0 & 1 & 0 & 0 \\ 0 & 0 & -1 & 0 \\ 1 & 0 & 0 & 1 \\ 1 & 0 & 0 & -1}.
\end{alignat*}
In case $ n = 4 $, we choose the entries $ φ^{(0)}_{E_i, E_j} $ and $ φ^{(0)}_{E_i, O_1} $ as above and
\begin{equation*}
φ^{(0)}_{O_1, O_1} = \pmat{0 & 1 & -1 & 0 \\ 0 & 1 & 1 & 0 \\ 1 & 0 & 0 & 1 \\ 1 & 0 & 0 & -1}.
\end{equation*}
%
%
%
%
%
We define the varieties $ \CG_Γ $ and the stability parameters $ θ_1 $ and $ θ_2 $ as follows:
\begin{equation*}
\CG_Γ = \closure{\GL φ^{(0)}}, \quad θ_1 = (\underset{E_2}{-1}, \underset{E_3}{-1}, \underset{E_4}{-1}, \underset{O_1}{-2}, …, \underset{O_{n-3}}{-2}), \quad θ_1 = (+1, …, +1).
\end{equation*}

\subsection{Stage 2: Construction of the map $ R $}
\label{sec:caseD-2}
We define the map $ R(φ, x) $ following our general recipe $ R(φ, x) = φ(- ¤ x) $:
\begin{center}
\begin{tikzpicture}[scale=0.9]
\path (-2, 2) node (E1) {$ E_1 $};
\path (-2, -2) node (E2) {$ E_2 $};
\path (0, 0) node (O1) {$ O_1 $};
\path (4, 0) node (I2) {$ I_2 $};
\path (8, 0) node (dots) {$ \cdots \vphantom{A_2} $};
\path (12, 0) node (On-3) {$ O_{n-3} $};
\path (14, 2) node (E3) {$ E_3 $};
\path (14, -2) node (E4) {$ E_4 $};
\path[draw, ->] (E1.south east) to node[midway, above] {$ x $} (O1.north west);
\path[draw, ->] ($ (O1.north west) + (down:0.2) $) to node[midway, left] {\tinymath{$ φ_{O_1, O_1; E_1} (- ¤ x) $}} ($ (E1.south east) + (down:0.2) $);
\path[draw, ->] (E2.north east) to node[midway, left] {\tinymath{$ φ_{E_2, O_1} (- ¤ x) $}} (O1.south west);
\path[draw, ->] ($ (O1.south west) + (down:0.2) $) to node[midway, right] {\tinymath{$ φ_{O_1, O_1; E_2} (- ¤ x) $}} ($ (E2.north east) + (down:0.2) $);
\path[draw, ->] ($ (O1.east) + (up:0.1) $) to node[midway, above] {$ φ_{O_1, O_1; I_2} (- ¤ x) $} ($ (I2.west) + (up:0.1) $);
\path[draw, ->] ($ (I2.west) + (down:0.1) $) to node[midway, below] {$ φ_{I_2, O_1; O_1} (- ¤ x) $} ($ (O1.east) + (down:0.1) $);
\path[draw, ->] ($ (I2.east) + (up:0.1) $) to node[midway, above] {$ φ_{I_2, O_1; O_3} (- ¤ x) $} ($ (dots.west) + (up:0.1) $);
\path[draw, ->] ($ (dots.west) + (down:0.1) $) to node[midway, below] {$ φ_{O_3, O_1; I_2} (- ¤ x) $} ($ (I2.east) + (down:0.1) $);
\path[draw, ->] ($ (dots.east) + (up:0.1) $) to ($ (On-3.west) + (up:0.1) $);
\path[draw, ->] ($ (On-3.west) + (down:0.1) $) to ($ (dots.east) + (down:0.1) $);
\path[draw, ->] ($ (On-3.north east) $) to node[midway, left] {\tinymath{$ φ_{O_{n-3}, O_1; E_3} (- ¤ x) $}} (E3.south west);
\path[draw, ->] ($ (E3.south west) + (down:0.2) $) to node[midway, right] {\tinymath{$ φ_{E_3, O_1} (- ¤ x) $}} ($ (On-3.north east) + (down:0.2) $);
\path[draw, ->] (On-3.south east) to node[midway, right] {\tinymath{$ φ_{O_{n-3}, O_1; E_4} (- ¤ x) $}} (E4.north west);
\path[draw, ->] ($ (E4.north west) + (down:0.2) $) to node[midway, left] {\tinymath{$ φ_{E_4, O_1} (- ¤ x) $}} ($ (On-3.south east) + (down:0.2) $);
\end{tikzpicture}
\end{center}
In particular, the specific representation $ R(φ^{(0)}, x) $ takes the following shape:
\begin{center}
\begin{tikzpicture}[scale=0.9]
\path (-2, 2) node (E1) {$ E_1 $};
\path (-2, -2) node (E2) {$ E_2 $};
\path (0, 0) node (O1) {$ O_1 $};
\path (4, 0) node (I2) {$ I_2 $};
\path (8, 0) node (dots) {$ \cdots \vphantom{A_2} $};
\path (12, 0) node (On-3) {$ O_{n-3} $};
\path (14, 2) node (E3) {$ E_3 $};
\path (14, -2) node (E4) {$ E_4 $};
\path[draw, ->] (E1.south east) to node[midway, above] {\tinymath{$ \pmat{x_1 \\ x_2} $}} (O1.north west);
\path[draw, ->] ($ (O1.north west) + (down:0.2) $) to node[midway, left] {\tinymath{$ \pmat{x_2 & -x_1} $}} ($ (E1.south east) + (down:0.2) $);
\path[draw, ->] (E2.north east) to node[midway, left] {\tinymath{$ \pmat{x_1 \\ -x_2} $}} (O1.south west);
\path[draw, ->] ($ (O1.south west) + (down:0.2) $) to node[midway, right] {\tinymath{$ \pmat{x_2 & x_1} $}} ($ (E2.north east) + (down:0.2) $);
\path[draw, ->] ($ (O1.east) + (up:0.1) $) to node[midway, above] {$ 2 \pmat{x_1 & 0 \\ 0 & x_2} $} ($ (I2.west) + (up:0.1) $);
\path[draw, ->] ($ (I2.west) + (down:0.1) $) to node[midway, below] {$ \pmat{x_2 & 0 \\ 0 & -x_1} $} ($ (O1.east) + (down:0.1) $);
\path[draw, ->] ($ (I2.east) + (up:0.1) $) to node[midway, above] {$ 2 \pmat{x_1 & 0 \\ 0 & x_2} $} ($ (dots.west) + (up:0.1) $);
\path[draw, ->] ($ (dots.west) + (down:0.1) $) to node[midway, below] {$ \pmat{x_2 & 0 \\ 0 & -x_1} $} ($ (I2.east) + (down:0.1) $);
\path[draw, ->] ($ (dots.east) + (up:0.1) $) to ($ (On-3.west) + (up:0.1) $);
\path[draw, ->] ($ (On-3.west) + (down:0.1) $) to ($ (dots.east) + (down:0.1) $);
\path[draw, ->] ($ (On-3.north east) $) to node[midway, left] {\tinymath{$ \pmat{x_1 & x_2} $}} (E3.south west);
\path[draw, ->] ($ (E3.south west) + (down:0.2) $) to node[midway, right] {\tinymath{$ \pmat{x_2 \\ -x_1} $}} ($ (On-3.north east) + (down:0.2) $);
\path[draw, ->] (On-3.south east) to node[midway, right] {\tinymath{$ \pmat{x_1 & -x_2} $}} (E4.north west);
\path[draw, ->] ($ (E4.north west) + (down:0.2) $) to node[midway, left] {\tinymath{$ \pmat{x_2 \\ x_1} $}} ($ (On-3.south east) + (down:0.2) $);
\end{tikzpicture}
\end{center}

\begin{lemma}
\label{th:caseD-2-preproj}
For any $ (φ, x) ∈ \CG_Γ × ℂ^2 $, the representation $ R(φ, x) $ satisfies the preprojective conditions.
\end{lemma}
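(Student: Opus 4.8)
The plan is to follow verbatim the three-step template used for the $A_n$ case in \autoref{th:caseA-2-preproj}. First, for any $(φ, x)$ the tuple $R(φ, x) = φ(- ¤ x)$ is a representation of the double quiver $\Qbar$, simply because the space of arrows from vertex $i$ to vertex $j$ in $\Qbar$ has dimension $\dim_Γ(U_i ¤ ℂ^2, U_j)$, so $R$ is a well-defined $\GL$-equivariant map $\CG_Γ × ℂ^2 → \Rep(\Qbar, α)$. Second — and this is the only step requiring computation — one checks that the specific representation $R(φ^{(0)}, x)$ drawn above satisfies the preprojective relation $\sum_{t(a)=v} a a^* - a^* a = 0$ at every vertex $v$ of the Kleinian $D_n$ quiver. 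Third, since the preprojective relations cut out a closed subvariety and $R(gφ^{(0)}, gx) = g\,R(φ^{(0)}, x)$, the representation $R(gφ^{(0)}, gx)$ satisfies them for every $g ∈ \GL$ and $x ∈ ℂ^2$; passing to limits over the orbit closure $\CG_Γ = \closure{\GL φ^{(0)}}$ then gives the claim for all $(φ, x) ∈ \CG_Γ × ℂ^2$.

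For the second step I would go through the vertices of the $D_n$ quiver one at a time. At each of the four leaf vertices $E_1, E_2, E_3, E_4$ the relation reduces to a single scalar identity; for instance at $E_1$ it reads $\pmat{x_2 & -x_1}\pmat{x_1 \\ x_2} = 0$, and the sign patterns chosen in $φ^{(0)}_{E_i, O_1}$ make the analogous identity hold at $E_2, E_3, E_4$. At the central vertex $O_1$ (for $n ≥ 5$) the relation is the $2×2$ matrix identity
\[
\pmat{x_1 \\ x_2}\pmat{x_2 & -x_1} + \pmat{x_1 \\ -x_2}\pmat{x_2 & x_1} - \pmat{x_2 & 0 \\ 0 & -x_1}\cdot 2\pmat{x_1 & 0 \\ 0 & x_2} = 0,
\]
which holds entrywise. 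At each interior vertex $O_k$ or $I_k$ the relation is again a $2×2$ identity between the two neighbouring arrow pairs, and because $φ^{(0)}_{I_k, O_1}$ and $φ^{(0)}_{O_k, O_1}$ were chosen to be the \emph{same} matrix $\pmat{0 & 1 & 0 & 0 \\ 0 & 0 & -1 & 0 \\ 2 & 0 & 0 & 0 \\ 0 & 0 & 0 & 2}$, this check is literally identical at every interior vertex: both neighbouring products equal $2\pmat{x_1 x_2 & 0 \\ 0 & -x_1 x_2}$ and cancel. Finally, at the vertex $O_{n-3}$ one uses the special matrix $φ^{(0)}_{O_{n-3}, O_1}$, whose codomain splits as $I_{n-4} ⊕ E_3 ⊕ E_4$, and a short computation closes the relation there as well.

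The main obstacle is purely bookkeeping: matching the orientations and signs of the double-quiver arrows with the sign conventions baked into $φ^{(0)}$, and correctly handling the $D_4$ exception. In the $n = 4$ case the decomposition $O_1 ¤ O_1 ≅ E_1 ⊕ E_2 ⊕ E_3 ⊕ E_4$ means the central vertex $O_1$ has four scalar-arrow neighbours and no interior neighbour, so one uses the $D_4$ variant of $φ^{(0)}_{O_1, O_1}$ and the relation at $O_1$ becomes the $2×2$ identity $\sum_{i} A_i A_i^* = 0$ with the four $A_i$ read off from that matrix. Beyond these conventions every individual verification is a scalar or $2×2$ identity in $x_1, x_2$, and the limit argument in step three is standard, so the lemma follows.
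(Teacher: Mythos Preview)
Your proposal is correct and follows essentially the same approach as the paper's proof: observe that $R$ is a $\GL$-equivariant map to $\Rep(\Qbar,\alpha)$, verify the preprojective relations for $R(\varphi^{(0)},x)$ directly, and then propagate to all of $\CG_\Gamma \times \mathbb{C}^2$ by equivariance and the closure argument. The paper in fact gives less detail than you do, simply asserting that the depicted representation $R(\varphi^{(0)},x)$ satisfies the preprojective relations and leaving the vertex-by-vertex check to the reader; your explicit computations at the leaf, central, and interior vertices are a welcome elaboration of that omitted step.
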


\begin{proof}
We start with the observation that $ R(φ, x) $ is in every case a quiver representation and $ R $ defines a $ \GL $-equivariant map $ R: \CG_Γ × ℂ^2 → \Rep(\Qbar, α) $ to the representations of the Kleinian double quiver. Next, we observe that the representation $ R(φ^{(0)}, x) $, which is depicted above, satisfies the preprojective relations. This implies that $ R(gφ^{(0)}, g(g^{-1} x)) = g R(φ^{(0)}, g^{-1} x) $ also satisfies the preprojective relations for any $ g ∈ \GL $ and $ x ∈ ℂ^2 $. By a standard limit argument, the preprojective conditions then also hold for any $ (φ, x) ∈ \CG_Γ × ℂ^2 $. We conclude that $ R $ becomes a $ \GL $-equivariant map $ R: \CG_Γ × ℂ^2 → \Rep(Π_Q, α) $. This finishes the proof.
\end{proof}

\subsection{Stage 3: Verification that $ R^* $ is an isomorphism of invariants}
\label{sec:caseD-3}
It is our task to prove the following lemma.

\begin{lemma}
\label{th:caseD-Rstarinv}
The map $ π ∘ R^*: ℂ[\Rep(Π_Q, α)]^{\GL} → ℂ[\CG_Γ × ℂ^2]^{\GL} → ℂ[X, Y]^{\Stab_{\GL} (φ^{(0)})} $ is an isomorphism.
\end{lemma}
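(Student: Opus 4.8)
The plan is to follow the template of the $ A_n $ case (\autoref{th:caseA-Rstarinv}): exhibit an explicit triple of algebra generators in each of the two invariant rings, verify by direct computation that $ π ∘ R^* $ carries the first triple to the second, and then close the argument with a Krull-dimension count. That $ π ∘ R^* $ lands in $ ℂ[X, Y]^{\Stab_{\GL}(φ^{(0)})} $ is already contained in the first two steps of the proof of \autoref{th:strategy-Rstar}. For the domain, the theorem of Crawley-Boevey and Holland \cite{Crawley-Boevey-Holland-deformed-preprojective} recalled in the preliminaries gives $ \Rep(Π_Q, α) \sslash \GL_α ≅ ℂ^2 \sslash Γ $, so $ ℂ[\Rep(Π_Q, α)]^{\GL_α} $ is the coordinate ring of the Kleinian $ D_n $ hypersurface $ x^2 + y^2 z + z^{n-1} = 0 $; since the diagonal torus $ ℂ^* ⊂ \GL_α $ acts trivially and together with $ \GL $ generates all of $ \GL_α $, the rings $ ℂ[\Rep(Π_Q, α)]^{\GL} $ and $ ℂ[\Rep(Π_Q, α)]^{\GL_α} $ coincide. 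I would fix a minimal generating set $ a, b, c $ of this ring realized by cycles in the double quiver $ \Qbar $, of lengths $ 4 $, $ 2(n-2) $ and $ 2(n-1) $ respectively, with their single relation of the above hypersurface shape.

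For the codomain, I would invoke the identification of the stabilizer from Stage 4 for the $ D_n $ case, by which $ \Stab_{\GL}(φ^{(0)}) ≅ Γ = \BD_{n-2} $ with its action on $ ℂ^2 $ being the standard embedding into $ \SL_2(ℂ) $; hence $ ℂ[X, Y]^{\Stab_{\GL}(φ^{(0)})} = ℂ[X, Y]^{Γ} $, which is generated, by classical binary-dihedral invariant theory, by $ a' = (XY)^2 $, $ b' = X^{2(n-2)} + Y^{2(n-2)} $ and $ c' = XY(X^{2(n-2)} - Y^{2(n-2)}) $, subject to the single relation $ c'^2 = a'b'^2 - 4 a'^{n-1} $ — the $ D_n $ hypersurface relation up to rescaling the generators. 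The main obstacle, exactly as in the $ D_4 $ work of Abdelgadir and Segal, is the explicit computation of $ π ∘ R^* $ on $ a, b, c $: by construction $ π(R^*(f))(x) $ is the value of the cycle $ f $ on the representation $ R(φ^{(0)}, x) $ displayed above, whose chain arrows are the diagonal matrices $ 2\pmat{x_1 & 0 \\ 0 & x_2} $ and $ \pmat{x_2 & 0 \\ 0 & -x_1} $ and whose fork arrows are the row/column matrices $ \pmat{x_1 & x_2} $, $ \pmat{x_2 \\ -x_1} $ and their analogues; multiplying these along $ a, b, c $ and taking traces should, after rescaling, reproduce exactly $ a', b', c' $. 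The bookkeeping is delicate and must be redone with $ O_1 $ reinterpreted as $ I_{(n-4)/2} $ in the exceptional case $ n = 4 $, as in the remark preceding \autoref{sec:caseD-2}; I would also double-check that the chosen $ a, b, c $ genuinely form a minimal generating set, which is forced by matching the quiver-lengths $ 4, 2(n-2), 2(n-1) $ against the weights of the $ D_n $ singularity.

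Once the generators are matched, the conclusion is formal: $ π ∘ R^* $ is surjective, since it sends a generating set of $ ℂ[\Rep(Π_Q, α)]^{\GL} $ onto a generating set of $ ℂ[X, Y]^{\Stab_{\GL}(φ^{(0)})} $; and a surjective homomorphism between finitely generated integral domains of equal Krull dimension — here $ 2 $, both being coordinate rings of the $ D_n $ surface singularity — is an isomorphism, because its kernel is a prime ideal of height zero and hence trivial. I would remark that this simultaneously supplies the fourth step in the proof of \autoref{th:strategy-Rstar} for the $ D_n $ type.
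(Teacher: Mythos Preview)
Your proposal is correct and follows essentially the same strategy as the paper: pick three explicit generators on each side, check that $ π ∘ R^* $ matches them, and conclude. The paper's generators are $ A' = X^2 Y^2 $, $ B' = X^{2(n-2)} + Y^{2(n-2)} $, $ C' = X^{2(n-2)+1} Y - XY^{2(n-2)+1} $ in the codomain and three trace functions $ A, B, C $ on $ \Rep(Π_Q, α) $, computed against the explicit representation $ R(φ^{(0)}, x) $.

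Two small remarks. First, the paper's generator $ B $ is not a pure cycle trace but $ \trace(A_1^* A_3^* \cdots A_{n-1}^* A_{n-1} \cdots A_3 A_1) - 2^{n-3} A^{(n-2)/2} $; the raw trace picks up an extra $ (XY)^{n-2} $ term that must be subtracted. Your phrasing ``after rescaling'' understates this, but it does not harm your argument: your Krull-dimension closing only needs that the images of $ a, b, c $ \emph{generate} $ ℂ[X,Y]^Γ $, and a trace whose image is $ b' + \text{(polynomial in } a'\text{)} $ suffices for that. Second, the paper dispenses with the Krull-dimension step by simply observing that both triples satisfy the same single $ D_n $ hypersurface relation, so the generator match is already an isomorphism of presented algebras; your dimension argument is a valid alternative and arguably more robust, since it does not require verifying the relation on the quiver side.
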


\begin{proof}
The trick is to regard the three generating elements of the domain and codomain. In the codomain we pick the three elements
\begin{equation*}
A' = X^2 Y^2, \quad B' = X^{2(n-2)} + Y^{2(n-2)}, \quad C' = X^{2(n-2) + 1} Y - X Y^{2(n-2)+1}.
\end{equation*}
In the domain we pick the three elements 
\begin{align*}
A &= 2^{-2} \trace(A_1^* A_2 A_2^* A_1), \\
B &= \trace(A_1^* A_3^* … A_{n-1}^* A_{n-1} …  A_3 A_1) - 2^{n-3} A^{(n-2)/2}, \\
C &= - 2^{3-n} \trace(A_1 A_1^* A_2 A_2^*  A_3^* … A_{n-1}^* A_{n-1} … A_1).
\end{align*}
Evidently, we have $ π(R^* (A)) = A' $ and $ π(R^* (B)) = B' $ and $ π(R^* (C)) = C' $. In both rings, the only relation satisfied by the three generators is the Kleinian $ D_n $ relation. This finishes the proof.
\end{proof}

\subsection{Stage 4: Identification of the stabilizer of $ φ^{(0)} $}
\label{sec:caseD-4}
We shall examine the stabilizer of $ φ^{(0)} $ under the $ \GL $-action. The stabilizer group of $ φ^{(0)} $ under the $ \GL $-action is generated by two elements $ σ_a $ and $ σ_x $. Mapping these group elements to $ a, x ∈ Γ = \BD_{n-2} $ verifies that the stabilizer group is isomorphic to $ Γ = \BD_{n-2} $.
\begin{align*}
& \Stab_{\GL} (φ^{(0)}) = ⟨σ_a, σ_x⟩ ⊂ \GL ,\\
& σ_a = \left(\underset{E_2}{+1}, \underset{E_3}{-1}, \underset{E_4}{-1}, \underset{O_j}{\pmat{e^{\frac{πi}{n-2} · j} & 0 \\ 0 & e^{-\frac{πi}{n-2} · j}}}, \underset{I_j}{\pmat{e^{\frac{πi}{n-2} · j} & 0 \\ 0 & e^{-\frac{πi}{n-2} · j}}}\right), \\
& σ_x = \left(\underset{E_2}{-1}, \underset{E_3}{+1}, \underset{E_4}{-1}, \underset{O_j}{\pmat{0 & 1 \\ -1 & 0}}, \underset{I_j}{\pmat{0 & 1 \\ 1 & 0}}\right).
\end{align*}
The elements $ σ_a $ and $ σ_x $ act on $ ℂ^2 = O_1 $ simply by left-multiplication with their $ O_1 $-entries. We immediately observe that this action is isomorphic to the action of $ Γ $ on $ ℂ^2 $. This finishes stage four.

\subsection{Stage 5: Identification of the $ θ_1 $-semistable locus}
\label{sec:caseD-5}
It is our task to prove the following lemma. Recall that $ f_0: \CG_Γ → ℂ $ denotes the $ |Γ|θ_1 $-semiinvariant given by the weighted product over $ i, j $ of the determinant of $ φ_{i, j} $.

\begin{lemma}
\label{th:caseD-cgtheta1}
Let $ φ ∈ \CG_Γ^{θ_1} $. If $ f_0 (φ) ≠ 0 $, then $ φ ∈ \GL φ^{(0)} $.
\end{lemma}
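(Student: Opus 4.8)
The plan is to follow the template of the proof of \autoref{th:caseA-cgtheta1}, with the trivial scalar induction there replaced by a gauge normalisation adapted to the explicit datum $φ^{(0)}$ of \autoref{sec:caseD-step1}. First, the hypothesis $f_0(φ) ≠ 0$ means, by the formula $f_0(φ) = \prod_{i,j} \det(φ_{i,j})^{\vdim U_i \vdim U_j}$, that every component $φ_{i,j}$ is a linear isomorphism; in particular, since $U_0 = E_1$ is not gauged, $φ_{i,0} = \Id$ for every $φ ∈ \CG_Γ$, as this already holds on $\GL φ^{(0)}$ and hence on the closure. I claim it suffices to find $g ∈ \GL$ with $(gφ)_{U, O_1} = φ^{(0)}_{U, O_1}$ for every irreducible $U$, i.e.\ to gauge $φ$ so that it agrees with $φ^{(0)}$ on all slices obtained by tensoring with the generating representation $O_1 ≅ ℂ^2$. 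Indeed, $O_1$ tensor-generates $\Rep(Γ)$ since the McKay quiver of $Γ$ is connected, so — running the reconstruction argument of the proof of \autoref{th:construction-theta2} with the full family $\{φ_{U, O_1}\}_U$ in place of a single slice $φ(- ¤ x)$, so that every $w ∈ O_1$ is recognised in the sense of that proof, and using that each component is invertible so that the vectors $φ(u ¤ w)$ span — the coherence relations of \autoref{th:construction-CGvariety-coherence} recover $φ$ from the data $\{φ_{U, O_1}\}_U$. Thus if $gφ$ and $φ^{(0)}$ agree on these slices, they are equal, whence $φ = g^{-1}φ^{(0)} ∈ \GL φ^{(0)}$.

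It therefore remains to perform the gauge normalisation of the slices $φ_{U, O_1}$. Begin with the one-dimensional representations $E_1, …, E_4$: their tensor components $φ_{E_i, E_j}$ are nonzero scalars forming a Clebsch-Gordan datum for the character group of $Γ$ (which is $C_2 × C_2$ for even $n$ and $C_4$ for odd $n$); using the rank-one gauge factors $g_{E_2}, g_{E_3}, g_{E_4}$ one normalises the generators of this subdatum to $1$, and then the coherence and symmetry relations force $φ_{E_i, E_j} = 1 = φ^{(0)}_{E_i, E_j}$ throughout, exactly as in \autoref{th:caseA-cgtheta1}. Now the coherence relation for $E_1 ¤ E_1 ¤ O_1$ makes $φ_{E_1, O_1}$ (under the identification $E_1 ¤ O_1 ≅ O_1$) an invertible idempotent, hence $φ_{E_1, O_1} = \Id = φ^{(0)}_{E_1, O_1}$; the relation for $E_2 ¤ E_2 ¤ O_1$ makes $φ_{E_2, O_1}$ an involution; and the remaining coherence relations among the $E_i$, together with those coupling the $E_i$ to $φ_{O_1, O_1}$, pin down the joint eigenstructure of $φ_{E_2, O_1}, φ_{E_3, O_1}, φ_{E_4, O_1}$, which one conjugates to the $φ^{(0)}$-values using the $\GL_2$ gauge factors still free at this stage (at $O_1$ and at the two-dimensional targets of $φ_{E_3, O_1}$ and $φ_{E_4, O_1}$).

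Finally, the component $φ_{O_1, O_1}$ and the slices $φ_{U, O_1}$ for the remaining two-dimensional $U$ are normalised one at a time, walking outward along the Kleinian quiver of \autoref{fig:prelim-D} from the central vertex $O_1$: at each vertex the relevant component is invertible, and the coherence relations express it in terms of the already-normalised components and the gauge parameters still free at that vertex, so it can be brought to its $φ^{(0)}$-value recorded in \autoref{sec:caseD-step1}. I expect this last step to be the main obstacle: unlike the scalar bookkeeping in the $A_n$ case it requires the explicit matrices of \autoref{sec:caseD-step1} and a careful count, at each vertex, of which $\GL_2$ gauge parameters remain free, and for $n = 4$ the whole argument must be read with the reinterpretation of $O_1$ as $I_{(n-4)/2}$ flagged in \autoref{sec:caseD-step1}. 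Once all slices $φ_{U, O_1}$ have been normalised, the reduction of the first paragraph closes the proof.
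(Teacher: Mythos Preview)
Your approach is genuinely different from the paper's, and the reduction in your first paragraph is a nice observation: using invertibility of all $φ_{i,j}$ together with the coherence relations of \autoref{th:construction-CGvariety-coherence}, the slices $\{φ_{U,O_1}\}_U$ do determine $φ$, so it suffices to gauge these to $φ^{(0)}$. However, the paper does \emph{not} carry out the direct gauge normalisation you outline in paragraphs two and three. Instead it argues by convergence: writing $φ = \lim_k g_k φ^{(0)}$ for a sequence $(g_k) ⊂ \GL$, the hypothesis $f_0(φ) ≠ 0$ forces the determinants $\det(g_k)_{O_i}$ and the scalars $(g_k)_{E_i}$ to converge (up to subsequence) to nonzero limits, and then a composed map $Φ_k: O_1^{¤(n-2)} → E_3 ⊕ E_4$ built from the $φ_{O_i,O_1}$-components is used to show that the matrix entries of $(g_k)_{O_1}$ themselves converge, after which an easy induction along the quiver gives convergence of all $(g_k)_{O_i}$. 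Thus $g_k → g ∈ \GL$ and $φ = gφ^{(0)}$.

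The trade-off is this. Your route, if completed, would prove the stronger statement that any datum satisfying the coherence and symmetry relations with all components invertible lies in the orbit — not merely those in the closure $\CG_Γ$ — and it stays closer to the $A_n$ template. But the ``main obstacle'' you anticipate is real: the step-by-step normalisation of the $2×2$ slices along the quiver requires tracking exactly which residual gauge freedom survives at each vertex, and for $n ≥ 5$ the targets of $φ_{E_3,O_1}$ and $φ_{E_4,O_1}$ are $O_{n-3}$ rather than $O_1$, so the involution argument you sketch does not apply directly. The paper's convergence argument sidesteps this combinatorics entirely by extracting the gauge element from limits of carefully chosen scalar quantities, at the cost of being less constructive and of relying on the closure description of $\CG_Γ$ rather than on the coherence relations alone.
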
 

\begin{proof}
Since $ φ ∈ \CG_Γ = \overline{\GL φ^{(0)}} $, we can write $ φ = \lim_{k → ∞} g_k φ^{(0)} $ for some sequence $ (g_k) ⊂ \GL $. We shall now prove that the determinants of the individual components of $ g_k $ converge, up to choice of a subsequence of $ (g_k) $. First of all, note that $ (g_k φ^{(0)})_{E_i} = φ^{(0)}_{ii} (g_k)_{E_i}^{-2} $. Meanwhile, $ (g_k φ^{(0)})_{E_i} $ converges to $ φ_{E_i} $, which is a nonzero number. After passing to a subsequence of $ (g_k) $, we can thus assume that $ (g_k)_{E_i} $ converges, with nonzero limit value. Next, we observe
\begin{align*}
\det (g_k φ^{(0)})_{O_1, O_1} &= (g_k)_{E_2} \det(g_k)_{O_2} \det(g_k)_{O_1, O_1}^{-2}, \\
\det (g_k φ^{(0)})_{O_i, O_1} &= \det(g_k)_{O_{i-1}} \det(g_k)_{O_{i+1}} \det(g_k)_{O_i}^{-2} \det(g_k)_{O_1}^{-2}, \quad 2 ≤ i ≤ n-4, \\
\det (g_k φ^{(0)})_{O_{n-3}, O_1} &= (g_k)_{E_3} (g_k)_{E_4} \det(g_k)_{O_{n-4}} \det(g_k φ^{(0)})_{O_1, O_1}^{-2}.
\end{align*}
By assumption, all left-hand sides converge to a nonzero value. Multiplying up the equations in the appropriate way, we conclude that the ratio $ \det(g_k)_{O_i} \det(g_k)_{O_1}^{-i^2} $ converges to a nonzero value for $ 1 ≤ i ≤ n-3 $. Combining with the third equation, we conclude that $ \det(g_k)_{O_1}^{(n-2)^2} $ converges to a nonzero value. After passing to a subsequence, we conclude that $ \det(g_k)_{O_1} $ converges and thanks to the convergence of the ratios also $ \det(g_k)_{O_i} $ converges for all $ 2 ≤ i ≤ n-3 $. This proves that the determinants of the individual components of $ (g_k) $ converge.

We shall now prove that $ (g_k) $ converges. We regard the map $ Φ_k $ given as the composition
\begin{align*}
\underbrace{(…(((O_1 ¤ O_1) ¤ O_1) ¤ O_1) ¤  …) ¤ O_1}_{(n-2) × O_1} &→  (…((O_2 ¤ O_1) ¤ O_1) ¤ …) ¤ O_1 \\
&→  (…(O_3 ¤ O_1) ¤ …) ¤ O_1 \\
&→ … \\
&→ E_3 ⊕ E_4
\end{align*}
In this composition, the arrows are given by $ π_{O_{i+1}} ∘ (g_k φ^{(0)})_{O_i, O_1} $ for $ 1 ≤ i ≤ n-4 $ and $ π_{E_3 ⊕ E_4} ∘ (g_k φ^{(0)})_{O_{n-3}, O_1} $. The map $ Φ_k $ is a map from the large space $ O_1^{¤ (n-2)} $ to the two-dimensional space $ E_3 ⊕ E_4 $, in other words a very wide matrix. By assumption, we know that $ (Φ_k) $ converges.

In what follows, we shall determine four entries of $ (Φ_k) $ explicitly and draw the conclusion that $ (g_k)_{O_1} $ converges. As a simplification, we calculate these specific entries by pretending that the inverse matrix of $ (g_k)_{O_i} $ agrees with its adjugate, pretending that $ \det(g_k)_{O_i} = 1 $. Since we have already proved that the determinants converge to nonzero values, this simplification is legitimate. We write
\begin{equation*}
(g_k)_{O_1} = \pmat{A_k & B_k \\ C_k & D_k}.
\end{equation*}
Let us now state our calculations as follows:
\begin{align*}
π_{E_3} (Φ_k (e_1^{¤ (n-2)})) &= -C_k^{n-2} + D_k^{n-2}, \\
π_{E_3} (Φ_k (e_2^{¤ (n-2)})) &= A_k^{n-2} - B_k^{n-2}, \\
π_{E_4} (Φ_k (e_1^{¤ (n-2)})) &= C_k^{n-2} + D_k^{n-2}, \\
π_{E_4} (Φ_k (e_2^{¤ (n-2)})) &= - A_k^{n-2} - B_k^{n-2}.
\end{align*}
Since the left-hand side converges as $ k → ∞ $, we conclude that $ (g_k)_{O_1} $ converges. We note that the limit is an invertible matrix. Finally, regard the maps
\begin{equation*}
π_{O_{i+1}} ∘ (g_k φ^{(0)})_{O_i, O_1} = (g_k)_{O_{i+1}} ∘ π_{O_{i+1}} ∘ φ^{(0)}_{O_i, O_1} ∘ ((g_k)_{O_i}^{-1} ¤ (g_k)_{O_1}^{-1}).
\end{equation*}
Since the left-hand side converges and $ (g_k)_{O_1} $ converges to an invertible matrix and $ π_{O_{i+1}} ∘ φ^{(0)}_{O_i, O_1} $ is surjective, we conclude inductively that $ (g_k)_{O_i} $ converges to an invertible matrix for all $ 2 ≤ i ≤ n-3 $. This proves that the sequence $ (g_k) ⊂ \GL $ converges. Consequently, we have $ φ = \lim_{k → ∞} g_k φ^{(0)} = (\lim_{k → ∞} g_k) φ^{(0)} ∈ \GL φ^{(0)} $. This shows that $ φ ∈ \GL φ^{(0)} $, and finishes the proof.
\end{proof}

\subsection{Stage 6: Identification of the $ θ_2 $-semistable locus}
\label{sec:caseD-6}
In this section, we prove the remaining parts of the sixth stage. One part is the claim that if $ (φ, x) ∈ \CG_Γ × ℂ^2 $ is $ θ_2 $-semistable, then $ R(φ, x) ∈ \Rep(Π_Q, α) $ is $ θ_2 $-semistable. The other part is the claim that the map $ R: (\CG_Γ × ℂ^2)^{θ_2} → \Rep(Π_Q, α)^{θ_2} $ is surjective.

We start by showing that if $ (φ, x) ∈ \CG_Γ × ℂ^2 $ is $ θ_2 $-semistable, then $ R(φ, x) ∈ \Rep(Π_Q, α) $ is a $ θ_2 $-semistable representation. The strategy is to prove the contraposite. We shall provide a list of non-$ θ_2 $-semistable representations in \autoref{fig:stability-cases} and then prove that none of them lie in the image of $ (\CG_Γ × ℂ^2)^{θ_2} $ under $ R $.

\begin{figure}
\centering
\begin{subfigure}{0.3\linewidth}
\begin{tikzpicture}[scale=1]
\path (0, 0) node (C) {$ E_1 $};
\path (2, -2) node (U2) {$ E_2 $};
\path (4, 0) node (U3) {$ E_3 $};
\path (2, 2) node (U4) {$ E_4 $};
\path (2, 0) node (U5) {$ O_1 $};
\path[draw, ->] ($ (C.east) + (up:0.1) $) to node[midway, above] {$ 0 $} ($ (U5.west) + (up:0.1) $);
\path[draw, <-] ($ (C.east) + (down:0.1) $) to node[midway, below] {$ * $} ($ (U5.west) + (down:0.1) $);
\path[draw, ->] ($ (U2.north) + (left:0.1) $) to node[midway, left] {$ * $} ($ (U5.south) + (left:0.1) $);
\path[draw, <-] ($ (U2.north) + (right:0.1) $) to node[midway, right] {$ * $} ($ (U5.south) + (right:0.1) $);
\path[draw, ->] ($ (U3.west) + (down:0.1) $) to node[midway, below] {$ * $} ($ (U5.east) + (down:0.1) $);
\path[draw, <-] ($ (U3.west) + (up:0.1) $) to node[midway, above] {$ * $} ($ (U5.east) + (up:0.1) $);
\path[draw, ->] ($ (U4.south) + (right:0.1) $) to node[midway, right] {$ * $} ($ (U5.north) + (right:0.1) $);
\path[draw, <-] ($ (U4.south) + (left:0.1) $) to node[midway, left] {$ * $} ($ (U5.north) + (left:0.1) $);
\end{tikzpicture}
\caption{}
\label{fig:stability-case-0}
\end{subfigure}
\begin{subfigure}{0.3\linewidth}
\begin{tikzpicture}[scale=1]
\path (0, 0) node (C) {$ E_1 $};
\path (2, -2) node (U2) {$ E_2 $};
\path (4, 0) node (U3) {$ E_3 $};
\path (2, 2) node (U4) {$ E_4 $};
\path (2, 0) node (U5) {$ O_1 $};
\path[draw, ->] ($ (C.east) + (up:0.1) $) to node[midway, above] {$ e_1 $} ($ (U5.west) + (up:0.1) $);
\path[draw, <-] ($ (C.east) + (down:0.1) $) to node[midway, below] {$ π_2 $} ($ (U5.west) + (down:0.1) $);
\path[draw, ->] ($ (U2.north) + (left:0.1) $) to node[midway, left] {$ e_1 $} ($ (U5.south) + (left:0.1) $);
\path[draw, <-] ($ (U2.north) + (right:0.1) $) to node[midway, right] {$ π_2 $} ($ (U5.south) + (right:0.1) $);
\path[draw, ->] ($ (U3.west) + (down:0.1) $) to node[midway, below] {$ e_1 $} ($ (U5.east) + (down:0.1) $);
\path[draw, <-] ($ (U3.west) + (up:0.1) $) to node[midway, above] {$ π_2 $} ($ (U5.east) + (up:0.1) $);
\path[draw, ->] ($ (U4.south) + (right:0.1) $) to node[midway, right] {$ e_1 $} ($ (U5.north) + (right:0.1) $);
\path[draw, <-] ($ (U4.south) + (left:0.1) $) to node[midway, left] {$ π_2 $} ($ (U5.north) + (left:0.1) $);
\end{tikzpicture}
\caption{}
\label{fig:stability-case-A}
\end{subfigure}
\begin{subfigure}{0.3\linewidth}
\begin{tikzpicture}[scale=1]
\path (0, 0) node (C) {$ E_1 $};
\path (2, -2) node (U2) {$ E_2 $};
\path (4, 0) node (U3) {$ E_3 $};
\path (2, 2) node (U4) {$ E_4 $};
\path (2, 0) node (U5) {$ O_1 $};
\path[draw, ->] ($ (C.east) + (up:0.1) $) to node[midway, above] {$ e_1 $} ($ (U5.west) + (up:0.1) $);
\path[draw, <-] ($ (C.east) + (down:0.1) $) to node[midway, below] {$ π_2 $} ($ (U5.west) + (down:0.1) $);
\path[draw, ->] ($ (U2.north) + (left:0.1) $) to node[midway, left] {$ e_1 $} ($ (U5.south) + (left:0.1) $);
\path[draw, <-] ($ (U2.north) + (right:0.1) $) to node[midway, right] {$ π_2 $} ($ (U5.south) + (right:0.1) $);
\path[draw, ->] ($ (U3.west) + (down:0.1) $) to node[midway, below] {$ e_1 $} ($ (U5.east) + (down:0.1) $);
\path[draw, <-] ($ (U3.west) + (up:0.1) $) to node[midway, above] {$ π_2 $} ($ (U5.east) + (up:0.1) $);
\path[draw, ->] ($ (U4.south) + (right:0.1) $) to node[midway, right] {$ e_1 $} ($ (U5.north) + (right:0.1) $);
\path[draw, <-] ($ (U4.south) + (left:0.1) $) to node[midway, left] {$ 0 $} ($ (U5.north) + (left:0.1) $);
\end{tikzpicture}
\caption{}
\label{fig:stability-case-B1}
\end{subfigure}
\begin{subfigure}{0.3\linewidth}
\begin{tikzpicture}[scale=1]
\path (0, 0) node (C) {$ E_1 $};
\path (2, -2) node (U2) {$ E_2 $};
\path (4, 0) node (U3) {$ E_3 $};
\path (2, 2) node (U4) {$ E_4 $};
\path (2, 0) node (U5) {$ O_1 $};
\path[draw, ->] ($ (C.east) + (up:0.1) $) to node[midway, above] {$ e_1 $} ($ (U5.west) + (up:0.1) $);
\path[draw, <-] ($ (C.east) + (down:0.1) $) to node[midway, below] {$ π_2 $} ($ (U5.west) + (down:0.1) $);
\path[draw, ->] ($ (U2.north) + (left:0.1) $) to node[midway, left] {$ e_1 $} ($ (U5.south) + (left:0.1) $);
\path[draw, <-] ($ (U2.north) + (right:0.1) $) to node[midway, right] {$ π_2 $} ($ (U5.south) + (right:0.1) $);
\path[draw, ->] ($ (U3.west) + (down:0.1) $) to node[midway, below] {$ e_1 $} ($ (U5.east) + (down:0.1) $);
\path[draw, <-] ($ (U3.west) + (up:0.1) $) to node[midway, above] {$ π_2 $} ($ (U5.east) + (up:0.1) $);
\path[draw, ->] ($ (U4.south) + (right:0.1) $) to node[midway, right] {$ e_2 $} ($ (U5.north) + (right:0.1) $);
\path[draw, <-] ($ (U4.south) + (left:0.1) $) to node[midway, left] {$ 0 $} ($ (U5.north) + (left:0.1) $);
\end{tikzpicture}
\caption{}
\label{fig:stability-case-B2}
\end{subfigure}
\begin{subfigure}{0.3\linewidth}
\begin{tikzpicture}[scale=1]
\path (0, 0) node (C) {$ E_1 $};
\path (2, -2) node (U2) {$ E_2 $};
\path (4, 0) node (U3) {$ E_3 $};
\path (2, 2) node (U4) {$ E_4 $};
\path (2, 0) node (U5) {$ O_1 $};
\path[draw, ->] ($ (C.east) + (up:0.1) $) to node[midway, above] {$ e_1 $} ($ (U5.west) + (up:0.1) $);
\path[draw, <-] ($ (C.east) + (down:0.1) $) to node[midway, below] {$ π_2 $} ($ (U5.west) + (down:0.1) $);
\path[draw, ->] ($ (U2.north) + (left:0.1) $) to node[midway, left] {$ e_1 $} ($ (U5.south) + (left:0.1) $);
\path[draw, <-] ($ (U2.north) + (right:0.1) $) to node[midway, right] {$ π_2 $} ($ (U5.south) + (right:0.1) $);
\path[draw, ->] ($ (U3.west) + (down:0.1) $) to node[midway, below] {$ e_1 $} ($ (U5.east) + (down:0.1) $);
\path[draw, <-] ($ (U3.west) + (up:0.1) $) to node[midway, above] {$ π_2 $} ($ (U5.east) + (up:0.1) $);
\path[draw, ->] ($ (U4.south) + (right:0.1) $) to node[midway, right] {$ 0 $} ($ (U5.north) + (right:0.1) $);
\path[draw, <-] ($ (U4.south) + (left:0.1) $) to node[midway, left] {$ * $} ($ (U5.north) + (left:0.1) $);
\end{tikzpicture}
\caption{}
\label{fig:stability-case-B3}
\end{subfigure}
%
%
\begin{subfigure}{0.3\linewidth}
\begin{tikzpicture}[scale=1]
\path (0, 0) node (C) {$ E_1 $};
\path (2, -2) node (U2) {$ E_2 $};
\path (4, 0) node (U3) {$ E_3 $};
\path (2, 2) node (U4) {$ E_4 $};
\path (2, 0) node (U5) {$ O_1 $};
\path[draw, ->] ($ (C.east) + (up:0.1) $) to node[midway, above] {$ e_1 $} ($ (U5.west) + (up:0.1) $);
\path[draw, <-] ($ (C.east) + (down:0.1) $) to node[midway, below] {$ 0 $} ($ (U5.west) + (down:0.1) $);
\path[draw, ->] ($ (U2.north) + (left:0.1) $) to node[midway, left] {$ e_1 $} ($ (U5.south) + (left:0.1) $);
\path[draw, <-] ($ (U2.north) + (right:0.1) $) to node[midway, right] {$ π_2 $} ($ (U5.south) + (right:0.1) $);
\path[draw, ->] ($ (U3.west) + (down:0.1) $) to node[midway, below] {$ e_1 $} ($ (U5.east) + (down:0.1) $);
\path[draw, <-] ($ (U3.west) + (up:0.1) $) to node[midway, above] {$ π_2 $} ($ (U5.east) + (up:0.1) $);
\path[draw, ->] ($ (U4.south) + (right:0.1) $) to node[midway, right] {$ e_1 $} ($ (U5.north) + (right:0.1) $);
\path[draw, <-] ($ (U4.south) + (left:0.1) $) to node[midway, left] {$ π_2 $} ($ (U5.north) + (left:0.1) $);
\end{tikzpicture}
\caption{}
\label{fig:stability-case-B5}
\end{subfigure}
\begin{subfigure}{0.3\linewidth}
\begin{tikzpicture}[scale=1]
\path (0, 0) node (C) {$ E_1 $};
\path (2, -2) node (U2) {$ E_2 $};
\path (4, 0) node (U3) {$ E_3 $};
\path (2, 2) node (U4) {$ E_4 $};
\path (2, 0) node (U5) {$ O_1 $};
\path[draw, ->] ($ (C.east) + (up:0.1) $) to node[midway, above] {$ e_1 $} ($ (U5.west) + (up:0.1) $);
\path[draw, <-] ($ (C.east) + (down:0.1) $) to node[midway, below] {$ π_2 $} ($ (U5.west) + (down:0.1) $);
\path[draw, ->] ($ (U2.north) + (left:0.1) $) to node[midway, left] {$ * $} ($ (U5.south) + (left:0.1) $);
\path[draw, <-] ($ (U2.north) + (right:0.1) $) to node[midway, right] {$ * $} ($ (U5.south) + (right:0.1) $);
\path[draw, ->] ($ (U3.west) + (down:0.1) $) to node[midway, below] {$ * $} ($ (U5.east) + (down:0.1) $);
\path[draw, <-] ($ (U3.west) + (up:0.1) $) to node[midway, above] {$ * $} ($ (U5.east) + (up:0.1) $);
\path[draw, ->] ($ (U4.south) + (right:0.1) $) to node[midway, right] {$ * $} ($ (U5.north) + (right:0.1) $);
\path[draw, <-] ($ (U4.south) + (left:0.1) $) to node[midway, left] {$ 0 $} ($ (U5.north) + (left:0.1) $);
\end{tikzpicture}
\caption{}
\label{fig:stability-case-C1}
\end{subfigure}
\begin{subfigure}{0.3\linewidth}
\begin{tikzpicture}[scale=1]
\path (0, 0) node (C) {$ E_1 $};
\path (2, -2) node (U2) {$ E_2 $};
\path (4, 0) node (U3) {$ E_3 $};
\path (2, 2) node (U4) {$ E_4 $};
\path (2, 0) node (U5) {$ O_1 $};
\path[draw, ->] ($ (C.east) + (up:0.1) $) to node[midway, above] {$ e_1 $} ($ (U5.west) + (up:0.1) $);
\path[draw, <-] ($ (C.east) + (down:0.1) $) to node[midway, below] {$ π_2 $} ($ (U5.west) + (down:0.1) $);
\path[draw, ->] ($ (U2.north) + (left:0.1) $) to node[midway, left] {$ e_1 $} ($ (U5.south) + (left:0.1) $);
\path[draw, <-] ($ (U2.north) + (right:0.1) $) to node[midway, right] {$ π_2 $} ($ (U5.south) + (right:0.1) $);
\path[draw, ->] ($ (U3.west) + (down:0.1) $) to node[midway, below] {$ * $} ($ (U5.east) + (down:0.1) $);
\path[draw, <-] ($ (U3.west) + (up:0.1) $) to node[midway, above] {$ 0 $} ($ (U5.east) + (up:0.1) $);
\path[draw, ->] ($ (U4.south) + (right:0.1) $) to node[midway, right] {$ * $} ($ (U5.north) + (right:0.1) $);
\path[draw, <-] ($ (U4.south) + (left:0.1) $) to node[midway, left] {$ 0 $} ($ (U5.north) + (left:0.1) $);
\end{tikzpicture}
\caption{}
\label{fig:stability-case-C2}
\end{subfigure}
\begin{subfigure}{0.3\linewidth}
\begin{tikzpicture}[scale=1]
\path (0, 0) node (C) {$ E_1 $};
\path (2, -2) node (U2) {$ E_2 $};
\path (4, 0) node (U3) {$ E_3 $};
\path (2, 2) node (U4) {$ E_4 $};
\path (2, 0) node (U5) {$ O_1 $};
\path[draw, ->] ($ (C.east) + (up:0.1) $) to node[midway, above] {$ e_1 $} ($ (U5.west) + (up:0.1) $);
\path[draw, <-] ($ (C.east) + (down:0.1) $) to node[midway, below] {$ 0 $} ($ (U5.west) + (down:0.1) $);
\path[draw, ->] ($ (U2.north) + (left:0.1) $) to node[midway, left] {$ e_1 $} ($ (U5.south) + (left:0.1) $);
\path[draw, <-] ($ (U2.north) + (right:0.1) $) to node[midway, right] {$ * $} ($ (U5.south) + (right:0.1) $);
\path[draw, ->] ($ (U3.west) + (down:0.1) $) to node[midway, below] {$ e_1 $} ($ (U5.east) + (down:0.1) $);
\path[draw, <-] ($ (U3.west) + (up:0.1) $) to node[midway, above] {$ * $} ($ (U5.east) + (up:0.1) $);
\path[draw, ->] ($ (U4.south) + (right:0.1) $) to node[midway, right] {$ e_1 $} ($ (U5.north) + (right:0.1) $);
\path[draw, <-] ($ (U4.south) + (left:0.1) $) to node[midway, left] {$ * $} ($ (U5.north) + (left:0.1) $);
\end{tikzpicture}
\caption{}
\label{fig:stability-case-C3}
\end{subfigure}
\caption{This figures lists all representations of the Kleinian $ D_4 $ quiver which are not $ θ_2 $-semistable. The star symbol $ * $ stands for arbitrary values. \autoref{fig:stability-case-C1} more generally concerns the case where $ φ_{55, 1} (- ¤ e_1) = π_2 $ and for at least one $ i ∈ \{2, 3, 4\} $ we have $ φ_{i5} (- ¤ e_1) = 0 $.}
\label{fig:stability-cases}
\end{figure}
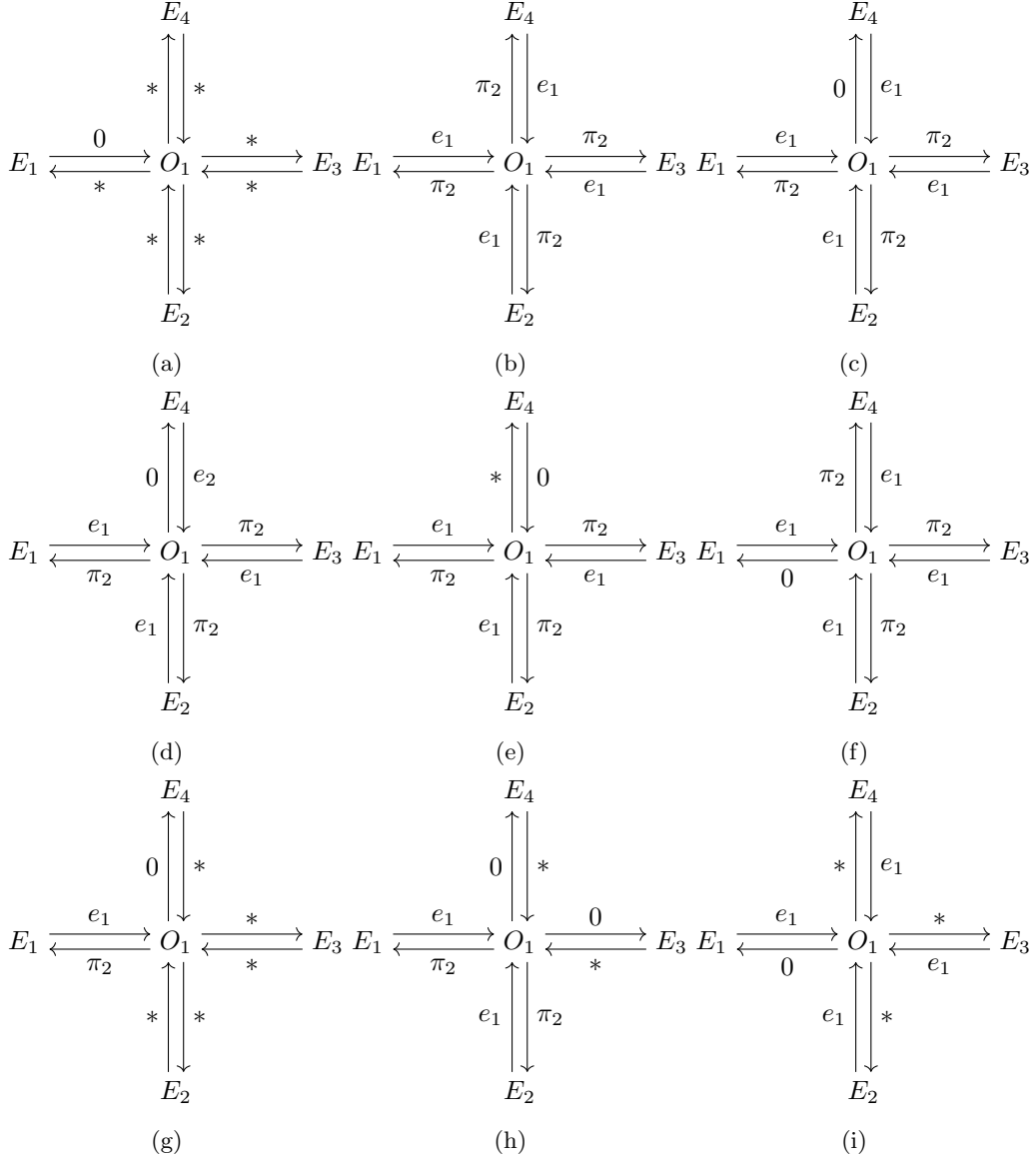
\begin{figure}
\ContinuedFloat
\centering
\begin{subfigure}{0.3\linewidth}
\begin{tikzpicture}[scale=1]
\path (0, 0) node (C) {$ E_1 $};
\path (2, -2) node (U2) {$ E_2 $};
\path (4, 0) node (U3) {$ E_3 $};
\path (2, 2) node (U4) {$ E_4 $};
\path (2, 0) node (U5) {$ O_1 $};
\path[draw, ->] ($ (C.east) + (up:0.1) $) to node[midway, above] {$ e_1 $} ($ (U5.west) + (up:0.1) $);
\path[draw, <-] ($ (C.east) + (down:0.1) $) to node[midway, below] {$ 0 $} ($ (U5.west) + (down:0.1) $);
\path[draw, ->] ($ (U2.north) + (left:0.1) $) to node[midway, left] {$ e_1 $} ($ (U5.south) + (left:0.1) $);
\path[draw, <-] ($ (U2.north) + (right:0.1) $) to node[midway, right] {$ π_2 $} ($ (U5.south) + (right:0.1) $);
\path[draw, ->] ($ (U3.west) + (down:0.1) $) to node[midway, below] {$ e_1 $} ($ (U5.east) + (down:0.1) $);
\path[draw, <-] ($ (U3.west) + (up:0.1) $) to node[midway, above] {$ π_2 $} ($ (U5.east) + (up:0.1) $);
\path[draw, ->] ($ (U4.south) + (right:0.1) $) to node[midway, right] {$ e_2 $} ($ (U5.north) + (right:0.1) $);
\path[draw, <-] ($ (U4.south) + (left:0.1) $) to node[midway, left] {$ 0 $} ($ (U5.north) + (left:0.1) $);
\end{tikzpicture}
\caption{}
\label{fig:stability-case-C4}
\end{subfigure}
\begin{subfigure}{0.3\linewidth}
\begin{tikzpicture}[scale=1]
\path (0, 0) node (C) {$ E_1 $};
\path (2, -2) node (U2) {$ E_2 $};
\path (4, 0) node (U3) {$ E_3 $};
\path (2, 2) node (U4) {$ E_4 $};
\path (2, 0) node (U5) {$ O_1 $};
\path[draw, ->] ($ (C.east) + (up:0.1) $) to node[midway, above] {$ e_1 $} ($ (U5.west) + (up:0.1) $);
\path[draw, <-] ($ (C.east) + (down:0.1) $) to node[midway, below] {$ 0 $} ($ (U5.west) + (down:0.1) $);
\path[draw, ->] ($ (U2.north) + (left:0.1) $) to node[midway, left] {$ e_1 $} ($ (U5.south) + (left:0.1) $);
\path[draw, <-] ($ (U2.north) + (right:0.1) $) to node[midway, right] {$ π_2 $} ($ (U5.south) + (right:0.1) $);
\path[draw, ->] ($ (U3.west) + (down:0.1) $) to node[midway, below] {$ e_1 $} ($ (U5.east) + (down:0.1) $);
\path[draw, <-] ($ (U3.west) + (up:0.1) $) to node[midway, above] {$ π_2 $} ($ (U5.east) + (up:0.1) $);
\path[draw, ->] ($ (U4.south) + (right:0.1) $) to node[midway, right] {$ 0 $} ($ (U5.north) + (right:0.1) $);
\path[draw, <-] ($ (U4.south) + (left:0.1) $) to node[midway, left] {$ π_1 $} ($ (U5.north) + (left:0.1) $);
\end{tikzpicture}
\caption{}
\label{fig:stability-case-C5}
\end{subfigure}
\begin{subfigure}{0.3\linewidth}
\begin{tikzpicture}[scale=1]
\path (0, 0) node (C) {$ E_1 $};
\path (2, -2) node (U2) {$ E_2 $};
\path (4, 0) node (U3) {$ E_3 $};
\path (2, 2) node (U4) {$ E_4 $};
\path (2, 0) node (U5) {$ O_1 $};
\path[draw, ->] ($ (C.east) + (up:0.1) $) to node[midway, above] {$ e_1 $} ($ (U5.west) + (up:0.1) $);
\path[draw, <-] ($ (C.east) + (down:0.1) $) to node[midway, below] {$ 0 $} ($ (U5.west) + (down:0.1) $);
\path[draw, ->] ($ (U2.north) + (left:0.1) $) to node[midway, left] {$ e_2 $} ($ (U5.south) + (left:0.1) $);
\path[draw, <-] ($ (U2.north) + (right:0.1) $) to node[midway, right] {$ π_1 $} ($ (U5.south) + (right:0.1) $);
\path[draw, ->] ($ (U3.west) + (down:0.1) $) to node[midway, below] {$ e_2 $} ($ (U5.east) + (down:0.1) $);
\path[draw, <-] ($ (U3.west) + (up:0.1) $) to node[midway, above] {$ π_1 $} ($ (U5.east) + (up:0.1) $);
\path[draw, ->] ($ (U4.south) + (right:0.1) $) to node[midway, right] {$ * $} ($ (U5.north) + (right:0.1) $);
\path[draw, <-] ($ (U4.south) + (left:0.1) $) to node[midway, left] {$ 0 $} ($ (U5.north) + (left:0.1) $);
\end{tikzpicture}
\caption{}
\label{fig:stability-case-C6}
\end{subfigure}
\begin{subfigure}{0.3\linewidth}
\begin{tikzpicture}[scale=1]
\path (0, 0) node (C) {$ E_1 $};
\path (2, -2) node (U2) {$ E_2 $};
\path (4, 0) node (U3) {$ E_3 $};
\path (2, 2) node (U4) {$ E_4 $};
\path (2, 0) node (U5) {$ O_1 $};
\path[draw, ->] ($ (C.east) + (up:0.1) $) to node[midway, above] {$ e_1 $} ($ (U5.west) + (up:0.1) $);
\path[draw, <-] ($ (C.east) + (down:0.1) $) to node[midway, below] {$ 0 $} ($ (U5.west) + (down:0.1) $);
\path[draw, ->] ($ (U2.north) + (left:0.1) $) to node[midway, left] {$ 0 $} ($ (U5.south) + (left:0.1) $);
\path[draw, <-] ($ (U2.north) + (right:0.1) $) to node[midway, right] {$ * $} ($ (U5.south) + (right:0.1) $);
\path[draw, ->] ($ (U3.west) + (down:0.1) $) to node[midway, below] {$ 0 $} ($ (U5.east) + (down:0.1) $);
\path[draw, <-] ($ (U3.west) + (up:0.1) $) to node[midway, above] {$ * $} ($ (U5.east) + (up:0.1) $);
\path[draw, ->] ($ (U4.south) + (right:0.1) $) to node[midway, right] {$ 0 $} ($ (U5.north) + (right:0.1) $);
\path[draw, <-] ($ (U4.south) + (left:0.1) $) to node[midway, left] {$ * $} ($ (U5.north) + (left:0.1) $);
\end{tikzpicture}
\caption{}
\label{fig:stability-case-D1}
\end{subfigure}
\begin{subfigure}{0.3\linewidth}
\begin{tikzpicture}[scale=1]
\path (0, 0) node (C) {$ E_1 $};
\path (2, -2) node (U2) {$ E_2 $};
\path (4, 0) node (U3) {$ E_3 $};
\path (2, 2) node (U4) {$ E_4 $};
\path (2, 0) node (U5) {$ O_1 $};
\path[draw, ->] ($ (C.east) + (up:0.1) $) to node[midway, above] {$ e_1 $} ($ (U5.west) + (up:0.1) $);
\path[draw, <-] ($ (C.east) + (down:0.1) $) to node[midway, below] {$ 0 $} ($ (U5.west) + (down:0.1) $);
\path[draw, ->] ($ (U2.north) + (left:0.1) $) to node[midway, left] {$ 0 $} ($ (U5.south) + (left:0.1) $);
\path[draw, <-] ($ (U2.north) + (right:0.1) $) to node[midway, right] {$ * $} ($ (U5.south) + (right:0.1) $);
\path[draw, ->] ($ (U3.west) + (down:0.1) $) to node[midway, below] {$ 0 $} ($ (U5.east) + (down:0.1) $);
\path[draw, <-] ($ (U3.west) + (up:0.1) $) to node[midway, above] {$ * $} ($ (U5.east) + (up:0.1) $);
\path[draw, ->] ($ (U4.south) + (right:0.1) $) to node[midway, right] {$ e_2 $} ($ (U5.north) + (right:0.1) $);
\path[draw, <-] ($ (U4.south) + (left:0.1) $) to node[midway, left] {$ 0 $} ($ (U5.north) + (left:0.1) $);
\end{tikzpicture}
\caption{}
\label{fig:stability-case-D2}
\end{subfigure}
\begin{subfigure}{0.3\linewidth}
\begin{tikzpicture}[scale=1]
\path (0, 0) node (C) {$ E_1 $};
\path (2, -2) node (U2) {$ E_2 $};
\path (4, 0) node (U3) {$ E_3 $};
\path (2, 2) node (U4) {$ E_4 $};
\path (2, 0) node (U5) {$ O_1 $};
\path[draw, ->] ($ (C.east) + (up:0.1) $) to node[midway, above] {$ e_1 $} ($ (U5.west) + (up:0.1) $);
\path[draw, <-] ($ (C.east) + (down:0.1) $) to node[midway, below] {$ 0 $} ($ (U5.west) + (down:0.1) $);
\path[draw, ->] ($ (U2.north) + (left:0.1) $) to node[midway, left] {$ 0 $} ($ (U5.south) + (left:0.1) $);
\path[draw, <-] ($ (U2.north) + (right:0.1) $) to node[midway, right] {$ 0 $} ($ (U5.south) + (right:0.1) $);
\path[draw, ->] ($ (U3.west) + (down:0.1) $) to node[midway, below] {$ 0 $} ($ (U5.east) + (down:0.1) $);
\path[draw, <-] ($ (U3.west) + (up:0.1) $) to node[midway, above] {$ 0 $} ($ (U5.east) + (up:0.1) $);
\path[draw, ->] ($ (U4.south) + (right:0.1) $) to node[midway, right] {$ e_2 $} ($ (U5.north) + (right:0.1) $);
\path[draw, <-] ($ (U4.south) + (left:0.1) $) to node[midway, left] {$ 0 $} ($ (U5.north) + (left:0.1) $);
\end{tikzpicture}
\caption{}
\label{fig:stability-case-D2s}
\end{subfigure}
\begin{subfigure}{0.3\linewidth}
\begin{tikzpicture}[scale=1]
\path (0, 0) node (C) {$ E_1 $};
\path (2, -2) node (U2) {$ E_2 $};
\path (4, 0) node (U3) {$ E_3 $};
\path (2, 2) node (U4) {$ E_4 $};
\path (2, 0) node (U5) {$ O_1 $};
\path[draw, ->] ($ (C.east) + (up:0.1) $) to node[midway, above] {$ e_1 $} ($ (U5.west) + (up:0.1) $);
\path[draw, <-] ($ (C.east) + (down:0.1) $) to node[midway, below] {$ 0 $} ($ (U5.west) + (down:0.1) $);
\path[draw, ->] ($ (U2.north) + (left:0.1) $) to node[midway, left] {$ 0 $} ($ (U5.south) + (left:0.1) $);
\path[draw, <-] ($ (U2.north) + (right:0.1) $) to node[midway, right] {$ * $} ($ (U5.south) + (right:0.1) $);
\path[draw, ->] ($ (U3.west) + (down:0.1) $) to node[midway, below] {$ * $} ($ (U5.east) + (down:0.1) $);
\path[draw, <-] ($ (U3.west) + (up:0.1) $) to node[midway, above] {$ 0 $} ($ (U5.east) + (up:0.1) $);
\path[draw, ->] ($ (U4.south) + (right:0.1) $) to node[midway, right] {$ * $} ($ (U5.north) + (right:0.1) $);
\path[draw, <-] ($ (U4.south) + (left:0.1) $) to node[midway, left] {$ 0 $} ($ (U5.north) + (left:0.1) $);
\end{tikzpicture}
\caption{}
\label{fig:stability-case-D3}
\end{subfigure}
\begin{subfigure}{0.3\linewidth}
\begin{tikzpicture}[scale=1]
\path (0, 0) node (C) {$ E_1 $};
\path (2, -2) node (U2) {$ E_2 $};
\path (4, 0) node (U3) {$ E_3 $};
\path (2, 2) node (U4) {$ E_4 $};
\path (2, 0) node (U5) {$ O_1 $};
\path[draw, ->] ($ (C.east) + (up:0.1) $) to node[midway, above] {$ e_1 $} ($ (U5.west) + (up:0.1) $);
\path[draw, <-] ($ (C.east) + (down:0.1) $) to node[midway, below] {$ 0 $} ($ (U5.west) + (down:0.1) $);
\path[draw, ->] ($ (U2.north) + (left:0.1) $) to node[midway, left] {$ * $} ($ (U5.south) + (left:0.1) $);
\path[draw, <-] ($ (U2.north) + (right:0.1) $) to node[midway, right] {$ 0 $} ($ (U5.south) + (right:0.1) $);
\path[draw, ->] ($ (U3.west) + (down:0.1) $) to node[midway, below] {$ * $} ($ (U5.east) + (down:0.1) $);
\path[draw, <-] ($ (U3.west) + (up:0.1) $) to node[midway, above] {$ 0 $} ($ (U5.east) + (up:0.1) $);
\path[draw, ->] ($ (U4.south) + (right:0.1) $) to node[midway, right] {$ * $} ($ (U5.north) + (right:0.1) $);
\path[draw, <-] ($ (U4.south) + (left:0.1) $) to node[midway, left] {$ 0 $} ($ (U5.north) + (left:0.1) $);
\end{tikzpicture}
\caption{}
\label{fig:stability-case-D4}
\end{subfigure}
\caption{Continued.}
\label{fig:stability-cases}
\end{figure}

In \autoref{fig:stability-cases}, we list all representations of the Kleinian $ D_4 $ quiver which are not $ θ_2 $-stable. We have depicted all of them only up to gauging, and in fact depicted them in a sloppy way. More precisely, we have ignored all scalar values. With respect to the sign convention that the arrows pointing towards the central vertex are the starred arrows of the double quiver $ \bar Q $, the preprojective conditions are not exactly satisfied and the values on the arrows should for instance rather be read $ ± e_1 $ instead of $ e_1 $. All these details do not play a role for our investigations of stability. Let us state the fact that we have listed all non-semistable representations as follows:

\begin{lemma}
Let $ (Q, α) $ be the Kleinian $ D_4 $ quiver and $ ρ ∈ \Rep(Π_Q, α) $ be a non-$ θ_2 $-semistable representation. Then $ ρ $ is one of the representations listed in \autoref{fig:stability-cases}, up to gauging.
\end{lemma}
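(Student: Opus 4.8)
The plan is to carry out a finite case analysis, combining the Hilbert--Mumford criterion with the preprojective relations, after first normalizing $ \rho $ up to gauge. Recall that for $ n = 4 $ the Kleinian double quiver $ \Qbar $ is a four-legged star: a central vertex $ O_1 $ with $ \alpha_{O_1} = 2 $, four leaves $ E_1, \dots, E_4 $ with $ \alpha_{E_i} = 1 $, and at the $ i $-th leg a pair of arrows which we write $ a_i \colon E_i \to O_1 $ and $ b_i \colon O_1 \to E_i $. The gauge group $ \GL $ acts on $ O_1 \cong \mathbb{C}^2 $ through $ \GL_2 $ and rescales three of the legs; the leg indexed by the trivial representation, whose index we call $ i_0 $, is the one on which $ \GL $ acts trivially. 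A one-parameter subgroup of $ \GL $ is, up to conjugation, a tuple of integer leg-weights $ c_i $ (with $ c_{i_0} = 0 $) together with a diagonal one-parameter subgroup $ \diag(t^{d_1}, t^{d_2}) $ of $ \GL_2(O_1) $, the latter amounting to a choice of ordered basis of $ O_1 $. It pairs with $ \theta_2 = \prod \det $ through $ \sum_i c_i + d_1 + d_2 $, and $ \lim_{t \to 0} g(t)^{-1}\rho $ exists exactly when, for every leg, each nonzero entry of $ a_i $ forces $ c_i \ge d_j $ and each nonzero entry of $ b_i $ forces $ d_j \ge c_i $, with $ j \in \{1,2\} $ indexing the relevant eigenline of $ O_1 $. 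Thus $ \rho $ fails to be $ \theta_2 $-semistable precisely when this linear system admits an integer solution of positive $ \theta_2 $-pairing --- a purely combinatorial condition on the vanishing pattern of the eight leg maps. Two flavors occur: the choice $ d_1 = d_2 $ produces the destabilizers that turn a single leg into a sub- or quotient representation, while $ d_1 \ne d_2 $ singles out one of the two coordinate lines of $ O_1 $.

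The next step is to normalize $ \rho $ using the preprojective relations. At the leaf $ E_i $ the relation reads $ b_i a_i = 0 $, so if $ a_i \ne 0 \ne b_i $ then $ \operatorname{im} a_i = \ker b_i $ as a line in $ O_1 $; at $ O_1 $ the relation reads $ \sum_i \varepsilon_i\, a_i b_i = 0 $, with signs $ \varepsilon_i = \pm 1 $ fixed by the orientation of $ Q $, an identity among the four matrices $ a_i b_i $, each of rank at most one. Using the $ \GL_2(O_1) $-action together with the leg rescalings one brings every nonzero $ a_i $ to a coordinate vector $ e_1 $ or $ e_2 $ and every nonzero $ b_i $ to a coordinate functional $ \pi_1 $ or $ \pi_2 $; the relation $ b_i a_i = 0 $ then forces opposite indices, and the central relation $ \sum_i \varepsilon_i a_i b_i = 0 $ becomes a linear relation among the matrices $ e_j \pi_k $ that discards most of the a priori possible patterns.

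With the normalization in hand, I would organize the case analysis by how many of the functionals $ b_i $ vanish, whether the two arrows $ a_{i_0}, b_{i_0} $ at the trivial leg vanish, and which of the nonzero lines $ \operatorname{im} a_i = \ker b_i $ coincide with one another and with the coordinate lines of $ O_1 $. In each branch one either exhibits an explicit one-parameter subgroup meeting the inequalities above with positive pairing, or checks that no such solution exists. Whenever a destabilizing subgroup exists, solving the inequalities pins $ \rho $ down, up to gauge, to exactly one of the representations drawn in \autoref{fig:stability-cases}; the coarser entry \autoref{fig:stability-case-C1} then absorbs all configurations in which $ b_{i_0} $ is a nonzero functional and at least one $ a_i $ vanishes, as its caption records. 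Running through every branch yields the statement.

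The hard part will not be any single step --- each is a short linear-algebra check --- but the bookkeeping of the roughly sixteen normal forms: one must be sure that the enumeration by vanishing pattern of the $ b_i $ and by the coincidences among the lines $ \operatorname{im} a_i $ is genuinely exhaustive, and that the central relation $ \sum_i \varepsilon_i a_i b_i = 0 $ has been invoked to eliminate every configuration that cannot occur, so that no case is dropped or double-counted. A convenient consistency check is that the representations left off the list --- those in which no coordinate line of $ O_1 $ is a common kernel of the $ b_i $, no leg image is a coordinate line, and the single-leg destabilizers are likewise blocked --- admit no destabilizing one-parameter subgroup at all, so that \autoref{fig:stability-cases} is exactly the complement of the $ \theta_2 $-semistable locus.
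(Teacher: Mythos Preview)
The paper does not actually prove this lemma: it simply records the list in \autoref{fig:stability-cases} and states the result as a fact. Your proposal therefore supplies an argument where the paper gives none, and the Hilbert--Mumford strategy you outline is a valid way to do it.

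That said, you are missing a much cleaner route. By King's criterion, GIT $\theta_2$-semistability for $\GL$ coincides with King $\theta$-semistability for the extension $\theta \in \mathbb{Z}^{Q_0}$ determined by $\theta \cdot \alpha = 0$; here $\theta = (-5,1,1,1,1)$. Since $E_1$ carries the only negative entry, a subrepresentation $\eta$ destabilizes precisely when $\dim_{E_1}\eta = 1$ and $\eta$ is proper. Hence $\rho$ fails to be $\theta_2$-semistable exactly when the subrepresentation generated by $1 \in E_1$ is proper. The classification then reduces to tracking how far the vector $a_{i_0}(1) \in O_1$ propagates under the loop maps $a_i b_i$: is $a_{i_0}$ zero? does the generated subspace of $O_1$ stay one-dimensional? which $b_i$ vanish on it? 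This organizes the cases more transparently than the raw Hilbert--Mumford inequalities and explains directly why the normal forms look the way they do.

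One overstatement in your normalization: you cannot in general gauge every nonzero $a_i$ to a coordinate vector, since a single $\GL_2$-move aligns at most two distinct lines in $O_1$ with the coordinate axes. You seem aware of this later (``which of the nonzero lines coincide''), and it does not break the argument --- the $*$-entries in \autoref{fig:stability-cases} mark exactly those arrows whose values are irrelevant to the destabilizing subrepresentation and hence never need normalizing --- but the sentence as written promises more than is true.
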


We are now ready to prove that if $ (φ, x) ∈ \CG_Γ × ℂ^2 $ is $ θ_2 $-semistable, then $ R(φ, x) ∈ \Rep(Π_Q, α) $ is $ θ_2 $-semistable. The strategy is to prove the contraposite and use the list of non-$ θ_2 $-semistable representations from \autoref{fig:stability-cases}.

\begin{lemma}
\label{th:caseD-theta2-semistable}
Let $ (Q, α) $ be the Kleinian $ D_4 $ quiver and $ (φ, x) ∈ \CG_Γ × ℂ^2 $ be $ θ_2 $-semistable. Then $ R(φ, x) ∈ \Rep(Π_Q, α) $ is $ θ_2 $-semistable.
\end{lemma}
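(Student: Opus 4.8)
The plan is to argue by contraposition, as in the $A_n$ case treated in \autoref{th:caseA-theta2-semistable}: assuming that $R(\varphi, x)$ is \textbf{not} $\theta_2$-semistable, I will produce a one-parameter subgroup of $\GL$ which destabilizes $(\varphi, x)$. By the classification lemma just stated, $R(\varphi, x)$ is $\GL_\alpha$-gauge equivalent to one of the finitely many representations drawn in \autoref{fig:stability-cases}. Since $R$ is $\GL$-equivariant and $\theta_2$-semistability of a point of $\CG_\Gamma \times \mathbb{C}^2$ is invariant under the $\GL$-action, I may replace $(\varphi, x)$ by an appropriate $\GL$-translate and assume that $R(\varphi, x)$ equals one of the listed representations on the nose, up to the scalar and sign sloppiness noted in the text. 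The datum $R(\varphi, x)$ records precisely the partial information $\varphi_{E_i, O_1}(- \otimes x)$ for $i = 1, \dots, 4$ together with the components $\pi_{E_j}\, \varphi_{O_1, O_1}(- \otimes x)$ for $j = 1, \dots, 4$; from the chosen case I read off which of these \emph{visible} maps vanish and, when $x \neq 0$, which coordinate line of $O_1 = \mathbb{C}^2$ the vector $x$ spans --- up to gauge, $x = e_1$ in every case of \autoref{fig:stability-cases} except the first one, where $x = 0$.

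For each case I construct an explicit one-parameter subgroup $g(t) = (t^{a_2}, t^{a_3}, t^{a_4}, \diag(t^{b_1}, t^{b_2})) \subset \GL_1^3 \times \GL_2 = \GL$, where the $\GL_2$-factor is written in a basis of $O_1$ adapted to the case at hand. The weights are to be chosen so that the total weight $a_2 + a_3 + a_4 + b_1 + b_2$ is strictly positive --- so that $g(t)$ pairs positively with $\theta_2$, using only the positivity of the entries of $\theta_2$ --- and so that $\lim_{t \to 0} g(t)^{-1}(\varphi, x)$ exists; the Hilbert--Mumford criterion then shows that $(\varphi, x)$ is not $\theta_2$-semistable. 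Since $\CG_\Gamma$ is closed in the affine space $\prod_{i, j} \Hom(U_i \otimes U_j, \bigoplus_k U_k^{\oplus c_{ijk}})$, it suffices to verify convergence of $g(t)^{-1}(\varphi, x)$ within this ambient affine space, i.e.\ that $g(t)^{-1}x$ converges in $O_1$ and that every component $(g(t)^{-1}\varphi)_{ij}$ converges. Convergence of $g(t)^{-1}x$ and of the visible components of $\varphi$ (equivalently, of $g(t)^{-1}R(\varphi, x)$ as a quiver representation) translates into elementary linear inequalities on the weights that are easy to arrange; the genuinely delicate point is the convergence of the remaining, \emph{invisible} components $\varphi_{ij}$ of the full Clebsch--Gordan datum.

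The invisible components are controlled by the coherence and symmetry relations of \autoref{th:construction-CGvariety-coherence}. For the $D_4$ group the only relations among the small representations are the scalar multiplication table of $\{E_2, E_3, E_4\}$ governing the $\varphi_{E_i, E_j}$ and the coherence relations among $\{E_i, O_1\}$, so the vanishing of a visible map such as $\varphi_{E_i, O_1}(- \otimes e_1)$ or $\pi_{E_j}\, \varphi_{O_1, O_1}(- \otimes e_1)$ forces, through these few relations, prescribed vanishing and growth behavior on the other $\varphi_{kl}$ that is compatible with the weights chosen above. This is exactly the type of bookkeeping carried out in the proof of \autoref{th:caseA-theta2-semistable}, only with a far shorter list of relations to propagate through.

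The main obstacle is organizational rather than conceptual: one must pick the weights and the adapted basis of $O_1$ so that \emph{every} component of the Clebsch--Gordan datum converges, not merely those visible in the quiver representation, and do so for each of the cases of \autoref{fig:stability-cases}. In practice the cases collapse into a few families: whenever a visible component degenerates along a coordinate direction, a diagonal one-parameter subgroup scaling that direction together with a compensating scaling on the $E_i$-factors works uniformly for the whole family, and the few most degenerate cases (for instance $x = 0$, or those in which almost all arrows vanish) are dispatched by an even simpler subgroup. I would therefore organize the write-up by grouping the figure's cases according to which visible maps vanish, treat one representative case per group in full detail, and note in each group that the coherence relations rule out divergent behavior in the remaining components under the chosen weights.
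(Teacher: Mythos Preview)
Your plan matches the paper's proof closely: contraposition, reduction to the finite list in \autoref{fig:stability-cases} via $\GL$-equivariance, and the Hilbert--Mumford criterion with explicit diagonal one-parameter subgroups in $\GL_1^3 \times \GL_2$, using the coherence relations to pin down the ``invisible'' components of $\varphi$. Two points are worth flagging. First, the paper does \emph{not} find a destabilizing one-parameter subgroup in every case: for the representations of \autoref{fig:stability-case-B2} and \autoref{fig:stability-case-C1} it instead runs a coherence diagram (on $E_4 \otimes O_1 \otimes O_1$, resp.\ $O_4 \otimes O_1 \otimes O_1$) to reach a contradiction, showing that no $(\varphi,x)\in\CG_\Gamma\times\mathbb C^2$ can map to that representation at all. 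This is logically harmless for the contraposition, but your write-up should anticipate that the coherence bookkeeping may \emph{rule out} a case rather than merely supply extra vanishing. Second, your expectation that the cases ``collapse into a few families'' with one representative each is too optimistic: the paper treats all cases individually because the required one-parameter subgroup and the chain of coherence deductions genuinely differ --- e.g.\ case \autoref{fig:stability-case-C4} needs the single deduction $\varphi_{E_2,E_3}=0$ before taking $g(t)=(1,1,t,\diag(1,t))$, whereas case \autoref{fig:stability-case-C6} requires a four-step coherence argument to force the entire bilinear form $\pi_{E_4}\varphi_{O_1,O_1}$ to vanish before taking $g(t)=(1,1,t,I_2)$. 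So the strategy is right, but expect a genuine case-by-case verification rather than a uniform argument.
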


\begin{proof}
We achieve the statement by applying the Hilbert-Mumford criterion. The criterion states that $ (φ, x) $ is not $ θ_2 $-semistable if and only if there exists a one-parameter subgroup $ g(t) ⊂ G $ which pairs positively with $ θ_2 $ and for which $ \lim_{t → 0} g(t)^{-1} (φ, x) $ exists within $ \CG_Γ × ℂ^2 $. Since $ \CG_Γ $ is a closed subset of an affine space, it suffices to simply show that $ g(t)^{-1} (φ, x) $ converges within the affine space.

We prove the statement by contraposition: Assume that $ R(φ, x) $ is a non-$ θ_2 $-semistable representation. It is our goal to deduce that $ (φ, x) $ is not $ θ_2 $-semistable. The property of $ θ_2 $-semistability is invariant under the $ G $-action, therefore it suffices to check it for all non-$ θ_2 $-semistable representations up to gauging. We achieve this by going through all the cases listed in \autoref{fig:stability-cases}. Apart from the representation in \autoref{fig:stability-case-0}, where $ x = 0 $, the listed representations all have $ x = e_1 $.

\begin{itemize}
\item Regard the representation in \autoref{fig:stability-case-0}. We read off that $ x = 0 $. Pick the one-parameter subgroup $ g(t) = (t, t, t, t I_2) $. Then $ \lim_{t → 0} g(t)^{-1} (φ, x) $ exists and in fact vanishes. The critical ingredient is $ x = 0 $.
\item Regard the representation in \autoref{fig:stability-case-A}. Pick the one-parameter subgroup $ g(t) = (1, 1, 1, \pmat{1 & 0 \\ 0 & t}) $. Then $ \lim_{t → 0} g(t)^{-1} (φ, x) $ exists. Indeed, the critical ingredients are $ φ_{E_i, O_1} (e_1) = e_1 $ and $ φ_{O_1, O_1; E_i} (e_1 ¤ e_1) = 0 $.
\item Regard the representation in \autoref{fig:stability-case-B1}. Pick the one-parameter subgroup $ g(t) = (1, 1, 1, \pmat{1 & 0 \\ 0 & t}) $. Then $ \lim_{t → 0} g(t)^{-1} (φ, x) $ exists. Indeed, the critical ingredients are $ φ_{E_i, O_1} (e_1) = e_1 $ and $ φ_{O_1, O_1; E_i} (e_1 ¤ e_1) = 0 $.
\item Regard the representation in \autoref{fig:stability-case-B2}. Regard the diagram for $ E_4 ¤ O_1 ¤ O_1 $ with entry vector $ 1 ¤ e_2 ¤ e_1 $. Since $ φ_{O_1, O_1; E_1} (e_2 ¤ e_1) = 1 $ and $ φ_{O_1, O_1; E_4} (- ¤ e_1) = 0 $, we have a contradiction and conclude the representation is not in the image of $ R $.
\item Regard the representation in \autoref{fig:stability-case-B3}. Regard the one-parameter subgroup $ g(t) = (t, t, 1, \pmat{1 & 0 \\ 0 & t}) $. Then $ \lim_{t → 0} g(t)^{-1} (φ, x) $ exists. Indeed, the critical ingredients are $ φ_{E_i, O_1} (e_1) ∈ \vspan(e_1) $ and $ φ_{O_1, O_1; E_2} (e_1 ¤ e_1) = φ_{O_1, O_1; E_3} (e_1 ¤ e_1) = 0 $.
\item Regard the representation in \autoref{fig:stability-case-B5}. Regard the one-parameter subgroup $ g(t) = (1, 1, 1, \pmat{1 & 0 \\ 0 & t}) $. Then $ \lim_{t → 0} g(t)^{-1} (φ, x) $ exists. Indeed, the critical ingredients are $ φ_{E_i, O_1} (e_1) = e_1 $ and $ φ_{O_1, O_1; E_i} (e_1 ¤ e_1) = 0 $.
\item Regard the representation in \autoref{fig:stability-case-C1}. Regard the diagram for $ O_4 ¤ O_1 ¤ O_1 $ with entry vector $ 1 ¤ e_2 ¤ e_1 $. Since $ φ_{O_1, O_1; E_1} (e_2 ¤ e_1) = 1 $ and $ φ_{O_1, O_1; E_4} (- ¤ e_1) = 0 $, we have a contradiction.
\item Regard the representation in \autoref{fig:stability-case-C2}. Regard the one-parameter subgroup $ g(t) = (1, 1, 1, \pmat{1 & 0 \\ 0 & t}) $. Then $ \lim_{t → 0} g(t)^{-1} (φ, x) $ exists. Indeed, the critical ingredient is $ φ_{E_i, O_1} (e_1) ∈ \vspan(e_1) $.
\item Regard the representation in \autoref{fig:stability-case-C3}. Regard the one-parameter subgroup $ g(t) = (1, 1, 1, \pmat{1 & 0 \\ 0 & t}) $. Then $ \lim_{t → 0} g(t)^{-1} (φ, x) $ exists. Indeed, the critical ingredients are $ φ_{E_i, O_1} (e_1) = e_1 $.
\item Regard the representation in \autoref{fig:stability-case-C4}. Regard the one-parameter subgroup $ g(t) = (1, 1, t, \pmat{1 & 0 \\ 0 & t}) $. We claim that $ \lim_{t → 0} g(t)^{-1} (φ, x) $ exists. Indeed, regard the diagram for $ E_2 ¤ O_1 ¤ O_1 $ with entry vector $ 1 ¤ e_2 ¤ e_1 $. Since $ φ_{O_1, O_1; E_3} (e_2 ¤ e_1) = 1 $ and $ φ_{O_1, O_1; E_4} (- ¤ e_1) = 0 $, we read off that $ φ_{E_2, E_3} = 0 $. Now $ \lim_{t → 0} g(t)^{-1} (φ, x) $ exists. Indeed, the critical ingredients are $ φ_{E_2, E_3} = 0 $, $ φ_{E_2, O_1} (e_1) = e_1 $, $ φ_{E_3, O_1} (e_1) = e_1 $, $ φ_{E_4, O_1} (e_1) = e_2 $ and $ φ_{O_1, O_1; E_4} (e_1 ¤ e_1) = 0 $.
\item Regard the representation in \autoref{fig:stability-case-C5}. Regard the one-parameter subgroup $ g(t) = (1, 1, 1, \pmat{1 & 0 \\ 0 & t}) $. Then $ \lim_{t → 0} g(t)^{-1} (φ, x) $ exists. Indeed, the critical ingredients are $ φ_{E_i, O_1} (e_1) ∈ \vspan(e_1) $.
\item Regard the representation in \autoref{fig:stability-case-C6}. The first step is to prove that $ φ_{O_1, O_1; E_4} = 0 $. We start with the observation that $ φ_{E_2, O_1} (e_1) = e_2 $ and $ φ_{O_1, O_1; E_4} (e_1 ¤ e_1) = 0 $ and $ φ_{O_1, O_1; E_4} (e_2 ¤ e_1) = 0 $. We use the diagram for $ O_1 ¤ E_2 ¤ O_1 $ with input vector $ e_1 ¤ 1 ¤ e_1 $ to conclude that $ φ_{O_1, O_1; E_4} (e_1 ¤ e_2) = 0 $. Next, we use the diagram for $ E_2 ¤ O_1 ¤ O_1 $ with input vector $ 1 ¤ e_1 ¤ e_1 $ to conclude that $ φ_{E_2, E_2} = 0 $. Finally, we use the diagram for $ E_3 ¤ O_1 ¤ O_1 $ with input vector $ 1 ¤ e_1 ¤ e_1 $ to conclude that $ φ_{E_2, E_3} = 0 $. We use the diagram for $ E_2 ¤ O_1 ¤ O_1 $ with input vector $ 1 ¤ e_1 ¤ e_2 $ to conclude that $ φ_{O_1, O_1; E_4} (e_2 ¤ e_2) = 0 $. This shows $ φ_{O_1, O_1; E_4} = 0 $. Now pick the one-parameter subgroup $ g(t) = (1, 1, t, I_2) $. We shall readily check that $ \lim_{t → 0} g(t)^{-1} (φ, x) $ exists. The critical ingredients are that $ φ_{O_1, O_1; E_4} = 0 $ and $ φ_{E_i, E_4} $ converges to zero for $ i = 2, 3, 4 $.
\item Regard the representation in \autoref{fig:stability-case-D1}. Regard the one-parameter subgroup $ g(t) = (1, 1, 1, \pmat{1 & 0 \\ 0 & t}) $. Then $ \lim_{t → 0} g(t)^{-1} (φ, x) $ exists. Indeed, the critical ingredients are $ φ_{E_i, O_1} (e_1) = 0 $.
\item Regard the representation in \autoref{fig:stability-case-D2}. We can assume that $ φ_{O_1, O_1; E_2} (- ¤ e_1) ≠ 0 $ or $ φ_{O_1, O_1; E_3} (- ¤ e_1) ≠ 0 $, otherwise we are covered by \autoref{fig:stability-case-D2s}. Without loss of generality assume that $ φ_{O_1, O_1; E_3} (- ¤ e_1) ≠ 0 $. Let $ y $ denote an element of $ O_1 $ which does not lie in the kernel. Regard the one-parameter subgroup $ g(t) = (1, 1, t, \pmat{1 & 0 \\ 0 & t}) $. Now regard the diagram for $ E_2 ¤ O_1 ¤ O_1 $ with entry vector $ 1 ¤ y ¤ e_1 $. Since $ φ_{O_1, O_1; E_3} (y ¤ e_1) ≠ 0 $ and $ φ_{O_1, O_1; E_4} (- ¤ e_1) = 0 $, we get $ φ_{E_2, E_3} = 0 $. We now see that $ \lim_{t → 0} g(t)^{-1} (φ, x) $ exists. The critical ingredients are $ φ_{E_2, E_3} = 0 $, $ φ_{O_1, O_1; E_4} (e_1 ¤ e_1) = 0 $, $ φ_{E_2, O_1} (e_1) = 0 $ and $ φ_{E_3, O_1} (e_1) = 0 $.
\item Regard the representation in \autoref{fig:stability-case-D2s}. Regard the one-parameter subgroup $ g(t) = (t, t, t, \pmat{1 & 0 \\ 0 & t}) $. Then $ \lim_{t → 0} g(t)^{-1} (φ, x) $ exists. Indeed, the critical ingredient is $ φ_{O_1, O_1} (- ¤ e_1) = 0 $.
\item Regard the representation in \autoref{fig:stability-case-D3}. Regard the one-parameter subgroup $ g(t) = (1, t, t, \pmat{1 & 0 \\ 0 & t}) $. Then $ \lim_{t → 0} g(t)^{-1} (φ, x) $ exists. Indeed, the critical ingredients are $ φ_{E_2, O_1} (e_1) = 0 $, $ φ_{O_1, O_1; E_3} (e_1 ¤ e_1) = 0 $ and $ φ_{O_1, O_1; E_4} (e_1 ¤ e_1) = 0 $.
\item Regard the representation in \autoref{fig:stability-case-D4}. Regard the one-parameter subgroup $ g(t) = (t, t, t, \pmat{1 & 0 \\ 0 & t}) $. Then $ \lim_{t → 0} g(t)^{-1} (φ, x) $ exists. Indeed, the critical ingredient is $ φ_{O_1, O_1} (e_1 ¤ e_1) = 0 $.
\end{itemize}

In every case, we have verified that $ (φ, x) $ is not $ θ_2 $-semistable. This finishes the proof.
\end{proof}

\begin{figure}
\centering
\begin{tikzpicture}
\begin{scope}[shift={(8, 5)}]
\path (0, 1) node (A) {$ E_1 $};
\path (0, -1) node (B) {$ E_2 $};
\path (1, 0) node (C) {$ O_1 $};
\path (2, 0) node (D) {$ I_2 $};
\path (3, 0) node (E) {$ … $};
\path (4, 0) node (F) {\tinymath{$ I_{n-4} $}};
\path (5, 0) node (G) {\tinymath{$ O_{n-3} $}};
\path (6, 1) node (H) {$ E_3 $};
\path (6, -1) node (I) {$ E_4 $};
\path[draw, <-] ($ (A) + (335:0.3) $) to node[pos=0.5, right] {$ 0 $} ($ (C) + (115:0.3) $);
\path[draw, ->] ($ (A) + (295:0.3) $) to node[pos=0.6, left] {$ e_1 $} ($ (C) + (155:0.3) $);
\path[draw, ->] ($ (B) + (65:0.3) $) to node[pos=0.4, left] {$ e_2 $} ($ (C) + (205:0.3) $);
\path[draw, <-] ($ (B) + (25:0.3) $) to node[pos=0.3, right] {$ π_1 $} ($ (C) + (245:0.3) $);
\path[draw, ->] ($ (C) + (20:0.3) $) to node[midway, above] {$ \tiny \pmat{1 & 0 \\ 0 & 1} $} ($ (D) + (160:0.3) $);
\path[draw, <-] ($ (C) + (340:0.3) $) to node[midway, below] {$ \tiny \begin{pmatrix} 0 & 0 \\ 1 & 0 \end{pmatrix} $} ($ (D) + (200:0.3) $);
\path[draw, ->] ($ (F) + (20:0.3) $) to node[midway, above] {$ \tiny \pmat{1 & 0 \\ 0 & 1} $} ($ (G) + (160:0.3) $);
\path[draw, <-] ($ (F) + (340:0.3) $) to node[midway, below] {$ \tiny \begin{pmatrix} 0 & 0 \\ 1 & 0 \end{pmatrix} $} ($ (G) + (200:0.3) $);
\path[draw, ->] ($ (G) + (65:0.3) $) to node[pos=0.8, left] {$ (1 ~ a) $} ($ (H) + (205:0.3) $);
\path[draw, <-] ($ (G) + (25:0.3) $) to node[pos=0.6, right] {$ 0 $} ($ (H) + (245:0.3) $);
\path[draw, <-] ($ (G) + (335:0.3) $) to node[pos=0.6, right] {$ e_2 $} ($ (I) + (115:0.3) $);
\path[draw, ->] ($ (G) + (295:0.3) $) to node[pos=0.6, left] {$ π_1 $} ($ (I) + (155:0.3) $);
\end{scope}
\begin{scope}[shift={(8, -5)}]
\path (0, 1) node (A) {$ E_1 $};
\path (0, -1) node (B) {$ E_2 $};
\path (1, 0) node (C) {$ O_1 $};
\path (2, 0) node (D) {$ I_2 $};
\path (3, 0) node (E) {$ … $};
\path (4, 0) node (F) {\tinymath{$ I_{n-4} $}};
\path (5, 0) node (G) {\tinymath{$ O_{n-3} $}};
\path (6, 1) node (H) {$ E_3 $};
\path (6, -1) node (I) {$ E_4 $};
\path[draw, <-] ($ (A) + (335:0.3) $) to node[pos=0.3, right] {$ 0 $} ($ (C) + (115:0.3) $);
\path[draw, ->] ($ (A) + (295:0.3) $) to node[pos=0.6, left] {$ e_1 $} ($ (C) + (155:0.3) $);
\path[draw, ->] ($ (B) + (65:0.3) $) to node[pos=0.4, left] {$ e_2 $} ($ (C) + (205:0.3) $);
\path[draw, <-] ($ (B) + (25:0.3) $) to node[pos=0.3, right] {$ π_1 $} ($ (C) + (245:0.3) $);
\path[draw, ->] ($ (C) + (20:0.3) $) to node[midway, above] {$ \tiny \pmat{1 & 0 \\ 0 & 1} $} ($ (D) + (160:0.3) $);
\path[draw, <-] ($ (C) + (340:0.3) $) to node[midway, below] {$ \tiny \begin{pmatrix} 0 & 0 \\ 1 & 0 \end{pmatrix} $} ($ (D) + (200:0.3) $);
\path[draw, ->] ($ (F) + (20:0.3) $) to node[midway, above] {$ \tiny \pmat{1 & 0 \\ 0 & 1} $} ($ (G) + (160:0.3) $);
\path[draw, <-] ($ (F) + (340:0.3) $) to node[midway, below] {$ \tiny \begin{pmatrix} 0 & 0 \\ 1 & 0 \end{pmatrix} $} ($ (G) + (200:0.3) $);
\path[draw, ->] ($ (G) + (65:0.3) $) to node[pos=0.8, left] {$ π_1 $} ($ (H) + (205:0.3) $);
\path[draw, <-] ($ (G) + (25:0.3) $) to node[pos=0.6, right] {$ e_2 $} ($ (H) + (245:0.3) $);
\path[draw, <-] ($ (G) + (335:0.3) $) to node[pos=0.6, right] {$ 0 $} ($ (I) + (115:0.3) $);
\path[draw, ->] ($ (G) + (295:0.3) $) to node[pos=0.8, left] {$ (1 ~ a) $} ($ (I) + (155:0.3) $);
\end{scope}
\begin{scope}[shift={(0, 2)}, transform canvas={scale=0.7}]
\path (0, 1) node (A) {$ E_1 $};
\path (0, -1) node (B) {$ E_2 $};
\path (1, 0) node (C) {$ O_1 $};
\path (2, 0) node (D) {$ I_2 $};
\path (3, 0) node (E) {$ … $};
\path (4, 0) node (F) {\tinymath{$ I_{n-4} $}};
\path (5, 0) node (G) {\tinymath{$ O_{n-3} $}};
\path (6, 1) node (H) {$ E_3 $};
\path (6, -1) node (I) {$ E_4 $};
\path[draw, <-] ($ (A) + (335:0.3) $) to node[pos=0.3, right] {$ 0 $} ($ (C) + (115:0.3) $);
\path[draw, ->] ($ (A) + (295:0.3) $) to node[pos=0.6, left] {$ e_1 $} ($ (C) + (155:0.3) $);
\path[draw, ->] ($ (B) + (65:0.3) $) to node[pos=0.4, left] {$ e_2 $} ($ (C) + (205:0.3) $);
\path[draw, <-] ($ (B) + (25:0.3) $) to node[pos=0.3, right] {$ π_1 $} ($ (C) + (245:0.3) $);
\path[draw, ->] ($ (C) + (20:0.3) $) to node[midway, above] {$ \tiny \begin{pmatrix} 0 & 0 \\ 1 & 0 \end{pmatrix} $} ($ (D) + (160:0.3) $);
\path[draw, <-] ($ (C) + (340:0.3) $) to node[midway, below] {$ \tiny \begin{pmatrix} 1 & 0 \\ 0 & 1 \end{pmatrix} $} ($ (D) + (200:0.3) $);
\path[draw, ->] ($ (F) + (20:0.3) $) to node[midway, above] {$ \tiny \begin{pmatrix} 0 & 0 \\ 1 & 0 \end{pmatrix} $} ($ (G) + (160:0.3) $);
\path[draw, <-] ($ (F) + (340:0.3) $) to node[midway, below] {$ \tiny \begin{pmatrix} 1 & 0 \\ 0 & 1 \end{pmatrix} $} ($ (G) + (200:0.3) $);
\path[draw, ->] ($ (G) + (65:0.3) $) to node[pos=0.6, left] {$ π_1 $} ($ (H) + (205:0.3) $);
\path[draw, <-] ($ (G) + (25:0.3) $) to node[pos=0.6, right] {$ (a+1)e_2 $} ($ (H) + (245:0.3) $);
\path[draw, <-] ($ (G) + (335:0.3) $) to node[pos=0.6, right] {$ -ae_2 $} ($ (I) + (115:0.3) $);
\path[draw, ->] ($ (G) + (295:0.3) $) to node[pos=0.6, left] {$ π_1 $} ($ (I) + (155:0.3) $);
\end{scope}
\begin{scope}[shift={(0, -5)}]
\path (0, 1) node (A) {$ E_1 $};
\path (0, -1) node (B) {$ E_2 $};
\path (1, 0) node (C) {$ O_1 $};
\path (2, 0) node (D) {$ I_2 $};
\path (3, 0) node (E) {$ … $};
\path (4, 0) node (F) {\tinymath{$ I_{n-4} $}};
\path (5, 0) node (G) {\tinymath{$ O_{n-3} $}};
\path (6, 1) node (H) {$ E_3 $};
\path (6, -1) node (I) {$ E_4 $};
\path[draw, <-] ($ (A) + (335:0.3) $) to node[pos=0.3, right] {$ 0 $} ($ (C) + (115:0.3) $);
\path[draw, ->] ($ (A) + (295:0.3) $) to node[pos=0.6, left] {$ e_1 $} ($ (C) + (155:0.3) $);
\path[draw, ->] ($ (B) + (65:0.3) $) to node[pos=0.4, left] {$ 0 $} ($ (C) + (205:0.3) $);
\path[draw, <-] ($ (B) + (25:0.3) $) to node[pos=0.3, right] {$ (1 ~ a) $} ($ (C) + (245:0.3) $);
\path[draw, ->] ($ (C) + (20:0.3) $) to node[midway, above] {$ \tiny \begin{pmatrix} 1 & 0 \\ 0 & 0 \end{pmatrix} $} ($ (D) + (160:0.3) $);
\path[draw, <-] ($ (C) + (340:0.3) $) to node[midway, below] {$ \tiny \begin{pmatrix} 0 & 0 \\ 0 & 1 \end{pmatrix} $} ($ (D) + (200:0.3) $);
\path[draw, ->] ($ (F) + (20:0.3) $) to node[midway, above] {$ \tiny \begin{pmatrix} 1 & 0 \\ 0 & 0 \end{pmatrix} $} ($ (G) + (160:0.3) $);
\path[draw, <-] ($ (F) + (340:0.3) $) to node[midway, below] {$ \tiny \begin{pmatrix} 0 & 0 \\ 0 & 1 \end{pmatrix} $} ($ (G) + (200:0.3) $);
\path[draw, ->] ($ (G) + (65:0.3) $) to node[pos=0.8, left] {$ π_1 $} ($ (H) + (205:0.3) $);
\path[draw, <-] ($ (G) + (25:0.3) $) to node[pos=0.6, right] {$ e_2 $} ($ (H) + (245:0.3) $);
\path[draw, <-] ($ (G) + (335:0.3) $) to node[pos=0.6, right] {$ -e_2 $} ($ (I) + (115:0.3) $);
\path[draw, ->] ($ (G) + (295:0.3) $) to node[pos=0.6, left] {$ π_1 $} ($ (I) + (155:0.3) $);
\end{scope}
\path[draw, ultra thick] (3, 0) -- (5, 0);
\path[draw] (5.5, 0) node {\Large $ \cdots $};
\path[draw, ultra thick] (6, 0) -- (8, 0);
\path[draw, ultra thick] (8, 0) -- (10, 3);
\path[draw, ultra thick] (8, 0) -- (10, -3);
\path[draw, ultra thick] (1, -3) -- (3, 0);
\path[fill] (1, -3) circle[radius=0.1] node[left] {$ E_2 $};
\path[fill] (3, 0) circle[radius=0.1] node[below right] {$ O_1 $};
\path[fill] (8, 0) circle[radius=0.1] node[below left] {$ O_{n-3} $};
\path[fill] (10, 3) circle[radius=0.1] node[right] {$ E_3 $};
\path[fill] (10, -3) circle[radius=0.1] node[right] {$ E_4 $};
\end{tikzpicture}
\caption{This figure depicts the $ n $-many one-parameter families of representations which make up the exceptional fiber of the Kleinian $ D_n $ singularity with the specific choice $ θ_2 = (+1, …, +1) $. The thick lines in the figure express the $ D_n $ Dynkin diagram, and each of its $ n $ vertices corresponds to a one-parameter family. The representation treated in \autoref{th:caseD-theta2surj} is depicted in the bottom-left corner. The nodes $ I_{n-4} $ and $ O_{n-3} $ should read $ O_{n-4} $ and $ I_{n-3} $, respectively, if $ n $ is odd instead of even.}
\label{fig:excfiber-smooth-Dn}
\end{figure}
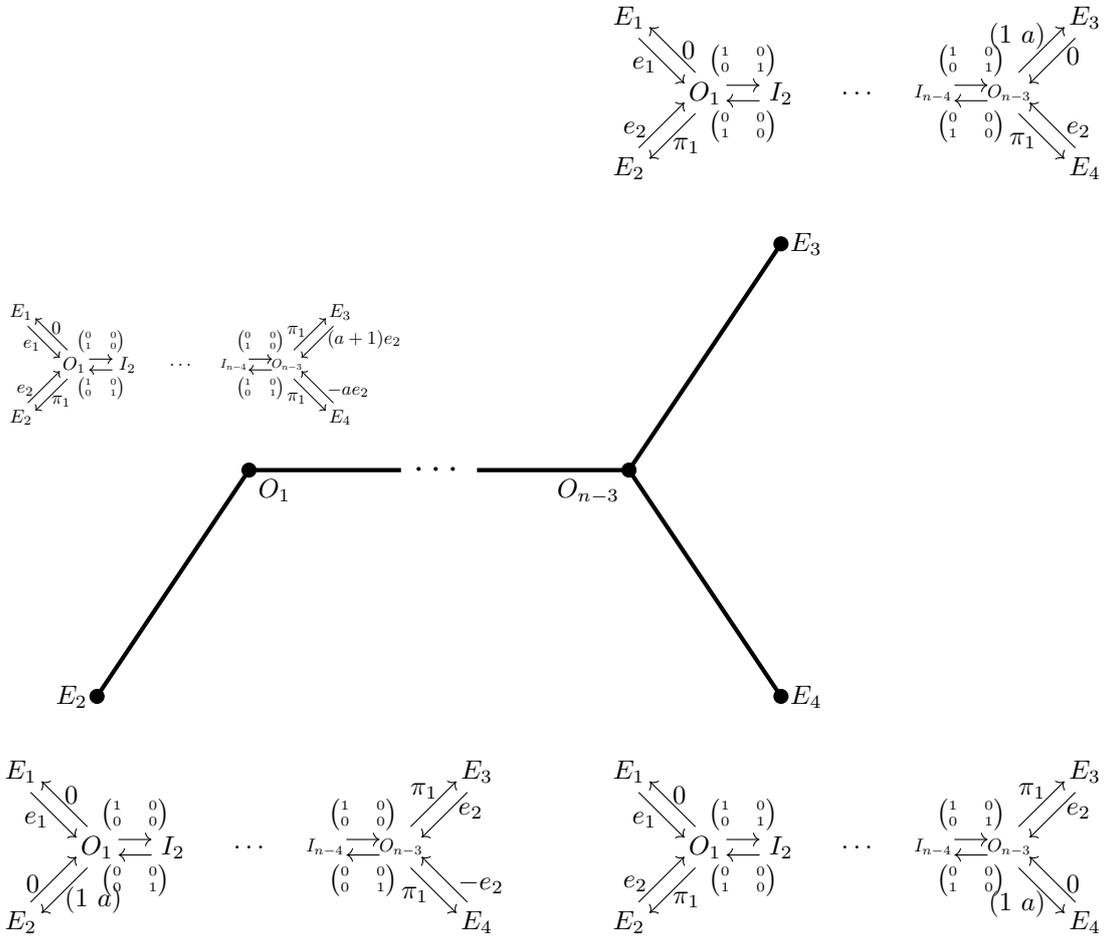

\begin{proposition}
\label{th:caseD-theta2surj}
Let $ ρ ∈ \Rep(Π_Q, α) $ be a $ θ_2 $-semistable element in the nullcone. Then $ ρ $ lies in the image of $ R $.
\end{proposition}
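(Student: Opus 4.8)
The plan is to mimic the strategy of the $ A_n $ case (\autoref{th:caseA-theta2surj}). By \cite{cb-kleinian}, the nullcone of the Kleinian $ D_n $ singularity with respect to $ θ_2 = (+1, …, +1) $ consists of exactly $ n $-many one-parameter families of representations, one attached to each node of the $ D_n $ Dynkin diagram; representatives of these families are collected in \autoref{fig:excfiber-smooth-Dn}. It therefore suffices to exhibit, for each family, a point $ (φ, x) ∈ \CG_Γ × ℂ^2 $ with $ R(φ, x) = ρ $.

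First I would fix the representative family drawn in the bottom-left corner of \autoref{fig:excfiber-smooth-Dn} and reconstruct the input data: since $ ρ $ lies in the nullcone, after gauging we may take $ x = e_1 ∈ O_1 = ℂ^2 $. The core step is to write down an explicit one-parameter subgroup $ g(t) ⊂ \GL $, diagonal in each $ \GL_1 $ factor and of the form $ \diag(t^{a_k}, t^{b_k}) $ in each $ \GL_2(O_k) $ factor, whose exponents are tuned so that (i) $ g(t) φ^{(0)} $ converges as $ t → 0 $ within the ambient affine space — which, since $ \CG_Γ = \overline{\GL φ^{(0)}} $ is closed, guarantees $ φ ≔ \lim_{t → 0} g(t) φ^{(0)} ∈ \CG_Γ $ — and (ii) the surviving components of $ φ $ reproduce the arrow values of $ ρ $. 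Convergence of $ g(t) φ^{(0)} $ amounts to checking, for every fusion $ U_i ¤ U_j → \bigoplus_k U_k^{⊕ c_{ijk}} $ occurring in $ φ^{(0)} $, that the $ t $-exponent attached to each nonzero matrix entry of $ (g(t) φ^{(0)})_{ij} $ is non-negative; this is the $ D_n $ analogue of the inequality $ α_{i+j} ≥ α_i + α_j $ used in \autoref{th:caseA-theta2surj}, and it is a finite, if tedious, verification using the explicit matrices for $ φ^{(0)} $ listed in \autoref{sec:caseD-step1}.

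Next I would compute the relevant components of the limit datum $ φ $ — namely $ φ_{E_i, O_1}(- ¤ e_1) $, the three components of $ φ_{O_1, O_1}(- ¤ e_1) $ landing in $ E_1 $, $ E_2 $ and $ O_2 $, and the analogous components of $ φ_{O_k, O_1}(- ¤ e_1) $ and $ φ_{I_k, O_1}(- ¤ e_1) $ along the chain $ O_1 → I_2 → … → O_{n-3} $ — by passing to the $ t → 0 $ limit of the corresponding entries of $ g(t) φ^{(0)} $ entrywise, exactly as in the $ A_n $ computation. Reading these off against \autoref{fig:excfiber-smooth-Dn} yields $ R(φ, e_1) = ρ $ for the chosen family. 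Finally I would remark that the remaining $ n-1 $ families are handled in the identical manner: the node of the Dynkin diagram dictates which arrows of $ ρ $ vanish, and $ g(t) $ is chosen so that precisely the corresponding components of $ φ $ are killed in the limit; as usual one must reinterpret $ O_1 $ via $ I_{(n-4)/2} $ in the $ D_4 $ case, per the convention of \autoref{sec:caseD-step1}.

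The main obstacle is the non-abelian nature of the gauge group. In $ A_n $ the one-parameter subgroups are plain tuples $ (t^{α_1}, …, t^{α_n}) $ and the fusion-rule inequalities are transparent; here each $ \GL_2 $ factor forces one to track how the individual entries of the non-scalar matrices $ φ^{(0)}_{ij} $ scale under $ g(t) $, and to control convergence and the precise shape of the limit simultaneously. Organising this bookkeeping cleanly — for instance by choosing $ g(t) $ adapted to the block structure of the $ φ^{(0)}_{O_k, O_1} $ so that a single scaling pattern propagates down the chain $ O_1 → I_2 → … → O_{n-3} $ — is where the real work lies; once a suitable $ g(t) $ is found, the verification reduces to a routine limit computation parallel to \autoref{th:caseA-theta2surj}.
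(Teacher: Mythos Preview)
Your overall strategy --- reduce to the $n$ one-parameter families in the nullcone via \cite{cb-kleinian} and realise each as $R(\varphi,e_1)$ for some boundary point $\varphi\in\CG_\Gamma$ --- matches the paper, but the ansatz has a genuine gap. The paper does \emph{not} take $\varphi=\lim_{t\to 0}g(t)\varphi^{(0)}$ with $g(t)$ diagonal in every $\GL_2$-block; it uses
\[
\varphi \;=\; \lim_{t\to 0}\, Q\,g(t)\,Q^{-1} R\,\varphi^{(0)}
\]
with two additional elements $Q,R\in\GL$ that carry the continuous parameter $a$ of the family $\rho_a$ and have nontrivial off-diagonal entries in the $\GL_2$-factors (for $n=4$ the $O_1$-component of $Q$ is $\pmat{1 & 0 \\ -1/(2a) & 1}$ and that of $R$ is a full $2\times 2$ matrix involving eighth roots of unity). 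A purely diagonal $g(t)$ acting on $\varphi^{(0)}$ can only rescale each matrix entry by a power of $t$, so in the limit every entry of $\varphi$ is either $0$ or the corresponding entry of $\varphi^{(0)}$; with $x=e_1$ fixed this leaves no continuous degree of freedom to sweep out the one-parameter family of orbits $\rho_a$. The paper's reasoning for the ansatz is also different from yours: rather than tuning exponents to force convergence and then reading off the arrows, one first chooses the diagonal part $g(t)$ so that $g(t)^{-1}$ degenerates $\rho_a$ to the zero representation, observes that this forces $g(t)$ to fix the limit $\varphi$, and only afterwards solves for $Q$ and $R$.

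You also substantially underestimate the difficulty. The paper states that for $n\ge 5$ the author was forced to invoke computer aid to find suitable $Q$ and $R$; explicit solutions are recorded only for $n=4,5,6$, and for $n\ge 5$ both the convergence of $Qg(t)Q^{-1}R\varphi^{(0)}$ and the treatment of the remaining $n-1$ families are left unverified in the paper itself. So your claim that ``once a suitable $g(t)$ is found, the verification reduces to a routine limit computation parallel to \autoref{th:caseA-theta2surj}'' is too optimistic: this step is exactly where the $D_n$ case departs sharply from $A_n$, and the non-abelian bookkeeping you flag as ``the real work'' is not resolved by a single scaling pattern along the chain.
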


\begin{proof}
Recall that Crawley-Boevey's analysis \cite{cb-kleinian} shows that there are $ n $-many 1-parameter families of representations in the nullcone, each distinguished by their socle. These 1-parameter families are depicted in \autoref{fig:excfiber-smooth-Dn}, following \cite{vdKreeke-Namikawa}. We focus here on the 1-parameter family attached to the $ E_2 $ vertex. In other words, this is the 1-parameter family of representations whose socle equals the simple representation $ S_{E_2} $. The parameter is denoted $ a ∈ ℂ^* $ and we shall denote the representation by $ ρ_a $.

We shall now show that $ ρ_a ∈ \Im(R) $ by constructing $ (φ, x) ∈ \CG_Γ × ℂ^2 $ such that $ R(φ, x) = ρ_a $. Let us start by explaining our ansatz. The ansatz entails choosing $ φ $ as a limit $ φ = \lim_{t → 0} g(t) R φ^{(0)} $ and setting $ x = e_1 $. Here $ g(t) ⊂ \GL $ shall be a 1-parameter subgroup and $ R ∈ \GL $ is an unknown element. Note that every 1-parameter subgroup of $ \GL $ is a conjugate of a diagonal matrix group, whose diagonal elements consist purely of powers $ t^α $ with $ α ∈ ℤ $. Since $ g(t) R φ^{(0)} $ is supposed to converge, it is a reasonable assumption that the exponents $ α $ are all negative. We note that the entire 1-parameter subgroup $ g(t) $ fixes $ φ $:
\begin{equation*}
g(t) φ = \lim_{s → 0} g(t) g(s) R φ^{(0)} = \lim_{s → 0} g(ts) R φ^{(0)} = φ.
\end{equation*}
We conclude that in the limit $ g(t)^{-1} $ takes $ ρ_a $ to the zero representation:
\begin{equation*}
g(t)^{-1} R(φ, x) = R(g(t)^{-1} φ, g(t)^{-1} x) \underset{t → 0}{\longrightarrow} R(φ, 0) = 0.
\end{equation*}
This finishes the description of our ansatz. The remaining task is to identify a 1-parameter subgroup whose inverse in the limit takes $ ρ_a $ to the zero representation. We are allowed a certain flexibility by means of admitting conjugation and finding a suitable element $ R ∈ \GL $.

For the first step, let us find a 1-parameter subgroup $ g(t) ⊂ \GL $ with negative exponents whose inverse in the limit takes $ ρ_a $ to zero. It is easy to find the following candidate:
\begin{equation*}
g(t)^{-1} = \big(\underset{E_1}{t^{2n-2}}, \underset{E_3}{t^{n-2}}, \underset{E_4}{t^{n-2}}, \underset{O_1}{\pmat{t^1 & 0 \\ 0 & t^{2n-3}}}, …, \underset{O_{n-3}}{\pmat{t^{n-3} & 0 \\ 0 & t^{n-1}}}\big).
\end{equation*}
The remainder of the verification entails finding an element $ Q ∈ \GL $ and $ R ∈ \GL $ such that $ φ = \lim_{t →0} Q g(t) Q^{-1} R φ^{(0)} $ exists and $ R(φ, e_1) = ρ_a $. For the case $ n = 4 $, this can be done by hand, departing from the better-suited gauge-equivalent presentation of $ ρ_a $ provided in \cite{vdKreeke-Namikawa}. For the case $ n ≥ 5 $, the author was forced to invoke computer aid. With this aid, we successfully verified the solvability for $ Q $ and $ R $ for low $ n $. In what follows, we describe the result of the calculations for $ n = 4, 5, 6 $. We shall write out the elements $ Q, R ∈ \GL $ as
\begin{align*}
Q &= \big(\underset{E_2}{1}, \underset{E_3}{1}, \underset{E_4}{1}, \underset{O_1}{Q_1}, …, \underset{O_{n-3}}{Q_{n-3}}\big), \\
R &= \big(\underset{E_2}{r_2}, \underset{E_3}{r_3}, \underset{E_4}{r_4}, \underset{O_1}{R_1}, …, \underset{O_{n-3}}{R_{n-3}}\big).
\end{align*}
We are now ready to describe the specific matrices $ Q_i ∈ \GL_2 (ℂ) $ and $ R_i ∈ \GL_2 (ℂ) $ for $ 1 ≤ i ≤ n-3 $ and elements $ r_2, r_3, r_4 ∈ ℂ^* $. For $ n = 4 $ we construct the following matrices:
\begin{align*}
Q_1 &= \pmat{1 & 0 \\ -\frac{1}{2a} & 1}, \\
R_1 &= \pmat{1 & -e^{2πi/8} \\ -i e^{2πi/8} - \frac{1}{2a} & \frac{2^{-1} e^{2πi/8}}{a}}, \\
r_2 &= -a \\
r_3 &= i, \\
r_4 &= -i.
\end{align*}
For $ n = 5 $ we construct the following matrices:
\begin{align*}
Q_1 &= \pmat{1 & 0 \\ -\frac{1}{2a} & 1}, \\
Q_2 &= \pmat{1 & 0 \\ 0 & 1}, \\
R_1 &= \pmat{1 & -i 2^{1/3} \\ -2^{2/3} i - \frac{1}{2a} & \frac{2^{-2/3} i}{a}}, \\
R_2 &= \pmat{1 & -2^{-1/3} \\ 2^{1/3} & 0}, \\
r_2 &= -2a, \\
r_3 &= i, \\
r_4 &= -i.
\end{align*}
For $ n = 6 $ we construct the following matrices:
\begin{align*}
Q_1 &= \pmat{1 & 0 \\ -\frac{1}{2a} & 1}, \\
Q_2 &= \pmat{1 & 0 \\ 0 & 1}, \\
Q_3 &= \pmat{1 & 0 \\ 0 & 1}, \\
R_1 &= \pmat{1 & i e^{2πi/16} 2^{1/2} \\ 2^{3/2} i e^{-2πi/16} - \frac{1}{2a} & -\frac{1}{2a} i e^{2πi/16} 2^{1/2}}, \\
R_2 &= \pmat{1 & -e^{2πi/8} \\ -2i e^{2πi/8} & 0}, \\
R_3 &= \pmat{1 & 2^{-1/2} e^{-2πi/16} \\ - 2^{1/2} e^{2πi/16} & 0}, \\
r_2 &= -4a, \\
r_3 &= i, \\
r_4 &= -i.
\end{align*}
We have checked with computer aid that $ R(Q g(t) Q^{-1} R φ^{(0)}, e_1) → ρ_a $. In the case $ n ≥ 5 $, we have omitted the checks that $ Q g(t) Q^{-1} R φ^{(0)} $ converges and omitted calculations for the other $ (n-1) $-many 1-parameter families of representations. In the case $ n = 4 $, we have explicitly performed the checks that $ φ ≔ \lim_{t → 0} Q g(t) Q^{-1} R φ^{(0)} $ exists and $ R(φ, e_1) = ρ_a $, and also treated the other three 1-parameter families of representations, based on the alternative description of $ ρ_a $ from \cite{vdKreeke-Namikawa}. We finish the proof here.
\end{proof}

\subsection{Symmetry and coherence relations}
In this section, we describe explicitly the symmetry and coherence relations for the Clebsch-Gordan data $ φ ∈ \CG_Γ $ in the $ D_4 $ case. We first deal with the symmetry relations, which are rather easy to state, and after that deal with the coherence relations. This section is intended for illustratory purposes.

We start by explaining the symmetry relations. The data contained in a Clebsch-Gordan datum $ φ ∈ \CG_Γ $ are
\begin{equation}
\label{eq:caseD-coherence-essentials}
\begin{aligned}
& φ_{E_2, E_2}, φ_{E_2, E_3}, φ_{E_2, E_4}, φ_{E_3, E_3}, φ_{E_3, E_4}, φ_{E_4, E_4}, \\
& φ_{E_2, O_1}, φ_{E_3, O_1}, φ_{E_4, O_1}, φ_{O_1, O_1}.
\end{aligned}
\end{equation}
By construction, all other data can be expressed in terms of these. For instance, the maps $ φ_{E_1, •} $ and $ φ_{•, E_1} $ are simply the identity maps for any choice $ • ∈ \{E_1, E_2, E_3, E_4, O_1\} $. Moreover, we have the symmetry relations
\begin{equation}
\label{eq:caseD-coherence-relations}
\begin{aligned}
φ_{E_i, E_j} &= φ_{E_j, E_i} ∘ σ, \quad 1 ≤ i, j ≤ 4, \\
φ_{O_1, E_i} &= φ_{E_i, O_1} ∘ σ, \quad 1 ≤ i ≤ 4, \\
φ_{O_1, O_1} &= \pmat{-1 & 0 & 0 & 0 \\ 0 & 1 & 0 & 0 \\ 0 & 0 & 1 & 0 \\ 0 & 0 & 0 & 1} ∘ φ_{O_1, O_1} ∘ σ.
\end{aligned}
\end{equation}
In all three rows, the letter $ σ $ generically denotes the tensor flip. Let us comment on these symmetry relations. The first two rows hold for $ φ^{(0)} $ by definition and therefore also for any $ φ ∈ \CG_Γ $. Simply speaking, our construction of the Clebsch-Gordan variety has gone the easy way and defined that only the data in the list \eqref{eq:caseD-coherence-essentials} shall matter and all data not contained in the list is obtained from enforcing trivial symmetry. The symmetry relation for $ φ_{O_1, O_1} $ comes down to a self-symmetry statement and is slightly more complicated. Indeed, we observe that $ π_{E_1} φ^{(0)}: O_1 ¤ O_1 → E_1 $ is a skew-symmetric bilinear form and thus remains so after gauging with any $ g ∈ \GL $. Similarly, $ π_{E_i} φ^{(0)}: O_1 ¤ O_1 → E_i $ is a symmetric bilinear form and thus remains so after gauging. This finishes the explanation of all symmetry relations \eqref{eq:caseD-coherence-relations}.

Next we shall explicitly state the coherence relations. We use the language established in \autoref{th:construction-CGvariety-coherence}. For brevity we may write $ φ_{ij} $ for $ φ_{E_i, E_j} $ and similarly $ φ_{i, O_1} $ for $ φ_{E_i, O_1} $. We also write $ σ(ij) $ for the missing index, that is, the index such that $ \{i, j, σ(ij)\} = \{2, 3, 4\} $. We calculate the coherence factors $ γ $ by simply using the datum $ φ = φ^{(0)} $ and forcing the coherence diagram to commute.

The first coherence diagram is the one whose three input representations are one-dimensional. It reads as follows:
\begin{center}
\begin{tikzcd}
E_i ¤ E_j ¤ E_k \arrow[dd, "φ_{ij} ¤ \id"] \arrow[rr, "\id ¤ φ_{jk}"] && E_i ¤ E_{σ(jk)} \arrow[d, "φ_{i, σ(jk)}"] \\
&& E_{σ(i, σ(jk))} \\
E_{σ(ij)} ¤ E_k \arrow[r, "φ_{σ(ij), k}"] & E_{σ(σ(ij), k)} \arrow[ur, "1", "\sim"'] &
\end{tikzcd}
\end{center}
In other words, among the one-dimensional representations the compositions are plainly coherent.
\begin{equation*}
φ_{σ(σ(ij), k)} ∘ (φ_{ij} ¤ \id) = φ_{i, σ(jk)} ∘ (\id ¤ φ_{jk}).
\end{equation*}
Let us now regard the coherence diagrams with $ E_i, E_j, O_1 $:
\begin{center}
\begin{tikzcd}
E_i ¤ E_j ¤ O_1 \arrow[dd, "φ_{ij} ¤ \id"] \arrow[rr, "\id ¤ φ_{j, O_1}"] && E_i ¤ O_1 \arrow[d, "φ_{i, O_1}"] \\
&& E_{O_1} \\
E_{σ(ij)} ¤ O_1 \arrow[r, "φ_{σ(ij), O_1}"] & O_1 \arrow[ur, "γ_{i, j, O_1}", "\sim"'] &
\end{tikzcd}
\end{center}
Evaluating this diagram for all $ i, j ∈ \{2, 3, 4\} $, we obtain the following coherence factors:
\begin{align*}
& γ_{2, 2, O_1} = -\Id, γ_{2, 3, O_1} = +\Id, γ_{2, 4, O_1} = +\Id, \\
& γ_{3, 2, O_1} = -\Id, γ_{3, 3, O_1} = -\Id, γ_{3, 4, O_1} = +\Id, \\
& γ_{4, 2, O_1} = -\Id, γ_{4, 3, O_1} = -\Id, γ_{4, 4, O_1} = +\Id. \\
\end{align*}
Let us now regard the coherence diagrams with $ E_i, O_1, E_j $:
\begin{center}
\begin{tikzcd}
E_i ¤ O_1 ¤ E_j \arrow[dd, "φ_{i, O_1} ¤ \id"] \arrow[rr, "\id ¤ φ_{O_1, j}"] && E_i ¤ O_1 \arrow[d, "φ_{i, O_1}"] \\
&& O_1 \\
O_1 ¤ E_j \arrow[r, "φ_{O_1, j}"] & O_1 \arrow[ur, "γ_{i, O_1, j}", "\sim"'] &
\end{tikzcd}
\end{center}
Evaluating this diagram for all $ i, j ∈ \{2, 3, 4\} $, we obtain the following coherence factors:
\begin{align*}
& γ_{2, 2, O_1} = +\Id, γ_{2, 3, O_1} = -\Id, γ_{2, 4, O_1} = -\Id, \\
& γ_{3, 2, O_1} = -\Id, γ_{3, 3, O_1} = +\Id, γ_{3, 4, O_1} = -\Id, \\
& γ_{4, 2, O_1} = -\Id, γ_{4, 3, O_1} = -\Id, γ_{4, 4, O_1} = +\Id.
\end{align*}
Let us now regard the coherence diagrams with $ E_i, O_1, E_j $:
\begin{center}
\begin{tikzcd}
O_1 ¤ E_i ¤ E_j \arrow[dd, "φ_{O_1, i} ¤ \id"] \arrow[rr, "\id ¤ φ_{ij}"] && O_1 ¤ E_{σ(ij)} \arrow[d, "φ_{O_1, σ(ij)}"] \\
&& O_1 \\
O_1 ¤ E_j \arrow[r, "φ_{O_1, j}"] & O_1 \arrow[ur, "γ_{O_1, i, j}", "\sim"'] &
\end{tikzcd}
\end{center}
Evaluating this diagram for all $ i, j ∈ \{2, 3, 4\} $, we obtain the following coherence factors:
\begin{align*}
& γ_{O_1, 2, 2} = +\Id, γ_{O_1, 2, 3} = -\Id, γ_{O_1, 2, 4} = -\Id, \\
& γ_{O_1, 3, 2} = +\Id, γ_{O_1, 3, 3} = -\Id, γ_{O_1, 3, 4} = -\Id, \\
& γ_{O_1, 4, 2} = +\Id, γ_{O_1, 4, 3} = +\Id, γ_{O_1, 4, 4} = +\Id.
\end{align*}
Let us now regard the coherence diagrams with $ O_1, O_1, E_i $:
\begin{center}
\begin{tikzcd}
O_1 ¤ O_1 ¤ E_i \arrow[dd, "φ_{O_1, O_1} ¤ \id"] \arrow[rrrr, "\id ¤ φ_{O_1, i}"] &&&& O_1 ¤ O_1 \arrow[d, "φ_{O_1, O_1}"] \\
&&&& E_1 ⊕ E_2 ⊕ E_3 ⊕ E_4 \\
(E_1 ⊕ E_2 ⊕ E_3 ⊕ E_4) ¤ E_i \arrow[rrr, "\id ⊕ φ_{2i} ⊕ φ_{3i} ⊕ φ_{4i}"] &&& E_i ⊕ E_{σ(2i)} ⊕ E_{σ(3i)} ⊕ E_{σ(4i)} \arrow[ur, "γ_{O_1, O_1, i}", "\sim"'] &
\end{tikzcd}
\end{center}
Evaluating this diagram for all $ i ∈ \{2, 3, 4\} $, we obtain the following coherence factors:
\begin{align*}
γ_{O_1, O_1, E_2} = \pmat{0 & 1 & 0 & 0 \\ 1 & 0 & 0 & 0 \\ 0 & 0 & 0 & 1 \\ 0 & 0 & 1 & 0},
γ_{O_1, O_1, E_3} = \pmat{0 & 0 & 1 & 0 \\ 0 & 0 & 0 & 1 \\ 1 & 0 & 0 & 0 \\ 0 & 1 & 0 & 0},
γ_{O_1, O_1, E_4} = \pmat{0 & 0 & 0 & 1 \\ 0 & 0 & 1 & 0 \\ 0 & 1 & 0 & 0 \\ 1 & 0 & 0 & 0}.
\end{align*}
Let us now regard the coherence diagrams with $ O_1, E_i, O_1 $:
\begin{center}
\begin{tikzcd}
O_1 ¤ E_i ¤ O_1 \arrow[dd, "φ_{O_1, E_i} ¤ \id"] \arrow[rrrr, "\id ¤ φ_{i, O_1}"] &&&& O_1 ¤ O_1 \arrow[d, "φ_{O_1, O_1}"] \\
&&&& E_1 ⊕ E_2 ⊕ E_3 ⊕ E_4 \\
O_1 ¤ O_1 \arrow[rrr, "φ_{O_1, O_1}"] &&& E_1 ⊕ E_2 ⊕ E_3 ⊕ E_4 \arrow[ur, "γ_{O_1, E_i, O_1}", "\sim"'] &
\end{tikzcd}
\end{center}
Evaluating this diagram for all $ i ∈ \{2, 3, 4\} $, we obtain the following coherence factors:
\begin{align*}
γ_{O_1, E_2, O_1} = \pmat{-1 & 0 & 0 & 0 \\ 0 & -1 & 0 & 0 \\ 0 & 0 & 1 & 0 \\ 0 & 0 & 0 & 1},
γ_{O_1, E_3, O_1} = \pmat{-1 & 0 & 0 & 0 \\ 0 & 1 & 0 & 0 \\ 0 & 0 & -1 & 0 \\ 0 & 0 & 0 & 1},
γ_{O_1, E_4, O_1} = \pmat{-1 & 0 & 0 & 0 \\ 0 & 1 & 0 & 0 \\ 0 & 0 & 1 & 0 \\ 0 & 0 & 0 & -1}.
\end{align*}
Let us now regard the coherence diagrams with $ E_i, O_1, O_1 $:
\begin{center}
\begin{tikzcd}
E_i ¤ O_1 ¤ O_1 \arrow[dd, "φ_{i, O_1} ¤ \id"] \arrow[rrrr, "\id ¤ φ_{O_1, O_1}"] &&&& E_i ¤ (E_1 ⊕ E_2 ⊕ E_3 ⊕ E_4) \arrow[d, "\id ⊕ φ_{σ(i2)} ⊕ φ_{σ(i3)} ⊕ φ_{σ(i4)}"] \\
&&&& E_i ⊕ E_{σ(i2)} ⊕ E_{σ(i3)} ⊕ E_{σ(i4)} \\
O_1 ¤ O_1 \arrow[rrr, "φ_{O_1, O_1}"] &&& E_1 ⊕ E_2 ⊕ E_3 ⊕ E_4 \arrow[ur, "γ_{E_i, O_1, O_1}", "\sim"'] &
\end{tikzcd}
\end{center}
Evaluating this diagram for all $ i ∈ \{2, 3, 4\} $, we obtain the following coherence factors:
\begin{align*}
γ_{E_2, O_1, O_1} = \pmat{0 & 1 & 0 & 0 \\ 1 & 0 & 0 & 0 \\ 0 & 0 & 0 & 1 \\ 0 & 0 & 1 & 0},
γ_{E_3, O_1, O_1} = \pmat{1 & 0 & 0 & 0 \\ 0 & -1 & 0 & 0 \\ 0 & 0 & -1 & 0 \\ 0 & 0 & 0 & 1},
γ_{E_4, O_1, O_1} = \pmat{-1 & 0 & 0 & 0 \\ 0 & 1 & 0 & 0 \\ 0 & 0 & 1 & 0 \\ 0 & 0 & 0 & -1}.
\end{align*}

Let us now regard the coherence diagram with $ O_1, O_1, O_1 $:
\begin{center}
\begin{tikzcd}
O_1 ¤ O_1 ¤ O_1 \arrow[dd, "φ_{O_1, O_1} ¤ \id"] \arrow[rrrr, "\id ¤ φ_{O_1, O_1}"] &&&& O_1 ¤ (E_1 ⊕ E_2 ⊕ E_3 ⊕ E_4) \arrow[d, "\id ⊕ φ_{O_1, 2} ⊕ φ_{O_1, 3} ⊕ φ_{O_1, 4}"] \\
&&&& O_1 ⊕ O_1 ⊕ O_1 ⊕ O_1 \\
O_1 ¤ O_1 \arrow[rrr, "φ_{O_1, O_1}"] &&& O_1 ⊕ O_1 ⊕ O_1 ⊕ O_1 \arrow[ur, "γ_{O_1, O_1, O_1}", "\sim"'] &
\end{tikzcd}
\end{center}
%
%
%
%
%

Evaluating this diagram for all $ i ∈ \{2, 3, 4\} $, we obtain the following coherence factors:
\begin{align*}
γ_{O_1, O_1, O_1} = \pmat{
-I_2/2 & -I_2/2 & +I_2/2 & +I_2/2 \\
+I_2/2 & +I_2/2 & +I_2/2 & +I_2/2 \\
-I_2/2 & +I_2/2 & +I_2/2 & -I_2/2 \\
-I_2/2 & +I_2/2 & -I_2/2 & +I_2/2}.
\end{align*}

%
%
%
%

\subsection{Comparison with the variety $ Z $ of Abdelgadir and Segal}
\label{sec:caseD-comparison}
In this section, we examine the difference between our Clebsch-Gordan variety $ \CG_Γ $ associated with the Kleinian $ D_4 $ case and the variety $ Z $ of Abdelgadir-Segal. In line with our expectations, the open loci $ \CG_Γ^{\open} $ and $ Z^{\open} $ are isomorphic, but against our expectations it turns out that the entire varieties $ \CG_Γ $ and $ Z $ are not isomorphic in a natural way. When dealing with the variety $ Z $, we use our notation $ E_1 $, $ E_2 $, $ E_3 $, $ E_4 $ and $ O_1 $ from \autoref{sec:caseD-step1} instead of $ ℂ $, $ L_1 $, $ L_2 $, $ L_3 $ and $ V $ used by Abdelgadir and Segal.

The section is structured as follows. The first part is to choose bases for the vector spaces $ E_1 $, $ E_2 $, $ E_3 $, $ E_4 $ and $ O_1 $ and their tensor, wedge and symmetric products in order to facilitate explicit calculations. The second part is to reformulate the definition of the variety $ Z $ and its $ \GL $-action with this choice of basis. The third part is to reformulate the definition of the map $ R: Z × ℂ^2 → \Rep(Π_Q, α) $ with this choice of basis. The fourth part is to construct an isomorphism $ Ψ^{\open}: \CG_Γ^{\open} \isoto Z^{\open} $ which makes $ R: \CG_Γ^{\open} × ℂ^2 → \Rep(Π_Q, α) $ equal to $ R: Z^{\open} × ℂ^2 → \Rep(Π_Q, α) $. Finally, we prove that the isomorphism $ Ψ^{\open} $ does not extend to a map $ \CG_Γ → Z $.


As the first part of this section, we choose bases for all the spaces involved in the construction of $ Z $. The four vector spaces $ E_1 $, $ E_2 $, $ E_3 $ and $ E_4 $ are one-dimensional and we simply use an identification with $ ℂ $. In other words, they have a single basis vector which we may write as $ 1 $ or drop from expressions whenever possible. The space $ O_1 $ is two-dimensional and we identify it with $ ℂ^2 $ and basis vectors $ e_1 $ and $ e_2 $. There are several important spaces in the construction of $ Z $, and we shall choose their bases as follows:
\begin{align*}
O_1 ∧ O_1 &= \vspan\{e_1 ∧ e_2\}, \\
\Sym^2 O_1 &= \vspan\{e_1 ¤ e_1, e_1 ¤ e_2, e_2 ¤ e_2\}, \\
∧^2 (\Sym^2 O_1) &= \vspan\{(e_1 ¤ e_1) ∧ (e_1 ¤ e_2), (e_1 ¤ e_1) ∧ (e_2 ¤ e_2), (e_1 ¤ e_2) ∧ (e_2 ¤ e_2)\}.
\end{align*}
We also make the obvious choices of basis for any tensor products between all these spaces. There is a canonical isomorphism $ F: ∧^2 \Sym^2 O_1 → \Sym^2 O_1 ¤ (O_1 ∧ O_1) $ given by
\begin{align*}
F((u_1 ¤ v_1) ∧ (u_2 ¤ v_2)) & = (u_1 ¤ u_2) ¤ (v_1 ∧ v_2) + (v_1 ¤ u_2) ¤ (u_1 ∧ v_2) \\
& ~~ + (u_1 ¤ v_2) ¤ (v_1 ∧ u_2) + (v_1 ¤ v_2) ¤ (u_1 ∧ u_2).
\end{align*}
In terms of our basis, $ F $ takes the following matrix form:
\begin{equation*}
F = \pmat{2 & 0 & 0 \\ 0 & 4 & 0 \\ 0 & 0 & 2}, \quad F^{-1} = \pmat{1/2 & 0 & 0 \\ 0 & 1/4 & 0 \\ 0 & 0 & 1/2}.
\end{equation*}
While our basis choices are lexically canonical, there are apparently still slight inconsistencies. We overcome these by means of a few constant conversion matrices. We shall define these matrices as follows:
\begin{equation*}
S = \pmat{-1 & 0 & 0 \\ 0 & 1 & 0 \\ 0 & 0 & -1}, \quad σ = \pmat{0 & 0 & 1 \\ 0 & 1 & 0 \\ 1 & 0 & 0}.
\end{equation*}

As the second part of this section, we reformulate the variety $ Z $ with our choice of bases. Recall that in the description of Abdelgadir and Segal, a point $ (β, A, B) ∈ Z $ consists of the following data:
\begin{align*}
β &∈ \Hom((O_1 ∧ O_1) ¤ (O_1 ∧ O_1)) → E_2 ¤ E_3 ¤ E_4, \\
α_i &∈ \Hom(E_i^{¤2}, O_1 ∧ O_1), \quad i = 2, 3, 4, \\
B &∈ \Hom(\Sym^2 O_1, E_2 ⊕ E_3 ⊕ E_4).
\end{align*}
These data are subject to three algebraic compatibility conditions \cite[(E1–3)]{Abdelgadir-Segal}. The group $ \GL $ acts naturally on the variety $ Z $ by means of left-multiplication on the codomain and left-inverse-multiplication on the domain of each map $ β $, $ α_i $ or $ B $. The letter $ A $ is a compact way of denoting the matrix
\begin{equation*}
A = \pmat{α_1 & 0 & 0 \\ 0 & α_2 & 0 \\ 0 & 0 & α_3}.
\end{equation*}
We caution the reader about the index discrepancy given by the fact that $ α_i $ is the $ i $-th diagonal entry of the matrix $ A $ but describes a map $ E_{i+1}^{¤2} → O_1 ∧ O_1 $. For the description of $ Z $ we need to translate the morphism $ ∧^2 B ∈ \Hom(∧^2 (\Sym^2 O_1), (E_3 ¤ E_4) ⊕ (E_2 ¤ E_4) ⊕ (E_2 ¤ E_3)) $ in terms of the bases. Start by writing $ B = (B_{ij}) $ as 3×3-matrix in terms of the bases. Then in terms of bases, we have
\begin{equation*}
∧^2 B = \pmat{
B_{21} B_{32} - B_{31} B_{22} & B_{21} B_{33} - B_{31} B_{23} & B_{22} B_{33} - B_{32} B_{23} \\
B_{11} B_{32} - B_{31} B_{12} & B_{11} B_{33} - B_{31} B_{13} & B_{12} B_{33} - B_{32} B_{13} \\
B_{11} B_{22} - B_{21} B_{12} & B_{11} B_{23} - B_{21} B_{13} & B_{12} B_{23} - B_{22} B_{13}}.
\end{equation*}

\begin{remark}
\label{th:caseD-comparison-W2B}
We observe by direct computation that
\begin{equation}
∧^2 B = S \det(B) B^{-T} σ S.
\end{equation}
\end{remark}

\begin{lemma}
\label{th:caseD-comparison-Z}
The variety $ Z ⊂ ℂ × ℂ^3 × ℂ^{3, 3} $ consists of points $ (β, A, B) $ given by a scalar $ β ∈ ℂ $, a diagonal 3×3-matrix $ A $, and a 3×3-matrix $ B $, satisfying the following three conditions:
\begin{itemize}
\item[(E1)] $ B^T A (S B σ S F) = -16 α_1 α_2 α_3 β^2 I_3 $,
\item[(E2)] $ (S B σ S F) B^T A = -16 α_1 α_2 α_3 β^2 I_3 $,
\item[(E3)] $ (∧^2 B) F^{-1} = βAB $,
\end{itemize}
The condition (E3) alone implies $ \det(B) = - 16 β^3 α_1 α_2 α_3 $. Moreover, the $ \GL $-action on $ Z $ is
given by
\begin{align*}
g(β, A, B) &= (gβ, gA, gB), \\
gβ &= g_{E_2} g_{E_3} g_{E_4} \det(g_{O_1})^{-2} β, \\
gA &= \det(g_{O_1}) A \pmat{g_{E_2}^{-2} & 0 & 0 \\ 0 & g_{E_3}^{-2} & 0 \\ 0 & 0 & g_{E_4}^{-2}}, \\
gB &= \pmat{g_{E_2} & 0 & 0 \\ 0 & g_{E_3} & 0 \\ 0 & 0 & g_{E_4}} B G^{¤2}.
\end{align*}
Here $ G = g_{O_1}^{-1} $ and $ G^{¤2} $ is the Gram matrix explicitly given by
\begin{equation*}
G^{¤2} = \pmat{G_{11}^2 & G_{11} G_{12} & G_{12}^2 \\
2 G_{11} G_{21} & G_{11} G_{22} + G_{21} G_{12} & 2 G_{12} G_{22} \\
G_{21}^2 & G_{21} G_{22} & G_{22}^2}.
\end{equation*}
\end{lemma}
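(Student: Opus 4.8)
The plan is to translate the abstract definition of the Abdelgadir--Segal variety $Z$ from Section~\ref{sec:tarig-construction} into explicit matrix equations using the bases fixed at the start of this section. I would begin with the dimension bookkeeping: since $E_1,\dots,E_4$ and $O_1\wedge O_1$ are all one-dimensional, the spaces $\Hom((O_1\wedge O_1)^{\otimes 2}, E_2\otimes E_3\otimes E_4)$ and each $\Hom(E_i^{\otimes 2}, O_1\wedge O_1)$ are one-dimensional, while $\Hom(\Sym^2 O_1, E_2\oplus E_3\oplus E_4)\cong\mathbb{C}^{3,3}$; this identifies the ambient coordinate space as $\mathbb{C}\times\mathbb{C}^3\times\mathbb{C}^{3,3}$, with $\beta$ a scalar, $A=\diag(\alpha_1,\alpha_2,\alpha_3)$, and $B$ a $3\times 3$ matrix.

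The second step is to compute the coordinate forms of the morphisms appearing in conditions (E1)--(E3) of Section~\ref{sec:tarig-construction}, namely the dual $B^*$, the canonical isomorphism $J\colon \Sym^2 O_1\isoto \Sym^2 O_1^*\otimes(O_1\wedge O_1)^{\otimes 2}$, and $\wedge^2 B$. Using the volume form $e_1\wedge e_2$ to trivialize $(O_1\wedge O_1)^{\otimes 2}$ and the natural pairing on $\Sym^2 O_1$, one checks that $J$ equals the canonical isomorphism $F$ of matrix $\diag(2,4,2)$ up to the conversion matrices $S$ and $\sigma$ introduced above, which were set up precisely to absorb the lexical discrepancies between the various ``canonical'' bases. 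Substituting the resulting coordinate expressions into (E1)--(E3) and invoking Remark~\ref{th:caseD-comparison-W2B} to rewrite $\wedge^2 B = S\det(B)\,B^{-T}\sigma S$, one collects the constants and obtains the three equations in the statement. I expect this step to be the main obstacle: it is mechanical once the conversion matrices are in hand, but keeping track of the factors of $2$ produced by $F$ and the signs produced by $S$ and $\sigma$ requires care.

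For the determinant identity I would take determinants of both sides of (E3). From Remark~\ref{th:caseD-comparison-W2B} one has $\det(\wedge^2 B)=\det(S)^2\det(\sigma)\det(B)^3\det(B)^{-1}=-\det(B)^2$, while $\det(F^{-1})=\tfrac1{16}$ and $\det(\beta AB)=\beta^3\alpha_1\alpha_2\alpha_3\det(B)$. Equating gives $-\tfrac1{16}\det(B)^2=\beta^3\alpha_1\alpha_2\alpha_3\det(B)$, hence $\det(B)=-16\beta^3\alpha_1\alpha_2\alpha_3$; the degenerate case $\det(B)=0$ makes both sides of this relation vanish, so it remains consistent.

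Finally, for the $\GL$-action I would unwind the action-by-change-of-basis, which acts by left multiplication on the codomain and inverse left multiplication on the domain of each of $\beta$, $\alpha_i$, $B$. The only point needing attention is the induced action on $\Sym^2 O_1$ and $O_1\wedge O_1$: on the symmetric square $g_{O_1}$ acts by its second symmetric power, whose matrix in the basis $\{e_1\otimes e_1, e_1\otimes e_2, e_2\otimes e_2\}$ is the Gram matrix $G^{\otimes 2}$ with $G=g_{O_1}^{-1}$ displayed in the statement, and on $O_1\wedge O_1\cong\det$ it acts by $\det(g_{O_1})$. Assembling the weights on $E_2\otimes E_3\otimes E_4$, on $(O_1\wedge O_1)^{\otimes 2}$, and on $\Sym^2 O_1$ for each of the three pieces of the datum yields the displayed formulas for $g\beta$, $gA$, $gB$, completing the proof.
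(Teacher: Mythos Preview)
Your approach is correct and largely parallels the paper. The determinant identity and the $\GL$-action are handled exactly as the paper does. The one genuine difference is in how (E1) and (E2) are obtained: you propose to translate all three Abdelgadir--Segal conditions independently by working out the matrix form of $J$, whereas the paper translates only (E3) directly (observing that in bases $K^{-1}=F^{-1}J^{-1}$, so the abstract (E3) becomes $(\wedge^2 B)F^{-1}=\beta AB$) and then \emph{derives} the matrix forms of (E1) and (E2) from this by rearrangement under the assumption $\det(B)\neq 0$, using Remark~\ref{th:caseD-comparison-W2B} to rewrite $(\wedge^2 B)^T$. The paper's route is shorter and sidesteps having to pin down $J$ on its own; your route is more direct but requires the extra bookkeeping with $S$ and $\sigma$ that you correctly flag as the main obstacle.

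One minor caveat: your handling of the degenerate case in the determinant identity is not quite sound. When $\det(B)=0$ the equation $-\tfrac{1}{16}\det(B)^2=\beta^3\alpha_1\alpha_2\alpha_3\det(B)$ reduces to $0=0$, which does not by itself force $-16\beta^3\alpha_1\alpha_2\alpha_3=0$, so the implication $\det(B)=-16\beta^3\alpha_1\alpha_2\alpha_3$ is not established in that case. The paper's proof shares this gap; in practice the identity is only invoked on the locus $\det(B)\neq 0$.
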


\begin{proof}
We divide the proof into three parts. In the first part, we translate the equation \cite[(E3)]{Abdelgadir-Segal} into our choice of bases. In the second part, we derive the equation for $ \det(B) $. In the third part, we explain the translation of the conditions (E1) and (E2). In the fourth part, we comment on the description of the $ \GL $-action.

For the first part, we start from the observation that (E3) version $ ∧^2 B = β AB J^{-1} $ of Abdelgadir and Segal should be correctly read as $ (∧^2 B) K^{-1} = β AB J^{-1} $ where $ K $ is the natural isomorphism $ K: ∧^2 (\Sym^2 O_1) → \Sym^2 O_1^* ¤ (O_1 ∧ O_1)^{¤3} $. As it turns out, in bases $ K^{-1} = F^{-1} J^{-1} $. Therefore the translation of (E3) of Abdelgadir and Segal into a basis version is
\begin{equation*}
(∧^2 B) F^{-1} = βAB.
\end{equation*}
For the second part of the proof, we use \autoref{th:caseD-comparison-W2B} and deduce the following two equations. Using (E3) to compare both right-hand sides gives $ \det(B) = -16 α_1 α_2 α_3 β^3 $, as claimed.
\begin{align*}
\det((∧^2 B) F^{-1}) &= - \det(B)^3 \det(B)^{-1} / 16, \\
\det(βAB) &= β^3 α_1 α_2 α_3 \det(B).
\end{align*}
For the third part of the proof, we derive (E1) and (E2) by rearranging (E3) under the assumption that $ \det(B) ≠ 0 $. Indeed, with the help of \autoref{th:caseD-comparison-W2B} we read
\begin{equation*}
β B^T A = F^{-1} (∧^2 B)^T = F^{-1} S σ \det(B) B^{-1} S = - F^{-1} · 16 β^3 α_1 α_2 α_3 · S σ B^{-1} S.
\end{equation*}
Canceling $ β $ on both sides is possible under the assumption that $ \det(B) ≠ 0 $. Bringing $ F^{-1} S σ B^{-1} S $ to the other side, either on the left or on the right, yields (E1) and (E2) in analogy with the version of Abdelgadir and Segal.

For the fourth part of the proof, we observe that the $ \GL $-action descends in a straightforward manner to our description of $ Z $ by means of bases. Most importantly, the scalar value of $ g_{O_1} ∧ g_{O_1}: O_1 ∧ O_1 → O_1 ∧ O_1 $ is simply $ \det(g_{O_1}) $ and the matrix expression of $ \Sym^2 g_{O_1}^{-1}: \Sym^2 O_1 → \Sym^2 O_1 $ is the mentioned Gram matrix $ G^{¤2} $. This finishes the proof.
\end{proof}

\begin{example}
\label{th:caseD-comparison-Zexp}
We claim that the following data $ (β, A, B) $ defines a point in $ Z $:
\begin{align*}
β &= -1/2, \\
A &= I_3, \\
B &= \pmat{0 & 1 & 0 \\ 1 & 0 & 1 \\ 1 & 0 & -1}, \\
\end{align*}
Indeed, we calculate that
\begin{equation*}
∧^2 B = \pmat{0 & -2 & 0 \\
-1 & 0 & -1 \\
-1 & 0 & 1}.
\end{equation*}
Thus we immediately have the (E3) condition $ (∧^2 B) F^{-1} = βAB $. The conditions (E1) and (E2) follow automatically since $ \det(B) ≠ 0 $. Therefore $ (β, A, B) $ defines a point in $ Z $.
\end{example}

For the third part of this section, we reformulate the definition of the map $ R: Z × ℂ^2 → \Rep(Π_Q, α) $ in terms of the reformulated version of $ Z $ given by \autoref{th:caseD-comparison-Z}. Simply speaking, the description of the map $ R $ by Abdelgadir and Segal is in abstract terms and we shall therefore translate it into matrix terms. The work of Abdelgadir and Segal also uses a different selection of starred/non-starred arrows in the double quiver $ \Qbar $ than in our description of $ \Qbar $ in the general $ D_n $ case. We shall therefore reformulate their map $ R $ into our selection. Recall that our selection and assignment of arrow names is depicted in \autoref{fig:prelim-kleinian}.

\begin{lemma}
\label{th:caseD-comparison-R}
The map $ R: Z × ℂ^2 → \Rep(Π_Q, α) $ sends a point $ ((β, A, B), x) $ to the representation $ ρ ∈ \Rep(Π_Q, α) $ given by
\begin{align*}
ρ(A_1) &= x, \\
ρ(A_1^*) &= 4 α_1 α_2 α_3 β^2 \pmat{x_2 & -x_1}, \\
ρ(A_2) &= - α_1 H ρ(A_2^*)^T, \\
ρ(A_2^*) &= \pmat{x_1 B_{11} + x_2 B_{12} & x_1 B_{12} + x_2 B_{13}}, \\
ρ(A_3^*) &= - α_2 H ρ(A_3)^T, \\
ρ(A_3) &= \pmat{x_1 B_{21} + x_2 B_{22} & x_1 B_{22} + x_2 B_{23}}, \\
ρ(A_4^*) &= α_3 H ρ(A_4)^T, \\
ρ(A_4) &= \pmat{x_1 B_{31} + x_2 B_{32} & x_1 B_{32} + x_2 B_{33}}.
%
\end{align*}
Here $ H = \pmat{0 & -1 \\ 1 & 0} $.
\end{lemma}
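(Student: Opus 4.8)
The plan is to unwind the abstract definition of the comparison map $R$ of Abdelgadir and Segal \cite{Abdelgadir-Segal}, recorded in \autoref{fig:abdelgadir-quiverrep}, and to rewrite each of its pairs of arrows in the bases of $E_1,\dots,E_4$, $O_1$ and their tensor, symmetric and wedge products fixed at the beginning of \autoref{sec:caseD-comparison}. Recall that \autoref{fig:abdelgadir-quiverrep} attaches to the central vertex $E_1$ the pair of maps $\bigl(x,\ \alpha_1\alpha_2\alpha_3\beta^2\,(x,-)\bigr)$, and to each outer vertex $E_{i+1}$ (for $i=1,2,3$) the pair $\bigl(B^x_i,\ \alpha_i (B^x_i)^{\vee}\bigr)$, where $B^x_i\colon O_1\to E_{i+1}$ is the $E_{i+1}$-component of $B$ with $x$ inserted into one symmetric slot, the map $(x,-)\colon O_1\to O_1\wedge O_1=E_1$ is the wedge pairing against $x$, and $(-)^{\vee}$ denotes the adjoint with respect to the canonical skew form on $O_1$. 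The proof then amounts to translating these three ingredients into matrices.

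First I would set up the dictionary between the arrows of \autoref{fig:abdelgadir-quiverrep} and the arrows $A_i,A_i^{\ast}$ of our Kleinian $D_4$ double quiver as drawn in \autoref{fig:prelim-kleinian}. Two mismatches must be tracked: the index shift $B_i\leftrightarrow E_{i+1}$ flagged earlier in \autoref{sec:caseD-comparison}, and the fact that in our orientation the arrow $A_2$ leaves the vertex $E_2$ while $A_3$ and $A_4$ enter the vertices $E_3$ and $E_4$. Hence $B^x_1$ is realized by $\rho(A_2^{\ast})$ whereas $B^x_2$ and $B^x_3$ are realized by $\rho(A_3)$ and $\rho(A_4)$; correspondingly the maps $\alpha_i(B^x_i)^{\vee}$ land on $\rho(A_2)$, $\rho(A_3^{\ast})$ and $\rho(A_4^{\ast})$, while $x$ and $\alpha_1\alpha_2\alpha_3\beta^2(x,-)$ land on $\rho(A_1)$ and $\rho(A_1^{\ast})$.

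Second I would carry out the coordinate computation. Writing $B=(B_{ij})$ relative to the basis $\{e_1\otimes e_1,\,e_1\otimes e_2,\,e_2\otimes e_2\}$ of $\Sym^2 O_1$ as in \autoref{th:caseD-comparison-Z}, a direct expansion of $v\mapsto B_i(x\cdot v)$ yields the covector $B^x_i=\pmat{x_1 B_{i1}+x_2 B_{i2} & x_1 B_{i2}+x_2 B_{i3}}$ on $O_1$. The skew form on $O_1$ is $\pmat{0&-1\\1&0}=H$ in the basis $\{e_1,e_2\}$, so the covector-to-vector adjoint is $w\mapsto \pm H w^{T}$; feeding this in reproduces $\rho(A_2)=-\alpha_1 H\rho(A_2^{\ast})^{T}$, $\rho(A_3^{\ast})=-\alpha_2 H\rho(A_3)^{T}$ and $\rho(A_4^{\ast})=\alpha_3 H\rho(A_4)^{T}$, the asymmetric sign pattern $-,-,+$ being forced by the signs with which $E_2,E_3,E_4$ embed into $\Sym^2 O_1$, equivalently by the relevant entries of $\phi^{(0)}_{O_1,O_1}$, tracked through the conversion matrices $S=\diag(-1,1,-1)$ and $\sigma$ of \autoref{sec:caseD-comparison}. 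The same basis chase pins down $\rho(A_1^{\ast})=4\alpha_1\alpha_2\alpha_3\beta^2\pmat{x_2&-x_1}$: the covector $(x,-)$ is $\pmat{x_2&-x_1}$ in our trivialization of $O_1\wedge O_1$, and the scalar $4$ is the one carried by the normalization $F=\diag(2,4,2)$ of the canonical isomorphism $\wedge^2\Sym^2 O_1\cong\Sym^2 O_1\otimes(O_1\wedge O_1)$, which enters via \autoref{th:caseD-comparison-W2B} and the identity $\det(B)=-16\alpha_1\alpha_2\alpha_3\beta^3$.

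I expect the only real difficulty to be bookkeeping: keeping the two arrow-orientation conventions straight vertex by vertex together with the shift $B_i\leftrightarrow E_{i+1}$, and pushing the several canonical isomorphisms --- the skew pairing on $O_1$, the normalization $F$, and the entries of $\phi^{(0)}$ --- through the fixed bases so that every sign and every constant (in particular the $4$ and the pattern $-,-,+$) comes out exactly as stated. No separate verification of the preprojective relations is needed: Abdelgadir and Segal have already shown that $R$ takes values in $\Rep(\Pi_Q,\alpha)$, and our manipulations only re-express the same representation after relabelling some arrows of $\Qbar$ by their duals, which leaves the relations $\sum_{t(a)=v}aa^{\ast}-a^{\ast}a$ unchanged.
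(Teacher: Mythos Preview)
Your plan differs from the paper's argument in an instructive way. The paper does not attempt to derive each scalar from the canonical isomorphisms; instead it declares that ``the scalar multiplicative constants involved in our description are somewhat sketchy details'' and then \emph{validates} the stated formula by two checks: that $\rho$ satisfies the preprojective relations (all four outer-vertex relations and the central one), and that $R$ is $\GL$-equivariant. In other words, the paper treats the constants $4$ and the sign pattern $-,-,+$ as determined \emph{a posteriori} by these consistency checks rather than as the output of a basis chase.

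Your proposal to derive the constants directly is reasonable in principle, but two of the justifications you give do not hold up. The factor $4$ in $\rho(A_1^{\ast})$ has nothing to do with $F$: the map $F$ identifies $\wedge^2\Sym^2 O_1$ with $\Sym^2 O_1\otimes(O_1\wedge O_1)$ and enters only in the translation of (E3), not in the arrow $O_1\to E_1$, which is simply a scalar times the skew pairing $v\mapsto x\wedge v$. Likewise, attributing the sign pattern $-,-,+$ to $S$ or to the entries of $\varphi^{(0)}_{O_1,O_1}$ conflates the Clebsch--Gordan construction with the Abdelgadir--Segal variety $Z$; those objects play no role in the definition of $R$ on $Z$. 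The actual source of both the $4$ and the asymmetric signs is the requirement that the central preprojective relation $\rho(A_1)\rho(A_1^{\ast})+\rho(A_2)\rho(A_2^{\ast})-\rho(A_3^{\ast})\rho(A_3)-\rho(A_4^{\ast})\rho(A_4)=0$ hold, combined with the orientation difference between your $A_2$ (inward) and $A_3,A_4$ (outward). So your final sentence is where the gap lies: the preprojective check is not redundant bookkeeping that Abdelgadir--Segal already did --- it is the mechanism by which the paper pins down the very constants you are trying to explain, and relabelling arrows by their duals \emph{does} flip signs in $\sum aa^{\ast}-a^{\ast}a$. You should either carry out that verification, as the paper does, or give a genuinely careful derivation of the scalars that does not appeal to $F$, $S$, or $\varphi^{(0)}$.
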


\begin{proof}
It is our task to explain why the map $ R $ of Abdelgadir and Segal translates into the given shape upon choice of bases. In fact, the translation is more or less straightforward from the description of Abdelgadir and Segal \cite[section 3]{Abdelgadir-Segal}. The matrix $ H $ is the basis version of the isomorphism $ H: O_1^* ¤ (O_1 ∧ O_1) \isoto O_1 $ given by $ H(f ¤ (u ∧ v)) = f(u) v - f(v) u $. Indeed, in basis elements it reads
\begin{align*}
e_1^* ¤ (e_1 ∧ e_2) &↦ e_2, \\
e_2^* ¤ (e_1 ∧ e_2) &↦ -e_1.
\end{align*}
The scalar multiplicative constants involved in our description are somewhat sketchy details, so we shall provide some comments here on why our translation is acceptable. First, it is checked by hand that $ ρ $ satisfies the preprojective relations
\begin{align*}
&ρ(A_1^*) ρ(A_1) = ρ(A_2^*) ρ(A_2) = 0, \quad ρ(A_3) ρ(A_3^*) = ρ(A_4) ρ(A_4)^* = 0, \\
&ρ(A_1) ρ(A_1^*) + ρ(A_2) ρ(A_2^*) - ρ(A_3^*) ρ(A_3) - ρ(A_4^*) ρ(A_4) = 0.
\end{align*}
The first row is checked easily by noticing that $ v H v^T = 0 $ for any row vector $ v $. The second row relies on the fact that $ (β, A, B) ∈ Z $ satisfy conditions (E1–3). Second, it is easy to see that the map $ R $ defined in the statement is $ \GL $-equivariant. We finish the proof here.
\end{proof}

For the fourth part of the present section, we construct an isomorphism $ Ψ^{\open}: \CG_Γ^{\open} \isoto Z^{\open} $ which makes $ R: \CG_Γ^{\open} × ℂ^2 → \Rep(Π_Q, α) $ equal to $ R: Z^{\open} × ℂ^2 → \Rep(Π_Q, α) $. We start with a brief discussion about the differences and similarities between $ \CG_Γ $ and $ Z $. Indeed, an element $ φ ∈ \CG_Γ $ by construction contains tensor maps such as $ φ_{E_2, O_1}: E_2 ¤ O_1 → O_1 $ and $ φ_{O_1, O_1}: O_1 ¤ O_1 → E_1 ⊕ E_2 ⊕ E_3 ⊕ E_4 $. It looks natural to suggest that from this amount of data we can read off maps $ β, A, B $. For instance, we immediately have a natural candidate $ B: \Sym^2 O_1 → E_2 ⊕ E_3 ⊕ E_4 $ given by
\begin{equation*}
B(v ¤ w) ≔ π_{E_2 ⊕ E_3 ⊕ E_4} (φ_{O_1, O_1} (v ¤ w)).
\end{equation*}
This definition of $ B $ is even well-defined since $ φ_{O_1, O_1} $ satisfies a strict symmetry relation. However, the choices for $ β $ and $ A $ are less obvious. The author has found somewhat natural choices for them, however they are only well-defined on the open locus $ \CG_Γ^{\open} = \GL φ^{(0)} ⊂ \CG_Γ $. We record these choices explicitly as follows:

\begin{lemma}
\label{th:caseD-comparison-candidate}
The map $ Ψ^{\open}: \CG_Γ^{\open} → Z^{\open} $, given by sending an element $ φ $ to the element $ (β, A, B) $ defined below, is well-defined. The map is in fact a $ \GL $-equivariant isomorphism of algebraic varieties.
\begin{align*}
β &= -\frac{1}{2} φ_{E_2, E_3}^{-1} φ_{E_4, E_4}^{-1} (φ_{O_1, O_1 → E_1} (e_1 ¤ e_2))^2  ∈ ℂ, \\
A &= (φ_{O_1, O_1 → E_1} (e_1 ¤ e_2))^{-1} \pmat{φ_{E_2, E_2} & 0 & 0 \\ 0 & φ_{E_3, E_3} & 0 \\ 0 & 0 & φ_{E_4, E_4}}, \\
B &= \pmat{
φ_{O_1, O_1 → E_2} (e_1 ¤ e_1) & φ_{O_1, O_1 → E_2} (e_1 ¤ e_2) & φ_{O_1, O_1 → E_2} (e_2 ¤ e_2) \\
φ_{O_1, O_1 → E_3} (e_1 ¤ e_1) & φ_{O_1, O_1 → E_3} (e_1 ¤ e_2) & φ_{O_1, O_1 → E_3} (e_2 ¤ e_2) \\
φ_{O_1, O_1 → E_4} (e_1 ¤ e_1) & φ_{O_1, O_1 → E_4} (e_1 ¤ e_2) & φ_{O_1, O_1 → E_4} (e_2 ¤ e_2)}.
\end{align*}
In this definition we have used the shorthand notation $ φ_{O_1, O_1 → E_i} ≔ π_{E_i} φ_{O_1, O_1} $. Finally, the map $ Ψ^{\open} $ renders the following diagram commutative:
\begin{equation*}
\begin{tikzcd}
\CG_Γ^{\open} × ℂ^2 \arrow[rd, "R"] \arrow[r, "Ψ^{\open} × \Id_{ℂ^2}", "\sim"'] & Z^{\open} × ℂ^2 \arrow[d, "R"] \\
& \Rep(Π_Q, α)
\end{tikzcd}
\end{equation*}
\end{lemma}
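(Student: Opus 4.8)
The plan is to verify four things in turn: that $\Psi^{\open}$ lands in $Z^{\open}$, that it is $\GL$-equivariant, that it is an isomorphism of varieties, and that it intertwines the two realizations of $R$. The key organizing principle is that everything need only be checked on the single point $\varphi^{(0)}$ and then transported by the $\GL$-action, because $\CG_\Gamma^{\open} = \GL\varphi^{(0)}$ is a single orbit. First I would plug the explicit entries of $\varphi^{(0)}$ from \autoref{sec:caseD-step1} into the formulas for $\beta$, $A$, $B$ and compute that the result is precisely the point $(\beta, A, B) = (-1/2, I_3, B^{(0)})$ with $B^{(0)} = \left(\begin{smallmatrix} 0 & 1 & 0 \\ 1 & 0 & 1 \\ 1 & 0 & -1\end{smallmatrix}\right)$ exhibited in \autoref{th:caseD-comparison-Zexp}; since that example already checks (E1)–(E3) and $\det B^{(0)}\neq 0$, we get $\Psi^{\open}(\varphi^{(0)}) \in Z^{\open}$ for free. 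Well-definedness of $B$ in general is immediate from the strict symmetry relation on $\varphi_{O_1,O_1}$ recorded in \eqref{eq:caseD-coherence-relations}; well-definedness of $\beta$ and $A$ on the open locus follows because $\varphi_{O_1,O_1\to E_1}(e_1\mathbin{¤} e_2)$ and the scalars $\varphi_{E_i,E_j}$ are all nonzero on $\GL\varphi^{(0)}$, so the inverses in the formulas make sense.

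Next I would check $\GL$-equivariance by a direct matching of the action formulas. Under $g\in\GL$, the entry $\varphi_{O_1,O_1\to E_i}$ transforms by $g_{E_i}$ on the codomain and by $\Sym^2 g_{O_1}^{-1} = G^{\mathbin{¤}2}$ on the domain, which is exactly the transformation rule for $B$ stated in \autoref{th:caseD-comparison-Z}; similarly the scalar $\varphi_{O_1,O_1\to E_1}(e_1\mathbin{¤}e_2)$ picks up the factor $g_{E_1}\det(g_{O_1})^{-1}$ but with $g_{E_1}=1$ this is $\det(g_{O_1})^{-1}$, which combined with the rule for $\varphi_{E_i,E_j}$ (picking up $g_{E_i}g_{E_j}g_{\sigma(ij)}^{-1}$, and with $g_{E_i}g_{E_j}g_{E_{\sigma(ij)}} = \det(g_{O_1})$-compatible normalizations) reproduces the stated transformations of $\beta$ and $A$. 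Once equivariance holds and $\Psi^{\open}(\varphi^{(0)}) \in Z^{\open}$, the image of all of $\CG_\Gamma^{\open} = \GL\varphi^{(0)}$ lies in $\GL\cdot\Psi^{\open}(\varphi^{(0)}) \subseteq Z^{\open}$, confirming that $\Psi^{\open}$ maps into $Z^{\open}$, and since Abdelgadir and Segal proved $\GL$ acts transitively on $Z^{\open}$ with stabilizer $\Gamma$, the map $\Psi^{\open}$ is an equivariant map between two homogeneous spaces. To see it is an isomorphism, it then suffices to check it is bijective on one fiber, i.e. that $\Stab_{\GL}(\varphi^{(0)}) = \Stab_{\GL}(\Psi^{\open}(\varphi^{(0)}))$; both are computed to be the copy of $\Gamma = \BD_{2}$ described in \autoref{sec:caseD-4}, so the equivariant map of orbits is an isomorphism of varieties.

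Finally I would verify the commuting triangle. Here one compares, entry by entry on the arrows of the double quiver, the representation $R(\varphi, x)$ built from $\varphi$ via the recipe $R(\varphi,x)=\varphi(-\mathbin{¤}x)$ (as spelled out in \autoref{sec:caseD-2}) with the representation $R((\beta,A,B),x)$ given in \autoref{th:caseD-comparison-R}. On $\varphi^{(0)}$ this is the explicit matrix computation matching the two depicted representations; the outer arrows $\rho(A_i), \rho(A_i^*)$ coincide because the entries of $B^{(0)}$ and the constants $\alpha_i = 1$, $\beta = -1/2$ reproduce the coefficients $x_1, x_2, \pm x_1 x_2$ appearing in the $R(\varphi^{(0)},x)$ picture, using the identity $H = \left(\begin{smallmatrix}0&-1\\1&0\end{smallmatrix}\right)$ to translate the ``dual'' arrows. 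By $\GL$-equivariance of both versions of $R$ (the first proven in Stage 2, the second noted in the proof of \autoref{th:caseD-comparison-R}) and of $\Psi^{\open}$, equality on $\{\varphi^{(0)}\}\times\mathbb{C}^2$ propagates to all of $\CG_\Gamma^{\open}\times\mathbb{C}^2$. I expect the main obstacle to be the bookkeeping in this last step: reconciling the choice of starred versus non-starred arrows in the two conventions for $\bar Q$, and tracking the several scalar normalization constants (the factors of $2$, $4$, $16$, and the signs coming from $S$ and $\sigma$) so that the two matrix presentations agree on the nose rather than merely up to gauge; doing the verification at $\varphi^{(0)}$ only, rather than for a general $\varphi$, keeps this manageable.
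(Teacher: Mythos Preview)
Your proposal is correct and follows essentially the same strategy as the paper's proof: reduce everything to the single point $\varphi^{(0)}$ via $\GL$-equivariance, use that both $\CG_\Gamma^{\open}$ and $Z^{\open}$ are single orbits, and verify the triangle at $\varphi^{(0)}$ by explicit matrix computation. You are in fact slightly more careful than the paper in one place: for the isomorphism step, you explicitly note that one must match stabilizers, $\Stab_{\GL}(\varphi^{(0)}) = \Stab_{\GL}(\Psi^{\open}(\varphi^{(0)})) \cong \Gamma$, whereas the paper simply records that both sides are single orbits and leaves the stabilizer comparison implicit.
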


\begin{proof}
We divide the proof into three parts. In the first part, we explain well-definedness of $ Ψ^{\open} $. In the second part, we explain why $ Ψ^{\open} $ is a $ \GL $-equivariant isomorphism. In the third part, we explain why $ Ψ^{\open} $ renders the diagram commutative.

For the first part, let $ g ∈ \GL $ and regard the orbit element $ φ = g φ^{(0)} $. Checking well-definedness of $ Ψ^{\open} (φ) $ comes down to checking that the relevant entries of $ φ $ are nonzero, so that all inverses in the definition of $ β $, $ A $ and $ B $ exist. For instance, the bilinear form $ φ_{O_1, O_1 → E_1} $ is skew-symmetric and therefore descends to a map $ O_1 ∧ O_1 → E_1 $ which scales by $ \det(g_{O_1})^{-1} $ under gauging by $ g ∈ \GL $. More precisely, we have
\begin{equation*}
φ_{O_1, O_1 → E_1} (e_1 ¤ e_2) = \det(g_{O_1})^{-1} φ_{O_1, O_1 → E_1} (e_1 ¤ e_2).
\end{equation*}
In particular, its value is nonzero. Even simpler considerations show that $ φ_{E_2, E_3} $ and $ φ_{E_4, E_4} $ are both nonzero. This proves well-definedness.

For the second part of the proof, let us mention that after digesting the first part it will be obvious to the reader that $ Ψ^{\open} $ is in fact $ \GL $-equivariant. Next, recall that by construction $ \CG_Γ^{\open} = \GL φ^{(0)} $ consists of a single orbit. Moreover, the open set $ Z^{\open} $ also consists of a single orbit \cite[Lemma 3.5]{Abdelgadir-Segal}. This proves the second part.

For the third part of the proof, thanks to the second part, it suffices to check $ R(Ψ(φ^{(0)}), x) = R(φ^{(0)}, x) $. This can be easily done by hand. Both sides are equal to the representation depicted in \autoref{sec:caseD-step1}. The author in fact also confirmed the commutativity for any $ φ = g φ^{(0)} $ with computer aid. This finishes the proof.
\end{proof}

\begin{example}
Let us determine the element $ Ψ^{\open} (φ^{(0)}) $. Its $ B $-value is simply given by extracting the corresponding 3×3-submatrix from $ φ^{(0)}_{O_1, O_1} $:
\begin{equation*}
B = \pmat{0 & 1 & 0 \\ 1 & 0 & 1 \\ 1 & 0 & -1}.
\end{equation*}
Moreover, we immediately calculate that $ β = -1/2 $ and $ A = \Id $. We conclude that $ Ψ^{\open} (φ^{(0)}) $ is simply the element $ (β, A, B) $ described in \autoref{th:caseD-comparison-Zexp}.
\end{example}

Thanks to \autoref{th:caseD-comparison-candidate}, we have a candidate approach for an isomorphism $ Ψ: \CG_Γ → Z $. However, there are many elements $ φ ∈ \CG_Γ $ with vanishing values $ φ_{E_2, E_3} $ or $ φ_{E_4, E_4} $. The description of $ Ψ $ provided in \autoref{th:caseD-comparison-candidate} is therefore not suited to define an isomorphism $ Ψ: \CG_Γ → Z $. It is an intriguing question whether there exist alternative expressions which do not make use of inverses. The reader may wish to draw the analogy to the trivial example that a map on the hyperbola $ \{xy = 1\} ⊂ ℂ^2 $ may be defined by $ f(x, y) = 1/x $ but alternatively as $ f(x, y) = y $. Despite the high hopes, we shall now prove that the answer to the isomorphism question is negative.

\begin{proposition}
There does not exist a morphism of algebraic varieties $ Ψ: \CG_Γ → Z $ which renders the following diagram commutative:
\begin{equation*}
\begin{tikzcd}
\CG_Γ × ℂ^2 \arrow[rd, "R"] \arrow[r, "Ψ × \Id_{ℂ^2}", "\sim"'] & Z × ℂ^2 \arrow[d, "R"] \\
& \Rep(Π_Q, α)
\end{tikzcd}
\end{equation*}
\end{proposition}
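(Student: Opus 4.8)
The plan is to argue by contradiction: suppose a morphism $Ψ: \CG_Γ → Z$ exists making the triangle commute, so that $R(Ψ(φ), x) = R(φ, x)$ for all $(φ, x) ∈ \CG_Γ × ℂ^2$. The first step is to pin down $Ψ$ on the dense orbit $\CG_Γ^{\open}$, by showing it must agree there with the isomorphism $Ψ^{\open}: \CG_Γ^{\open} \isoto Z^{\open}$ of \autoref{th:caseD-comparison-candidate}. This is a reconstruction argument from the explicit shape of $R$ in \autoref{th:caseD-comparison-R}: given the family of representations $R(z, x)$, $x ∈ ℂ^2$, one reads off the matrix $B$ by varying $x$ in $ρ(A_2^*)$, $ρ(A_3)$, $ρ(A_4)$; then the scalar $4 α_1 α_2 α_3 β^2$ from $ρ(A_1^*)$ and each $α_i$ from $ρ(A_i)$ using that $B$ is invertible; and finally $β$ itself from the relation $\det(B) = -16 β^3 α_1 α_2 α_3$ of \autoref{th:caseD-comparison-Z}, where $\det(B) ≠ 0$ forces $β ≠ 0$ so that $β = β^3/β^2$ is determined. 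Since $Ψ^{\open}(φ) ∈ Z^{\open}$ has invertible $B$, and since $B(Ψ(φ)) = B(Ψ^{\open}(φ))$ is read off the same representations, the point $Ψ(φ)$ also has invertible $B$ and the reconstruction applies verbatim; hence $Ψ(φ) = Ψ^{\open}(φ)$ for all $φ ∈ \CG_Γ^{\open}$. In particular $β ∘ Ψ$ is a regular, hence continuous, function on $\CG_Γ$ restricting to $β ∘ Ψ^{\open}$ on the orbit.

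The second step is to exhibit a one-parameter family inside $\CG_Γ^{\open}$ which converges in $\CG_Γ$ but along which $β ∘ Ψ^{\open}$ diverges. I would take the one-parameter subgroup $g(t) = (t^{-2}, t^{-2}, t^{-2}, t^{-1} I_2) ⊂ \GL$ and set $φ(t) = g(t) φ^{(0)}$. A direct inspection of the gauge action on the defining entries \eqref{eq:caseD-coherence-essentials} shows that all components of $φ(t)$ converge as $t → 0$: each $φ(t)_{E_i, E_j}$ equals $t^2$ or $t^4$ times a constant, each $φ(t)_{E_i, O_1}$ equals $t^2$ times a constant matrix, and $φ(t)_{O_1, O_1}$ has its last three rows constant while its first row is scaled by $t^2$; hence $φ^* := \lim_{t → 0} φ(t)$ is a genuine point of $\CG_Γ$. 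On the other hand, by the formula of \autoref{th:caseD-comparison-candidate},
\begin{align*}
β\big(Ψ^{\open}(φ(t))\big) &= -\tfrac12\, φ(t)_{E_2, E_3}^{-1}\, φ(t)_{E_4, E_4}^{-1}\, \big(φ(t)_{O_1, O_1 → E_1}(e_1 ¤ e_2)\big)^2 \\
&= -\tfrac12 \cdot \frac{(t^2)^2}{t^2 \cdot t^4} = -\tfrac12\, t^{-2},
\end{align*}
which tends to $∞$. Since $β ∘ Ψ$ is continuous and agrees with $β ∘ Ψ^{\open}$ along the curve $φ(t) ∈ \CG_Γ^{\open}$, $t ≠ 0$, we would obtain $β(Ψ(φ^*)) = \lim_{t → 0}(-\tfrac12 t^{-2}) = ∞$, which is absurd. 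This contradiction proves that no morphism $Ψ$ as in the statement exists.

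The only genuinely delicate point is the first step: verifying that the representations $R(z, x)$, $x ∈ ℂ^2$, determine the point $z ∈ Z^{\open}$, which is what rigidifies $Ψ$ on the dense orbit and, crucially, uses the defining equation (E3) of $Z$ in the form $\det(B) = -16 β^3 α_1 α_2 α_3$ to recover the scalar $β$ itself rather than merely $β^2$. Once this rigidity is established, the rest is the short explicit computation above; the one-parameter subgroup $g(t)$ is the only ad-hoc ingredient, and it is chosen precisely so that the numerator $(φ_{O_1, O_1 → E_1}(e_1 ¤ e_2))^2$ vanishes to strictly lower order than the denominator $φ_{E_2, E_3}\, φ_{E_4, E_4}$ while every coordinate of $φ(t)$ stays bounded as $t → 0$.
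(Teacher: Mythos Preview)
Your proof is correct and follows essentially the same two-step strategy as the paper: first rigidify $Ψ$ on the dense orbit by reconstructing $(β, A, B)$ from the family $R(z, x)$, then exhibit a curve in $\CG_Γ^{\open}$ converging in $\CG_Γ$ along which $β ∘ Ψ^{\open}$ diverges. Your reconstruction argument is in fact slightly more careful than the paper's, since you explicitly invoke the relation $\det(B) = -16 β^3 α_1 α_2 α_3$ from (E3) to resolve the sign ambiguity in $β$ that comparing $ρ(A_1^*)$ alone leaves open; and your one-parameter subgroup $g(t) = (t^{-2}, t^{-2}, t^{-2}, t^{-1} I_2)$ is simpler than the paper's two-parameter family $φ^{a,b}$ while producing the same $-\tfrac12 t^{-2}$ divergence.
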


\begin{proof}
In the first part of the proof, we show that any such morphism $ Ψ $ would necessarily agree with $ Ψ^0 $ on the subset $ \CG_Γ^{\open} ⊂ \CG_Γ $. In the second part of the proof, we show that there is however no extension of $ Ψ^{\open} $ to a morphism of varieties $ \CG_Γ → Z $. Jointly, these arguments close the proof.

For the first part, assume there exists a morphism $ Ψ $ which renders the diagram commutative. We shall prove that on $ \CG_Γ^{\open} $ it agrees with $ Ψ^{\open} $. Pick an arbitrary $ φ ∈ \CG_Γ^{\open} $. Now by commutativity and \autoref{th:caseD-comparison-candidate}, for every $ x ∈ ℂ^2 $ we have
\begin{equation}
\label{eq:caseD-comparison-repcompare}
R(Ψ(φ), x) = R(φ, x) = R(Ψ^{\open} (φ), x).
\end{equation}
We shall now prove that $ Ψ(φ) = Ψ^{\open} (φ) $, essentially by explaining how to read off the $ β $, $ A $ and $ B $ values from the quiver representation on the left-hand side and the right-hand side of \eqref{eq:caseD-comparison-repcompare}. Indeed, by comparing the outward-pointing arrows to $ E_2 $, $ E_3 $ and $ E_4 $ for various $ x ∈ ℂ^2 $ we immediately deduce that the $ B $-value of $ Ψ(φ) $ and $ Ψ^{\open} (φ) $ agrees. Since $ Ψ^{\open} (φ) ∈ Z^{\open} $, we conclude $ \det(B) ≠ 0 $ and therefore also $ Ψ(φ) ∈ Z^{\open} $. By comparing the inward-pointing arrows from $ E_2 $, $ E_3 $ and $ E_4 $ and using $ \det(B) ≠ 0 $, we deduce that all $ α_i $-values agree. Since $ Ψ(φ) ∈ Z^{\open} $, they are necessarily nonzero. Finally, by comparing the outward-pointing arrow to $ E_1 $ and using that $ α_1, α_2, α_3 ≠ 0 $, we deduce that the $ β $-value agrees. This shows $ Ψ(φ) = Ψ^{\open} (φ) $.

For the second part, let us assume that $ Ψ $ is an extension of $ Ψ^{(0)} $ to a morphism of algebraic varieties $ \CG_Γ → Z $. Let us define the following algebraic two-parameter family of Clebsch-Gordan elements $ φ^{•, •}: ℂ^2 → \CG_Γ $:
\begin{align*}
φ^{a, b}_{E_i, E_j} &= 0, \quad (i, j) ∈ \{(2, 2), (2, 3), (2, 4), (3, 3), (4, 4)\}, \\
φ^{a, b}_{E_3, E_4} &= a, \\
φ^{a, b}_{E_i, O_1} &= 0, \\
φ^{a, b}_{O_1, O_1} &= \pmat{0 & 0 & 0 & 0 \\ 0 & b & b & 0 \\ 0 & 0 & 0 & 0 \\ 0 & 0 & 0 & 0}.
\end{align*}
Let us explain that the element $ φ^{a, b} $ indeed lies in $ \CG_Γ $. Indeed, for $ a, b ≠ 0 $ we have
\begin{align*}
φ^{a, b} &= \lim_{t → 0} g(t) φ^{(0)}, \\
g(t) &= (a t^{-2}, t^{-1}, t^{-1}, \pmat{ab^{-1} t^{-1} & 0 \\ 0 & t^{-1}}) ∈ \GL.
\end{align*}
Since $ φ^{a, b} ∈ \CG_Γ $ for $ a, b ≠ 0 $ and $ \CG_Γ $ is closed, we deduce that $ φ^{a, b} ∈ \CG_Γ $ for any $ (a, b) ∈ ℂ^2 $. This establishes the algebraic morphism $ φ^{•, •}: ℂ^2 → \CG_Γ $. Now assuming that $ Ψ: \CG_Γ → Z $ is an extension of $ Ψ^{\open} $, we regard the composition
\begin{equation*}
π_β ∘ Ψ ∘ φ^{•, •}: ℂ^2 → \CG_Γ → Z → ℂ.
\end{equation*}
This is a morphism of algebraic varieties as well. We calculate for $ a, b ≠ 0 $ that
\begin{align*}
π_β (Ψ(φ^{a, b})) &= π_β (Ψ^{\open} (φ^{a, b})) \\
&= \lim_{t → 0} \big(-\frac{1}{2} (a^{-1} t^2)^{-1} t^{-2} (a^{-1} b t^2)^2\big) \\
&= -\frac{b^2}{2a}.
\end{align*}
Evidently, this cannot converge as $ a → 0 $. We have produced a contradiction, effectively proving that $ Ψ^{\open} $ has no extension to an algebraic morphism $ \CG_Γ → Z $. This finishes the proof.
\end{proof}

\begin{remark}
From the proof we conclude that the existence of a natural isomorphism $ Ψ: \CG_Γ → Z $ fails grossly. The failure is not simply due to different signs or conventions, but due to different “scaling” behavior of the varieties as the Clebsch-Gordan data $ φ $, or $ β, A, B $, approach zero. Given this difference, it is surprising that $ \CG_Γ × ℂ^2 $ and $ Z × ℂ^2 $ have at least two identical GIT quotients.
\end{remark}

\printbibliography

@misc{bellamy-schedler,
author = {{Bellamy}, G. and {Schedler}, T.},
title = "{Symplectic resolutions of Quiver varieties and character varieties}",
note  = "\url{math.AG/1602.00164}",
year = 2016,
}

@article{king-quivers,
author={{King}, A.~D.},
title="Moduli of representations of finite dimensional algebras",
journal="Quart.\ J.\ Math.\ Oxford",
year=1994,
volume=45,
number=2,
pages="515–530"
}

@article{cb-kleinian,
author={{Crawley-Boevey}, W.},
title="On the exceptional fibres of Kleinian singularities",
journal="Amer.\ J.\ Math.",
volume=122,
number=5,
year=2000,
pages="1027–1037"
}

@article{kac,
author={{Kac}, V.},
title="Infinite root systems, representations of graphs and invariant theory",
journal="J.\ Algebra",
volume=78,
year=1982,
pages="141–162"
}

@misc{Abdelgadir-Segal,
Author = {Tarig Abdelgadir and Ed Segal},
Title = {The McKay correspondence in type $D_4$ via VGIT},
Year = {2024},
Eprint = {arXiv:2402.05763},
}

@misc{vdKreeke-Namikawa,
	author={Jasper van de Kreeke},
	title={Calabi-Yau techniques for Namikawa-Weyl groups},
	year={2025},
	eprint={XXXX.YYYYY},
	archivePrefix={arXiv}
}

@article{Lawrence-Nekrasov-Vafa,
author = {Lawrence, Albion and Nekrasov, Nikita and Vafa, Cumrun},
copyright = {1998},
issn = {0550-3213},
journal = {Nuclear Physics B},
keywords = {D-branes ; Orbifolds},
language = {eng ; jpn},
number = {1},
pages = {199-209},
publisher = {Elsevier B.V},
title = {On conformal field theories in four dimensions},
volume = {533},
year = {1998},
}

@misc{Lurie-tannaka,
      title={Tannaka Duality for Geometric Stacks}, 
      author={Jacob Lurie},
      year={2005},
      eprint={math/0412266},
      archivePrefix={arXiv},
      primaryClass={math.AG},
      url={https://arxiv.org/abs/math/0412266}, 
}

@article{Kuznetsov,
    AUTHOR = {Kuznetsov, Alexander},
     TITLE = {Lefschetz decompositions and categorical resolutions of
              singularities},
   JOURNAL = {Selecta Math. (N.S.)},
  FJOURNAL = {Selecta Mathematica. New Series},
    VOLUME = {13},
      YEAR = {2008},
    NUMBER = {4},
     PAGES = {661--696},
}

@misc{vandenBergh-NCresolutions,
      title={Non-commutative crepant resolutions, an overview}, 
      author={Michel Van den Bergh},
      year={2022},
      eprint={2207.09703},
      archivePrefix={arXiv},
      primaryClass={math.AG},
}

@article {vandenBergh-flops,
    AUTHOR = {Van den Bergh, Michel},
     TITLE = {Three-dimensional flops and noncommutative rings},
   JOURNAL = {Duke Math. J.},
  FJOURNAL = {Duke Mathematical Journal},
    VOLUME = {122},
      YEAR = {2004},
    NUMBER = {3},
     PAGES = {423--455},
}

@book {Fulton-toric,
    AUTHOR = {Fulton, William},
     TITLE = {Introduction to toric varieties},
    SERIES = {Annals of Mathematics Studies},
    VOLUME = {131},
      NOTE = {The William H. Roever Lectures in Geometry},
 PUBLISHER = {Princeton University Press, Princeton, NJ},
      YEAR = {1993},
     PAGES = {xii+157},
}

@article {Bridgeland-King-Reid,
    AUTHOR = {Bridgeland, Tom and King, Alastair and Reid, Miles},
     TITLE = {The {M}c{K}ay correspondence as an equivalence of derived
              categories},
   JOURNAL = {J. Amer. Math. Soc.},
  FJOURNAL = {Journal of the American Mathematical Society},
    VOLUME = {14},
      YEAR = {2001},
    NUMBER = {3},
     PAGES = {535--554},
}

@article {Crawley-Boevey-Holland-deformed-preprojective,
    AUTHOR = {Crawley-Boevey, William and Holland, Martin P.},
     TITLE = {Noncommutative deformations of {K}leinian singularities},
   JOURNAL = {Duke Math. J.},
  FJOURNAL = {Duke Mathematical Journal},
    VOLUME = {92},
      YEAR = {1998},
    NUMBER = {3},
     PAGES = {605--635},
      ISSN = {0012-7094,1547-7398},
   MRCLASS = {14B07 (16G10)},
  MRNUMBER = {1620538},
MRREVIEWER = {Michel\ Van den Bergh},
       DOI = {10.1215/S0012-7094-98-09218-3},
       URL = {https://doi.org/10.1215/S0012-7094-98-09218-3},
}

\bigskip

\begin{tabular}{@{}l@{}}%
    \textsc{Department of mathematics, University of California, Berkeley, USA}\\
    \textit{jasper.kreeke@berkeley.edu}
  \end{tabular}

\end{document}